\newtheorem{theorem}{Theorem}[section]
\newtheorem{lemma}[theorem]{Lemma}
\newtheorem{corollary}[theorem]{Corollary}
\newtheorem{proposition}[theorem]{Proposition}
\theoremstyle{remark}
\theoremstyle{definition}
\newtheorem{definition}[theorem]{Definition}
\newtheorem{example}[theorem]{Example}
\numberwithin{equation}{section} \makeatother
\DeclareMathOperator{\Cdb}{{\mathbb C}}
\DeclareMathOperator{\Ddb}{{\mathbb D}}
\DeclareMathOperator{\Kdb}{{\mathbb K}}
\DeclareMathOperator{\Mdb}{{\mathbb M}}
\DeclareMathOperator{\Ndb}{{\mathbb N}}
\DeclareMathOperator{\Rdb}{{\mathbb R}}
\DeclareMathOperator{\Tdb}{{\mathbb T}}
\DeclareMathOperator{\Zdb}{{\mathbb Z}}
\DeclareMathOperator{\Fl}{{\mathcal F}}
\DeclareMathOperator{\Ll}{{\mathcal L}}
\DeclareMathOperator{\Ml}{{\mathcal M}}
\DeclareMathOperator{\Sl}{{\mathcal S}}
\DeclareMathOperator{\Ul}{{\mathcal U}}
\DeclareMathOperator{\re}{{Re}}
\DeclareMathOperator{\ball}{Ball}
\newcommand{\osa}[1]{\operatorname{oa^{\ast}}(#1)}
\newcommand{\oa}[1]{\operatorname{oa}(#1)}
\newcommand{\dg}{\dagger}
\begin{document}

\title[Involutive operator algebras]{Involutive operator algebras}

\author{David P. Blecher}
\address{Department of Mathematics, University of Houston, Houston, TX
77204-3008}
\email[David Blecher]{dblecher@math.uh.edu}
\author{Zhenhua Wang}
\address{Department of Mathematics, University of Houston, Houston, TX
77204-3008}
\email[Zhenhua Wang]{zhenwang@math.uh.edu}
\keywords{Operator algebras, involution, accretive operator, ideal, hereditary subalgebra, interpolation, complex symmetric operator}
\subjclass[2010]{46K50,  46L52, 47L07, 47L30, 47L75 (primary), 32T40, 46J15, 46L07, 46L85, 47B44, 47L25, 47L45 (secondary)}
\date{Revision of February 11, 2019}
 \thanks{We acknowledge
support by Simons Foundation grant  527078}
 \maketitle
\begin{abstract}
  Examples of operator algebras with involution include
the operator $*$-algebras occurring in noncommutative differential geometry
studied recently by Mesland, Kaad, Lesch, and others, several classical function 
algebras, triangular matrix algebras, 
(complexifications) of real operator algebras, and an operator 
algebraic version of the {\em complex symmetric} operators studied
by Garcia, Putinar, Wogen, Zhu, and others.   We investigate the general theory of involutive operator 
algebras, and give  many  applications, such as a characterization of  the symmetric operator algebras introduced in the 
early days of operator space theory.   
\end{abstract} 

\section{Introduction}
An {\em operator algebra} for us is a closed subalgebra of $B(H),$ for a complex Hilbert space $H.$ 
Here we study operator algebras with involution.  Examples include
the operator $*$-algebras occurring in noncommutative differential geometry
studied recently by Mesland, Kaad, Lesch, and others (see
e.g.\ \cite{Mes14,KL,BKM} and references therein),
(complexifications) of real operator algebras, and an operator 
algebraic version of the {\em complex symmetric} operators studied
by Garcia, Putinar, Wogen, Zhu, and many others (see \cite{GP}  for a survey, or e.g.\  \cite{GW}).  

By an {\em operator $\ast$-algebra} we mean an operator algebra with an involution $\dagger$ making it a $\ast$-algebra 
with $\Vert [a_{ji}^{\dagger}]\Vert=\Vert [a_{ij}]\Vert$ 
for $[a_{ij}] \in M_n(A)$ and $n \in \Ndb$.   Here we are using the 
{\em matrix norms} of operator space theory (see e.g.\ \cite{Pisbk}).
This notion was first introduced by Mesland 
in the setting of noncommutative differential geometry \cite{Mes14},
who was soon joined by  Kaad and Lesch \cite{KL}.  
In several recent  papers by these authors and coauthors they 
exploit operator $\ast$-algebras and involutive modules  in geometric situations.   
Subsequently we noticed very many other examples of 
operator $\ast$-algebras, and other 
involutive operator
algebras, occurring naturally in general 
operator algebra theory which 
 seem to have not been studied hitherto.  
It is thus natural to investigate the general theory of involutive operator 
algebras, and this is the focus
of the present paper.  
We are able to include a rather large number of results
 since many proofs are similar to their
operator algebra counterparts in 
the  literature (see e.g.\ \cite{BLM}).     Thus we often need only discuss the
new points that arise.  However
to  follow some of the arguments the reader  will need to have the operator algebra variant
from the original papers in hand.     It is worth saying that some of the arguments
we are following are complicated, and so it is not clear in advance whether they have  `involutive variants'.    In fact
 some of the main theorems about 
operator algebras do not have operator $*$-algebra variants,  
so some work is needed to
disentangle the items that do work. 
We make no attempt to be comprehensive for the sake of avoiding
tedium.   We will simply 
illustrate the main techniques and features, 
indicating what can be done.   Many of the results are focused around 
`real positivity' in the sense of several recent papers of the first 
author and collaborators referenced in our bibliography. 
 Some related theory and several complementary results can be found in the second authors  PhD thesis \cite{Wang}.

\subsection{Structure of our paper} In the rest of this section 
we give some background, perspective, and  notations.  In Section 2 we give several general
results.  For example we prove some facts about involutions
on nonselfadjoint operator algebras and their relationship
to the $C^*$-algebras they generate.
As an application of some ideas 
in the theory of complex symmetric operators 
we characterize the symmetric operator algebras introduced
in \cite{BComm}.  This is a problem outstanding from the early years of operator space theory. Section 3 is 
devoted to examples of involutive operator algebras,
for instance examples coming from operator space 
theory, subdiagonal algebras, model theory for contractions on a Hilbert space,
and complex symmetric operators.

In the remaining sections 
we restrict our focus, for specificity, to
operator $\ast$-algebras.    Many of our results involve {\em real positivity} in the sense of several recent papers of the first 
author and collaborators referenced in our bibliography, where the cones $\Rdb_+  {\mathfrak F}_A$ and ${\mathfrak r}_A$ (defined below) act as a substitute 
for the usual positive cone in a $C^*$-algebra.    In an operator $*$-algebra $A$, ${\mathfrak F}_A$ and ${\mathfrak r}_A$ are closed under the involution,
and taking $n$th roots commutes with the involution.  Thus the theory of real positivity studied in many of the first authors recent papers will have good involutive variants.   
 In Section 4 we discuss contractive approximate identities, Cohen factorization for operator $\ast$-algebras, multiplier operator $\ast$-algebras,
dual operator $\ast$-algebras (by which
we mean an operator $*$-algebra which is a
dual operator space with
weak* continuous
involution), and involutive $M$-ideals.  
Section 5 has a common theme of hereditary subalgebras and
ideals,  noncommutative topology (e.g.\ open projections, 
support projections, and compact and peak projections),  and peak interpolation, in the involutive setting.  Thus we are finding the involutive variants of 
the operator algebra theory of these topics
from e.g.\ the papers \cite{BRI, BRII, BRord, BNII, BHN, Bnpi}. 
 
  \subsection{Involutions, and notation} \label{inv} 

By an involution we mean at least a bijection $\tau : A \to A$ which is of
 period 2: $\tau^2(a) = a$ for $a \in A$.
A $C^*$-algebra $B$ may have two kinds of extra involution: a period 2 conjugate linear $*$-antiautomorphism or a period 2  linear $*$-antiautomorphism.
The former is just the usual involution $*$ composed with a period 2  $*$-automorphism of $B$.  The latter is essentially the same as a `real structure', that is
if $\theta$ is the antiautomorphism
then  $B$ is just the complexification of a real $C^*$-algebra $D = \{ x \in B : x = \bar{x} \}$, where $\bar{x} = \theta(x)^*$.
We may characterize $x \mapsto \bar{x}$ on $B$ very simply as the map $a + i b \mapsto a - ib$ for $a, b \in D$.    

By way of contrast, there are four distinct natural 
kinds of `completely isometric involution' on a general operator algebra $A$.  Namely, period 2 bijections which are 
\begin{itemize}
\item [(1)]  conjugate linear  antiautomorphisms $\dagger : A \to A$ satisfying $\| [a_{ji}^\dagger ] \|
= \| [ a_{ij}] \| ,$
\item [(2)]  
  linear antiautomorphisms $\theta : A \to A$ satisfying $\| [a_{ji}^\theta ] \|
= \| [ a_{ij}] \| ,$
\item [(3)]  conjugate linear automorphisms $\textendash : A \to A$ satisfying $\| [\overline{a_{ij}} ] \|
= \| [ a_{ij}] \| ,$
\item [(4)]   linear automorphisms $\pi : A \to A$ satisfying $\| [a_{ij}^\pi ] \|
= \| [ a_{ij}] \| .$
\end{itemize}
 Here $[a_{ij}]$ is a generic element in $M_n(A)$, the $n \times n$ matrices with entries in $A$, for all $n \in \Ndb$.  
Class (1) is just the operator $*$-algebras mentioned earlier.
In this paper we will call the algebras in class (2) {\em  operator algebras with linear involution} $\theta$, and write $\theta(a)$
 as $a^\theta$.
We will not discuss (4) in this paper, these are well studied and are only mentioned here because most of the results in the present paper apply
to all four classes.  We will just say that this class is in bijective correspondence with the unital {\em completely symmetric projections}  on $A$ in
the sense of  \cite{BNpac}, this correspondence is essentially Corollary 4.2 there.
Similarly,  for the same reasons we will not discuss  class (3) in this paper, except 
in the final theorem.   By \cite[Theorem 3.3]{Sharma},  class (3) 
 is essentially the same as  `real operator algebra structure', 
that is $A$ is just the complexification of a real operator algebra $D = \{ x \in B : x = \bar{x} \}$, and we may rewrite $\bar{x} = 
a - i b$ if $x = a +  ib$ for $a, b \in D$.   Thus the variant of the main aspects of our paper in case (3) seem best treated within
the theory of real operator algebras.  
However it is worth saying that the theory
in our paper in case (3) may be viewed
as a transliteration of a chapter in the theory of real operator algebras.
  We also remark that if $A$ is unital or approximately unital then
 one can easily show using the Banach-Stone theorem for operator algebras 
(see e.g.\ \cite[Theorem 4.5.13]{BLM}), and the `opposite and adjoint algebras' discussed at the start of Section 2, 
that the matrix norm equality in (3) and (4) (resp.\ (1) and (2))  plus a `unital condition' force 
the involution to be multiplicative (resp.\  anti-multiplicative).

  If $A$ is a $C^*$-algebra then classes (1) and (4) are essentially the same after
applying the $C^*$-algebra involution $*$.
(Note that in this case the matrix norm equality in (1) or (4)   follows from the 
same equality for $1 \times 1$ matrices, that is that the involution is isometric.   Indeed it is well known
that $*$-isomorphisms of $C^*$-algebras are completely isometric.) 
Similarly classes (2) and (3) essentially coincide if $A$ is a $C^*$-algebra.
  
  We will mostly focus on class (1) for specificity.   In fact  
most of the results in the present paper apply
to all four classes, however it would be too tedious to state several cases of each result.   Instead we leave it to the 
reader to state the matching results in cases (2)--(4).   For example to get from case (1) to case (2) of results below
one replaces $a^\dagger$ by $a^\theta$, and $\dagger$-{\em selfadjoint elements}, that is elements satisfying $a^\dagger = a$, by 
elements with  $a^\theta = a$.   
We remark that if $A$ is an operator algebra with linear involution $\theta$, then $\{ a \in A : a = a^\theta  \}$ is a Jordan operator 
algebra in the sense of \cite{BWj}.    (We remark that  these `$\theta$-{\em selfadjoint} elements' need not generate $A$ as an operator 
algebra, unlike 
for involutions of type (1).)  
Most of our discussion of class (2) involves finding interesting examples of such involutions.   Indeed although classes (1)--(4) 
have similar theory from the viewpoint of our paper, the {\em examples} of algebras in these  classes are quite different in general.

Because of the ubiquity of the asterisk symbol
in our area of study, we 
usually write the involution on an operator $*$-algebra as $\dagger$,
and refer to, for example, $\dagger$-selfadjoint elements
or subalgebras, and $\dagger$-homomorphisms (the natural
morphisms for $*$-algebras).

A little more background and notation:   
A {\em unital} operator algebra has an identity of norm $1$,
and an {\em approximately unital} operator algebra has a
contractive approximate identity (cai). 
For background  on operator spaces
and operator algebras from an operator space 
point of view we refer the reader
to \cite{BLM,Pisbk}.
 Meyer's theorem  states that any operator algebra $A$ has a
unitization $A^1$ that is unique up to completely isometric isomorphism \cite[Corollary 2.1.15]{BLM}.  If $A$ is
nonunital then $A$ is of codimension 1 in the unital operator algebra $A^1$; otherwise set $A^1 = A$.
In this paper,  all projections $p \in A$ are orthogonal projections. If $X$ and $Y$ 
are sets then we write $XY$ for the {\em norm closure} of the span of terms of the form $xy,$ for $x\in X, y\in Y.$ The second dual $A^{\ast\ast}$ of an operator algebra $A$ is again an operator algebra,
which is unital if $A$ is approximately unital.

We recall that a  $C^{\ast}$-cover $(B, j)$ of an operator algebra $A$ is a $C^*$-algebra $B$ and a completely isometric
homomorphism $j : A \to B$ such that $j(A)$ generates $B$ as a $C^*$-algebra.   Sometimes we simply call this a $C^*$-algebra
generated by $A$.  There is a `biggest' and `smallest' $C^{\ast}$-cover,
$C^*_{\rm max}(A)$ and $C^*_e(A)$ (see \cite[Propositions 4.3.5 and 2.4.2]{BLM}). 
For example $C^*_{\rm max}(A)$ has the universal
property that any completely contractive representation $\pi : A \to B(H)$
extends to a $*$-representation of $C^*_{\rm max}(A)$ on $H$. Any
completely isometric homomorphism $j : A \to B$ into a $C^*$-cover $B$ of
$A$ generated by the copy of $A$, gives rise to a $*$-homomorphism $B \to C^*_e(A)$ which is
`the identity' on the copy of $A$.    

Because of the uniqueness of unitization, for an operator algebra $A$
we can define unambiguously ${\mathfrak F}_A = \{ a \in A : \Vert 1 - a \Vert \leq 1 \}$.  Then
 $\frac{1}{2} {\mathfrak F}_A = \{ a \in A : \Vert 1 - 2 a \Vert \leq 1 \} \subset {\rm Ball}(A)$.
Here and throughout ${\rm Ball}(X)$ denotes $\{ x \in X : \| x \| \leq 1 \}$.
  Similarly, ${\mathfrak r}_A$,
the {\em real positive} or {\em accretive} elements in $A$, is 
$\{ a \in A : a + a^* \geq 0 \}$, where the 
adjoint $a^*$ is taken in any $C^*$-cover of $A$.   
We write oa$(x)$ for the operator algebra generated by an operator $x$.  
By a {\em symmetry} we mean either a selfadjoint unitary operator, or a period 2 $*$-automorphism 
of a $C^*$-algebra, depending on the context.

 Recall that a projection in $A^{\ast\ast}$ is {\em open} in $A^{**},$ or $A$-open for short, if $p\in(pA^{\ast\ast}p\, \cap A)^{\perp\perp}$. That is, if  and only if there is a net $(x_t)$ in $A$ with 
$$x_t=px_t=x_tp=px_tp\to p,\, {\rm weak^*}.$$ 
This is a generalization of Akemann's notion of open projections for $C^*$-algebras. If $p$ is open in $A^{**}$ then  clearly
$$D=pA^{\ast\ast}p\cap A=\{a\in A: a=ap=pa=pap\}$$ 
is a closed subalgebra of $A,$ and the subalgebra $D^{\perp\perp}$ of $A^{\ast\ast}$ has identity $p.$ By \cite[Proposition 2.5.8]{BLM} $D$ has a cai. If $A$ is also approximately unital then a projection $p$ in $A^{\ast\ast}$ is closed if $p^{\perp}$ is open.

We call such a subalgebra $D$ a {\em hereditary subalgebra} of $A$ (or HSA) and we say that  $p$ is the {\em support projection} of the HSA $pA^{\ast\ast}p\cap A.$. It follows from the above that the support projection of a HSA is the weak* limit of any cai from the HSA.    The above correspondence between hereditary subalgebras and open projections is bijective and order preserving.  

\section{Involutive operator algebras}

We recall for any operator space $X$ the {\em opposite} and {\em adjoint} operator spaces $X^\circ$ and $X^\star$ from e.g.\ 1.2.25 in \cite{BLM}.  
Here $X^\circ$ is $X$ but with `transposed matrix norms' $|||[a_{ij} ]||| = \| [a_{ji} ] \|$.
Similarly $X^\star$ is the set of formal symbols $x^\star$ for $x \in X$, but with  the same operator space structure 
as  $\{ x^* \in B : x \in X \}$, if $X$ is (completely isometrically) a subspace of a 
$C^*$-algebra $B$.  
 
If $A$ is an operator algebra we write $A^1$ for the unitization of $A$.
If $X$ is an operator space then $I(X)$ and ${\mathcal T}(X)$ are respectively
the {\em  injective and ternary envelopes}  of $X$ from e.g.\ \cite[Chapter 4]{BLM} (the word `triple' is used in place 
of `ternary' in \cite{BLM}).   We will not apply the injective and ternary envelope in this paper, but 
include them in the following result since they will be important in future work on involutive operator algebras and spaces.   We recall from e.g.\
the last reference that
 $I(X)$ is an injective operator space containing $X$, and it is the unique such
 with the  `rigidity' (or with the `essential') property of 4.2.3 in \cite{BLM}.   Then ${\mathcal T}(X)$, sometimes called the {\em noncommutative Shilov
boundary} of $X$, is the smallest closed subspace of $I(X)$ containing $X$ which is closed under the ternary product $x y^* z$.   It may also be characterized by 
the universal  property in \cite[Theorem 8.3.9]{BLM}; and if $X$ is unital or is an approximately
unital operator algebra then ${\mathcal T}(X)$ is the $C^*$-envelope $C^*_e(X)$ mentioned earlier.

\begin{proposition} \label{ciop}  If $X$ is an operator space and $A$ is an operator algebra then
\begin{itemize}
\item [(1)]  $(A^1)^\circ = (A^\circ)^1$, $I(X^\circ) = I(X)^\circ, {\mathcal T}(X^\circ) =  {\mathcal T}(X)^\circ, C^*_e(A^\circ) = C^*_e(A)^\circ,$ and $C^*_{\rm max}(A^\circ) = C^*_{\rm max}(A)^\circ$.
\item [(2)] $(A^1)^\star= (A^\star)^1$, $I(X^\star) = I(X)^\star, {\mathcal T}(X^\star) =  {\mathcal T}(X)^\star, C^*_e(A^\star) = C^*_e(A)^\star,$ and $C^*_{\rm max}(A^\star) = C^*_{\rm max}(A)^\star$.
\end{itemize}
\end{proposition}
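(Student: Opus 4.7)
The plan is to observe that both $X \mapsto X^\circ$ and $X \mapsto X^\star$ are period-two involutive functors on the category of operator spaces which preserve complete isometries, injectivity, TRO structure, operator algebra structure (with multiplication appropriately reversed), unitality, and $C^*$-structure. Once this structural fact is in hand, every identity in the proposition is forced by the uniqueness part of the universal property characterizing the object on the right. I would prove (1) and (2) in parallel; the only substantive difference is that $\star$ additionally conjugates scalars.

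For the unitization identities, the argument is Meyer's theorem (\cite[Corollary 2.1.15]{BLM}): $(A^1)^\circ$ is a unital operator algebra containing $A^\circ$ as a codimension-one subalgebra (in the non-unital case), and by uniqueness of the unitization it must equal $(A^\circ)^1$ completely isometrically via the identity on $A^\circ$.  The only routine check is that the identity of $A^1$ remains an identity under the opposite (resp.\ adjoint) multiplication.  For $I(X^\circ) = I(X)^\circ$, the key observation is that injectivity is preserved by $\circ$: any extension problem into $Y^\circ$ transports, by applying $\circ$ throughout, to one into $Y$, and back again since $\circ$ has period two.  The rigidity/essentialness property of $X \hookrightarrow I(X)$ transfers to $X^\circ \hookrightarrow I(X)^\circ$ in the same way, so uniqueness of the injective envelope (4.2.3 of \cite{BLM}) yields the equality.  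The $\star$ version is entirely analogous.

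The ternary-envelope identities follow as a direct corollary: the ternary product $xy^*z$ on $I(X)$ transports to a ternary product on $I(X)^\circ$ (reversing the order of the three arguments) and to $I(X)^\star$ (by applying $\star$ to each argument), so ${\mathcal T}(X)^\circ$ is a TRO inside $I(X^\circ) = I(X)^\circ$ containing $X^\circ$ and minimal with that property, hence equals ${\mathcal T}(X^\circ)$.  For the $C^*$-envelope the case of (approximately) unital $A$ is immediate since ${\mathcal T}(A) = C^*_e(A)$ there; the general case reduces to this via the unitization identity.  For $C^*_{\rm max}$, the argument is purely by universal property: a completely contractive representation of $A^\circ$ on $H$ corresponds under the $\circ$-functor to a completely contractive representation of $A$ into $B(H)^\circ$, which extends to a $*$-homomorphism of $C^*_{\rm max}(A)$; transporting back shows that $C^*_{\rm max}(A)^\circ$ has the defining universal property of $C^*_{\rm max}(A^\circ)$.

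The main delicate step in this program is the verification that the opposite and adjoint of a $C^*$-algebra remain $C^*$-algebras under the correctly chosen multiplication and involution, so that the various universal extensions land in the right category, and that the rigidity and essentialness properties really do descend along these functors.  Once these structural checks are in place, the proposition reduces to a sequence of routine uniqueness appeals.
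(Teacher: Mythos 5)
Your proposal is correct and follows essentially the same route as the paper: Meyer's theorem for the unitization identities, and then universal-property/diagram-chase arguments obtained by applying the period-two functors $\circ$ and $\star$ to all maps, which is precisely how the paper disposes of the injective envelope, ternary envelope, $C^*_e$ and $C^*_{\rm max}$ identities. You have merely written out in more detail the transports of injectivity, rigidity, the ternary product, and the representation-extension property that the paper leaves as a brief sketch.
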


\begin{proof}       Note that  $(A^\circ)^1$ is a unital operator algebra containing $A^\circ$ as a codimension 1 subalgebra, so by
Meyer's theorem \cite[Corollary 2.1.15]{BLM} it must be the unitization.  Similarly for $(A^1)^{\star}$.
 The rest all follow by the universal properties defining these objects, and a diagram chase applying $\circ$ or $\star$ to the 
maps in the diagrams.  For example, such a strategy shows that $I(X)^\circ$ is injective.   It  contains $X^\circ$, and a similar 
strategy shows that it has the `rigidity' property (or `essential' property) characterizing the injective envelope.  
\end{proof}

{\bf Remark.}   There is a similar result for $\bar{X}$ and $\bar{A}$, which would be useful in treating class (3) from the start of Section
{\rm \ref{inv}}.   Here $\bar{X} = (X^{\star})^\circ$, and from this formula the proof of the result in this case is
clear.

\medskip

The following result, the involutive variant of  Meyer's theorem  \cite[Corollary 2.1.15]{BLM}, 
 is useful in treating involutions on operator algebras with no identity or approximate identity.

\begin{lemma} \label{mey}    Let  $A$ be a nonunital operator algebra with an involution of one of the  types {\rm (1)--(4)} at the start of Section
{\rm \ref{inv}}.
Then the  involution on $A$ has a unique extension to an involution of the same type on the unitization $A^1$, with the involution of $1$ 
being $1$.
\end{lemma}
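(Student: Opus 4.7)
The extension is forced by algebra: for type (1), $\tilde\dagger(1)=1$, $\tilde\dagger|_A = \dagger$, and conjugate linearity together demand $\tilde\dagger(a+\lambda 1) = a^\dagger + \bar\lambda 1$; analogously $\tilde\theta(a+\lambda 1) = a^\theta + \lambda 1$ in case (2), $\overline{a + \lambda 1} = \bar a + \bar\lambda 1$ in case (3), and $(a+\lambda 1)^\pi = a^\pi + \lambda 1$ in case (4). These are well-defined because $A$ is nonunital (so $A \cap \Cdb 1 = \{0\}$ in $A^1$), and the (anti-)multiplicativity, (conjugate) linearity, and period-$2$ properties transfer immediately from the corresponding properties of the involution on $A$. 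Uniqueness of the extension is therefore automatic; the substantive content of the lemma is the matrix norm identity on $M_n(A^1)$.

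The plan is to treat case (1) in detail, since the other three cases run through the same template with a different target operator algebra. The type-(1) condition on $A$ is precisely the statement that $\dagger : A \to A^\circ$ is a conjugate linear completely isometric algebra homomorphism, the reversed multiplication on $A^\circ$ absorbing the anti-multiplicativity and the transposed matrix norms of $A^\circ$ absorbing the index swap $ij \leftrightarrow ji$. I want the analogous statement for $\tilde\dagger : A^1 \to (A^1)^\circ$. By Proposition \ref{ciop}, $(A^1)^\circ = (A^\circ)^1$, so it suffices to produce a unital, conjugate linear, completely contractive homomorphism $\tilde\dagger : A^1 \to (A^\circ)^1$ extending $\dagger$.

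This is a conjugate linear form of Meyer's theorem \cite[Corollary 2.1.15]{BLM}, and it reduces to the usual version by passing to the complex conjugate operator algebra. Writing $\overline{(A^\circ)^1}$ for the same underlying algebra with conjugated scalar action, it is again a unital operator algebra, and a conjugate linear homomorphism into $(A^\circ)^1$ is the same data as a linear homomorphism into $\overline{(A^\circ)^1}$. Applying the usual Meyer's theorem to the linear completely contractive homomorphism $\dagger : A \to \overline{(A^\circ)^1}$ produces a unique linear unital complete contraction $A^1 \to \overline{(A^\circ)^1}$, which re-reads as the unique conjugate linear unital completely contractive homomorphism $\tilde\dagger : A^1 \to (A^\circ)^1$ extending $\dagger$. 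By the algebraic uniqueness noted above, this $\tilde\dagger$ is our explicit formula $a + \lambda 1 \mapsto a^\dagger + \bar\lambda 1$.

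To upgrade complete contraction to complete isometry, I would observe that $\tilde\dagger \circ \tilde\dagger$ is a linear, unital, completely contractive homomorphism $A^1 \to A^1$ whose restriction to $A$ is $\dagger^2 = \mathrm{id}_A$; ordinary (linear) Meyer uniqueness then forces $\tilde\dagger^2 = \mathrm{id}_{A^1}$, so $\tilde\dagger$ is its own inverse and is completely contractive in both directions, hence completely isometric. Unravelling the complete isometry $A^1 \to (A^1)^\circ$ in terms of the matrix norms on $A^1$ gives exactly the desired type-(1) identity. The only real subtlety throughout is the conjugate linearity, which the $\overline{(\,\cdot\,)}$ trick absorbs; cases (2), (3), (4) run identically, targeting $(A^\circ)^1$, $\overline{A}^{\,1}$ (via the remark following Proposition \ref{ciop}), and $A^1$ itself, and requiring no complex conjugate reduction in (2) or (4).
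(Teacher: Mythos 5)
Your proof is correct and follows essentially the same route as the paper: the paper handles case (2) by exactly your template (apply Meyer's theorem to $a \mapsto \theta(a)^\circ$ and invoke $(A^1)^\circ = (A^\circ)^1$ from Proposition \ref{ciop}), cites \cite[Lemma 1.15]{BKM} for case (1), and declares the remaining cases ``similar or easier.'' Your two additions --- absorbing conjugate linearity into the complex conjugate algebra, and upgrading complete contractivity to complete isometry via $\tilde{\dagger}^2 = \mathrm{id}_{A^1}$ and the uniqueness of the unital extension --- are both sound, and make the case (1) argument self-contained rather than a citation.
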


\begin{proof}   For  operator $*$-algebras this is \cite[Lemma 1.15]{BKM}.   If $\theta : A \to A$ is a linear involution (type (2) in the 
list at the start of Section \ref{inv}), then by Meyer's theorem $a \mapsto \theta(a)^\circ$ extends to a unital completely isometric 
homomorphism  $A^1 \to (A^\circ)^1$.   Composing this with $\circ$, and using the fact that  $(A^1)^\circ = (A^\circ)^1$
from Proposition \ref{ciop}, we obtain our result.   The other cases are similar or easier.
\end{proof}

\begin{proposition}\label{dmeyer1}
Let $A$ and $B$ be operator algebras  with $A$ nonunital. Also suppose that there exists involutions on $A$  and $B$ 
of one of the  types {\rm (1)--(4)} at  the start of Section {\rm \ref{inv}}.  Let $\pi: A\to B$ be a completely contractive (resp.\ completely isometric) involution 
preserving homomorphism.  Then there is a unital completely contractive (resp.\ completely isometric) involution 
preserving homomorphism extending $\pi:$ from $A^1$ to $B^1$ (for the completely isometric case we also need $B$ nonunital).
\end{proposition}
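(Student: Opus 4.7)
The plan is to apply Meyer's theorem \cite[Corollary 2.1.15]{BLM} to extend $\pi$, temporarily ignoring involutions, to the unique unital homomorphism $\pi^1 : A^1 \to B^1$ given by $\pi^1(a + \lambda 1) = \pi(a) + \lambda\, 1_{B^1}$; this is automatically completely contractive (respectively completely isometric, when $B$ is nonunital) by Meyer's theorem. The involutions on $A$ and $B$ extend uniquely to involutions of the same type on $A^1$ and $B^1$ by Lemma \ref{mey}, with $1$ fixed. If $B$ happens to be unital then $B^1 = B$ and the involution is already defined; the (anti)homomorphism property forces $1_B$ to be fixed in any case, since then $(1_B)^\dagger$ is an identity for the product and so must equal $1_B$.

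It remains to verify that $\pi^1$ preserves the extended involutions. I would spell out class (1); the other three are essentially identical. For $x = a + \lambda 1 \in A^1$ with $a \in A$ and $\lambda \in \Cdb$, the extended $\dagger$ satisfies $x^\dagger = a^\dagger + \bar{\lambda}\, 1$, so, using that $\pi$ preserves $\dagger$ on $A$ and that $1_{B^1}^\dagger = 1_{B^1}$,
\begin{equation*}
\pi^1(x^\dagger) \;=\; \pi(a^\dagger) + \bar{\lambda}\, 1_{B^1} \;=\; \pi(a)^\dagger + \bar{\lambda}\, 1_{B^1}^\dagger \;=\; (\pi(a) + \lambda\, 1_{B^1})^\dagger \;=\; \pi^1(x)^\dagger.
\end{equation*}
A more conceptual repackaging (which also handles classes (2)--(4) uniformly) is to set $\phi(x) := (\pi^1(x^\dagger))^\dagger$, observe that $\phi$ is again a unital completely contractive homomorphism $A^1 \to B^1$ extending $\pi$ (linearity follows because two conjugate linear maps compose to a linear one; multiplicativity because the antiautomorphism property applied twice restores the order of factors; the matrix norm bound uses the transposition identity (1) from Section \ref{inv}), and then conclude $\phi = \pi^1$ by the obvious uniqueness of the unital linear extension from $A$ to its codimension one unitization.

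The only genuine subtlety is the completely isometric case when $B$ is unital, which is why the proposition explicitly excludes it: if $B^1 = B$ already contains a unit, then the Meyer extension from $A^1$ to $B^1$ need not remain a complete isometry. Apart from that the argument is routine, resting entirely on Lemma \ref{mey}, Meyer's theorem, and the elementary identity displayed above.
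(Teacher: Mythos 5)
Your proposal is correct and follows essentially the same route as the paper: extend the involutions to $A^1$ and $B^1$ via Lemma \ref{mey}, invoke Meyer's theorem for the (complete) contractivity or isometry of the unital extension, and then verify that the extension preserves the involution. The paper dismisses this last step as ``easy to check''; your displayed computation (and the uniqueness argument identifying $\phi(x)=(\pi^1(x^\dagger))^\dagger$ with $\pi^1$) is precisely that check, so there is nothing further to add.
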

\begin{proof}
By Lemma \ref{mey}, we know that  both $A^1$ and $B^1$ are operator algebras with
 the same type of involution.  The unital extension of $\pi$ to $A^1$ is completely contractive (resp.\ completely isometric) by \cite[Theorem 2.1.13 and Corollary 2.1.15]{BLM}.
It is easy to check that it is also involution preserving. 
\end{proof}

{\bf Remark.}
One may replace completely contractive (resp.\ completely isometric) by contractive (resp.\ isometric) in the last  result.
Thus the unitization $A^1$ of an involutive operator algebra is unique up to (completely) isometric involutive isomorphism.	

\medskip

We now characterize class (2) from the start of Section
{\rm \ref{inv}}.   A {\em conjugation} on a complex Hilbert space $H$ is a
conjugate linear period $2$ isometry $u : H \to H$.   If $j : H \to \bar{H}$ is the canonical conjugate linear map into the 
conjugate Hilbert space 
then $ju$ is unitary, so $\langle uy , ux \rangle = \langle ju x, ju y \rangle_{\bar{H}} = \langle x, y \rangle$ for $x, y \in H$.  
An operator $T$ on $H$ is called $c$-{\em symmetric} if $c T c = T^*$, and  is called {\em complex symmetric} if 
it is $c$-symmetric for a conjugation $c$ on $H$.    The class of complex symmetric operators is very large and significant
(see e.g.\ \cite{GP}).  

\begin{theorem} \label{lininvch} Let  $A$ be an operator algebra.  The following are equivalent:
\begin{itemize}
\item [(i)] $A$ is an operator algebra with linear involution (that is,  type {\rm (2)} in the 
list at the start of Section  {\rm \ref{inv}}) $\theta$.
\item [(ii)]  There exists a $C^*$-algebra $B$ generated by  $A$ 
(or by $A^1$), and a period 2 $*$-anti-isomorphism  $\rho : B \to B$ with $\rho(A) = A$.
\item [(iii)] There exists a conjugation $c$ on a complex Hilbert space $H$ on which $A$ may be completely isometrically represented 
as an operator algebra such that $c A^* c \subset A$ (here we are identifying $A$ with its image in $B(H)$).
\end{itemize}
We may take $\theta$ in {\rm (i)} to be the restriction to $A$ of the $\rho$ in {\rm  (ii)}, or of the map $T \mapsto c T^* c$ in {\rm (iii)}.
We may take $B$ in {\rm (ii)} to be $C^*_e(A)$ (or $C^*_e(A^1)$), or $C^*_{\rm max}(A)$ (or $C^*_{\rm max}(A^1)$).
\end{theorem}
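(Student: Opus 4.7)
The plan is to establish the equivalences by the route (i) $\Leftrightarrow$ (ii) and (ii) $\Leftrightarrow$ (iii). The implications (ii) $\Rightarrow$ (i) and (iii) $\Rightarrow$ (ii) are essentially formal; the substantive content lies in lifting $\theta$ to a $C^*$-cover, and in realizing an abstract period-2 $*$-anti-automorphism geometrically via a conjugation. Throughout, the nonunital case is handled by first applying Lemma \ref{mey} to pass to the unitization.

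For (i) $\Rightarrow$ (ii), I would exploit the opposite operator algebra formalism from Proposition \ref{ciop}. The matrix-norm condition $\|[a_{ji}^\theta]\|=\|[a_{ij}]\|$ says precisely that $\theta$, viewed as the set-theoretic identity on $A$ followed by $\theta$ into $A^{\circ}$, is a completely isometric isomorphism of operator algebras $A \to A^{\circ}$ (the anti-multiplicativity becomes multiplicativity when the target is given the opposite product). By the universal property of the $C^*$-envelope, applied to the completely isometric homomorphism $\theta:A\to A^{\circ}\subset C^*_e(A^{\circ})$, we obtain a $*$-homomorphism $\widetilde\theta:C^*_e(A)\to C^*_e(A^{\circ})$. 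Proposition \ref{ciop}(1) identifies $C^*_e(A^{\circ})$ with $C^*_e(A)^{\circ}$, so read as a self-map $\widetilde\theta$ is a $*$-anti-automorphism of $C^*_e(A)$ extending $\theta$. Uniqueness of $C^*_e$-extensions gives $\widetilde\theta^2=\mathrm{id}$ (it is a $*$-endomorphism of $C^*_e(A)$ agreeing with the identity on the generating subspace $A$). The same argument applies verbatim with $C^*_{\mathrm{max}}$ in place of $C^*_e$ using its universal property, and with $A^1$ in place of $A$, yielding all four advertised choices of $B$.

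For (ii) $\Rightarrow$ (i), restrict $\rho$ to $A$: it is plainly a period-2 linear anti-automorphism, and the matrix-norm condition follows because $[b_{ij}]\mapsto [\rho(b_{ji})]$ is a $*$-automorphism of the $C^*$-algebra $M_n(B)$, hence isometric. For (iii) $\Rightarrow$ (ii), set $\rho(T)=cT^*c$ on $B(H)$; a short check using $c^2=I$ and conjugate-linearity of $c$ shows $\rho$ is a period-2 linear $*$-anti-automorphism of $B(H)$, and the hypothesis $cA^*c\subset A$ combined with $\rho^2=\mathrm{id}$ forces $\rho(A)=A$ and hence $\rho(A^*)=A^*$, so $\rho$ stabilizes $B=C^*(A)$.

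The main obstacle is (ii) $\Rightarrow$ (iii): realizing the abstract $\rho$ as conjugation by a concrete $c$. Fix a faithful $*$-representation $\pi_0:B\to B(H_0)$ and write $\bar H_0$ for its conjugate Hilbert space with the standard antilinear identification $x\mapsto \bar x$, and $\bar T$ for the operator on $\bar H_0$ defined by $\bar T(\bar x)=\overline{Tx}$. A direct check shows that $T\mapsto \overline{T^*}$ converts $*$-anti-homomorphisms into $*$-homomorphisms, so $b\mapsto \overline{\pi_0(\rho(b))^*}$ is a genuine $*$-representation of $B$ on $\bar H_0$. Set
\[
\pi(b)=\pi_0(b)\oplus \overline{\pi_0(\rho(b))^*}\qquad \text{on}\ H=H_0\oplus \bar H_0,
\]
which is faithful since $\pi_0$ is. Define $c(x\oplus \bar y)=y\oplus \bar x$; this is a period-2 conjugate-linear isometry of $H$. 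A coordinate calculation, using $\rho^2=\mathrm{id}$, then yields $\pi(\rho(b))=c\,\pi(b)^*c$ for all $b\in B$, so identifying $A$ with $\pi(A)$ gives $cA^*c\subset A$. The only real delicacy here is bookkeeping in the conjugate Hilbert space; the rest of the theorem is universal-property manipulation organized around Proposition \ref{ciop}.
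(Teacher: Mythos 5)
Your proposal is correct, and most of it ((i) $\Rightarrow$ (ii) via $a \mapsto \theta(a)^{\circ}$, Proposition \ref{ciop}, and the rigidity/universal properties of $C^*_e$ and $C^*_{\rm max}$; the formal reverse implications) runs along the same lines as the paper. The genuine divergence is in producing the conjugation from the abstract $*$-anti-automorphism. The paper proves (ii) $\Rightarrow$ (iii) by forming the period-2 conjugate-linear $*$-automorphism $\pi(b)=\rho(b)^*$, passing to the real $C^*$-algebra $D=\{b : b=\rho(b)^*\}$, representing $D$ on a real Hilbert space $K$, and taking $c$ to be the canonical conjugation on the complexification $K_c$; this makes the link with real structure explicit but imports the representation theory of real $C^*$-algebras. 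You instead double a faithful representation against its ``conjugate-opposite'' on $H_0\oplus \bar H_0$ and use the swap conjugation $c(x\oplus\bar y)=y\oplus\bar x$; the verification $c\,\pi(b)^*c=\pi(\rho(b))$ checks out (the key points being that $T\mapsto \bar T$ is a multiplicative conjugate-linear $*$-map and that $b\mapsto \overline{\pi_0(\rho(b))^*}$ is therefore a genuine $*$-representation), and this is entirely self-contained within complex Hilbert space theory. The only blemish is terminological: in (ii) $\Rightarrow$ (i) the map $[b_{ij}]\mapsto[\rho(b_{ji})]$ is a $*$-\emph{anti}-automorphism of $M_n(B)$, not a $*$-automorphism (one computes $\tilde\rho(xy)=\tilde\rho(y)\tilde\rho(x)$); since $*$-anti-automorphisms of $C^*$-algebras are also isometric, the matrix-norm conclusion is unaffected. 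Note also that the paper closes the cycle with a direct (iii) $\Rightarrow$ (i), which is marginally shorter than your (iii) $\Rightarrow$ (ii) $\Rightarrow$ (i), but your logical structure does establish all three equivalences and the advertised choices of $\theta$ and $B$.
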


\begin{proof}    (i) $\Rightarrow$ (ii) \  We may assume that $A$ is unital by  Proposition \ref{dmeyer1}.
Given (i), the map $A \to A^\circ : a \mapsto \theta(a)^\circ$ is a completely isometric isomorphism, so extends to a 
$*$-isomorphism $B = C^*_e(A) \to C^*_e(A^\circ)$.  The latter algebra
equals $C^*_e(A)^\circ$ by Proposition \ref{ciop}.   This gives a  $*$-anti-isomorphism on $B$
taking $A$ onto $A$, which is easily checked to be period 2.    Similarly with 
$B = C^*_{\rm max}(A)$  
using the universal property of these $C^*$-algebras 
and  the appropriate 
item in Proposition \ref{ciop}.

(ii) $\Rightarrow$ (iii) \ The map $\pi(b) =  \rho(b)^*$ is a  period 2 conjugate linear $*$-isomorphism on $B$.  
Then $D = \{ b \in B : b =  \rho(b)^* \}$ is a real $C^*$-algebra, which we can represent on a real Hilbert space $K$.   Let $H = K_c$ 
and $c : H \to H$ be the canonical conjugation.   Then $B$ may be viewed as the $C^*$-algebra $D + i D$ acting on $H$ by
$(a + ib)(\xi + i \eta) = a \xi - b \eta + i(a \eta + b \xi)$.   And $\pi(a+ib) = a - ib$ by definition of $D$.   Then $$c (a + ib) c
(\xi + i \eta) =  c( a \xi + b \eta + i(- a \eta + b \xi)) = a \xi + b \eta + i (a \eta - b \xi) = \pi(a+ib) (\xi + i \eta).$$
So $\pi(b) = c b c$, and $\rho(b) = c b^* c$.

 (iii) $\Rightarrow$ (i) \ The map $\theta(a) =  c a^* c$ is a linear period 2 anti-isomorphism from $A$ to $A$, and  $$\| [ a_{ij} ] \|  = \| [ a_{ji}^* ] \|  =
\| [ c a_{ji}^* c ] \|=  \| [ a_{ji}^\theta ] \| $$ 
for $[a_{ij}] \in M_n(A)$.   \end{proof}

A {\em symmetric  operator algebra} is an operator algebra $A$ with $A = A^\circ$ completely 
isometrically via the identity map.  These were introduced in
\cite{BComm} where it was observed that such algebras were commutative, etc.  They are characterized by the following result, 
which says that such algebras must be operator
algebras of matrices that equal their transpose
(i.e.\ which are symmetric with respect to the transpose as matrices).   By an operator algebra of matrices
we mean a subalgebra of $\Mdb_I$ for a cardinal $I$,
where $\Mdb_I = B(\ell^2_I)$ thought of as $I \times I$ matrices.
Any operator algebra $A$ of symmetric matrices is  commutative, since $ab = (ab)^T = ba$ for $a, b \in A$.   Hence $A$ is 
 the algebra
generated by a set of commuting symmetric matrices in $\Mdb_I$. 
  
\begin{corollary} \label{chsym}  An  operator algebra $A$  is symmetric if and only if there exists a conjugation $c$ on a complex Hilbert space $H$ on which $A$ may be completely isometrically represented 
as an operator algebra,  such that $c a c = a^*$ for all $a \in A$ (here we are identifying $A$ with its image in $B(H)$).
That is, if and only if  there exists a  completely isometric representation of $A$ as $c$-symmetric operators
for a  conjugation $c$.    Equivalently, $A$ is symmetric if and only if there exists an orthonormal basis  ${\mathfrak B}$ for 
a complex Hilbert space $H$ on which $A$ has been completely isometrically represented 
as an operator algebra,  such that the matrix with respect to  ${\mathfrak B}$ of every element in $A$ equals its transpose.
\end{corollary}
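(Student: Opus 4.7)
The plan is to derive this corollary as a direct specialization of Theorem \ref{lininvch} with $\theta$ taken to be the identity map on $A$. The key observation is that $A = A^\circ$ completely isometrically via the identity map precisely says that $\mathrm{id}_A$ is a linear involution of type (2) from Section \ref{inv}. Indeed, the identity being a linear anti-homomorphism is the same as commutativity of $A$ (noted in the paragraph preceding the statement), and the condition $\|[a_{ji}^{\mathrm{id}}]\| = \|[a_{ij}]\|$ reads $\|[a_{ji}]\| = \|[a_{ij}]\|$, which is exactly the condition that the identity map $M_n(A) \to M_n(A^\circ)$ is an isometry for every $n$. The equivalence (i) $\Leftrightarrow$ (iii) of Theorem \ref{lininvch} applied with $\theta = \mathrm{id}$ then immediately produces the conjugation form: $A$ is symmetric iff $A$ has a completely isometric representation on some $H$ and there exists a conjugation $c$ on $H$ with $ca^*c = a$ for every $a \in A$; using $c^2 = I$, this is equivalent to $cac = a^*$ for every $a \in A$, i.e.\ $A$ is represented as $c$-symmetric operators.

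For the orthonormal-basis form, I would invoke the standard fact from the theory of complex symmetric operators that every conjugation $c$ admits an orthonormal basis $\mathfrak{B} = \{e_i\}$ of $c$-fixed vectors. (Sketch: $H^c = \{\xi : c\xi = \xi\}$ is a real Hilbert space under the inherited inner product, which takes real values on $H^c$ since $\langle x, y\rangle = \langle cx, cy\rangle = \langle y, x\rangle$ equals its own conjugate there; and $H = H^c + iH^c$, so any real orthonormal basis of $H^c$ is a complex orthonormal basis of $H$ consisting of $c$-fixed vectors.) In such a basis, using $ce_i = e_i$ and $\langle c\xi, c\eta\rangle = \langle \eta, \xi\rangle$, one computes
$$
(cac)_{ij} = \langle cac\, e_j, e_i\rangle = \langle c(a e_j), ce_i\rangle = \langle e_i, a e_j\rangle = \overline{a_{ij}},
$$
while $(a^*)_{ij} = \overline{a_{ji}}$. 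Hence $cac = a^*$ is equivalent to $a_{ij} = a_{ji}$ in the basis $\mathfrak{B}$, i.e.\ the matrix of $a$ equals its transpose. The converse is identical: given such a basis, the antilinear map fixing each $e_i$ is a conjugation $c$, and the same calculation shows that matrix-symmetry of every $a \in A$ in $\mathfrak{B}$ forces $cac = a^*$, which by the previous paragraph makes $A$ symmetric.

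I expect no serious obstacle beyond bookkeeping. The substantive content is packaged in Theorem \ref{lininvch}; what remains is the identification of the symmetric condition with "the identity is a linear involution", the standard construction of a $c$-fixed orthonormal basis, and the one-line matrix computation above that converts the conjugation condition $cac = a^*$ into the transpose-symmetry condition on matrix entries.
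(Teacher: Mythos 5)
Your proposal is correct and follows essentially the same route as the paper: specialize Theorem \ref{lininvch} (and its proof) to $\theta = \mathrm{id}_A$ to get the conjugation $c$ with $ca^*c = a$, then pass to an orthonormal basis of $c$-fixed vectors (using $K^\perp = (0)$ for $K = \{\xi : c\xi = \xi\}$) and the standard matrix computation to convert $cac = a^*$ into transpose-symmetry. Your explicit entrywise calculation is a correct filling-in of the step the paper only alludes to.
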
 

\begin{proof}    Set $\theta$ equal the identity map and apply the last theorem and its proof  to see that 
$A$  is symmetric if and only if the desired  conjugation $c$ on $H$ exists with $c a^* c = a$ for $a \in A$.  
From this the first two if and only if's follow. 
The last statement follows from the fact alluded to earlier that the  $c$-symmetric operators are precisely the 
operators whose matrix with respect to  some appropriate orthonormal basis  ${\mathfrak B}$ is symmetric.  This follows from
the well known trick  (certainly well known 
in the theory of complex symmetric operators) that if 
$K = \{  \xi = c \xi \in H \}$, then $K^\perp = (0)$, so  a (real)  orthonormal basis ${\mathfrak B}$ for $K$ is  a (complex) 
basis for $H$.  Also $c T c = T^*$ implies the matrix with respect to  ${\mathfrak B}$ is symmetric.  
\end{proof}  

\begin{corollary} \label{chsym2}  An  operator algebra $A$  is commutative
if and only if it is isometrically isomorphic to an operator algebra of matrices 
that equal their transpose.
\end{corollary}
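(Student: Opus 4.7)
The plan is to handle the two directions separately. For the reverse direction, this is essentially the observation made just before the statement: any algebra of symmetric matrices satisfies $ab=(ab)^T=b^Ta^T=ba$, so it is commutative, and commutativity passes through any isometric algebra isomorphism.

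For the forward direction, suppose $A$ is commutative. I would first note that $C^*_e(A)$ is commutative: given any completely isometric representation $A\hookrightarrow B(H)$, the generated $C^*$-subalgebra is commutative, and the universal property of $C^*_e(A)$ yields a $*$-epimorphism from this commutative $C^*$-algebra onto $C^*_e(A)$, so the latter is commutative too. By Gelfand duality $C^*_e(A)\cong C(X)$ for some compact Hausdorff space $X$, and the inclusion $A\hookrightarrow C(X)$ is isometric. Next I would represent $C(X)$ faithfully as multiplication operators on some $L^2$-space $H=L^2(Y,\mu)$, using the standard form of abelian von Neumann algebras. This faithful $*$-representation is isometric, so composing with the previous inclusion produces an isometric algebra homomorphism $A\to B(H)$ whose image consists of multiplication operators $M_f$. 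If $c:H\to H$ denotes pointwise complex conjugation, then $c$ is a conjugation and $cM_fc=M_{\bar f}=M_f^*$, so every operator in the image of $A$ is $c$-symmetric.

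Finally, I would invoke the device at the end of the proof of Corollary~\ref{chsym}: letting $K=\{\xi\in H:c\xi=\xi\}$, any real orthonormal basis $\mathfrak{B}$ for $K$ is also a complex orthonormal basis for $H$, and the relation $cTc=T^*$ forces the matrix of such a $T$ with respect to $\mathfrak{B}$ to be symmetric. Thus $A$ is isometrically isomorphic to an operator algebra of matrices that equal their transpose. The main point to verify is the commutativity of $C^*_e(A)$, which falls out cleanly from its universal property; once that is in hand the remainder is standard bookkeeping.
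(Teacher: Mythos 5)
The reverse direction is fine and matches the paper. The forward direction, however, has a fatal gap at its very first step: it is simply not true that a commutative operator algebra generates a commutative $C^*$-algebra, nor that $C^*_e(A)$ is commutative when $A$ is. The $C^*$-algebra generated by a commuting family of operators is commutative only when the generators are normal and commute with each other's adjoints. For a concrete counterexample take $A = {\rm oa}(S)$ for the unilateral shift $S$ (commutative, but it generates the Toeplitz algebra), or the two-dimensional algebra $\Cdb I + \Cdb E_{12} \subset M_2$, whose $C^*$-envelope is $M_2$. Worse, the entire strategy of embedding $A$ isometrically into some $C(X)$ and then into multiplication operators cannot be repaired: any homomorphism from $A$ into a commutative $C^*$-algebra factors through characters, and characters annihilate quasinilpotent elements, so a commutative operator algebra with nontrivial radical (e.g.\ ${\rm oa}(V)$ for the Volterra operator, or $\Cdb E_{12}$ above) admits no isometric homomorphism into any commutative $C^*$-algebra. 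So "the main point to verify" in your plan is false, and the argument collapses there.

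The paper's route avoids commutative $C^*$-algebras entirely. For commutative $A$ the map $a \mapsto (a, a^\circ)$ into $A \oplus A^\circ$ is an algebra homomorphism (this is exactly where commutativity is used, since multiplication in $A^\circ$ is reversed), it is isometric at the first matrix level, and its range is a \emph{symmetric} operator algebra because swapping the two coordinates identifies it completely isometrically with its own opposite. One then applies Corollary \ref{chsym} to this symmetric algebra to produce the conjugation $c$ and the symmetric-matrix representation; your final paragraph (the basis $\mathfrak{B}$ for $K = \{\xi : c\xi = \xi\}$) is the correct endgame, but you must reach a symmetric operator algebra by the $A \oplus A^\circ$ device rather than by passing through $C(X)$. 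Note also that this only yields an \emph{isometric} isomorphism, not a completely isometric one, which is precisely why the corollary is stated with "isometrically."
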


\begin{proof}  If $A$  is commutative then it is 
isometrically isomorphic to $\{ (a, a^\circ) \in A \oplus A^{\circ} : a \in A \}$,
which is a symmetric operator algebra \cite{BComm}.
 The rest follows from Corollary \ref{chsym} (and the observation above that an algebra of symmetric matrices is  commutative).
\end{proof}

\begin{corollary} \label{chsym3}  The algebra generated by any operator on a Hilbert space is
 isometrically isomorphic to the algebra generated by a complex symmetric operator 
on another Hilbert space.
\end{corollary}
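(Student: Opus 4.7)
The plan is to derive this directly from Corollary \ref{chsym2} together with the proof of Corollary \ref{chsym}. Given an operator $T$ on a Hilbert space $H$, first I would observe that $A = \oa{T}$ is commutative, since it is the norm-closure of the algebra of polynomials in $T$, and any two such polynomials commute. So Corollary \ref{chsym2} applies: there is an isometric isomorphism $\ph : A \to B$, where $B \subseteq \Mdb_I$ is an operator algebra all of whose elements equal their own transpose with respect to the standard orthonormal basis of $\ell^2_I$.

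Next I would set $S = \ph(T) \in B \subseteq B(\ell^2_I)$ and argue that $\oa{S} = B$. Indeed, $\ph$ maps polynomials in $T$ to polynomials in $S$, and being isometric it is continuous, so $B = \ph(A)$ is the norm-closure in $B(\ell^2_I)$ of the polynomials in $S$. Since $S \in B$ is a matrix equal to its transpose, the proof of Corollary \ref{chsym} shows that $S$ is $c$-symmetric with respect to the canonical conjugation $c$ on $\ell^2_I$ (coordinatewise complex conjugation), i.e.\ $S$ is a complex symmetric operator. Combining these, $\oa{T} = A \cong B = \oa{S}$ isometrically, with $S$ complex symmetric on $\ell^2_I$.

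There is no real obstacle here; the whole content has already been built into Corollaries \ref{chsym} and \ref{chsym2}. The only point worth checking carefully is that the image $\ph(T)$ genuinely generates the image algebra as an operator algebra inside $B(\ell^2_I)$ (rather than inside some smaller ambient $C^*$-algebra), but this is immediate from isometry plus density of polynomials in $T$ inside $A$.
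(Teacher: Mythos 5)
Your proposal is correct and follows essentially the same route as the paper: the paper also notes that ${\rm oa}(T)$ falls under Corollary \ref{chsym2} (via the $a \mapsto (a,a^\circ)$ trick behind that corollary's proof) and then applies Corollary \ref{chsym} to realize the generator as a $c$-symmetric operator. Your extra check that $\varphi(T)$ generates the image algebra, and that a matrix equal to its transpose is $c$-symmetric for the coordinatewise conjugation, are exactly the (routine) points the paper's one-line proof leaves implicit.
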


\begin{proof}   As in the proof of Corollary \ref{chsym2}, this algebra is isometrically isomorphic to
a symmetric operator algebra, and by  Corollary \ref{chsym}
 its generator may be taken to be  
$c$-symmetric for a conjugation $c$. \end{proof}

There are similar characterizations for the other three classes of `involutions' considered  at the start of Section
{\rm \ref{inv}}.
Indeed the result matching Theorem \ref{lininvch} for operator $*$-algebras 
is the following, mostly from \cite[Section 1]{BKM}.

\begin{theorem} \label{opinvch} Let  $A$ be an operator algebra.  The following are equivalent:
\begin{itemize}
\item [(i)] $A$ is an operator $*$-algebra.
\item [(ii)]  There exists a $C^*$-algebra $B$ generated by  $A$
(or of $A^1$), and a period 2 $*$-automorphism  $\rho : B \to B$ with $\rho(A) = A^*$.  \item [(iii)] There exists a symmetry
 $u$ on a complex Hilbert space $H$ on which $A$ may be completely isometrically represented
as an operator algebra such that $u A^* u \subset A$ (here we are identifying $A$ with its image in $B(H)$).
\end{itemize}
We may take $\theta$ in {\rm (i)} to be the restriction to $A$ of $\rho(\cdot)^*$
for $\rho$ as in {\rm  (ii)}, or to be the map $T \mapsto u T^* u$ in {\rm (iii)}.
We may take $B$ in {\rm (ii)} to be $C^*_e(A)$ (or $C^*_e(A^1)$), or $C^*_{\rm max}(A)$ (or $C^*_{\rm max}(A^1)$).
\end{theorem}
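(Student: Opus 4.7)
The plan is to mirror the argument of Theorem \ref{lininvch}, systematically replacing the opposite operator space $X^\circ$ with the adjoint operator space $X^\star$. The key observation is that the composition of two conjugate linear antimultiplicative maps is a linear multiplicative map, which is exactly what is needed to turn the conjugate linear antiautomorphism $\dagger$ into a (linear) $*$-homomorphism on an ambient $C^*$-algebra. Much of the technical content is already present in \cite[Section 1]{BKM}, to which the proof can appeal.

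For (i) $\Rightarrow$ (ii), first reduce to the unital case via Lemma \ref{mey} and Proposition \ref{dmeyer1}. Consider the map $\phi : A \to A^\star$ defined by $\phi(a) = (a^\dagger)^\star$. As a composition of two conjugate linear maps, $\phi$ is linear, and by definition of the adjoint operator space one has $\| [(a_{ij}^\dagger)^\star ] \|_{M_n(A^\star)} = \| [a_{ji}^\dagger ] \|_{M_n(A)} = \| [a_{ij}] \|$, so $\phi$ is completely isometric. Since $\dagger$ is antimultiplicative and multiplication in $X^\star$ reverses that of $X$, $\phi$ is a homomorphism. By the universal properties of $C^*_e$ and $C^*_{\rm max}$, together with Proposition \ref{ciop}, $\phi$ extends to a $*$-isomorphism $\Phi : B \to B^\star$ for either choice $B = C^*_e(A)$ or $B = C^*_{\rm max}(A)$. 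Composing with the canonical linear $*$-isomorphism $B^\star \to B$ sending $b^\star$ to $b^*$ produces a $*$-automorphism $\rho : B \to B$ with $\rho(a) = (a^\dagger)^*$ for $a \in A$. Thus $\rho(A) = A^*$, and $\rho^2 = \operatorname{id}$ holds first on $A$ (since $\dagger$ is period $2$ and $**$ is the identity), and then on all of $B$ by continuity on the generating set.

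For (ii) $\Rightarrow$ (iii), represent $B$ faithfully on a Hilbert space $H_0$ and apply the standard doubling trick: the representation $\pi : B \to B(H_0 \oplus H_0)$ given by $\pi(b) = b \oplus \rho(b)$ is faithful, and the swap $u(\xi, \eta) = (\eta, \xi)$ is a symmetry satisfying $u \pi(b) u = \pi(\rho(b))$ for all $b \in B$. Since $\rho(A) = A^*$ and $\rho^2 = \operatorname{id}$, this gives $u \pi(A)^* u = u \pi(A^*) u = \pi(\rho(A^*)) = \pi(A)$, so after identifying $A$ with $\pi(A) \subset B(H_0 \oplus H_0)$ we obtain $u A^* u \subset A$.

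For (iii) $\Rightarrow$ (i), define $\dagger(a) = u a^* u$. Conjugate linearity and the period $2$ property follow at once from $*$ being conjugate linear and $u^* = u$, $u^2 = 1$; antimultiplicativity is $\dagger(ab) = u b^* a^* u = (u b^* u)(u a^* u) = \dagger(b)\dagger(a)$; and the matrix norm identity comes from $[\dagger(a_{ji})] = (1_n \otimes u)\, [a_{ij}]^*\, (1_n \otimes u)$ combined with unitarity of $1_n \otimes u$. The hypothesis $u A^* u \subset A$ is promoted to $\dagger(A) = A$ by applying $\dagger$ once more. The final claims identifying $\dagger$ with $\rho(\cdot)^*$ and with $T \mapsto u T^* u$ are then read off from the constructions above. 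The only real obstacle lies in the first step: keeping the $\star$-operator-space conventions straight, especially identifying $B^\star$ with $B$ as a $C^*$-algebra via $b^\star \mapsto b^*$ and verifying via Proposition \ref{ciop} that the extension of $\phi$ lands inside the appropriate $C^*$-cover; everything else is a routine transliteration of the linear-involution case.
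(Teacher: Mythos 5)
Your proof is correct and follows essentially the same route as the paper: the paper simply cites \cite[Section 1]{BKM} for everything except the $C^*_{\rm max}$ assertion, which it establishes---exactly as you do---by applying the universal property to the completely isometric homomorphism $a \mapsto \rho(a^{\dagger})^*$ and checking that the resulting $*$-endomorphism has order $2$ because it does on the generating copy of $A$. Your written-out argument is the expected transliteration of the proof of Theorem \ref{lininvch} with $X^\star$ in place of $X^\circ$ (together with the doubling trick for (ii) $\Rightarrow$ (iii)), which is precisely what the cited source carries out.
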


\begin{proof} This is proved
in \cite[Section 1]{BKM}, except for the
assertion about $C^*_{\rm max}(A)$.  If $\rho : A \to C^*_{\rm max}(A)$
is the canonical `inclusion' let $\pi : A\to C^{\ast}_{\max}(A)$ be the completely isometric homomorphism defined by $\pi(a)= \rho(a^{\dagger})^{\ast}.$ By the universal property of $C^{\ast}_{\max}(A),$ there exists a unique $\ast$-homomorphism $\sigma: C^{\ast}_{\max}(A)\to C^{\ast}_{\max}(A)$ such that $\sigma(\rho(a))=\pi(a)=\rho(a^{\dagger})^{\ast}$ for any $a\in A.$ 
Moreover, $\sigma$ has order 2 since
$$\sigma^2(\rho(a))=\sigma(\rho(a^{\dagger})^{\ast})=\sigma(\rho(a^{\dagger}))^{\ast}=\rho(a),$$
and since $\rho(A)$ generates $C^{\ast}_{\max}(A)$ as a $C^{\ast}$-algebra.  The final assertion
follows by extending to the unitization and using  $C^{\ast}_{\max}(A^1) = C^{\ast}_{\max}(A)^1$ from Proposition \ref{ciop}.
 \end{proof}  

It is natural to ask if in item (ii) in Theorems \ref{lininvch} or 
\ref{opinvch} (ii), or in matching results 
for the other types of involutions,
one may use {\em any} $C^*$-algebra generated by a completely isometric 
copy of $A$.   The answer is in the negative, as one sees in the following result and the example following it.

\begin{definition} \label{ivltcmptb}   Suppose that an operator algebra $A$ has an involution $\nu$ of 
one of the types {\rm (1)--(4)}  at the start of Section \ref{inv}.  
If a $C^{\ast}$-cover $(B, j)$ of $A$ has an involution $\omega$ of the same type,
and if  $j(a)^{\omega}=j(a^{\nu}),$ for any $a\in A,$ we say that the involution on $B$ is {\em compatible with} $A.$ 
\end{definition}

\begin{lemma}
Suppose that $A$ is an operator algebra (possibly not approximately unital) with involution $\nu$ of 
 type {\rm (1)} (resp.\ type {\rm  (2)})  at the start of Section {\rm \ref{inv}}, 
 and $(B,j)$ is a $C^{\ast}$-cover of $A$. Then $B$ has an involution compatible with $A$ if and only if there exists an order $2$ $\ast$-automorphism 
(resp.\  $\ast$-antiautomorphism) $\sigma: B\to B$ such that $\sigma(j(a^{\nu}))=j(a)^{*}$ (resp.\ $\sigma(j(a^{\nu}))=j(a)$)  for any $a\in A$.
\end{lemma}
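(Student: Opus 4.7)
The plan is, for each of the two cases, to exhibit an explicit bijection between compatible involutions $\omega$ on $B$ and maps $\sigma$ of the prescribed type, then verify the properties by short calculations. The key observation is that on a $C^*$-algebra one may freely toggle between linear and conjugate-linear, and between multiplicative and antimultiplicative, by composing with the $C^*$-involution $*$. This is exactly what converts a compatible involution of type (1) into an order 2 $*$-automorphism, while leaving a type (2) involution essentially intact.

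For type (1), given a compatible $\omega$ (a period 2 conjugate-linear $*$-antiautomorphism of $B$ with $\omega(j(a)) = j(a^\nu)$ for $a \in A$), I would set $\sigma(b) = \omega(b)^*$. Then $\sigma$ is linear (conjugate-linear composed with the conjugate-linear $*$), multiplicative (the two anti-multiplicative factors cancel), and $*$-preserving, and $\sigma^2(b) = \omega(\omega(b)^*)^* = \omega(\omega(b)) = b$, so $\sigma$ is an order 2 $*$-automorphism. Compatibility becomes
\[
\sigma(j(a^\nu)) = \omega(j(a^\nu))^* = \omega^2(j(a))^* = j(a)^*.
\]
Conversely, given such a $\sigma$, define $\omega(b) = \sigma(b)^*$; the same calculations run in reverse give the required properties of $\omega$, and replacing $a$ by $a^\nu$ in $\sigma(j(a^\nu)) = j(a)^*$ and then taking adjoints recovers $\omega(j(a)) = j(a^\nu)$. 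The matrix-norm condition in the definition of a type (1) involution on $B$ is automatic here, since any $*$-automorphism of a $C^*$-algebra is completely isometric and $*$ itself is conjugate-linearly completely isometric.

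For type (2), the transition is trivial: one takes $\sigma = \omega$. A period 2 linear $*$-antiautomorphism of $B$ satisfying $\omega(j(a)) = j(a^\nu)$ is, by period 2, equivalent to $\sigma(j(a^\nu)) = j(a)$, and conversely. The main (minor) bookkeeping obstacle in either case is tracking whether $\sigma$ should be an automorphism or an antiautomorphism: this is forced by how many `antis' enter the composition. In type (1) one antimultiplicative map is twisted by $*$ (itself another anti), yielding an automorphism; in type (2) there is no $*$-twist, so $\sigma$ remains antimultiplicative. Once this combinatorics is settled the verifications are entirely formal.
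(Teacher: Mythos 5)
Your proposal is correct and follows essentially the same route as the paper: in the forward direction the paper sets $\sigma(b)=(b^*)^{\omega}$ (resp.\ $b^{\omega}$), which agrees with your $\sigma(b)=\omega(b)^*$ since a completely isometric (anti)automorphism of a $C^*$-algebra automatically commutes with $*$, and the converse direction is the identical formula $b^{\omega}=\sigma(b)^*$ (resp.\ $\sigma(b)$). Your explicit checks of the period-2 property, the compatibility identity via substituting $a^{\nu}$ for $a$, and the automatic matrix-norm condition merely fill in what the paper leaves as ``easy to see.''
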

\begin{proof}
($\Rightarrow$) If $B$ has an involution $\omega$ compatible with $A,$ then  $j(a)^{\omega}=j(a^{\nu}),$ for all $a\in A.$ Define $\sigma: B\to B$ 	by $\sigma(b)=(b^{\ast})^{\omega}$ (resp.\ $b^\omega$) for any $b\in B.$ Then it is easy to see that $\sigma$ is an order $2$ $\ast$-automorphism
(resp.\  $\ast$-antiautomorphism).

($\Leftarrow$) The involution on $B$ is defined by  $b^{\omega}=\sigma(b)^{\ast}$ (resp.\ $b^{\omega}= \sigma(b)$)  for any $b\in B.$ Then $B$ is a $C^{\ast}$-algebra with involution which is compatible with $A.$
\end{proof}

\begin{example}    Let $A(\Ddb)$ be the disk algebra.  By the above $C^*_{\rm max}(A(\Ddb))$ and $C(\Tdb) = C^*_e(A(\Ddb))$
 are $C^*$-covers of $A(\Ddb)$ which are compatible
 with the involution $\overline{f(\bar{z})}$ on $A(\Ddb)$.   So too is $C(\overline{\Ddb})$ clearly, with the same involution $\overline{f(\bar{z})}$.
The Toeplitz $C^*$-algebra is a well known $C^*$-cover of the disk algebra $A(\Ddb)$, however we show that it is not compatible 
with the involution $\overline{f(\bar{z})}$ on $A(\Ddb)$.  
Let $S$ be the unilateral shift on $l^2(\Ndb_0)$ and $\oa{S}$ be the operator algebra generated by $S.$ Then $\oa{S}$ is an operator $\ast$-algebra with trivial involution induced by $S^{\dagger}=S.$ Suppose that the Toeplitz $C^*$-algebra $C^{\ast}(S)$ has an involution compatible with $\oa{S}.$ Then there exists an order-2 $\ast$-isomorphism $C^{\ast}(S)$ such that $\sigma(S^{\dagger})=S^{\ast}.$ Moreover, we have
$$I=\sigma(I)=\sigma(S^{\ast}S)=\sigma(S)^{\ast}\sigma(S)=SS^{\ast}\neq I,$$ 
which is a contradiction.

It is similarly not hard to find (using \ref{Shb} below)
 commutative $C^*$-algebras $C(K)$ generated by a function algebra $A$ with linear involution $\theta$ (such as $A = A(\Ddb)$), such that $\theta$ does not extend to a  linear involution on $C(K)$. 
\end{example}

\begin{lemma} \label{bidu}    Let  $A$ be an operator algebra with an involution of one of the  types {\rm (1)--(4)} at the start of Section
{\rm  \ref{inv}}.
Then the  involution on $A$ has a unique extension to a weak*
continuous involution of the same type on the bidual $A^{**}$.
\end{lemma}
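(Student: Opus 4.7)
The plan is to import the extension from a $C^*$-cover. I focus on case (1), operator $*$-algebras; the other three cases are entirely parallel, using Theorem \ref{lininvch} (or its analogues) in place of Theorem \ref{opinvch}, with linear-in-place-of-conjugate-linear bookkeeping.

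By Theorem \ref{opinvch} I may realize $A$ completely isometrically in a $C^*$-algebra $B$ (taking $B = C^*_e(A)$, say) equipped with a period-2 $*$-automorphism $\rho : B \to B$ satisfying $\rho(A) = A^*$ and $a^\dagger = \rho(a)^*$ for every $a \in A$. Passing to biduals, $B^{**}$ is a von Neumann algebra into which $A^{**}$ embeds as a weak*-closed subalgebra; $\rho^{**}:B^{**} \to B^{**}$ is a weak*-continuous period-2 $*$-automorphism; and the involution on $B^{**}$ is itself weak*-continuous as a conjugate-linear map (the predual $B_*$ is stable under $\omega \mapsto \overline{\omega(\cdot^*)}$). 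Combining these weak*-continuities, the relation $\rho(A) = A^*$ passes to weak*-closures to give $\rho^{**}(A^{**}) = (A^{**})^*$, so the formula $\tilde a^\dagger := \rho^{**}(\tilde a)^*$ defines a map $A^{**} \to A^{**}$ which restricts to $\dagger$ on $A$.

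The remaining verifications are routine. Conjugate linearity, antimultiplicativity, and weak*-continuity are inherited from the ingredients. The period-2 property is
$$(\tilde a^\dagger)^\dagger = \rho^{**}\bigl(\rho^{**}(\tilde a)^*\bigr)^* = (\rho^{**})^2(\tilde a) = \tilde a,$$
using that $\rho^{**}$ is a $*$-homomorphism and $(\rho^{**})^2 = \mathrm{id}$. The matrix-norm identity $\|[\tilde a_{ji}^\dagger]\| = \|[\tilde a_{ij}]\|$ reduces to the corresponding identity inside $M_n(B^{**})$, where both $\rho^{**}$ and the map $[x_{ij}] \mapsto [x_{ji}^*]$ are completely isometric. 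Uniqueness is immediate: any two weak*-continuous type-(1) extensions agree on the weak*-dense subalgebra $A$, and hence on its weak*-closure $A^{**}$ (the set where they agree is weak*-closed since both maps are weak*-continuous).

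The main point to be vigilant about, and what I would verify first, is the weak*-continuity of the conjugate-linear $C^*$-algebra involution on $B^{**}$ together with $\rho^{**}$ remaining a $*$-homomorphism at the bidual level; these hinge on weak*-density of $B$ in $B^{**}$ and separate weak*-continuity of the Arens product. Once those are in hand no real obstacle remains.
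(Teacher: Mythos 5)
Your proof is correct, but it follows a different route from the paper's. The paper encodes the involution as a (linear) completely isometric homomorphism $A \to A^\circ$ (resp.\ $A \to A^\star$, etc.), extends that map to a weak* continuous completely isometric homomorphism $A^{**} \to (A^\circ)^{**}$ by the standard bidual-extension fact, identifies $(A^\circ)^{**} \cong (A^{**})^\circ$, and composes back with $\circ$; this is purely operator-space-theoretic, needs no $C^*$-cover, treats all four types uniformly, and delivers the matrix-norm identities for free since the bidual extension is completely isometric. You instead invoke the structure theorem (Theorem \ref{opinvch}) to realize $\dagger$ as $\rho(\cdot)^*$ for a period-2 $*$-automorphism $\rho$ of a $C^*$-cover $B$, and then work inside the enveloping von Neumann algebra $B^{**}$, where $\rho^{**}$ and the adjoint are both weak* continuous. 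That is a legitimate and arguably more concrete argument; its cost is that it relies on the identification $A^{**} \cong A^{\perp\perp} \subset B^{**}$ (standard, and used freely elsewhere in the paper) and on the characterization theorems, which the paper only states in full for types (1) and (2) — for types (3) and (4) you would either need the analogous (unstated) characterizations or, more simply, note that a linear completely isometric automorphism extends to the bidual directly with no cover at all. One small point in your favor: you address uniqueness explicitly (agreement on the weak*-dense copy of $A$ plus weak* continuity), which the paper's proof leaves implicit. Your parenthetical justification of weak* continuity of the adjoint on $B^{**}$ via stability of the predual under $\omega \mapsto \overline{\omega(\,\cdot\,^*)}$ is loosely worded (the relevant predual is $B^*$, not $B_*$), but the underlying fact — that the involution of a von Neumann algebra is weak* continuous — is of course correct.
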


\begin{proof}  We will just prove this in the case of a linear involution
$\theta$; the others are similar.  The associated completely isometric
homomorphism  
$A \to A^\circ : a \mapsto \theta(a)^\circ$
extends to a weak* continuous 
completely isometric
homomorphism
$A^{**} \to (A^\circ)^{**}$.  However it is an
easy exercise
to see that $(A^\circ)^{**} \cong (A^{**})^\circ$.
Composing with $\circ$ we obtain a 
 weak* 
continuous  linear involution on $A^{**}$ extending
$\theta$.     \end{proof}

We mention that the Cayley transform $\kappa$ 
and the ${\mathfrak F}$ transform of \cite[Section 2.2]{BRord},  important tools in 
the area of the later sections of our paper, do work well with respect to involutions.  
For example suppose that  $\dagger$ is the involution 
on an operator $*$-algebra, and $\sigma$ the associated
$*$-automorphism on a (compatible) $C^*$-cover.
If $x$ is real positive then $\sigma(x^*) = x^\dagger$ is real positive,
and   $$\sigma \kappa(x^\dagger) = \sigma((x^\dagger - 1)( x^\dagger + 1)^{-1})
= (x^*  - 1)( x^* + 1)^{-1} = \kappa(x)^*.$$  So $\kappa(x^\dagger) = \kappa(x)^\dagger$.
Similarly, if $x$ is a contraction with $1-x$ invertible then the same is true for $x^\dagger$ and the inverse Cayley transform
$\kappa^{-1}(x^\dagger) = (1+x^\dagger)(1- x^\dagger)^{-1}$ is real positive, and  must equal
$\kappa^{-1}(x)^\dagger$.   The ${\mathfrak F}$-transform is
 ${\mathfrak F}(x) = \frac{1}{2}(1 + \kappa(x)) = x(1+x)^{-1}$. 
Following the proof in Lemma 2.5 in \cite{BRord}, it is easy to see
that for any operator $\ast$-algebra $A,$ the ${\mathfrak F}$-transform maps
the $\dagger$-selfadjoint elements in ${\mathfrak r}_A$ 
bijectively onto the set of
$\dagger$-selfadjoint elements in $\frac{1}{2}{\mathfrak F}_A$ of norm $<1.$

\section{Examples} \label{Exs}

  We give many  examples of operator $*$-algebras
and operator algebras with linear involution here.   Of course any
 real operator algebra  at all gives an example of the third type of involution mentioned at the start of Section \ref{inv}, namely
the complexification.  We will not consider
these here.   

\subsection{Examples from noncommutative differential
geometry}

Several examples of operator $*$-algebras were given in \cite{BKM}, most of them examples from noncommutative differential
geometry (historically the first such example being due to Mesland). 
Other examples from noncommutative differential
geometry may be found in other recent papers of Kaad, Mesland, and their coauthors.

\subsection{Function algebra examples}   \label{Shb}
Let $A$ be a uniform algebra  (with minimal operator space structure, see 1.2.21
 in \cite{BLM}).  Then $A\subset C^{\ast}_e(A)=C(\partial A),$ where $\partial A$ is the Shilov boundary of $A$
(see e.g.\ \cite[Section 4.1]{BLM}). If $A$ is an operator $*$-algebra (resp.\  has linear involution $\theta$), then there exists 
a period $2$ homeomorphism $\tau : \partial A\to \partial A$ such that $f^{\dagger}(\omega)=\overline{f(\tau(\omega))}$  (resp.\ $f^\theta = f \circ \tau$) for any $f \in A$.     From this formula it is easy to write down function algebra examples.  
For example, the disk algebra $A(\Ddb)$ is an operator $*$-algebra with $f^\dagger(z) = \overline{f(\bar{z})}$,
and so are its  closed $\dagger$-ideals of functions e.g.\ vanishing at 0, or at 1.  The latter ideal  is
interesting from the perspective of approximate identities:  it is nonunital, is a 
$\dagger$-ideal, and  has a real positive $\dagger$-selfadjoint cai (see Lemma \ref{dcai} below), etc).
Similarly $H^\infty(\Ddb)$ is a dual  operator $*$-algebra 
 with the same involution.   This involution is
weak* continuous, and extends to an involution on the von Neumann algebra $L^\infty(\Tdb)$.

These two algebras also have linear involution $f^\theta(z) = f(-z)$.   This and the identity map are the only linear involutions
on $A(\Ddb)$ and $H^\infty$, by the well known theory of automorphisms of these algebras.  

 We recall that a {\em Q-algebra} is an operator algebra quotient of 
a function algebra  (with minimal operator space structure)  by a closed ideal.   Q-algebras are symmetric  operator algebras, and in particular have a linear involution.  If the function algebra has an involution making it an operator $*$-algebra, and the ideal
is involutive, then we call the quotient an {\em involutive Q-algebra}.    We will see later that for example the algebra generated by the 
Volterra operator is an involutive Q-algebra.   

\subsection{Examples from complex symmetric and $*$-exchangeable operators} 

An operator $T$ in a $C^*$-algebra $B$ will be called $*$-{\em exchangeable} if $\| p(T, T^*) \| = \| p(T^*, T) \|$
for any polynomial $p$ in two noncommuting variables.  One may use polynomials without constant term here if one wishes.
Indeed if the equality holds for such polynomials $p$ then it follows that the map $p(T, T^*) \mapsto  p(T^*, T)$  is well defined 
on a dense subset of $C^*(T)$, hence extends to a
$*$-homomorphism $\sigma$ on $C^*(T)$ taking $T$ to $T^*$, and extends
further to $C^*(1,T)$.  This shows that 
the norm equality holds for polynomials with constant terms too.  
It is easy to see that $\sigma$ is a period 2 $*$-automorphism.  

For a  polynomial $p$ of one variable we write  $p^\dagger$ for the same polynomial but with coefficients replaced by their complex conjugate
(that is, $p^\dagger(z) = \overline{p(\bar{z})}$).  

\begin{theorem} \label{oneg} Let $A$ be an operator algebra with a single generator $T$.   The following are equivalent:
\begin{itemize}
\item [(i)] For $n \in \Ndb$ and polynomials $p_{ij}$ for $1 \leq i, j \leq n$
we have $$\| [ p_{ji}^\dagger (T) ] \|
= \| [ p_{ij}(T) ] \| .$$
\item [(ii)]  $T$ is $*$-exchangeable in some $C^*$-algebra generated by 
(a completely isometrically homomorphic copy of) $A$.
\item [(iii)] $A$ is an operator $*$-algebra with $T$ $\dagger$-selfadjoint.
\item [(iv)] There exists a symmetry $u$ on a Hilbert space $H$ on which $A$ may be completely isometrically represented 
as an operator algebra such that $T^* = u T u$ (here we are identifying $T$ with its image in $B(H)$).
\end{itemize}
In 
 {\em (i)} one may if one wishes use only polynomials with no constant term.
\end{theorem}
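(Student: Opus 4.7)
The plan is to use Theorem \ref{opinvch} as the backbone, with the additional input (on top of what that theorem gives) being the bookkeeping with polynomials needed to pass between (i) and (iii), and the use of the $*$-automorphism $\sigma$ discussed in the paragraph preceding the theorem to handle (ii). Note (iii) $\Leftrightarrow$ (iv) is essentially immediate: Theorem \ref{opinvch} says $A$ is an operator $*$-algebra iff there is a symmetry $u$ on $H$ implementing $a^\dagger = u a^* u$ on a completely isometric copy of $A$, and under this identification $T^\dagger = T$ translates to $T = uT^*u$, i.e.\ $T^* = uTu$. The implication (iv) $\Rightarrow$ (ii) is then a two-line calculation: $uTu = T^*$ gives $uT^*u = T$ (since $u^2 = 1$), so conjugating any polynomial $p(T,T^*)$ by the unitary $u$ produces $p(T^*,T)$, preserving norms.

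For (iii) $\Rightarrow$ (i), since $\dagger$ is conjugate linear and anti-multiplicative on $A$ with $T^\dagger = T$, one has $(p(T))^\dagger = p^\dagger(T)$ for every polynomial $p$, so (i) is just the defining matrix-norm identity for an operator $*$-algebra applied to the entries $a_{ij} = p_{ij}(T)$. For (i) $\Rightarrow$ (iii), take $n=1$ in (i) to see that the conjugate linear map $p(T) \mapsto p^\dagger(T)$ on polynomials (without constant term) in $T$ is isometric, hence extends to a conjugate linear isometric period-$2$ anti-automorphism $\dagger$ of $A$; the anti-multiplicative and involutive properties are checked on the dense polynomial subalgebra and pass to the closure, and the full matrix-norm identity in (i) similarly extends by continuity to show $\|[a_{ji}^\dagger]\| = \|[a_{ij}]\|$ on all of $M_n(A)$. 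By construction $T^\dagger = T$.

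Finally, for (ii) $\Rightarrow$ (iii), the paragraph preceding the theorem supplies a period-$2$ $*$-automorphism $\sigma$ of $C^*(T)$ (extendable to $C^*(1,T)$) with $\sigma(T) = T^*$. For a polynomial $p(z) = \sum a_n z^n$ without constant term, $\sigma(p(T)) = \sum a_n (T^*)^n = (p^\dagger(T))^*$; since polynomials in $T$ (without constant term) are dense in $A$ and $\{p^\dagger(T)\}$ is also dense in $A$, continuity of $\sigma$ gives $\sigma(A) \subseteq A^*$, and applying $\sigma$ again yields $\sigma(A) = A^*$. Then Theorem \ref{opinvch} (the implication (ii) $\Rightarrow$ (i) there) produces the operator $*$-algebra structure on $A$ via $a^\dagger = \sigma(a)^*$, with $T^\dagger = \sigma(T)^* = T$. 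The final remark follows because the argument for (i) $\Rightarrow$ (iii) already uses only polynomials without constant term, and Lemma \ref{mey} extends the resulting involution to $A^1$ with $1^\dagger = 1$, from which the matrix-norm identity in (i) for general polynomials is automatic.

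The main obstacle is the implication (ii) $\Rightarrow$ (iii); specifically, verifying that the $*$-automorphism $\sigma$ on $C^*(T)$ actually carries $A$ onto $A^*$ (as required to invoke Theorem \ref{opinvch}) rather than into some larger subalgebra of $C^*(T)$. Everything else is either a direct citation of Theorem \ref{opinvch} or a routine density and continuity argument on polynomials in the single generator $T$.
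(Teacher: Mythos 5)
Your proposal is correct and takes essentially the same approach as the paper: the same appeal to Theorem \ref{opinvch}, the same polynomial identity $p(T)^* = p^\dagger(T^*)$ exploited for the matrix-norm computations, and the same use of the period-2 $*$-automorphism $\sigma$ coming from $*$-exchangeability for handling (ii). The only difference is a harmless reshuffling of the implication graph (you close the loop on (ii) via (iv) $\Rightarrow$ (ii) by direct conjugation, where the paper instead proves (iii) $\Rightarrow$ (ii) through the $C^*$-envelope), and your density argument showing $\sigma(A)=A^*$ supplies a detail the paper merely asserts.
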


\begin{proof} 
(iv)  $\Rightarrow$ (i) \ Given 
linear symmetry $u : H \to H$ with $T^* = uTu$
then $p(T)^* =  p^\dagger (T^*) = u p^\dagger (T) u$,
so that
$$\| [ p_{ji}^\dagger (T) ] \|
= \| [  u \, p_{ji}(T)^* \, u] \| = \| [ p_{ij}(T) ] \| .$$

(i) $\Rightarrow$ (iii) \ If  for polynomials $p_{ij}$ with no constant term 
we have $\| [ p_{ij}(T) ] \| = \| [ p_{ji}^\dagger (T) ]  \| = 
\| [ p_{ij} (T^*) ]  \|$, then the map $p(T) \mapsto p(T^*)$ is well defined and completely isometric.  Here $*$ is the involution on a $C^*$-algebra
containing $A$.  It extends to a completely isometric surjective homomorphism oa$(T) \to {\rm oa}(T^*)$.
Composing this with the involution $*$ we obtain an involution on oa$(T)$ making it an 
operator $*$-algebra with $T$ $\dagger$-selfadjoint.   
(If one wishes then we can extend the involution to the unitization by Proposition \ref{dmeyer1}, which implies the equality in (i) for polynomials
with constant term.)

(iii) $\Rightarrow$ (ii) \  If oa$(T)$ has such involution then by the characterization of operator $*$-algebras in Theorem \ref{opinvch} 
 there exists a $*$-isomorphism  $C^*_e({\rm oa}(T)) \to C^*_e({\rm oa}(T))$
 taking   $T^\dagger = T$ to $T^*$.    
Equivalently (as in the discussion above the theorem), $p(T, T^*) \mapsto p(T^*, T)$ is a well-defined  isometry.  

(ii) $\Rightarrow$ (iii) \ If $T$ is $*$-exchangeable in some $C^*$-algebra $B$ generated by $A$, then as explained above the theorem
we have a period 2 
$*$-automorphism $\sigma : B \to B$ with $\sigma(T) = T^*$.   The restriction of $\sigma$ to $A$ maps onto $A^*$.  So $A$ is an operator $*$-algebra with $T$ $\dagger$-selfadjoint if we 
define $a^\dagger = \sigma(a)^*$.

(iii) $\Rightarrow$ (iv) \ By the characterization of operator $*$-algebras in Theorem \ref{opinvch}, there exists a symmetry $u$ on a Hilbert space $H$ on which $A$ may be completely isometrically represented 
as an operator algebra such that $a^\dagger = u a^* u$ for all $a \in A$.  Setting $a = T$ we obtain  $T^* = u T u$. 
\end{proof}

There is a similar result for operator algebras with linear involution $\theta$ with 
$\theta(T) = T$.   The analogue of condition  (ii) in Theorem \ref{oneg} is the condition called $g$-normality in \cite{GuoJiZhu}, namely that 
$\| p(T, T^*) \| = \| p^\dagger (T^*, T) \|$
for any polynomial $p$ in two free variables.   Here $p^\dagger$ is obtained from $p$ by conjugating each coefficient.
The equivalence of (ii) and (iv) is known: after our paper was written we found this equivalence  in \cite{SZ} with a quite different proof.
The paper \cite{ZZ} also
contains some other very interesting related results.

\begin{theorem} \label{cosym} Let $A$ be an operator algebra with a single generator $T$.   The following are equivalent:
\begin{itemize}
\item [(i)] For $n \in \Ndb$ and polynomials $p_{ij}$ for $1 \leq i, j \leq n$
we have $$\| [ p_{ji} (T) ] \|
= \| [ p_{ij}(T) ] \| .$$
\item [(ii)]  $T$ is $g$-normal in some $C^*$-algebra generated by $A$.
\item [(iii)] $A$ is a symmetric  operator algebra (that is $I_A$ is
a
 linear involution). 
\item [(iv)]   There exists  a Hilbert space $H$ on which $A$ may be completely isometrically represented 
as an operator algebra such that $T$ becomes a complex symmetry on $H$ (in the sense defined above {\rm 
Theorem \ref{lininvch}}). 
\end{itemize}
In {\rm (i)} one may if one wishes use only polynomials with no constant term.
\end{theorem}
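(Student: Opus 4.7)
The plan is to mirror the proof of Theorem \ref{oneg}, substituting the linear-involution tools (Corollary \ref{chsym} and Theorem \ref{lininvch}) for the operator $*$-algebra ones (Theorem \ref{opinvch}). I would establish the cycle (iv) $\Rightarrow$ (i) $\Rightarrow$ (iii) $\Rightarrow$ (ii) $\Rightarrow$ (iii) $\Rightarrow$ (iv), exploiting Corollary \ref{chsym} to convert `complex symmetric' into `matrix equal to its transpose' and back.

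For (iv) $\Rightarrow$ (i), use Corollary \ref{chsym} to fix an orthonormal basis $\mathfrak{B}$ of $H$ with respect to which $T$ has a symmetric matrix; since polynomials of a symmetric complex matrix remain symmetric, each $p_{ij}(T)$ has a symmetric matrix with respect to $\mathfrak{B}$. Viewed as an operator on $H^n$ in the basis $\mathfrak{B}^{(n)}$, the transpose of $[p_{ij}(T)]$ is then exactly $[p_{ji}(T)]$, and transposition of matrices with respect to an orthonormal basis preserves the operator norm, giving (i). For (i) $\Rightarrow$ (iii), note that $A$ is automatically commutative since it is generated by the single element $T$, while the matrix norm equality in (i) extends by density and continuity from polynomial matrices to all of $M_n(A)$. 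Hence the identity map is a linear anti-automorphism of $A$ satisfying the matrix norm condition of type (2), that is, $A$ is symmetric. For (iii) $\Rightarrow$ (iv), Corollary \ref{chsym} directly supplies a conjugation $c$ on a Hilbert space such that $A$ acts by $c$-symmetric operators, so $T$ is in particular complex symmetric.

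For (iii) $\Rightarrow$ (ii), I would apply Theorem \ref{lininvch} with $\theta = I_A$: this produces a $C^*$-algebra $B$ generated by $A$ together with a period-2 $*$-antiautomorphism $\rho : B \to B$ restricting to the identity on $A$, so $\rho(T) = T$ and $\rho(T^*) = T^*$. Because $\rho$ reverses products, $\rho(q(T, T^*)) = q^{\mathrm{rev}}(T, T^*)$ for every polynomial $q$ in two free variables, where $q^{\mathrm{rev}}$ denotes reversal of the factors in each monomial (coefficients unchanged). Isometry of $\rho$ then gives $\|q(T, T^*)\| = \|q^{\mathrm{rev}}(T, T^*)\|$. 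A short bookkeeping identity on monomials, namely $(m_k^{\mathrm{rev}}(T, T^*))^* = m_k(T^*, T)$, implies $p^\dagger(T^*, T) = p^{\mathrm{rev}}(T, T^*)^*$ for any polynomial $p$, hence $\|p^\dagger(T^*, T)\| = \|p^{\mathrm{rev}}(T, T^*)\| = \|p(T, T^*)\|$, which is $g$-normality.

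The main obstacle is (ii) $\Rightarrow$ (iii), where one must construct the involution from $g$-normality. I would reverse the bookkeeping identity above: $g$-normality forces $\|q(T, T^*)\| = \|q^{\mathrm{rev}}(T, T^*)\|$ for every polynomial $q$, so the anti-homomorphism on the free unital $*$-algebra on one generator that fixes the generator descends to a well-defined isometric $*$-anti-homomorphism $\sigma$ on the dense $*$-subalgebra of polynomials in $T$ and $T^*$; it extends by continuity to a period-2 $*$-antiautomorphism of the generated $C^*$-algebra. Since $\sigma$ fixes $T$ and polynomials in $T$ alone are invariant under monomial reversal, $\sigma(A) = A$ and $\sigma|_A = I_A$. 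Theorem \ref{lininvch} then recognizes $I_A$ as a linear involution of type (2) on $A$, so $A$ is symmetric.
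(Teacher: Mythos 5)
Your proposal is correct and follows essentially the same route as the paper: the same cycle of implications, with Theorem \ref{lininvch} (applied with $\theta=I_A$) driving (iii)$\Leftrightarrow$(ii) via a period-2 $*$-antiautomorphism fixing $T$, Corollary \ref{chsym} giving (iii)$\Rightarrow$(iv), and the norm identity plus density giving the well-definedness of the reversal map in (ii)$\Rightarrow$(iii). Your (iv)$\Rightarrow$(i) via symmetric matrices in a distinguished basis is just a coordinatized version of the paper's direct computation $p(T)^* = c\,p(T)\,c$, and your explicit bookkeeping identity $(m^{\mathrm{rev}}(T,T^*))^* = m(T^*,T)$ correctly fills in the step the paper compresses into ``equivalently, $p(T,T^*)\mapsto p^\dagger(T^*,T)$ is a well-defined isometry.''
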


\begin{proof} (iv)  $\Rightarrow$ (i) \ Given 
 conjugation $c : H \to H$ with $T^* = cTc$
then $p(T)^* =  p^\dagger (T^*) = c p (T) c$,
so that
$$\| [ p_{ij} (T) ] \| = \| [ p_{ji}(T)^* ] \| 
= \| [  c \, p_{ji}(T) \, c] \| = \| [ p_{ji}(T) ] \| .$$

(i) $\Leftrightarrow$ (iii) \ Obvious.  

(iii) $\Rightarrow$ (ii) \  If oa$(T)$ has such involution then by Theorem 
\ref{lininvch}  there exists a $*$-antiautomorphism  $C^*_e({\rm oa}(I,T)) \to C^*_e({\rm oa}(I,T))$
 taking   $T$ to $T$.    Composing with $*$, we get a conjugate linear $*$-automorphism of $C^*_e({\rm oa}(I,T))$ taking $T$ to $T^*$. 
Equivalently, $p(T, T^*) \mapsto p^\dagger(T^*, T)$ is a well-defined  isometry.  

(ii) $\Rightarrow$ (iii)  \ If $T$ is $g$-normal in some $C^*$-algebra $B$ generated by $A$, then we have a period 2 
$*$-antiautomorphism $\sigma : B \to B$ with $\sigma(T) = T$.   The restriction of $\sigma$ to $A$ maps onto $A$.  So $A$ is an operator algebra with
linear involution  $\theta$ with
$T^\theta = T$.

(iii) $\Rightarrow$ (iv) \ Immediate from  Corollary \ref{chsym}. 
\end{proof}

\begin{example}  One may ask if all operators $T$ satisfy the conditions in the last theorem, or in the one before it.  However Halmos' example $x = 2E_{12} + E_{23}$ in $M_3$
may be shown to be a counterexample.    Since $x^\intercal= x^*$ (we write $\intercal$ for the transpose) the same example will work for both.
Indeed one can show that $x$ generates $M_3$ as a $C^*$-algebra, and since this is simple we have  
$C^*_e({\rm oa}(x)) = M_3$.  Any $*$-automorphism of $M_3$ is inner, and also  $*$-antiautomorphisms of $M_3$ 
are of form $u^* a^\intercal u$ for a unitary $u \in M_3$.   An easy matrix computation show that there are no unitary solutions to
 $u^* x u = x^\intercal = x^*$.   Thus $x$ is not $*$-exchangeable or g-normal in $M_3$, hence oa$(x)$  is not symmetric
nor is an operator $*$-algebra with $x$ $\dagger$-selfadjoint.     On the other hand, the 
matrix $x \oplus x^\intercal$ in $M_6$ does satisfy the conditions in the last two theorems (this may be seen similarly to the  idea in the proof of
Corollary \ref{chsym2}).
 \end{example}

Many `truncated Toeplitz operators' are  complex symmetric, and some are $*$-exchangeable,  giving by 
the theorems above examples  of operator $*$-algebras,  and operator algebras with linear involution. 
To see these assertions it is helpful to recall the Sz.\ Nagy-Foias model theory for contractions \cite{NFK,Berc}.   
For many contractions $T$ it is known that $T$ is 
unitarily isomorphic to a truncated Toeplitz operator, a so-called
{\em Jordan block} \cite[Chapter 3]{Berc}, namely the compression 
$S(u) = P_K S_{|K}$ of the unilateral shift $S$, viewed as multiplication by $z$ on $H^2$, to the subspace $K = H^2 \ominus u H^2$, 
for a (nonconstant) inner function $u$ on the disk.  Thus  the weak* closed algebra  $A_T$ generated by $T$ (and $I$) is completely isometrically 
and weak* homeomorphically isomorphic to the weak* closed algebra generated by 
$S(u)$ (and $I$).
On the other hand, the last   weak* closed algebra  is known to be 
equal to the commutant $\{ S(u) \}'$, and is isometrically
weak* homeomorphic to the quotient $H^\infty/ u H^\infty$ (see 
\cite[Corollary 1.20]{Berc}). 

\begin{lemma} \label{symm} The weak* closed operator algebra generated by a Jordan block $S(u)$ is symmetric,
indeed is a Q-algebra, 
for every inner function $u$.  Thus $S(u)$ satisfies the conditions of the last theorem.
\end{lemma}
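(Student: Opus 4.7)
The plan is to identify $A_T$ as a Q-algebra and then invoke the fact, recalled in Section \ref{Exs}, that Q-algebras are symmetric. From the paragraph immediately preceding the lemma, $A_T$ is isometrically and weak*-homeomorphically identified with $H^\infty/uH^\infty$ via the map $f + uH^\infty \mapsto P_K M_f|_K$, where $K = H^2 \ominus uH^2$.

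The first step is to upgrade this identification to a completely isometric one, i.e., to show that for each $n \in \Ndb$ and each $[f_{ij}] \in M_n(H^\infty)$,
$$\bigl\|\bigl[\, P_K M_{f_{ij}}|_K \,\bigr]\bigr\|_{B(K^n)} \;=\; \inf\bigl\{\, \|[f_{ij} + u g_{ij}]\|_{M_n(H^\infty)} : g_{ij} \in H^\infty \bigr\}.$$
This is the matricial Sz.-Nagy-Foias/Sarason commutant lifting theorem, and it is classical. Granting it, $A_T$ is completely isometrically isomorphic to $H^\infty/uH^\infty$ with its quotient operator space structure. Since $H^\infty$ is a commutative uniform algebra on its maximal ideal space, and since its natural operator space structure coincides with the minimal one (both are computed as $\sup_{|z|<1}\|[f_{ij}(z)]\|_{M_n}$), and since $uH^\infty$ is a norm-closed ideal of $H^\infty$, the quotient $H^\infty/uH^\infty$ fits the definition of Q-algebra recalled in Section \ref{Exs}. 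Hence so does $A_T$.

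By the result from \cite{BComm} cited in Section \ref{Exs}, every Q-algebra is a symmetric operator algebra. Therefore $A_T = A_T^\circ$ completely isometrically via the identity map, which is precisely condition (iii) of Theorem \ref{cosym} applied to the single generator $S(u)$ of $A_T$. The equivalences in that theorem then yield all of the other listed conditions for $S(u)$ automatically, including the existence of a completely isometric representation of $A_T$ in which $S(u)$ acts as a complex symmetric operator.

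The main obstacle is the complete-isometry upgrade in the first step, since the text preceding the lemma only asserts an isometric identification with $H^\infty/uH^\infty$. One way to bypass commutant lifting is to exhibit directly the classical Sarason conjugation $C : K \to K$ (antilinear isometry with $C^2 = I$) satisfying $C S(u) C = S(u)^*$; this verifies condition (iv) of Theorem \ref{cosym} for $S(u)$, and hence gives the symmetry of $A_T$ via the implication (iv) $\Rightarrow$ (iii), with the Q-algebra assertion then recovered from the quotient identification.
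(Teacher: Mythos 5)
Your main argument is essentially the paper's own proof: the paper likewise upgrades the isometric identification $A_{S(u)} \cong H^\infty/uH^\infty$ to a complete isometry --- getting one inequality from the complete contractivity of the $H^\infty$ functional calculus (via its completely positive extension to $L^\infty$) and the other by applying commutant lifting to the ampliation $S^{(n)}$ as the minimal isometric dilation of $S(u)^{(n)}$ --- and then concludes that $A_{S(u)}$ is a Q-algebra, hence symmetric, hence so is its subalgebra $\mathrm{oa}(S(u))$. Your fallback via the Sarason conjugation $Cf = u\bar{z}\bar{f}$ on $K$ is a legitimate independent shortcut to the symmetry assertion through Corollary \ref{chsym}, but as you note it does not by itself yield the Q-algebra claim, for which the completely isometric quotient identification is still needed.
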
 

\begin{proof}    In  \cite[Corollary 1.20]{Berc}, it is shown that $A_{S(u)} = \{ S(u) \}'$ is isometrically 
weak* homeomorphic to the quotient $H^\infty/ u H^\infty$.  
Following
the ideas in the proof of \cite[Corollary 1.20]{Berc} one can see that 
this isometry is a complete isometry.   The functional
calculus  $H^\infty \to A_{S(u)}$ for $S(u)$ is a complete contraction since it has a positive unital,
hence completely positive
and completely contractive, extension to $L^\infty$.
Thus we have an isometric complete contraction $H^\infty/ u H^\infty \to A_T$.
The unilateral shift $S$ is a minimal 
isometric dilation of $S(u)$.  Suppose that $x = [ x_{ij} ]
\in {\rm Ball}(M_n(A_{S(u)}))$.
Then $S^{(n)}$ is a minimal isometric dilation of $S(u)^{(n)}$.
By the commutant lifting  theorem 
(e.g.\ \cite[Theorem 1.10]{Berc}) there exists $y = [y_{ij} ] 
\in {\rm Ball}(M_n(B(H^2))$ such that $y \in \{ S^{(n)} \}'$,
so that $y_{ij} \in \{ S \}'$, and
$P_{K^{(n)}} y_{|K^{(n)}} = x$.   Thus $P_K (y_{ij})_{|K} = x_{ij}$.  
Since $y_{ij} \in \{ S \}'$, and the $H^\infty$ functional
calculus is a complete isometry $H^\infty \to \{ S \}'$,
we see that $y_{ij} = f_{ij}(S)$ for 
$[f_{ij}] \in {\rm Ball}(M_n(H^\infty))$.  Thus
$[f_{ij} + u H^\infty] \in {\rm Ball}(M_n(H^\infty/ u H^\infty))$
is a preimage of $x$.   It follows that 
$A_{S(u)} \cong H^\infty/ u H^\infty$ completely 
isometrically (and weak* homeomorphically).
Now 
 $H^\infty/ u H^\infty$ is a $Q$-algebra. 
 Hence $A_{S(u)}$ is a   $Q$-algebra and symmetric operator algebra. 
Its subalgebra oa$(S(u))$ is thus also symmetric, so $S(u)$  satisfies the conditions of the last theorem.  \end{proof} 

As a consequence,   the large class of contractions
$T$ unitarily equivalent to a  Jordan block $S(u)$ for 
some (nonconstant) inner function $u$ on the disk,
all generate symmetric operator algebras,
in particular operator algebras having  linear involution.

Turning to more specific examples, the  Volterra operator $Vf(x) = \int_0^x \, f(t) \, dt$ on $L^2([0,1])$  is both $*$-exchangeable and complex symmetric
(the latter via the conjugation $cf(t) = \overline{f(1-t)}$).  Thus the operator 
algebra  generated by $V$ is both an operator $*$-algebra and has linear involution.    The same is true for the  weak* closed algebra
generated by $V$.
These may be viewed as  infinite dimensional versions of the upper triangular matrices.     Indeed the Volterra operator $V$ is unitarily equivalent to $S(u)$ with $u(z) = \exp ((z+1)(z-1)^{-1})$, by e.g.\ \cite[Lemma 3.18 on p.\ 97]{Berc}, and this
$u$ is invariant under the involution $f^\dagger(z) = \overline{f(\bar{z})}$.   
Thus $H^\infty/ u H^\infty$ is an operator $*$-algebra, indeed is an  involutive Q-algebra, and also is 
a dual operator $*$-algebra.
  This is because $\dagger$ is a weak* continuous 
involution on $H^\infty$.  Hence $A_V$ is a dual operator $*$-algebra.  
Similarly, the norm closed algebra oa$(V)$ generated by the Volterra algebra 
is completely
isometrically isomorphic to $A_1(\Ddb)/u A_1(\Ddb)$
where $A_1(\Ddb)$ are the disk algebra functions vanishing
at $1$ (the isomorphism $H^\infty/ u H^\infty
\to A_V$ restricts to an isomorphism $A_1(\Ddb)/u A_1(\Ddb)
\to {\rm oa}(V)$, see \cite{PW}).
The latter quotient again is
an operator $*$-algebra (since $A_1(\Ddb)$ and its
ideal $u A_1(\Ddb)$
are invariant under the involution $\dagger$).
So oa$(V)$ is an  involutive Q-algebra, with $\dagger$-selfadjoint
generator.  The associated $\dagger$-selfadjoint 
contractive generator is $1 - (1+V)^{-1} = V(I + V)^{-1}$ (see the discussion  
just above Section \ref{Exs}),
which corresponds to the image of  $(1-z)/2 \in A_1(\Ddb)$.  It is known to be a 
radical Banach algebra \cite{PW} so the spectrum of every element 
is $(0)$.   Thus this is an example of
an operator $*$-algebra such that every $\dagger$-selfadjoint element has real spectrum,
but  which is not a $C^*$-algebra.     Indeed in this algebra for every $a \in A$ we 
have Sp$(a^\dagger a) \subset [0,\infty)$.  

Slightly more generally a contraction operator unitarily 
equivalent to Jordan block $S(\theta)$ 
for an inner function $\theta$, generates 
an  operator $*$-algebra with $T^\dagger = T$
if $\theta(\bar{z}) = \overline{\theta(z)}$.   Such inner functions include Blaschke products with real zeroes
and the function $u$ in the previous paragraph.

\subsection{Examples based on upper triangular matrices} The upper triangular $n$ by $n$ matrix algebra is an example which has all four types of involutions mentioned at the start of Section \ref{inv}.
 The $*$-algebra involution is given by 
$x^\dagger = u_n x^* u_n$
where $u_n$ is the order reversing $n \times n$ permutation 
matrix.   Similarly $ u_n x^\intercal u_n$  is a linear involution, where $\intercal$ is the transpose.   Similarly,   the infinite dimensional version of the 
 upper triangulars acting on  $l^2(\Zdb)$ is an operator $*$-algebra and 
operator algebra with linear involution.  Here  $u((a_n)) = (a_{-n})$ for $(a_n) \in 
l^2(\Zdb)$, and $x^\dagger = u x^* u$, etc.      These algebras have as  one `involutive ideal' the 
strictly upper triangular subalgebra.  

The following is an example of an operator $*$-algebra which is a maximal subdiagonal algebra in the sense of Arveson \cite{Arv2}
within the hyperfinite II$_1$ factor $R$.   One could call this the {\em  hyperfinite upper triangulars}.   In $M_{2^n}$ consider conjugation by the order reversing permutation 
matrix which for convenience we write as $u_n$ (in the notation above it is $u_{2^n}$).
Then we have $u_{n+1} (x \oplus x) u_{n+1} = (u_n x u_n) \oplus (u_n x u_n)$ for $x \in M_{2^n}$.
It follows that $(u_n x^* u_n)$ gives rise to  a well defined period 2 $*$-automorphism on the union ${\mathcal C}$ 
of the copies on $M_{2^n}$ .
This extends by density to a period 2 $*$-automorphism $\theta$ of the CAR algebra $B$, and this gives an involution 
on the closure $A$ of the union of the  upper triangular matrices in  $M_{2^n}$ for all
$n \in \Ndb$,  since $\theta(A) \subset A^*$.      So $A$ is an operator $*$-algebra.    A similar construction 
using the transpose in place of $*$ gives a linear involution on $A$.
Note that for the 
normalized trace $$\tau_n(\theta(x) y^*) = \tau_n(u_n x u_n y^*) = \tau_n(x \theta(y)^*), \qquad x, y \in M_{2^n},$$
so that $\theta$ extends to a symmetry $U$ on the Hilbert space of the GNS representation of the trace of $B$.
Since $U A U \subset A^*$, it is easy to argue that the weak* closure $N$ of $A$ is a dual operator $*$-algebra
inside ${\mathcal R}$, the hyperfinite II$_1$ factor.   Similarly one has a   linear involution on $N$. We claim that $N$ is a subdiagonal algebra in the sense of Arveson.
If $\Phi_n : M_n \to D_n \subset M_n$ is the canonical projection onto the matrices supported on main diagonal in $M_n$,
then $\Phi_{n+1}(x \oplus x) = \Phi_n(x) \oplus  \Phi_n(x)$ for $x \in M_{2^n}$.  Thus we obtain a trace preserving projection $\Psi$ 
from the 
CAR algebra $B$ onto the `main diagonal' part $D_0$ of $B$.   Indeed $\tau(\Psi(x) y ) = \tau(\Psi(xy)) = \tau (xy)$ for $x \in B, y \in D_0$.
   On the other hand the canonical  trace preserving  conditional expectation $\Phi$ from $R$ onto the `main diagonal' part $D$ of $R$
restricts to a trace preserving  conditional expectation from $B$ onto $D_0$, so by the unicity of the
 trace preserving  normal conditional expectation  we get that $\Phi$ extends $\Psi$.
Since $\Psi$ is multiplicative on $B$, by density $\Phi$ is a homomorphism onto $D$.  Also since 
$A + A^*$ is clearly dense in $B$, by density we have 
$N + N^*$ is weak* dense in $R$, so $N$ is a maximal subdiagonal algebra in the sense of Arveson.

\subsection{The algebra of an involutive operator space} \label{uxy}   Let $X$ be an operator system or selfadjoint subspace of
$B(H)$, and consider, as in 2.2.10 in \cite{BLM}, 
 $${\mathcal U}(X) = \biggl\{ \left[ \begin{array}{ccl} \lambda_1 & x\\ 0
&\lambda_2 \end{array} \right] \, :\, x\in X,\ \lambda_1
,\lambda_2 \in\Cdb\biggr\} \subset B(H \oplus H),$$
where $\lambda_1$ and $\lambda_2$ stand for the operators
$\lambda_1 I_H$ and $\lambda_2 I_H$ respectively.  Note that 
${\mathcal U}(X)$ may be regarded as a subspace of the Paulsen system ${\mathcal U}(X) + {\mathcal U}(X)^*$.
Give ${\mathcal U}(X)$  the involution that we gave the upper triangular matrices, namely 
$(u_2 \otimes I_H) a^* (u_2 \otimes I_H)$, where $u_2$ is the usual permutation matrix on $\Cdb^2$.
Then ${\mathcal U}(X)$ is an operator $*$-algebra.

More abstractly we define an  {\em operator} $*$-{\em space}
  to be an operator space $X$ with a 
period 2 conjugate linear  bijection $* : X \to X$ satisfying $\| [a_{ji}^* ] \|
= \| [ a_{ij}] \|.$    As shown in the introduction to \cite{BKNW} if $u : X \to B(K)$ is
a linear complete isometry, then 
$$\Theta(x) \; = \;  \left[ \begin{array}{ccl} 0  & u(x) \\ u(x^*)^*
& 0 \end{array} \right] \in B(K \oplus K), \qquad x \in X, $$  is a $*$-linear 
complete
isometry.  So $X$ `is' a selfadjoint subspace of
$B(H)$ for a Hilbert space $H$, and gives rise to an operator $*$-algebra ${\mathcal U}(X)$ as above. 

A special case that is sometimes used is when $X$ is an `involutive Banach space'.   Then $X$ with its Min or Max operator space structure (see 1.2.21
and 1.2.22 in \cite{BLM}) 
will be  an operator $*$-space.
There is a similar construction for the other types of `involution' mentioned at the start of Section \ref{inv}.    Namely,
if $X$ is an operator space with an involution satisfying the conditions at the start of Section \ref{inv} of  one of these four types, but with no multiplicativity or anti-multiplicativity condition
assumed, then the operator algebra 
${\mathcal U}(X)$ may be given an operator algebra involution of the matching type.

\subsection{The algebra of an involutive bimodule}  There is an operator module version of the last example.   Since we plan to study   operator modules in an involutive setting later we will be brief here.   In \cite{BKM} 
a kind of involutive operator module is studied that is quite different to,
 and much more interesting than, the ones below, although the representation in the next result has 
a superficial similarity inspired by the `standard forms' considered there.   

\begin{theorem}
Let $A$ be an approximately unital operator $\ast$-algebra and let $X$ be an operator $\ast$-space in the sense of Subsection {\rm \ref{uxy}} above.
Suppose that $X$ is a nondegenerate operator $A$-$A$-bimodule in the 
sense of {\rm \cite[Chapter 3]{BLM}} such that 
$(ax)^{\dagger}=x^{\dagger}a^{\dagger},$ for any $a\in A$ and $x\in X.$ Then there exist a Hilbert space $H,$ a completely isometric linear map $\sigma: X\to B(H),$ and a nondegenerate completely isometric homomorphism $\pi$ of $A$ on $H,$ and selfadjoint unitary $u$ on $H,$ such that
\begin{align*}
&\sigma(x^{\dagger})=u\sigma(x)^{\ast}u \,\,\quad {\rm and} \quad \pi(a^{\dagger})=u\pi(a)^{\ast}u,\\
&\pi(a)\sigma(x)=\sigma(ax)\quad {\rm and} \quad \sigma(x)\pi(a)=\sigma(xa),
\end{align*}
for all $a\in A$ and $x\in X.$
\end{theorem}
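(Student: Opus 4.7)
My plan is to build the desired Hilbert space by taking a concrete bimodule representation of $(A,X)$, forming its $\dagger$-conjugate twin, doubling, and implementing the involution via the canonical swap symmetry on the doubled space. Concretely, I will write $H = H_0 \oplus H_0$ with the involution $u$ interchanging the two summands; the two summands will house a bimodule representation and a conjugate-adjoint version of it, arranged so that the swap realizes $\dagger$ on the nose.

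\emph{Step 1 (bimodule representation).} I would invoke the representation theorem for nondegenerate operator bimodules over approximately unital operator algebras (see \cite[Chapter 3]{BLM}) to obtain a Hilbert space $H_0$, a nondegenerate completely isometric homomorphism $\pi_0 : A \to B(H_0)$, and a completely isometric linear map $\sigma_0 : X \to B(H_0)$ satisfying $\pi_0(a)\sigma_0(x) = \sigma_0(ax)$ and $\sigma_0(x)\pi_0(a) = \sigma_0(xa)$.

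\emph{Step 2 ($\dagger$-twin).} Define $\pi_1(a) := \pi_0(a^\dagger)^{\ast}$ and $\sigma_1(x) := \sigma_0(x^\dagger)^{\ast}$. The operator $\ast$-algebra identity $\|[a_{ji}^{\dagger}]\| = \|[a_{ij}]\|$ (and its operator $\ast$-space analogue) combined with the fact that $\pi_0$ and $\sigma_0$ are completely isometric forces $\pi_1$ and $\sigma_1$ to be completely isometric as well. The relation $(ab)^{\dagger} = b^{\dagger}a^{\dagger}$ makes $\pi_1$ a homomorphism, and the identities $(ax)^{\dagger} = x^{\dagger}a^{\dagger}$ and $(xa)^{\dagger} = a^{\dagger}x^{\dagger}$ yield, by taking adjoints of the relations for $(\pi_0, \sigma_0)$, the bimodule identities $\pi_1(a)\sigma_1(x) = \sigma_1(ax)$ and $\sigma_1(x)\pi_1(a) = \sigma_1(xa)$.

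\emph{Step 3 (doubling and symmetry).} Set $H := H_0 \oplus H_0$, $\pi := \pi_0 \oplus \pi_1$, $\sigma := \sigma_0 \oplus \sigma_1$, and let
\[
u := \begin{pmatrix} 0 & I_{H_0} \\ I_{H_0} & 0 \end{pmatrix} \in B(H),
\]
which is a selfadjoint unitary. The maps $\pi$ and $\sigma$ are completely isometric (direct sums of completely isometric maps), and the bimodule identities are inherited diagonally. Since $\pi_1(a)^{\ast} = \pi_0(a^{\dagger})$ and $\pi_0(a)^{\ast} = \pi_1(a^{\dagger})$, the one-line computation $u (T_0 \oplus T_1)^{\ast} u = T_1^{\ast} \oplus T_0^{\ast}$ gives $u\pi(a)^{\ast}u = \pi(a^{\dagger})$, and the analogous identity $u\sigma(x)^{\ast}u = \sigma(x^{\dagger})$. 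For nondegeneracy of $\pi$, the first summand is nondegenerate by construction, while for the second it suffices to note that any $h \in H_0$ orthogonal to $\pi_0(A)^{\ast}H_0 = \pi_1(A)H_0$ is orthogonal to $\pi_0(A)H_0$, hence zero.

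The main obstacle, to my mind, is not the bookkeeping in Steps 2--3, which reduces to formal adjoint calculations; rather, it is to locate a bimodule representation in Step 1 with sufficient nondegeneracy to support Step 4. Once that input is in hand, the $\dagger$-twin construction and swap symmetry convert the asymmetric bimodule picture into the symmetric one required by the theorem, with the compatibility $(ax)^{\dagger} = x^{\dagger}a^{\dagger}$ doing exactly the work needed to make $\pi_1$ multiplicative and the bimodule relations for $(\pi_1,\sigma_1)$ fall out from adjoining.
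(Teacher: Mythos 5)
Your proposal is correct and follows essentially the same route as the paper: both obtain a nondegenerate bimodule representation $(\Theta,\phi)$ on $H_0$ from \cite[Theorem 3.3.1, Lemma 3.3.5]{BLM}, form the $\dagger$-twin $a\mapsto\Theta(a^\dagger)^*$, $x\mapsto\phi(x^\dagger)^*$, and double to $H_0\oplus H_0$ with the swap symmetry $u$ implementing the involution. The adjoint computations and the nondegeneracy argument for the second summand are exactly as you describe.
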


\begin{proof}
By \cite[Theorem 3.3.1, Lemma 3.3.5]{BLM}, there exist a Hilbert space $H_0,$ a completely isometric linear map $\phi:X\to B(H_0),$ and nondegenerate completely completely isometric homomorphism $\Theta$ of $A$ such that
\begin{align*}
\Theta(a)\phi(x)=\phi(ax)\quad \mbox{and}\quad \phi(x)\Theta(b)=\phi(xb),
\end{align*}
 for all $a, b\in A$ and $x\in X.$  We consider the Hilbert space $H :=H_0\oplus H_0$ and the completely isometric homomorphism $\pi: A\to B(H)$ given by
 \[\pi(a)=\begin{pmatrix}
 \Theta(a) & 0 \\
0&\Theta(a^{\dg})^{\ast}
\end{pmatrix}\]
and the complete isometry $\sigma: X\to B(H)$ given by
\[\sigma(x)=\begin{pmatrix}
\phi(x)& 0 \\
 0& \phi(x^{\dagger})^*
\end{pmatrix}.\]
The self-adjoint unitary $u=\begin{pmatrix}
0 & 1 \\
1& 0
\end{pmatrix}$ implements the relations
$$\sigma(x^{\dagger})=u\sigma(x)^{\ast}u \,\,\quad {\rm and} \quad \pi(a^{\dagger})=u\pi(a)^{\ast}u.$$
Moreover, it is easy to see that $\pi(a)\sigma(x)=\sigma(ax)$ and $\sigma(x)\pi(a)=\sigma(xa).$
\end{proof}

Let $X$ be a nondegenerate operator $A$-$A$-bimodule 
over an approximately unital operator $*$-algebra $A$,
of the type characterized in the last theorem.
For $H, \pi, \sigma$ as in that theorem consider
  $${\mathcal U}_A(X) = \biggl\{ \left[ \begin{array}{ccl} \pi( a_1)  & \sigma(x) \\ 0
& \pi(a_2) \end{array} \right] \, :\, x\in X,\ a_1, a_2 \in A \biggr\} \subset B(H \oplus H).$$
    This is an operator $*$-algebra, with involution
$a \mapsto (u_2 \otimes I_H) a^* (u_2 \otimes I_H)$, where $u_2$ is the usual permutation matrix on $\Cdb^2$,  similarly to Example \ref{uxy}.

There are similar constructions for some of the other type of involutions
listed at the start of Section \ref{inv}.  

\medskip

We mention a few final examples.  If $A$ is an operator $*$-algebra then so are   $A^\circ$ and $A^\star$.
For any operator algebra $A$ we have that 
$A \oplus A^\star$ is an operator $*$-algebra with
obvious involution.   Example 1.9 (2) of \cite{BKM} is a  natural example of an  operator $*$-algebra inside the reduced free group $C^*$-algebra.
This may be modified to give examples in the group von Neumann algebra or full group $C^*$-algebra.
 Finally, there are many examples of period 2 automorphisms in  the literature of operator algebras, although they are usually not very explicit.
For example they sometimes occur as a special case of
finite group actions on operator algebras,
or the $\Zdb_2$-action case of crossed product operator algebras.

\section{Operator $\ast$-algebras}

Henceforth in our paper for specificity our involutive algebras will be operator $\ast$-algebras.  As said earlier, we leave the case 
of the remaining material for the other kinds of involutions to the reader.  We remark that the $C^{\ast}$-algebras which are operator $\ast$-algebras are 
exactly the $\Zdb_2$-{\em graded $C^*$-algebras}.  

An {\em involutive ideal} or $\dagger${\em -ideal} in an operator algebra with involution $\dagger$ is an ideal $J$ with $J^\dagger \subset J$. 

\begin{proposition}\label{qdoai}
 Let $A$ be an operator $\ast$-algebra. Suppose $J$ is a closed $\dagger$-ideal, then $J$ and $A/J$ are operator $\ast$-algebras.
\end{proposition}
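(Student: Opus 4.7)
The plan is to verify each of the defining properties of an operator $*$-algebra in turn for $J$ and for $A/J$, the only nontrivial point being the matrix-norm equality $\|[a_{ji}^\dagger]\| = \|[a_{ij}]\|$ in the quotient case.

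For $J$, I would argue as follows. Since $J$ is a closed subalgebra of the operator algebra $A$, it is itself an operator algebra with the inherited matrix norms, and the inclusion $M_n(J)\hookrightarrow M_n(A)$ is a complete isometry for every $n$. Because $J$ is a $\dagger$-ideal, $\dagger$ restricts to a conjugate linear period-$2$ antiautomorphism of $J$. The matrix-norm identity then follows immediately by restriction: for $[a_{ij}]\in M_n(J)$,
\[
\|[a_{ji}^\dagger]\|_{M_n(J)}=\|[a_{ji}^\dagger]\|_{M_n(A)}=\|[a_{ij}]\|_{M_n(A)}=\|[a_{ij}]\|_{M_n(J)}.
\]

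For $A/J$, I would first recall that the quotient of an operator algebra by a closed (two-sided) ideal is again an operator algebra with the quotient operator space matrix norms
\[
\|[a_{ij}+J]\|_{M_n(A/J)}=\inf\bigl\{\|[a_{ij}+b_{ij}]\|_{M_n(A)} : [b_{ij}]\in M_n(J)\bigr\},
\]
via the standard identification $M_n(A/J)=M_n(A)/M_n(J)$ (see e.g.\ \cite[1.2.14]{BLM}). Define $(a+J)^\dagger := a^\dagger+J$; this is well defined because $J^\dagger\subseteq J$, and it is clearly a conjugate linear period-$2$ antiautomorphism of $A/J$.

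The key step is the matrix-norm equality. Given $[a_{ij}]\in M_n(A)$, the map $[b_{ij}]\mapsto [b_{ji}^\dagger]$ is a bijection of $M_n(J)$ onto itself (since $\dagger$ is a bijection of $J$ and transposition is a bijection of $M_n$). Therefore
\[
\|[a_{ji}^\dagger+J]\|_{M_n(A/J)}
=\inf_{[b_{ij}]\in M_n(J)}\bigl\|[a_{ji}^\dagger+b_{ji}^\dagger]\bigr\|
=\inf_{[b_{ij}]\in M_n(J)}\bigl\|[(a_{ij}+b_{ij})^\dagger]_{ji}\bigr\|,
\]
and the matrix-norm axiom for the operator $*$-algebra $A$ gives $\|[(a_{ij}+b_{ij})^\dagger]_{ji}\|=\|[a_{ij}+b_{ij}]\|$. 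Taking the infimum over $[b_{ij}]\in M_n(J)$ yields $\|[a_{ji}^\dagger+J]\|_{M_n(A/J)}=\|[a_{ij}+J]\|_{M_n(A/J)}$, as required.

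The only potential obstacle is bookkeeping around the substitution $c_{ij}=b_{ji}^\dagger$ in the infimum; everything else (well-definedness, antimultiplicativity, conjugate linearity, period $2$, and the fact that $A/J$ is an operator algebra) is routine transfer from $A$ using that $J$ is a closed $\dagger$-ideal.
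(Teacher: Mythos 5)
Your proof is correct and follows essentially the same route as the paper: both arguments reduce to the quotient matrix-norm formula $\|[a_{ij}+J]\|=\inf\{\|[a_{ij}+b_{ij}]\| : [b_{ij}]\in M_n(J)\}$ together with the matrix-norm identity in $A$, the paper phrasing this as two inequalities obtained by taking infima, while you package it as a single substitution using the bijection $[b_{ij}]\mapsto[b_{ji}^\dagger]$ of $M_n(J)$. The only difference is that you spell out the (trivial) case of $J$ itself and the well-definedness of the quotient involution, which the paper leaves implicit.
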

\begin{proof}   This follows from the matching fact for  operator algebras \cite[Proposition 2.3.4]{BLM}, and the computation 
$$\Vert [a_{ji}^\dagger + J ] \Vert
	\leq \Vert [a_{ji}^\dagger + x_{ji}^\dagger] \Vert
	\leq \Vert [a_{ij}+ x_{ij}] \Vert, \qquad x_{ij} \in J,$$ so that
	$\Vert [a_{ji}^\dagger + J ] \Vert \leq \Vert [a_{ij}+ J] \Vert$ for $a_{ij} \in A$.
 Similarly, we have $\Vert [a_{ij}+ J] \Vert\leq \Vert [a_{ji}^\dagger + J ] \Vert.$
\end{proof}

{\bf Remark.}   There are $*$-algebra variants of  the usual consequences of   the matching fact  in 
operator algebra theory.  For example one may deduce easily from Proposition \ref{qdoai} following the method in e.g.\ 1.2.30, 2.3.6, 2.3.7 in \cite{BLM},
that one may {\em interpolate} between operator $*$-algebras.   Indeed 
suppose that $(A_0, A_1)$ is a compatible couple of Banach $*$-algebras which happen to be
operator $*$-algebras.   Just like in the general operator space case \cite[1.2.30]{BLM}, let $\Sl$ be the strip of all complex numbers $z$  with $0\leq \re z\leq 1$ and let $\Fl=\Fl(A_0, A_1)$ be the space of all bounded and continuous functions $f:\Sl \to A_0+A_1$ such that the restriction of $f$ to the interior of $\Sl$ is analytic, and such that the maps $t\to f(it)$ and $t\to f(1+it)$ belong to $C_0(\Rdb; A_0)$ and $C_0(\Rdb, A_1)$ respectively. For any $f\in \Fl,$ the function $f^{\dg}$ is defined by $f^{\dagger}(z)=f(\bar{z})^* \in \Fl.$ 
The last asterisk here is the involution on $A_0+A_1$.
Then $\Fl(A_0, A_1)$ with the operator
space considered in 1.2.30  in \cite{BLM} is an operator $\ast$-algebra with the involution $\dagger$.   
For any $0\leq \theta \leq 1,$ let $\Fl_{\theta}(A_0, A_1)$ be the two-sided closed ideal of all $f\in\Fl$ for which $f(\theta)=0.$  This is $\dagger$-selfadjoint.  The interpolation space $A_{\theta}=[A_0, A_1]_{\theta}$ is the subspace of $A_0+ A_1$ formed by all $x=f(\theta)$ for some $f\in\Fl$. As operator spaces, the interpolation space $A_{\theta} \cong \Fl(A_0, A_1)/\Fl_{\theta}(A_0, A_1)$ through the map $\pi: f\mapsto f(\theta).$ It is easy to see that $\pi$ is $\dagger$-linear.  By Proposition \ref{qdoai}, the quotient $A_{\theta}\cong \Fl(A_0, A_1)/\Fl_{\theta}(A_0, A_1)$ is an operator $\ast$-algebra.

\subsection{Contractive approximate identities}

\begin{lemma}\label{dcai}
Let $A$ be an operator $\ast$-algebra. Then the following are equivalent: 
\begin{itemize}
\item[(i)] $A$ has a cai.
\item[(ii)]$A$ has a $\dagger$-selfadjoint cai.
\item[(iii)] $A$ has a left cai.
\item[(iv)] $A$ has a right cai. 
\item[(v)] $A^{\ast\ast}$ has an identity of norm $1.$ 	
\end{itemize}
\end{lemma}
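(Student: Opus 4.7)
The plan is to recover most of this equivalence from the classical operator algebra theory and use the involution only to incorporate (ii). The chain $\mathrm{(i)}\Leftrightarrow\mathrm{(iii)}\Leftrightarrow\mathrm{(iv)}\Leftrightarrow\mathrm{(v)}$ is the content of the standard theory of approximate identities for operator algebras, as in \cite[Section 2.5]{BLM}; none of those arguments use the $*$-structure, so they transfer here verbatim. The only genuinely new assertion is (ii).

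The implication $\mathrm{(ii)}\Rightarrow\mathrm{(i)}$ is trivial, so I focus on $\mathrm{(i)}\Rightarrow\mathrm{(ii)}$. Given any cai $(e_t)$ for $A$, the first step is to observe that $(e_t^{\dagger})$ is also a cai. Since $\dagger$ is an isometric anti-automorphism, for each $a \in A$,
$$\|a - e_t^{\dagger} a\| = \|a^{\dagger} - a^{\dagger} e_t\|\longrightarrow 0, \qquad \|a - a e_t^{\dagger}\| = \|a^{\dagger} - e_t a^{\dagger}\|\longrightarrow 0.$$
Setting $f_t = \tfrac{1}{2}(e_t + e_t^{\dagger})$, the triangle inequality gives $\|f_t\|\le 1$, we clearly have $f_t^{\dagger}=f_t$, and linearity of limits shows $(f_t)$ is still an approximate identity. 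This is the desired $\dagger$-selfadjoint cai.

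There is no serious obstacle in the argument. If anything, the involutive setting is cheaper: the classical step that a one-sided cai suffices to produce a two-sided cai for a general operator algebra is a delicate result, but here one also sees directly that a left cai $(e_t)$ immediately produces the right cai $(e_t^{\dagger})$ via the computation above. Alternatively, one could prove $\mathrm{(i)}\Rightarrow\mathrm{(ii)}$ by routing through (v): the identity of $A^{**}$ is automatically $\dagger$-selfadjoint because $\dagger$ extends weak* continuously to $A^{**}$ by Lemma \ref{bidu} and the identity is unique, and then Goldstine combined with symmetrization yields a $\dagger$-selfadjoint cai. The direct symmetrization argument above is, however, the cleanest route.
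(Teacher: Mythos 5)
Your argument for (i)~$\Rightarrow$~(ii) — pass to $(e_t^{\dagger})$, check it is a cai using that $\dagger$ is an isometric anti-automorphism, and average — is exactly the paper's proof, and it is correct. Likewise your observation that the identity of $A^{**}$ must be $\dagger$-selfadjoint is fine.

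However, your opening claim that the chain (i)~$\Leftrightarrow$~(iii)~$\Leftrightarrow$~(iv)~$\Leftrightarrow$~(v) ``is the content of the standard theory\dots none of those arguments use the $*$-structure, so they transfer here verbatim'' is false, and this is the one place where the involution is genuinely needed. For a general operator algebra a left cai does \emph{not} imply a right cai or a two-sided cai: take $A = e_{11}M_2 = {\rm span}\{e_{11}, e_{12}\} \subset M_2$, which has $e_{11}$ as a left identity but satisfies $e_{12}f = 0$ for every $f \in A$, so it has no right approximate identity and $A^{**} = A$ has no identity. (Indeed the right ideals $\overline{xA}$ appearing throughout the paper are approximately unital only on one side in general.) The correct route is precisely the one you relegate to an aside at the end: if $(e_t)$ is a left cai then $(e_t^{\dagger})$ is a right cai, so $A$ has both a left and a right cai, whence $A^{**}$ has an identity of norm $1$ by \cite[Proposition 2.5.8]{BLM}, and that same proposition gives (v)~$\Rightarrow$~(i). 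This is the paper's argument. So the ingredients of a complete proof are all present in your write-up, but you should delete the assertion that (iii) and (iv) are equivalent to the rest in the non-involutive theory — there is no such ``delicate classical result''; that equivalence simply fails without the involution — and promote the $\dagger$-symmetrization of one-sided cai's from a remark to the actual proof of (iii)~$\Leftrightarrow$~(iv)~$\Rightarrow$~(v).
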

\begin{proof}
(i) $\Rightarrow$ (ii) \ If $(e_t)$ is a cai for $A,$ then $(e_t^{\dagger})$ is also a cai for $A.$ Let $f_t=(e_t+e_t^{\dagger})/2,$ then $(f_t)$ is a $\dagger$-selfadjoint cai for $A.$

(iii) $\Rightarrow$ (iv) \ If $(e_t)$ is a left cai for $A,$ then $(e_t^{\dagger})$ is a right cai. Analogously, it is easy to see that (iv) $\Rightarrow$ (iii.)

(iv) $\Rightarrow$ (v)  \ By a well-known fact in operator algebra that if $A$ has a left cai and right cai, then $A^{\ast\ast}$ has an identity of norm $1$ (see e.g. \cite[Proposition 2.5.8]{BLM}).

That (ii) $\Rightarrow$ (i), and 
(i) $\Rightarrow$ (iii), are obvious.  That (v) $\Rightarrow$ (i) follows from Proposition 2.5.8 in \cite{BLM}.
\end{proof}

\begin{corollary}\label{cdgcai12}
If $A$ is an operator $\ast$-algebra with a countable cai $(f_n),$ then $A$ has a countable $\dagger$-selfadjoint cai in $\frac{1}{2}{\mathfrak F}_A.$	
\end{corollary}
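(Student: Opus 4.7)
My strategy is to produce, by operator-algebraic means ignoring the involution, a countable cai already sitting in $\tfrac{1}{2}{\mathfrak F}_A$, and then to symmetrize it under $\dagger$.

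First I would invoke the Blecher--Read result that any operator algebra with a countable cai admits a countable cai $(e_n)$ with $e_n\in \tfrac{1}{2}{\mathfrak F}_A$, i.e.\ $\|1-2e_n\|\le 1$ in the unitization (this is one of the basic consequences of the real-positive cai machinery of \cite{BRI, BRord}).

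Next I symmetrize. By Lemma \ref{mey}, $\dagger$ extends to $A^1$ fixing $1$, and it is a complete isometry, so $(1-2e_n)^\dagger = 1 - 2e_n^\dagger$ has the same norm as $1-2e_n$, whence $e_n^\dagger\in\tfrac{1}{2}{\mathfrak F}_A$. Also $(e_n^\dagger)$ is again a two-sided cai for $A$: applying $\dagger$ to $\|a-e_na\|\to 0$ and $\|a-ae_n\|\to 0$ gives $\|a^\dagger-a^\dagger e_n^\dagger\|\to 0$ and $\|a^\dagger-e_n^\dagger a^\dagger\|\to 0$, and $\dagger(A)=A$. Now set $g_n:=\tfrac{1}{2}(e_n+e_n^\dagger)$. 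Then $g_n^\dagger=g_n$,
$$1-2g_n \;=\; \tfrac{1}{2}(1-2e_n)+\tfrac{1}{2}(1-2e_n^\dagger),$$
so $\|1-2g_n\|\le 1$, i.e.\ $g_n\in\tfrac{1}{2}{\mathfrak F}_A$; and $g_na=\tfrac{1}{2}(e_na+e_n^\dagger a)\to a$ with the analogous statement on the right, so $(g_n)$ is a cai.

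The only non-routine ingredient is step one: one needs to know that a countable cai can be replaced by a countable cai inside $\tfrac{1}{2}{\mathfrak F}_A$. Once that is in hand, averaging with the involution is forced to land back in $\tfrac{1}{2}{\mathfrak F}_A$ because $\tfrac{1}{2}{\mathfrak F}_A$ is convex and preserved by $\dagger$ (the latter because $\dagger$ is an isometric anti-homomorphism with $1^\dagger=1$), and this convexity-plus-symmetry observation is what makes the involutive refinement automatic.
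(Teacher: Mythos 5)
Your argument is correct and is essentially the paper's proof: both obtain a cai in $\tfrac{1}{2}{\mathfrak F}_A$ from Blecher--Read and then symmetrize termwise via $e\mapsto (e+e^\dagger)/2$, which stays in $\tfrac{1}{2}{\mathfrak F}_A$ by convexity and $\dagger$-invariance. The only divergence is in how countability is handled: the paper invokes just \cite[Theorem 1.1]{BRI} (which yields a possibly uncountable cai $(e_t)$ in $\tfrac{1}{2}{\mathfrak F}_A$), symmetrizes to get $e_t'$, and then extracts a countable subfamily by choosing $t_n$ with $\Vert f_n e_{t_n}'-f_n\Vert+\Vert e_{t_n}'f_n-f_n\Vert<2^{-n}$ against the given countable cai $(f_n)$, whereas you cite a ready-made countable version of the Blecher--Read result; if that exact statement is not conveniently at hand in \cite{BRI}, this standard $2^{-n}$ extraction supplies it.
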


\begin{proof}
By \cite[Theorem 1.1]{BRI}, $A$ has a cai $(e_t)$ in $\frac{1}{2}{\mathfrak F}_A.$ Denote $e_t'=\frac{e_t+e_t^{\dagger}}{2},$  then $(e_t')$ is also a cai in $\frac{1}{2}{\mathfrak F}_A.$ Choosing $t_n$ with $\Vert f_n e_{t_n}'-f_n\Vert +\Vert e_{t_n}'f_n-f_n\Vert <2^{-n},$ it is easy to see that $(e_{t_n}')$ is a countable $\dagger$-selfadjoint cai in $\frac{1}{2}{\mathfrak F}_A.$
\end{proof}

\begin{corollary}
If $J$ is a closed two-sided $\dagger$-ideal in an operator $\ast$-algebra $A$ and if $J$ has a cai, then $J$ has a $\dagger$-selfadjoint cai $(e_t)$ with $\Vert 1-2e_t\Vert\leq 1$ for all $t,$ which is also quasicentral in $A$.	
\end{corollary}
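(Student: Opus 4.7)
The plan is to combine three modifications of a cai for $J$: an averaging with $\dagger$ to achieve $\dagger$-selfadjointness, a passage through $\frac{1}{2}{\mathfrak F}_A$ to secure the condition $\|1-2e_t\|\leq 1$, and a standard Arveson-type convex-combination argument to achieve quasicentrality in $A$. The key observation is that the three operations are mutually compatible: $\frac{1}{2}{\mathfrak F}_A$ is convex and is preserved by $\dagger$ (since $\|1-a^\dagger\|=\|(1-a)^\dagger\|=\|1-a\|$), and quasicentrality is preserved under convex combinations, scalar averaging, and the involution $\dagger$ (because $\dagger$ is an isometric anti-automorphism of $A$ and $a^\dagger$ ranges over all of $A$ as $a$ does).

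Concretely, I would first invoke \cite[Theorem 1.1]{BRI} (applicable since $J$ is itself an approximately unital operator algebra) to obtain a cai $(f_t)$ for $J$ lying in $\frac{1}{2}{\mathfrak F}_J$. Next, a standard quasicentrality argument going back to Arveson — select, for each finite set $F\subset A$ and each $\varepsilon>0$, a convex combination of the $f_t$ whose commutators with members of $F$ are of norm less than $\varepsilon$ — produces a cai $(g_t)$ for $J$, still in $\frac{1}{2}{\mathfrak F}_J$ by convexity, which is quasicentral in $A$.

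Finally, set $e_t=\tfrac{1}{2}(g_t+g_t^\dagger)$. Then $e_t^\dagger=e_t$; and $e_t\in\frac{1}{2}{\mathfrak F}_J$ because $\frac{1}{2}{\mathfrak F}_J$ is $\dagger$-invariant (so $g_t^\dagger\in\frac{1}{2}{\mathfrak F}_J$) and convex, which gives $\|1-2e_t\|\leq 1$. Also $(e_t)$ is a cai for $J$, since $(g_t^\dagger)$ is itself a cai for $J$ (as $J$ is a $\dagger$-ideal, so for $a\in J$ we have $\|a g_t^\dagger-a\|=\|(g_t a^\dagger-a^\dagger)^\dagger\|\to 0$, and similarly on the other side), so that for $a\in J$
\[
\|ae_t-a\|\leq\tfrac{1}{2}\bigl(\|ag_t-a\|+\|ag_t^\dagger-a\|\bigr)\to 0,
\]
and analogously for $\|e_ta-a\|$. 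Quasicentrality of $(e_t)$ in $A$ follows because $(g_t)$ is quasicentral and so is $(g_t^\dagger)$: applying $\dagger$ to the estimate $\|ag_t-g_ta\|\to 0$ gives $\|g_t^\dagger a^\dagger-a^\dagger g_t^\dagger\|\to 0$, and running $a$ over $A$ runs $a^\dagger$ over $A$.

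The only real obstacle is the bookkeeping, namely checking that the three desired properties survive the symmetrization $g\mapsto\tfrac{1}{2}(g+g^\dagger)$; this reduces to the $\dagger$-invariance of ${\mathfrak F}_A$ and of $J$, the convexity of $\frac{1}{2}{\mathfrak F}_A$, and the fact that $\dagger$ is an isometric anti-automorphism of $A$. As an alternative one could avoid \cite{BRI} entirely by building the cai from $\dagger$-selfadjoint real positive elements and taking the ${\mathfrak F}$-transform, as discussed just above Section \ref{Exs}, noting that both operations commute with $\dagger$; but the route above seems more direct.
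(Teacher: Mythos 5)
Your proof is correct and takes essentially the same route as the paper: \cite[Theorem 1.1]{BRI} to obtain a cai in $\frac{1}{2}{\mathfrak F}_J$, the symmetrization $g\mapsto\frac{1}{2}(g+g^{\dagger})$, and the convex-combination quasicentrality argument of \cite[Corollary 1.5]{BRI}. The only (immaterial) difference is the order of operations --- the paper symmetrizes first and then quasicentralizes, relying on convex combinations preserving $\dagger$-selfadjointness and $\frac{1}{2}{\mathfrak F}_A$, whereas you quasicentralize first and then verify that symmetrization preserves quasicentrality, the cai property (using that $J$ is a $\dagger$-ideal), and membership in $\frac{1}{2}{\mathfrak F}_A$.
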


\begin{proof}
By the proof of Corollary \ref{cdgcai12}, we know that $J$ has a  $\dagger$-selfadjoint cai, denoted $(e_t),$ in $\frac{1}{2}{\mathfrak F}_A.$ 
The rest is as in the proof of \cite[Corollary 1.5]{BRI}.
\end{proof}

Let $A$ be an operator algebra (possibly not unital). Then the {\em left (resp.\, right) support projection} of an element $x$ in $A$ is the smallest projection $p\in A^{\ast\ast}$ such that $px=x$ (resp.\,  $xp=x$), if such a projection exists (it always exists if $A$ has a cai, see e.g. \cite{BRI}). If the left and right support projection exist, and are equal, then we call it the {\em support projection}, written $s(x).$
\begin{theorem}\cite[Corollary 3.4]{BRII}
For any operator algebra $A,$ if $x\in {\mathfrak r}_A$ and $x\neq 0,$ then the left support projection of $x$ equals the right support projection, and equals the weak* limit of $(x^{1/n})$.   It also equals $s(y),$ where $y=x(1+x)^{-1}\in \frac{1}{2}{\mathfrak F}_A$. Also, $s(x)$ is open in $A^{\ast\ast}$.
\end{theorem}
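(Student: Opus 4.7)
The strategy is to construct the candidate support projection $e$ as the weak* limit of the sequence $(x^{1/n})$, verify that $e$ has the universal properties of both the left and right support of $x$, and identify $e$ with $s(y)$ for $y = \mathfrak{F}(x)$. Throughout, fix a $C^{\ast}$-cover $B$ of $A$ and identify $A^{\ast\ast}$ as a weak*-closed subspace of $B^{\ast\ast}$.

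Since $x$ is accretive, $1+x$ is invertible in $B$, so $y := \mathfrak{F}(x) = x(1+x)^{-1}$ is well-defined and lies in $\frac{1}{2}\mathfrak{F}_A$, with the inverse relation $x = y(1-y)^{-1}$ holding in $B$. Sectoriality of $x$ (its spectrum sits in $\{\mathrm{Re}\,z \ge 0\}$) lets us define $x^{1/n}$ by holomorphic functional calculus in $B$; the nontrivial real-positivity machinery of \cite{BRI, BRII} then places $x^{1/n}$ inside $A$ itself, and shows that $x^{1/n}$ is again real positive with $\|x^{1/n}\|$ bounded in $n$. On the compact spectrum $\sigma_B(x)$, the functions $z \mapsto z^{1/n}$ are uniformly bounded and converge pointwise to the characteristic function of $\sigma_B(x)\setminus\{0\}$; a bounded-convergence argument in $B^{\ast\ast}$, reduced via the polar decomposition $x = v|x|$ in $B^{\ast\ast}$ to the classical Borel functional calculus of the positive element $|x|$, produces a weak* limit $e = \lim_n x^{1/n}$ which one checks is a projection lying in $A^{\ast\ast}$.

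From $x \cdot x^{1/n} = x^{1/n} \cdot x = x^{1+1/n}$ and the analogous bounded-convergence argument applied to $z \mapsto z^{1+1/n}$, one obtains $ex = xe = x$. For the universal property of $e$ as the left support, invoke the Balakrishnan-type integral formula $x^{1/n} = \frac{\sin(\pi/n)}{\pi} \int_0^\infty t^{1/n - 1} \, x(t+x)^{-1} \, dt$, valid for accretive $x$ in $B$: any projection $p \in A^{\ast\ast}$ with $px = x$ satisfies $p \cdot x(t+x)^{-1} = x(t+x)^{-1}$ for every $t > 0$ (by factoring $p x \cdot (t+x)^{-1}$), hence $p x^{1/n} = x^{1/n}$, and passing to the weak* limit gives $pe = e$. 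Symmetrically, $xp = x$ forces $ep = e$. Thus $e$ is simultaneously the left and right support of $x$, so $s(x)$ exists and equals $e$. The identity $s(x) = s(y)$ is then immediate from the inverse relation $x = y(1-y)^{-1}$: a projection $p$ satisfies $px = x$ iff $py = y$, and similarly on the right, so the support projections of $x$ and $y$ coincide. Openness of $s(x)$ in $A^{\ast\ast}$ follows from the construction itself: $(x^{1/n})$ is a net in $e A^{\ast\ast} e \cap A$ converging weak* to $e$, which is exactly the defining criterion for openness.

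The main obstacle is the input that $x^{1/n}$ lives in $A$ rather than merely in $B$; this is a substantive real-positivity fact from \cite{BRI, BRII}, not deducible from holomorphic functional calculus alone, and without it one sees the spectral projection only in $B^{\ast\ast}$ and loses access to the openness conclusion in $A^{\ast\ast}$. A secondary technical point is the bounded-convergence step for non-self-adjoint $x$, handled by reducing via the polar decomposition to the classical Borel calculus for the positive element $|x|$ in $B^{\ast\ast}$; this also underlies the verification that $e$ is idempotent.
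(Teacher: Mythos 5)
This statement is quoted in the paper with a citation to \cite[Corollary 3.4]{BRII}; the paper gives no proof of its own, so your proposal has to be measured against the argument in that reference. Much of your skeleton is sound and close to the real one: the Balakrishnan integral formula $x^{1/n} = \frac{\sin(\pi/n)}{\pi}\int_0^\infty t^{1/n-1}x(t+x)^{-1}\,dt$ does put $x^{1/n}$ in $A$ (since $x(t+x)^{-1}\in A\cdot A^1\subset A$, $A$ being an ideal in $A^1$); your universal-property computation ($px=x$ forces $px(t+x)^{-1}=x(t+x)^{-1}$, hence $px^{1/n}=x^{1/n}$, hence $pe=e$ and so $e\le p$) is exactly right and also yields uniqueness of the limit point; the passage between $x$ and $y=x(1+x)^{-1}$ via $x=y(1-y)^{-1}$ is correct; and the openness conclusion from $x^{1/n}=ex^{1/n}=x^{1/n}e\to e$ weak* is correct.

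The genuine gap is at the central step: your mechanism for producing the weak* limit $e$ and verifying that it is a projection does not work. You propose to reduce, via the polar decomposition $x=v|x|$, to the Borel functional calculus of $|x|$ and a bounded-convergence argument for the symbols $z\mapsto z^{1/n}$ on $\sigma_B(x)$. For non-normal $x$ this reduction is invalid: the sectorial-calculus roots $x^{1/n}$ bear no computable relation to $|x|=(x^*x)^{1/2}$, and bounded pointwise convergence of symbols on the spectrum implies convergence of the corresponding operators only for normal elements (where one genuinely has a spectral measure). Since the entire point of the real-positivity theory is that $x$ is not normal, this step -- which you call a ``secondary technical point'' but which is in fact the crux -- collapses. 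The argument in \cite{BRI,BRII} instead runs as follows: the semigroup law $x^{1/m}x^{1/n}=x^{1/m+1/n}$ together with norm continuity of $t\mapsto x^t$ on $(0,\infty)$ gives $x^{1/m}x^{1/n}\to x^{1/m}$ in norm as $n\to\infty$; if $e$ is any weak* limit point of the bounded sequence $(x^{1/n})$, separate weak* continuity of multiplication in $A^{**}$ then yields $x^{1/m}e=x^{1/m}$ for each $m$, and letting $m\to\infty$ along the subnet gives $e^2=e$; one then uses that $e$ is a contractive (indeed real positive) idempotent to conclude it is an orthogonal projection. Only after that does your universal-property argument identify all limit points and upgrade subnet convergence to convergence of the whole sequence. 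Without some replacement for this chain, the existence of the weak* limit and its idempotency are unproved in your write-up.
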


\begin{proposition} \label{sdaggi}
In an operator $*$-algebra $A$, ${\mathfrak F}_A$ and ${\mathfrak r}_A$ are $\dagger$-closed,
and if $x \in {\mathfrak r}_A$ we have $(x^\dg)^{1/n} = (x^{1/n})^{\dg}$ for  $n \in \Ndb$, and $s(x)^\dg = s(x^{\dg})$.
If $x \in {\mathfrak F}_A$ then  $s(x) \vee s(x^\dagger) = s(x + x^\dagger)$. 
In particular if $x$ is $\dagger$-selfadjoint then so is $s(x)$.  \end{proposition}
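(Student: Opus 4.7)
My plan is to pick a compatible $C^*$-cover $B$ of $A$ together with a period-2 $*$-automorphism $\sigma: B \to B$ satisfying $a^\dg = \sigma(a)^*$ for $a \in A$, which exists by Theorem \ref{opinvch}. The map $\sigma$ extends to a normal $*$-automorphism of $B^{**}$ that commutes with weak* limits, the holomorphic functional calculus, and the $C^*$-support map on positive elements, while the bidual involution $\dg$ on $A^{**}$ from Lemma \ref{bidu} is weak* continuous and agrees there with the restriction of $\sigma(\cdot)^*$. These tools will do most of the work.

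First I will handle the three easy assertions. Since $1^\dg = 1$ (Lemma \ref{mey}) and $\dg$ is an isometry, for $x \in \mathfrak{F}_A$ we have $\|1 - x^\dg\| = \|(1 - x)^\dg\| = \|1 - x\| \leq 1$. For $x \in \mathfrak{r}_A$ I compute $x^\dg + (x^\dg)^* = \sigma(x)^* + \sigma(x) = \sigma(x + x^*) \geq 0$. For $n$-th roots, I define $x^{1/n}$ by the principal branch of $z \mapsto z^{1/n}$ applied to the accretive element $x$ in $B$, so that
$$(x^{1/n})^\dg = \sigma(x^{1/n})^* = (\sigma(x)^{1/n})^* = (\sigma(x)^*)^{1/n} = (x^\dg)^{1/n},$$
where the third equality uses that $(y^{1/n})^*$ is an accretive $n$-th root of $y^*$ and so must equal $(y^*)^{1/n}$ by uniqueness of accretive $n$-th roots.

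Next, applying the weak* continuous involution $\dg$ to the formula $s(x) = \wks\text{-}\lim_n x^{1/n}$ (from the quoted Corollary 3.4 of \cite{BRII}) and invoking the previous display gives $s(x)^\dg = s(x^\dg)$. Specializing to $x = x^\dg$ yields the ``in particular'' clause that $s(x)$ is $\dg$-selfadjoint when $x$ is.

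The final identity $s(x + x^\dg) = s(x) \vee s(x^\dg)$ is where I expect the real effort. The inclusion ``$\leq$'' is routine: setting $q := s(x) \vee s(x^\dg)$, the dominance $q \geq s(x), s(x^\dg)$ gives $qx = xq = x$ and $q x^\dg = x^\dg q = x^\dg$, so $q$ absorbs $y := x + x^\dg$ on both sides, forcing $s(y) \leq q$. For the reverse inclusion, put $p = s(y)$; then $py = y = yp$ and their adjoints give $p(y + y^*) = (y + y^*)p = y + y^*$, hence $p \geq s_{C^*}(y + y^*)$ in $B^{**}$. The decisive observation is that $y + y^* = (x + x^*) + \sigma(x + x^*)$ is the sum of two positive elements of $B$, yielding $s_{C^*}(y + y^*) = s_{C^*}(x + x^*) \vee \sigma(s_{C^*}(x + x^*))$. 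To finish, I will identify $s_{C^*}(x + x^*) \in B^{**}$ with $s(x) \in A^{**}$: this identification is legitimate for $x \in \mathfrak{F}_A$ because the inequality $xx^* \leq x + x^*$ valid there (obtained from $\|1 - x\| \leq 1$) forces $x^{1/n}$ and an appropriate root of $x + x^*$ to share a common weak* limit in $B^{**}$. Securing this identification is the main obstacle of the proof; once it is in hand, $\sigma(s_{C^*}(x + x^*)) = \sigma(s(x)) = s(x)^\dg = s(x^\dg)$, and we conclude $p \geq s(x) \vee s(x^\dg) = q$, closing the argument.
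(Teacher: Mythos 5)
Your proposal is correct and follows essentially the same route as the paper: the first four assertions are handled identically (apply $\dagger$ to $\|1-x\|\leq 1$, pass to a compatible $C^*$-cover with $*$-automorphism $\sigma$ for ${\mathfrak r}_A$ and for commuting $\sigma$ with $n$th roots, then take weak* limits of $x^{1/n}$), and your argument for $s(x)\vee s(x^\dagger)=s(x+x^\dagger)$ is precisely an unpacking of the proof of \cite[Proposition 2.14]{BRI} that the paper cites at this point. The identification you flag as the main obstacle, namely that $s(x)$ coincides with the $C^*$-algebraic support projection of $x+x^*$ for $x\in{\mathfrak F}_A$, is exactly the content underlying that citation and closes quickly from the two inequalities $x^*x\leq x+x^*$ and $xx^*\leq x+x^*$: if $q$ is the support of $x+x^*$ then $(1-q)(x+x^*)(1-q)=0$ forces $x(1-q)=(1-q)x=0$, so $q\geq s(x)$, while $s(x)(x+x^*)=(x+x^*)s(x)=x+x^*$ gives the reverse inequality.
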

\begin{proof}   Applying $\dagger$ we see that
$\| 1 - x \| \leq 1$ implies $\| 1 - x^\dagger \| \leq 1$.   For the  $\dagger$-invariance of  ${\mathfrak r}_A$ note that this is easy to see for a $C^*$-cover $B$ with compatible involution (Definition \ref{ivltcmptb}), and then one may use the fact that 
${\mathfrak r}_A = A \cap {\mathfrak r}_B$.   Similarly if $x \in  {\mathfrak r}_A$ and $\sigma$ is the
 $*$-automorphism on $B$ inducing $\dagger$ then $\sigma(x^{1/n}) = \sigma(x)^{1/n}$ clearly.
 Applying $*$,  it follows that 
$(x^{\dg})^{1/n} =  (x^{1/n})^\dagger.$     Note that if $x \in {\mathfrak F}_A$ then this may be seen more explicitly  since  $x^{1/n}$ may be written as a power series in $1-x$ with real coefficients.  Then
$s(x)^\dagger = (w^{\ast}\lim_n x^{1/n})^\dagger =  s(x^{\dg})$.  
 The  $s(x + x^\dagger)$ assertion follows from e.g.\ the proof of \cite[Proposition 2.14]{BRI}.  
\end{proof}	

Thus the theory of real positivity studied in many of the first authors recent papers will have good involutive variants.   

 See the end of Section \ref{inv}   for notation used in the following result and proof.

\begin{corollary} \label{fbab} For any operator $\ast$-algebra $A,$ if $x\in {\mathfrak r}_A$ is $\dagger$-selfadjoint, then  $a={\mathfrak F}(x)=x(1+x)^{-1}\in \frac{1}{2}{\mathfrak F}_A$ is
$\dagger$-selfadjoint, and $\overline{xA}=\overline{aA}=s(x)A^{\ast\ast}\cap A$ is a  right ideal in $A$ with 
a $\dagger$-selfadjoint left cai. Also, $\overline{xAx}=\overline{aAa}$ is a  $\dagger$-selfadjoint HSA whose support
projection is $\dagger$-selfadjoint.  
\end{corollary}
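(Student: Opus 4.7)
The plan is to leverage the non-involutive results already quoted (the theorem of \cite{BRII} cited just above Proposition \ref{sdaggi}, and the ${\mathfrak F}$-transform computation at the close of Section 2) and promote each assertion to its $\dagger$-variant using Proposition \ref{sdaggi} together with the hypothesis $x^\dagger = x$.

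First, $a = x(1+x)^{-1} = {\mathfrak F}(x) \in \frac{1}{2}{\mathfrak F}_A$ by the standard ${\mathfrak F}$-transform properties, and it is $\dagger$-selfadjoint because the ${\mathfrak F}$-transform commutes with $\dagger$: the displayed computation just before Section \ref{Exs} gives ${\mathfrak F}(x^\dagger) = {\mathfrak F}(x)^\dagger$, and $x = x^\dagger$ then yields $a = a^\dagger$. The equalities $\overline{xA} = \overline{aA} = s(x)A^{**} \cap A$ (a right ideal) and $\overline{xAx} = \overline{aAa}$ (a HSA) are immediate from \cite[Corollary 3.4]{BRII} as quoted above Proposition \ref{sdaggi}.

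Second, to produce a $\dagger$-selfadjoint left cai for $\overline{xA}$, I would use the resolvent sequence $e_n = {\mathfrak F}(nx) = nx(1+nx)^{-1}$. It lies in $\overline{xA}$, since $e_n = x\cdot n(1+nx)^{-1}$; it is $\dagger$-selfadjoint because $(1+nx)^{-1}$ commutes with $x$ and is a norm-limit of polynomials in $x$ with real coefficients (alternatively, apply ${\mathfrak F}(y^\dagger) = {\mathfrak F}(y)^\dagger$ to $y=nx$); and $(e_n)$ is a left cai for $\overline{xA}$ by the same operator algebra theory underlying \cite[Corollary 3.4]{BRII}. For the HSA assertion, $\dagger$-selfadjointness of the support projection is exactly Proposition \ref{sdaggi}, $s(x)^\dagger = s(x^\dagger) = s(x)$; and $\overline{aAa}$ is $\dagger$-closed via $(\overline{aAa})^\dagger = \overline{a^\dagger A^\dagger a^\dagger} = \overline{aAa}$, using $a = a^\dagger$ and $A^\dagger = A$.

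The only delicate point is the $\dagger$-selfadjoint left cai. Naive symmetrization $e_t \mapsto \frac{1}{2}(e_t + e_t^\dagger)$ of an arbitrary left cai does not work, because $\overline{xA}$ is not itself $\dagger$-closed ($e_t^\dagger$ would lie in $\overline{Ax}$, not in $\overline{xA}$); the functional-calculus construction above sidesteps this by producing $\dagger$-selfadjoint elements already inside $\overline{xA}$.
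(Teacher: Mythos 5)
Your argument is correct, and its overall strategy---quote the non-involutive facts from \cite{BRII} and then upgrade the relevant elements to $\dagger$-selfadjoint ones via Proposition \ref{sdaggi} and the ${\mathfrak F}$-transform compatibility---is the same as the paper's. The one point where you genuinely diverge is the construction of the $\dagger$-selfadjoint left cai for $\overline{xA}$: the paper takes the roots $(a^{1/n})$ of $a={\mathfrak F}(x)$, which are $\dagger$-selfadjoint by the identity $(x^{1/n})^{\dagger}=(x^{\dagger})^{1/n}$ of Proposition \ref{sdaggi} and are already known to be a left cai for $\overline{aA}$ from the operator algebra theory; you instead take the resolvent sequence $e_n={\mathfrak F}(nx)=nx(1+nx)^{-1}$. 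Both work. Your choice has the small advantage that the left cai property is a one-line computation, since $e_n y-y=-(1+nx)^{-1}y$ and $\Vert (1+nx)^{-1}xz\Vert\leq \Vert z\Vert/n$, so no appeal to the root machinery is needed; the minor price is that you should note $e_n=x\cdot n(1+nx)^{-1}$ a priori lies only in $xA^1$, and one needs $x\in\overline{xA}$ (which follows from the quoted equality $\overline{xA}=s(x)A^{\ast\ast}\cap A$) to conclude $e_n\in\overline{xA}$. Your closing observation---that naive symmetrization $e_t\mapsto (e_t+e_t^{\dagger})/2$ of an arbitrary left cai fails because $\overline{xA}$ is not $\dagger$-closed---is exactly the point that forces both you and the paper to manufacture the cai out of $x$ itself, and is worth recording.
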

\begin{proof} 
It is an exercise that  $a=x(1+x)^{-1}$ is $\dagger$-selfadjoint,
and is in $\frac{1}{2}{\mathfrak F}_A$ by the previous
result.
Since 
 $(a^{1/n})$ is  $\dagger$-selfadjoint by a fact in  the last proof, $(a^{1/n})$ 
 serves as a $\dg$-selfadjoint left cai for $\overline{aA}.$ Besides, $\overline{aAa}$ is $\dagger$-selfadjoint and the weak* limit of $(a^{1/n})$ is $s(a).$ The rest follows from \cite[Corollary 3.5]{BRII}.	
\end{proof}

We remark that by the operator algebra theory,   $\overline{xAx}$ is the HSA  matching the right ideal $\overline{xA}$ under the bijective
correspondence between these objects  (see e.g. \cite{BHN,BRI,BRII}).   See also Section 5.1 below for the involutive variant of this correspondence.

\begin{lemma}
If $x\in {\mathfrak F}_A,$ with $x\neq 0,$ then the operator $\ast$-algebra generated by $x$, denoted $\osa{x},$ has a cai.	Indeed, the operator $\ast$-algebra $\osa{x}$ has a $\dagger$-selfadjoint sequential cai belonging to $\frac{1}{2}{\mathfrak F}_A.$
\end{lemma}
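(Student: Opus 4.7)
The plan is to realize $\osa{x}$ as a subalgebra of a convenient $\dagger$-selfadjoint HSA and transfer a cai from the latter, then apply Corollary~\ref{cdgcai12} to polish it.

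First I would set $a = x + x^{\dagger}$. Since $x \in {\mathfrak F}_A \subset {\mathfrak r}_A$, and by Proposition~\ref{sdaggi} also $x^{\dagger} \in {\mathfrak F}_A \subset {\mathfrak r}_A$, the sum $a$ lies in ${\mathfrak r}_A$ and is $\dagger$-selfadjoint. The final identity in Proposition~\ref{sdaggi} gives $s(a) = s(x) \vee s(x^{\dagger})$; denote this projection by $p$. The formula $s(x)^{\dagger} = s(x^{\dagger})$ from the same proposition gives $p^{\dagger} = p$, so the HSA $D_p = pA^{\ast\ast}p \cap A = \overline{aAa}$ corresponding to $p$ is $\dagger$-selfadjoint. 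Since $p \geq s(x)$ and $p \geq s(x^{\dagger})$, we have $px = xp = x$ and $px^{\dagger} = x^{\dagger}p = x^{\dagger}$, so both $x$ and $x^{\dagger}$ lie in $D_p$, hence $\osa{x} \subset D_p$.

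Next, by Corollary~\ref{fbab} applied to $a$, the element $b = {\mathfrak F}(a) = a(1+a)^{-1}$ is $\dagger$-selfadjoint and belongs to $\frac{1}{2}{\mathfrak F}_A$, and the sequence $(b^{1/n})_{n \in \Ndb}$ is a $\dagger$-selfadjoint sequential (left) cai for $D_p$. From the Blecher--Read theory of real positivity tacitly used throughout the earlier results of Section~4, the $\mathfrak F$-transform and the $n$-th roots of a real positive element belong to the (nonunital) operator algebra generated by that element alone; thus $b \in \mathrm{oa}(a) \subset \osa{x}$ and $b^{1/n} \in \mathrm{oa}(b) \subset \osa{x}$ for each $n$. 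Because $\osa{x} \subset D_p$, the sequence $(b^{1/n})$ acts in particular as a sequential cai for $\osa{x}$; this already proves that $\osa{x}$ has a cai.

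Finally, since $\osa{x}$ now has a countable cai, Corollary~\ref{cdgcai12} applied inside $\osa{x}$ delivers a $\dagger$-selfadjoint sequential cai lying in $\tfrac{1}{2}{\mathfrak F}_{\osa{x}} \subset \tfrac{1}{2}{\mathfrak F}_A$, completing the proof.

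The main obstacle, and the only point that is not a direct assembly of Proposition~\ref{sdaggi} with Corollaries~\ref{fbab} and~\ref{cdgcai12}, is verifying that $b = {\mathfrak F}(a)$ and its roots $b^{1/n}$ really lie inside $\osa{x}$ rather than merely inside a surrounding $C^{\ast}$-cover. This is a standard but nontrivial feature of the real positivity machinery (the operator algebra generated by a real positive element is closed under the $\mathfrak F$- and $\kappa$-transforms and under $n$-th roots), and once it is invoked the argument collapses into the chain of inclusions above.
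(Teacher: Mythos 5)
Your argument is correct, and it shares the paper's key object --- the projection $p = s(x)\vee s(x^{\dagger}) = s(x+x^{\dagger})$ --- but the mechanism for producing the cai is genuinely different. The paper observes that $p$ lies in $\osa{x}^{\ast\ast}$ and is an identity there (since $px=xp=x$ and $px^{\dagger}=x^{\dagger}p=x^{\dagger}$), then invokes the abstract fact \cite[Theorem 2.5.8]{BLM} that an identity in the bidual yields a cai; the sequential $\dagger$-selfadjoint cai in $\frac{1}{2}{\mathfrak F}_A$ is then obtained via separability of $\osa{x}$ together with \cite[Corollary 2.17, Theorem 2.19]{BRI} and Corollary \ref{cdgcai12}. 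You instead embed $\osa{x}$ into the $\dagger$-HSA $\overline{aAa}=pA^{\ast\ast}p\cap A$ for $a=x+x^{\dagger}$ and pull the explicit cai $\bigl(({\mathfrak F}(a))^{1/n}\bigr)$ down into $\osa{x}$, which produces a countable cai directly and bypasses the separability detour. The price is that you must know that ${\mathfrak F}(a)$ and its $n$th roots lie in ${\rm oa}(a)\subset\osa{x}$ rather than merely in a $C^{\ast}$-cover; this is indeed a standard fact of the real-positivity machinery (\cite[Section 2]{BRI}, \cite[Section 2.2]{BRord}), and the paper's own assertion that $p\in\osa{x}^{\ast\ast}$ tacitly relies on the same fact, so you are not assuming anything the paper does not. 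Your closing appeal to Corollary \ref{cdgcai12} applied to $\osa{x}$ (using ${\mathfrak F}_{\osa{x}}\subset{\mathfrak F}_A$, which holds by uniqueness of the unitization) coincides with the paper's final step. In short: same skeleton, but your proof is more constructive about where the cai comes from, while the paper's is shorter because it leans on the bidual-identity criterion.
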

\begin{proof}
 If $x\in {\mathfrak F}_A,$ then $x^{\dg}\in {\mathfrak F}_A$ as we proved above.    
Denote $B=C^{\ast}_e(A),$ then $p = s(x)\vee s(x^{\dg}) = s(x + x^\dagger)$ in $B^{\ast\ast}$ is in $\osa{x}^{\ast\ast}.$  
Clearly 
$p x=x p=x$ and $p x^{\dg}=x^{\dg} p=x^{\dg}.$
Therefore, $p$ is an identity in $\osa{x}^{\ast\ast}.$ By \cite[Theorem 2.5.8]{BLM}, $\osa{x}$ has a cai. 

Moreover, since $\osa{x}$ is separable,  by \cite [Corollary 2.17]{BRI}, there exists $a\in{\mathfrak F}_A$ such that $s(a)=1_{\osa{x}^{\ast\ast}}$. Therefore $\osa{x}$ has a countable $\dagger$-selfadjoint cai by applying to \cite[Theorem 2.19]{BRI} and Corollary \ref{cdgcai12}. 
\end{proof}

We write $x  \preccurlyeq y$ if $y - x \in {\mathfrak r}_A$.   The ensuing  `order theory'  in the involutive case is largely similar to 
the operator algebra case from \cite{BRord}.  For example:

\begin{theorem} \label{havin}  Let  $A$ be an operator $*$-algebra which generates a $C^*$-algebra
$B$ with compatible involution $\dagger$, and let ${\mathcal U}_A = \{ a \in A : \Vert a \Vert < 1 \}$.  The following are equivalent:
\begin{itemize} \item [(1)]   $A$ is approximately unital.
 \item [(2)]  For any $\dagger$-selfadjoint positive $b \in {\mathcal U}_B$ there exists  $\dagger$-selfadjoint $a \in {\mathfrak c}_A$
with $b \preccurlyeq a$.
 \item [(2')]  Same as {\rm (2)}, but also $a \in \frac{1}{2}  {\mathfrak F}_A$ and `nearly positive' in the sense of the introduction to  {\rm \cite{BRord}}:
we can make it as close  in norm as we like to an actual positive element.
  \item [(3)]   For any pair of $\dagger$-selfadjoint elements 
$x, y \in {\mathcal U}_A$ there exist   nearly positive $\dagger$-selfadjoint
$a \in \frac{1}{2}  {\mathfrak F}_A$
with $x \preccurlyeq a$ and $y \preccurlyeq a$.
\item [(4)]    For any $\dagger$-selfadjoint $b \in {\mathcal U}_A$  there exist   nearly positive $\dagger$-selfadjoint
$a \in \frac{1}{2}  {\mathfrak F}_A$
with $-a \preccurlyeq b \preccurlyeq a$.
\item [(5)] For any $\dagger$-selfadjoint $b \in {\mathcal U}_A$  there exist $\dagger$-selfadjoint 
$x, y \in  \frac{1}{2}  {\mathfrak F}_A$ 
with $b = x-y$.
\item [(6)]  ${\mathfrak r}_A$ is a generating cone, indeed any $\dagger$-selfadjoint element in 
$A$ is a difference of  two $\dagger$-selfadjoint elements in ${\mathfrak r}_A$.
\item [(7)]  Same as {\rm (6)} but with ${\mathfrak r}_A$ replaced by $\Rdb_+ \, {\mathfrak F}_A$.
\end{itemize}  \end{theorem}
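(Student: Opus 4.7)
My plan is to reduce each implication to its known non-involutive analog in \cite{BRord} (such as Theorem 2.6 or Theorem 3.2 there) by a single recurring ``symmetrization'' trick: replace any witness $a_0$ produced by the non-involutive theorem with $a := (a_0 + a_0^\dagger)/2$. For this to preserve the relevant properties I need three stability facts. First, Proposition \ref{sdaggi} already gives that $\mathfrak{r}_A$ and $\mathfrak{F}_A$ are $\dagger$-closed, so the averaged element still lies in $\frac{1}{2}\mathfrak{F}_A$ and the partial order $x\preccurlyeq y$ is preserved by $\dagger$. Second, by Theorem \ref{opinvch}, in a compatible $C^{*}$-cover $B$ we have $x^\dagger = \sigma(x)^{*}$ for a period-$2$ $*$-automorphism $\sigma$, so $\sigma$ maps positive elements to positive elements and hence the averaging $p \mapsto (p + p^\dagger)/2 = (p + \sigma(p))/2$ carries positive elements to positive elements. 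Third, this same averaging therefore carries ``nearly positive'' elements to nearly positive elements with the norm estimate unchanged (if $\|a_0 - p_0\| < \varepsilon$ with $p_0 \geq 0$, then $(p_0 + p_0^\dagger)/2 \geq 0$ and $\|a - (p_0+p_0^\dagger)/2\| < \varepsilon$).

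With these tools in hand I would proceed as follows. The implication (1)$\Rightarrow$(2$'$) uses the non-involutive analog to produce $a_0$ with $b \preccurlyeq a_0$; applying $\dagger$ gives $b \preccurlyeq a_0^\dagger$ (since $b$ is $\dagger$-selfadjoint), and averaging yields the desired $\dagger$-selfadjoint $a$. (2$'$)$\Rightarrow$(2) is trivial; (2)$\Rightarrow$(1) is immediate from the non-involutive version. For (1)$\Rightarrow$(3) one likewise invokes the joint majorization statement in \cite{BRord} and symmetrizes. (3)$\Rightarrow$(4) is the special case $y = -x$. (4)$\Rightarrow$(5) sets $x := (a+b)/2$, $y := (a-b)/2$; both are $\dagger$-selfadjoint and in $\mathfrak{r}_A$ by (4), and $b = x - y$. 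For (5)$\Rightarrow$(6) one uses $\frac{1}{2}\mathfrak{F}_A \subset \mathfrak{r}_A$. For (6)$\Rightarrow$(7), real positive $\dagger$-selfadjoint elements may be approximated (via the $\mathfrak{F}$-transform mentioned just above Section \ref{Exs}, which preserves $\dagger$-selfadjointness) by elements in $\Rdb_+ \mathfrak{F}_A$. Finally (7)$\Rightarrow$(1) follows because a generating cone from $\Rdb_+ \mathfrak{F}_A$ forces $A = \overline{\mathrm{span}}\,\mathfrak{F}_A$, which by the standard Blecher--Read criterion already yields a cai, and Lemma \ref{dcai} then gives a $\dagger$-selfadjoint one.

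The step requiring real care is (4)$\Rightarrow$(5): membership of $x,y$ in $\mathfrak{r}_A$ is easy, but membership in $\frac{1}{2}\mathfrak{F}_A$ (equivalently $\|1 - 2x\|\leq 1$) is subtle and is precisely where the ``nearly positive'' strength in (4) is used. Here the argument must follow the corresponding passage in the proof of the analogous result in \cite{BRord}: write $a$ as close to an actual positive element $p\in B$ with $\|p\| \leq 1/2$, and estimate $\|1 - (a+b)\|$ by comparing with $\|1 - p\pm b\|$; because $b$ is $\dagger$-selfadjoint with $\|b\|<1$ and $p$ may be taken $\dagger$-selfadjoint after averaging by the observation above, the norm estimates carry through verbatim from the non-involutive case. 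The other implications are essentially mechanical once the symmetrization lemma is isolated.
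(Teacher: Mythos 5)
Your proposal is correct and follows essentially the same route as the paper: reduce each implication to its non-involutive analogue in \cite{BRord} and symmetrize the witness via $a\mapsto (a+a^\dagger)/2$, using that ${\mathfrak F}_A$, ${\mathfrak r}_A$, positivity, and near-positivity are preserved by $\dagger$ (the paper writes out only (1)$\Rightarrow$(2$'$) and (2$'$)$\Rightarrow$(3) in exactly this way and leaves the remaining implications to the reader with the same averaging trick). Your identification of (4)$\Rightarrow$(5) as the step needing the near-positivity and the norm estimates from \cite{BRord} is consistent with what the paper intends.
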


\begin{proof}   (1) $\Rightarrow$ (2') \  By the proof in \cite[Theorem 2.1]{BRord}  for any $\dagger$-selfadjoint positive $b \in {\mathcal U}_B$ there exists  $c  \in \frac{1}{2}  {\mathfrak F}_A$ and nearly positive with $b \leq {\rm Re} \  c$.   Hence it is easy to see that 
 $b \leq {\rm Re} \,  (c^\dagger)$ and $b \leq {\rm Re} \, a$ where  $a = (c + c^\dagger)/2$.

(2')  $\Rightarrow$ (3) \ By $C^*$-algebra theory there exists  positive $b \in {\mathcal U}_B$ with
${\rm Re} \, x$ and ${\rm Re} \, y$ both $\leq b$.   It is easy to see that $b^\dagger = \sigma(b) \geq 0$.
Then ${\rm Re} \, x  \leq b^\dagger$, so that
${\rm Re} \, x  \leq (b + b^\dagger)/2$.   Similarly for $y$.
Then  apply (2') to obtain $a$ from $(b + b^\dagger)/2$.

The remaining implications follow the proof in  \cite[Theorem 2.1]{BRord}  but using tricks similar to the ones we have used so far
in this proof.   We leave the details to the reader. 
\end{proof}

{\bf Remark.}  Similarly as in Proposition 2.6 in \cite{BRord}, but using our 
Theorem \ref{havin} (3) in the proof in place of the 
matching result referenced there,  one can show that
for an approximately unital operator $\ast$-algebra $A$, the $\dagger$-selfadjoint 
elements of norm $< 1$ in $\frac{1}{2}{\mathfrak F}_A$ is a directed set in 
the $\preccurlyeq$ ordering, and is a cai for $A$ which is increasing
in this ordering.   

\medskip

The following is a version of the Aarnes-Kadison Theorem for operator $\ast$-algebras.

\begin{theorem}[Aarnes-Kadison type Theorem] If $A$ is an operator $\ast$-algebra then the following are equivalent:
\begin{itemize}
\item[(i)] There exists a $\dagger$-selfadjoint $x\in{\mathfrak r}_A$ with $A$ the norm closure of $xAx.$
\item[(ii)]	There exists a $\dagger$-selfadjoint $x\in{\mathfrak r}_A$ with $A$ the norm closure of $xA$,
which equals the norm closure of $Ax.$
\item[(iii)] There exists a $\dagger$-selfadjoint $x\in{\mathfrak r}_A$ with $s(x)=1_{A^{\ast\ast}}.$
\item[(iv)] $A$ has a countable $\dagger$-selfadjoint cai.
\item[(v)]  $A$ has a $\dg$-selfadjoint and strictly real positive element.
\end{itemize}
Indeed these are all equivalent to the same conditions with `$\dagger$-selfadjoint' removed.	
\end{theorem}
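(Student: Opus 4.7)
First, I would dispose of the claim in the last sentence: the equivalence of (i)--(v) without the ``$\dagger$-selfadjoint'' qualifier is the (non-involutive) Aarnes--Kadison theorem for operator algebras established in the earlier work \cite{BRI, BRII} cited here (see the discussion around \cite[Theorem 2.19]{BRI}), so it may simply be invoked. Since each $\dagger$-selfadjoint condition trivially implies its non-involutive counterpart, the theorem reduces to showing that the non-involutive version of one of (i)--(v)---I would take (iii)---implies the $\dagger$-selfadjoint version of (iii).

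For this central step I would start from an $x \in {\mathfrak r}_A$ with $s(x) = 1_{A^{**}}$, apply the ${\mathfrak F}$-transform to replace $x$ by $b = x(1+x)^{-1} \in \tfrac{1}{2}{\mathfrak F}_A$ (which satisfies $s(b) = s(x) = 1_{A^{**}}$), and then symmetrize: set $c = \tfrac{1}{2}(b + b^\dagger)$. Since ${\mathfrak F}_A$ is convex and $\dagger$-closed by Proposition \ref{sdaggi}, $c$ lies in $\tfrac{1}{2}{\mathfrak F}_A \subset {\mathfrak r}_A$, and is manifestly $\dagger$-selfadjoint. Applying the identity in Proposition \ref{sdaggi},
$$ s(b + b^\dagger) \;=\; s(b) \vee s(b^\dagger) \;=\; 1_{A^{**}} \vee 1_{A^{**}} \;=\; 1_{A^{**}}, $$
and since $c$ differs from $b + b^\dagger$ by a positive scalar, $s(c) = 1_{A^{**}}$ as well. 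Thus $c$ is a $\dagger$-selfadjoint element of ${\mathfrak r}_A$ with full support, proving the $\dagger$-selfadjoint version of (iii).

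From this strengthened (iii) the other $\dagger$-selfadjoint conditions all fall out. Applying Corollary \ref{fbab} to $c$ gives $\overline{cA} = s(c) A^{**} \cap A = A$ and $\overline{cAc} = A$, delivering (i) and (ii). Condition (v) is just a restatement of the strengthened (iii). For (iv), the existence of $c$ yields a countable cai through standard $\mathfrak{F}$-transform theory (the elements $c^{1/n}$ cluster weak$^*$ at $s(c) = 1$), which may then be replaced by a countable $\dagger$-selfadjoint cai via Corollary \ref{cdgcai12}; conversely, from a countable $\dagger$-selfadjoint cai $(e_n)$ one recovers a $\dagger$-selfadjoint strictly real positive element as the norm convergent sum $\sum_n 2^{-n} e_n$, closing the loop to (v).

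The only real obstacle is the symmetrization $b \mapsto \tfrac{1}{2}(b + b^\dagger)$: one must know that the full-support condition survives this operation. That is exactly where one needs the identity $s(b + b^\dagger) = s(b) \vee s(b^\dagger)$, which Proposition \ref{sdaggi} supplies only for elements of ${\mathfrak F}_A$, forcing the preliminary ${\mathfrak F}$-transform before symmetrizing. Everything else is a careful bookkeeping of $\dagger$-invariance through operator algebra theory already in hand.
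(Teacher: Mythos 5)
Your proof is correct, and it reorganizes the argument in a way that differs from the paper's. The paper's proof also begins with the ${\mathfrak F}$-transform replacement $x \mapsto x(1+x)^{-1}$, but then it establishes the equivalence of the $\dagger$-selfadjoint versions of (i)--(iv) by re-running the non-involutive proofs of \cite[Lemma 2.10, Theorem 2.19]{BRI} while tracking $\dagger$-selfadjointness (e.g.\ that $x^{1/n}$ is $\dagger$-selfadjoint when $x$ is, from Proposition \ref{sdaggi}), and it handles the final assertion by symmetrizing at the level of condition (iv): a countable cai can be replaced by a countable $\dagger$-selfadjoint cai (Lemma \ref{dcai} / Corollary \ref{cdgcai12}). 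You instead invoke the non-involutive theorem as a black box and perform a single symmetrization at the level of condition (iii), using the identity $s(b+b^\dagger) = s(b) \vee s(b^\dagger)$ for $b \in {\mathfrak F}_A$ from Proposition \ref{sdaggi}; this concentrates the involutive content in one place and makes clear exactly which lemma carries the load, at the cost of still needing the non-involutive equivalences twice (once to reach (iii), once to propagate back out). Two small points of bookkeeping: for (ii) you should note explicitly that $\overline{Ac} = (\overline{cA})^\dagger = A^\dagger = A$ since $c$ is $\dagger$-selfadjoint; and (v) is not literally a restatement of (iii), since `strictly real positive' has its own definition in \cite{BRI} --- what you are using is the cited equivalence that an element of ${\mathfrak r}_A$ is strictly real positive iff its support projection is $1$ (\cite[Lemma 2.10]{BRI}, \cite[Section 3]{BRord}), which is the same fact the paper invokes. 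Neither point is a gap.
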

\begin{proof}   In (i)--(iii) we can assume that $x\in{\mathfrak F}_A$ by replacing it with the $\dagger$-selfadjoint
element $x(1+x)^{-1} \in  \frac{1}{2} {\mathfrak F}_A$ (see \cite[Section 2.2]{BRord}).  
Then the  equivalence of (i)--(iv) follows as in \cite[Lemma 2.10 and Theorem 2.19]{BRI}, for (iv) using that 
$x^{\frac{1}{n}}$ is  $\dagger$-selfadjoint as we said in the proof of Proposition \ref{sdaggi}. 
 Similarly (v) follows from these by  \cite[Lemma 2.10]{BRI},
and the converse follows since  strictly real positive elements have support projection $1$
(see \cite[Section 3]{BRord}).   The final assertion follows since 
if $A$ has a countable cai, then $A$ has a $\dagger$-selfadjoint countable cai (Lemma \ref{dcai}). 
\end{proof}

\subsection{Cohen factorization for operator $\ast$-algebras and their modules}
The Cohen factorization theorem is a crucial tool for Banach algebras, operator algebras and their modules. In this section we will give a variant that works for operator $\ast$-algebras and their modules.
Recall that if $X$ is a Banach space and $A$ is a Banach algebra then $X$ is called a {Banach $A$-module} if there is a module action $A\times X\to X$ which is a contractive linear map. If $A$ has a bounded approximate identity $(e_t)$ then we say that $X$ is nondegenerate if $e_t x\to x$ for $x\in X.$ A {\em Banach $A$-bimodule} is both a left and a right Banach $A$-module such that $a(xb)=(ax)b.$

The following is an operator $\ast$-algebra version of the Cohen factorization theorem:
\begin{theorem}
If $A$ is approximately unital operator $\ast$-algebra, and if $X$ is a nondegenerate Banach $A$-module(resp.\ $A$-bimodule), if $b\in X$ then there exists an element $b_0\in X$ and a $\dg$-selfadjoint $a\in {\mathfrak F}_A$ with $b=ab_0$ (resp.\ $b=ab_0a$). Moreover if $\Vert b\Vert<1$ then $b_0$ and $a$ may be chosen of norm $<1.$ \end{theorem}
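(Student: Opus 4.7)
The plan is to adapt the classical Cohen factorization argument for approximately unital operator algebras (see e.g.\ \cite[Section 2]{BRI} or \cite[Section 2.4]{BLM}) to the involutive setting. The key new ingredient is the existence, guaranteed by Lemma \ref{dcai} and Corollary \ref{cdgcai12}, of a $\dagger$-selfadjoint cai $(e_t)$ for $A$ lying in $\tfrac12 \mathfrak{F}_A$. The central observation is that in the standard construction the factor $a$ arises as a norm limit of real linear combinations of cai elements and $1_{A^1}$, so if the cai is $\dagger$-selfadjoint and only real scalars appear then every intermediate element, and hence $a$ itself, is $\dagger$-selfadjoint.

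Concretely, for the module case I would fix $\alpha \in (0,1)$ and inductively choose indices $t_n$ so that
$$
a_n \;=\; \alpha \sum_{k=1}^{n} (1-\alpha)^{k-1} e_{t_k} \,+\, (1-\alpha)^n\, 1_{A^1} \;\in\; A^1
$$
is invertible in $A^1$ and $\|a_n^{-1} b - a_{n-1}^{-1} b\| < 2^{-n}$; this is the usual Cohen recursion. Since each $e_{t_k}$ and $1_{A^1}$ is $\dagger$-selfadjoint and the coefficients are real, $a_n$ is $\dagger$-selfadjoint, and applying $\dagger$ to $a_n a_n^{-1} = 1$ together with the anti-multiplicativity of $\dagger$ gives $(a_n^{-1})^\dagger = a_n^{-1}$. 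Passing to limits, $a = \lim_n a_n \in A$ (the scalar tail vanishes) is $\dagger$-selfadjoint, $b_0 = \lim_n a_n^{-1} b$ satisfies $b = a b_0$, and since $\alpha \sum_k (1-\alpha)^{k-1} = 1$ one has
$$
\|1 - a\| \;=\; \Bigl\| \alpha \sum_k (1-\alpha)^{k-1}(1 - e_{t_k}) \Bigr\| \;\leq\; 1,
$$
using $\|1 - e_{t_k}\| \leq 1$ from $e_{t_k} \in \tfrac12 \mathfrak{F}_A$. Hence $a \in \mathfrak{F}_A$.

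For the bimodule case, I would run the standard two-sided Cohen recursion, selecting $t_n$ so that $(a_n^{-1} b a_n^{-1})$ is Cauchy in $X$; each $a_n$ is again $\dagger$-selfadjoint for the same reason, and the limit yields $b = a b_0 a$ with $a$ $\dagger$-selfadjoint in $\mathfrak{F}_A$. For the final norm clause, when $\|b\| < 1$ the usual estimates permit $\|b_0 - b\|$ to be made arbitrarily small, giving $\|b_0\| < 1$. To simultaneously force $\|a\| < 1$, I would replace the pair $(a, b_0)$ by $(\lambda a, \lambda^{-1} b_0)$ for $\lambda \in (0,1)$ close to $1$ (and by $(\lambda a, \lambda^{-2} b_0)$ in the bimodule case): this preserves the factorization and $\dagger$-selfadjointness, keeps $\lambda a \in \mathfrak{F}_A$ since $\|1 - \lambda a\| \leq (1-\lambda) + \lambda \|1 - a\| \leq 1$, and makes both norms strictly less than $1$ for $1 - \lambda$ small enough.

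The main obstacle is essentially bookkeeping: verifying that the classical recursion can be carried out entirely through $\dagger$-selfadjoint elements. This works precisely because the construction uses only real scalars and a $\dagger$-selfadjoint cai, and because $\dagger$-selfadjointness is preserved under inversion in $A^1$. No new analytic input beyond the existence of a $\dagger$-selfadjoint cai in $\tfrac12 \mathfrak{F}_A$ is required.
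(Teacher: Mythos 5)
Your proposal is correct and follows essentially the same route as the paper, which simply observes that in Pedersen's factorization theorem \cite[Theorem 4.1]{PED} the element $a$ is built from convex combinations of cai elements, so choosing the cai $\dagger$-selfadjoint (Lemma \ref{dcai}) makes $a$ $\dagger$-selfadjoint, and leaves the remaining details as an exercise. You have carried out exactly that exercise, including the correct observations that real convex combinations of $\dagger$-selfadjoint elements and $1_{A^1}$ are $\dagger$-selfadjoint, that inverses of $\dagger$-selfadjoint invertibles are $\dagger$-selfadjoint, and that a cai in $\tfrac12{\mathfrak F}_A$ yields $\Vert 1-a\Vert\leq 1$.
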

\begin{proof}
In \cite[Theorem 4.1]{PED}, the $a$ is constructed from convex combinations of elements in a cai, and in our case the cai may be chosen $\dagger$-selfadjoint by Lemma \ref{dcai}. The details are  left as an exercise to the reader. 
\end{proof}

\subsection{Multiplier algebras}
\begin{theorem}\label{lefmult}
Let $A$ be an approximately unital operator $\ast$-algebra. Then the following algebras are completely isometrically isomorphic:
\begin{itemize}
\item[(i)] $LM(A)=\{\eta\in A^{\ast\ast}: \eta A\subset A\},$ 

\item[(ii)] $LM(\pi)=\{T \in B(H): T\pi(A)\subset\pi(A)\},$ 
where $\pi$ is a  nondegenerate completely isometric representation  of $A$ on a Hilbert space $H$ such that there exists an order 2 $\ast$-automorphism $\sigma: B(H)\to B(H)$ satisfying $\sigma(\pi(a))^{\ast}=\pi(a^{\dagger})$ for any $a\in A,$ 
\item[(iii)] the set of completely bounded right $A$-module maps $CB_A(A).$ \end{itemize}	
\end{theorem}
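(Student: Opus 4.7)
The plan is to deduce this from the non-involutive analogue (see e.g.\ \cite[Theorem 2.6.2]{BLM}), which establishes $LM(A) \cong LM(\pi) \cong CB_A(A)$ completely isometrically for any nondegenerate completely isometric representation $\pi$ of an approximately unital operator algebra $A$. So the only new ingredient needed is the existence of such a $\pi$ satisfying the compatibility condition in (ii).

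First I would produce $\pi$. Theorem \ref{opinvch}(iii) yields a completely isometric homomorphism $\pi_0 : A \to B(K)$ and a symmetry $u \in B(K)$ with $u \pi_0(a)^* u = \pi_0(a^\dagger)$ for every $a \in A$; the assignment $\sigma_0(T) := u T u$ then defines an order 2 $*$-automorphism of $B(K)$ (since $u = u^* = u^{-1}$) satisfying $\sigma_0(\pi_0(a))^* = \pi_0(a^\dagger)$. The next task is to arrange that $\pi_0$ is nondegenerate while preserving the compatibility with $\sigma_0$. This rests on the existence of a $\dagger$-selfadjoint cai $(e_t)$ of $A$ (Lemma \ref{dcai}): a direct calculation gives $\sigma_0(\pi_0(e_t)) = u \pi_0(e_t) u = \pi_0(e_t^\dagger)^* = \pi_0(e_t)^*$, so the positive operators $\pi_0(e_t) + \pi_0(e_t)^* \in B(K)$ are $\sigma_0$-fixed. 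Passing to strong-operator limits shows that the essential projection $p \in B(K)$ associated with the $*$-algebra generated by $\pi_0(A)$ is $\sigma_0$-invariant. After an application of standard operator-algebraic reductions (restricting to the essential subspace, and, if necessary, doubling via $\pi_0 \oplus \pi_0'$ with $\pi_0'(a) = \pi_0(a^\dagger)^*$), we obtain a nondegenerate completely isometric representation $\pi : A \to B(H)$ on which the restricted order 2 $*$-automorphism $\sigma := \sigma_0|_{B(H)}$ still satisfies $\sigma(\pi(a))^* = \pi(a^\dagger)$.

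Second, with $\pi$ in hand, the three isomorphisms are purely operator-algebraic. The representation $\pi$ admits a unique weak*-continuous completely isometric homomorphic extension $\tilde \pi : A^{**} \to B(H)$, which restricts to a completely isometric isomorphism $LM(A) \to LM(\pi)$. The map $\eta \mapsto L_\eta$ (left multiplication by $\eta$) gives a completely isometric isomorphism $LM(A) \to CB_A(A)$, with inverse $T \mapsto T^{**}(1_{A^{**}})$. Both facts are standard (see \cite[Theorem 2.6.2]{BLM}) and make no use of the involutive structure.

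The main obstacle is the nondegenerate refinement in the first step: arranging that $\sigma_0$ descends to the nondegenerate part while $\pi$ remains completely isometric. The key identity $\sigma_0(\pi_0(e_t)) = \pi_0(e_t)^*$, valid for any $\dagger$-selfadjoint cai, is what makes this feasible; once the essential projection is known to be $\sigma_0$-fixed, the remainder of the argument is routine bookkeeping on top of the existing operator algebra theory.
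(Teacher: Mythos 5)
Your proposal is correct and follows essentially the same route as the paper, whose entire proof of this theorem is the citation to \cite[Theorem 2.6.3]{BLM}: the three complete isometries are inherited verbatim from the non-involutive multiplier theory once a compatible nondegenerate $\pi$ is in hand. Your additional verification that such a $\pi$ exists is a worthwhile point the paper leaves implicit, though your justification that the essential projection $p$ is $\sigma_0$-fixed by taking strong-operator limits of $\pi_0(e_t)+\pi_0(e_t)^*$ is shaky (there is no reason for $\pi_0(e_t)^*$ to converge strongly to $p$); it is cleaner to observe that $u$ carries $\overline{\pi_0(A)K}$ onto $\overline{\pi_0(A)^*K}$ and that these subspaces coincide because $\ker T^* = (\operatorname{ran} T)^{\perp}$, or simply to use the doubled representation $\pi_0\oplus \pi_0'$ on $K\oplus K$ with the flip symmetry, as in \cite{BKM}, which is automatically nondegenerate when $\pi_0$ is.
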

\begin{proof}
See \cite[Theorem 2.6.3]{BLM}.	
\end{proof}
\begin{definition}
Let $A$ be an approximately unital operator $\ast$-algebra. Then we define
\begin{itemize}
\item[(i)] $RM(A)=\{\xi\in A^{\ast\ast}: A\xi\subset A\};$
\item[(ii)] $RM(\pi)=\{S\in B(H): \pi(A)S\subset\pi(A)\},$ for any nondegenerate completely isometric representation $\pi$ of $A$ on a Hilbert space $H$ and there exists order-2 $\ast$-automorphism $\sigma: B(H)\to B(H)$ satisfies $\sigma(\pi(a))^{\ast}=\pi(a^{\dagger})$ for any $a\in A;$
\item[(iii)] the set of completely bounded left $A$-module maps, which we denote as $_A{CB}(A).$
\end{itemize}	
\end{definition}

\begin{corollary}
Let $A$ be an approximately unital operator $\ast$-algebra. Then
\begin{itemize}
\item[(a)] $\eta\in LM(A)$ if and only if $\eta^{\dagger}\in RM(A),$ where $\eta, \eta^{\dagger}\in A^{\ast\ast}$ and $\dagger$ is the involution in $A^{**};$
\item[(b)] $T\in LM(\pi)$ if  and only if $T^{\dagger}\in RM(\pi),$ where $T^{\dagger}=\sigma(T)^{\ast};$	
\item[(c)] $L\in CB_A(A)$ if  and only if $L^{\dagger}\in$ $_A{CB}(A),$ where the map $L^{\dagger}$ is defined by $L^{\dagger}(a)=L(a^{\dagger})^{\dagger}.$
\end{itemize}
\end{corollary}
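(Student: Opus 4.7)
The plan is to prove all three parts by directly exploiting that $\dagger$ is a period $2$ (completely isometric) anti-multiplicative involution, so that applying $\dagger$ interchanges left and right conditions in a symmetric way. Since $\dagger$ has period $2$, it will suffice in each case to verify one implication.

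For (a), I would start with $\eta \in LM(A)$, so $\eta A \subset A$, and extend $\dagger$ to the bidual $A^{**}$ via Lemma \ref{bidu}. Since $\dagger$ is anti-multiplicative and satisfies $A^\dagger = A$, applying $\dagger$ to both sides of $\eta A \subset A$ yields $A^\dagger \eta^\dagger = A\eta^\dagger \subset A^\dagger = A$, which means $\eta^\dagger \in RM(A)$. The reverse implication follows because $(\eta^\dagger)^\dagger = \eta$.

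For (b), the key identity is $\sigma(\pi(a)^{\ast}) = \pi(a^\dagger)$, obtained by taking adjoints in $\sigma(\pi(a))^{\ast} = \pi(a^\dagger)$ and using that $\sigma$ is a $\ast$-automorphism. Equivalently, $\sigma \circ \ast$ restricted to $\pi(A)$ equals $\pi \circ \dagger$, so in particular $\sigma(\pi(A)^{\ast}) = \pi(A)$. Now starting from $T \in LM(\pi)$, i.e.\ $T\pi(A) \subset \pi(A)$, I would apply $\ast$ and then $\sigma$ to both sides to obtain
\[
\sigma(\pi(A)^\ast)\, \sigma(T^\ast) \subset \sigma(\pi(A)^\ast),
\]
which by the previous identity says $\pi(A) T^\dagger \subset \pi(A)$, so $T^\dagger \in RM(\pi)$. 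Periodicity of $\dagger$ (i.e.\ $(T^\dagger)^\dagger = \sigma(\sigma(T)^\ast)^\ast = T$, using that $\sigma$ has period $2$ and commutes with $\ast$) gives the converse.

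For (c), I would verify directly from the definition $L^\dagger(a) = L(a^\dagger)^\dagger$ that $L^\dagger$ is a left $A$-module map when $L$ is a right $A$-module map. The calculation is
\[
L^\dagger(ab) = L((ab)^\dagger)^\dagger = L(b^\dagger a^\dagger)^\dagger = (L(b^\dagger)\, a^\dagger)^\dagger = a\, L(b^\dagger)^\dagger = a\, L^\dagger(b),
\]
using anti-multiplicativity of $\dagger$ and the right-module property of $L$. Complete boundedness of $L^\dagger$ is immediate because $\dagger$ is a complete isometry on $A$, so $\|L^\dagger\|_{cb} = \|L\|_{cb}$. Period $2$ of $\dagger$ gives $(L^\dagger)^\dagger = L$, yielding the reverse implication. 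None of the three parts presents a serious obstacle; the main technical care is in (b), where one must carefully track that $\sigma$ is a linear (not conjugate linear) $\ast$-automorphism so that it passes through $\ast$ correctly, and that $\sigma^2 = \mathrm{id}$ so that $\dagger$ on $B(H)$ is genuinely period $2$.
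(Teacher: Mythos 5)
Your proposal is correct and takes essentially the same approach as the paper: for (b) the paper's displayed computation $\pi(a)T^{\dagger}=\sigma(\pi(a^{\dagger}))^{\ast}\sigma(T)^{\ast}=(\sigma(T\pi(a^{\dagger})))^{\ast}\in\sigma(\pi(A))^{\ast}\subset\pi(A)$ is exactly your ``apply $\ast$ then $\sigma$'' argument written elementwise. The paper proves only (b) and leaves (a) and (c) to the reader; your verifications of those parts are the intended routine ones.
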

\begin{proof}
We just give the proof of (b). Suppose that $T\in LM(\pi),$ then
\begin{align*}
\pi(a)T^{\dagger}=\sigma(\pi(a^{\dagger}))^{\ast}\sigma(T)^{\ast}=(\sigma(T\pi(a^{\dagger})))^{\ast}\in \sigma(\pi(A))^{\ast}\subset \pi(A).	
\end{align*}
Thus, $T^{\dagger}\in RM(\pi).$ Similarly, if $T^{\dagger}\in RM(\pi)$ then $T\in LM(\pi).$
\end{proof}

We consider pairs $(D,\mu)$ consisting of a unital operator $\ast$-algebra $D$ and a completely isometric $\dagger$-homomorphism $\mu:A\to D,$ such that $D\mu(A)\subset\mu(A),\ \mu(A)D\subset\mu(A)$.  We use the phrase {\em multiplier operator $\ast$-algebra} of $A,$ and write $M(A),$ for any pair $(D,\mu)$ which is completely  $\dagger$-isometrically $A$-isomorphic to $\Ml(A)=\{x\in A^{\ast\ast}: xA\subset A\ \mbox{and}\ Ax\subset A\}.$ Note that by Lemma \ref{bidu}
the inclusion of $A$ in $A^{**}$ is a $\dagger$-homomorphism, hence the canonical map $i:A\to \Ml(A)$, is a $\dagger$-homomorphism. From this it follows that there is a unique involution on $M(A)$ for which $i$ is a $\dagger$-homomorphism.  
\begin{proposition}
Suppose that $A$ is an approximately unital operator $\ast$-algebra. If 
$(D,\mu)$ is a left multiplier operator algebra of $A,$ then the closed 
subalgebra
$$\{d\in D:\mu(A)d\subset \mu(A)\}$$
of $D,$ together with the map $\mu$, is a multiplier operator $\ast$-algebra of 
$A.$
\end{proposition}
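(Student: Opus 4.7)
The plan is to transfer the problem to $A^{**}$ and exploit the fact that $\Ml(A) \subset A^{**}$ inherits an involution from the canonical involution on $A^{**}$. By Theorem \ref{lefmult}, the left multiplier operator algebra $D$ is completely isometrically $A$-isomorphic to $LM(A) = \{\eta \in A^{**} : \eta A \subset A\}$; call this isomorphism $\phi : D \to LM(A)$, so that $\phi \circ \mu$ is the canonical embedding of $A$ into $A^{**}$. Under $\phi$, the subalgebra $E := \{d \in D : \mu(A) d \subset \mu(A)\}$ corresponds to $\Ml(A) = \{x \in A^{**} : xA \subset A \text{ and } Ax \subset A\}$. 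So it suffices to show that $\Ml(A)$, with the involution inherited from $A^{**}$, is an operator $\ast$-algebra and then pull this structure back to $E$.

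The key step is to check that $\Ml(A)$ is closed under the involution $\dagger$ on $A^{**}$ supplied by Lemma \ref{bidu}. Since $A$ itself is $\dagger$-invariant in $A^{**}$, for $x \in \Ml(A)$ we have
$$(xA)^\dagger \;=\; A^\dagger x^\dagger \;=\; A x^\dagger \;\subset\; A^\dagger \;=\; A,$$
and symmetrically $(Ax)^\dagger = x^\dagger A \subset A$. Hence $x^\dagger \in \Ml(A)$, so $\Ml(A)$ is a closed $\dagger$-subalgebra of the operator $\ast$-algebra $A^{**}$. The matrix norm identity $\|[x_{ji}^\dagger]\| = \|[x_{ij}]\|$ is inherited from $A^{**}$, so $\Ml(A)$ is itself an operator $\ast$-algebra, containing the canonical copy of $A$ as a $\dagger$-subalgebra.

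The remaining verifications are routine bookkeeping. Transport the involution from $\Ml(A)$ back to $E$ via $\phi$: the resulting $\dagger$ on $E$ is a period-2 anti-isomorphism satisfying the required matrix norm equality, and $E$ is unital since $1_D$ trivially lies in $E$. The map $\mu : A \to E$ is a completely isometric $\dagger$-homomorphism because, after applying $\phi$, it becomes the canonical embedding $A \hookrightarrow \Ml(A) \subset A^{**}$, which is a $\dagger$-homomorphism by Lemma \ref{bidu}. The two-sided multiplier conditions $E \mu(A) \subset \mu(A)$ and $\mu(A) E \subset \mu(A)$ follow immediately from $E \subset D$ and the defining property of $E$. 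Thus $(E, \mu)$ is completely $\dagger$-isometrically $A$-isomorphic to $\Ml(A)$, which is the definition of a multiplier operator $\ast$-algebra of $A$. There is no substantive obstacle here; the only thing to be careful about is that all the $\dagger$-structure ultimately descends from the single fact that $A$ is $\dagger$-stable inside $A^{**}$, which is Lemma \ref{bidu}.
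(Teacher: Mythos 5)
Your proof is correct and follows essentially the same route as the paper: identify $E$ with $\Ml(A)\subset A^{**}$ via the $A$-isomorphism coming from the left multiplier structure (the paper cites \cite[Proposition 2.6.8]{BLM} for this step), and transport the involution that $\Ml(A)$ inherits from the weak* continuous involution on $A^{**}$ of Lemma \ref{bidu}. Your explicit verification that $\Ml(A)$ is $\dagger$-invariant in $A^{**}$ is the content the paper disposes of in the paragraph preceding the proposition, so nothing substantive differs.
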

\begin{proof}
Let $E$ denote the set $\{d\in D:\mu(A)d\subset \mu(A)\}$. By \cite[Proposition 
2.6.8]{BLM}, we know that $E$ is a multiplier operator algebra of $A.
$ Thus, there exists a completely isometric surjective homomorphism 
$\theta:\Ml(A)\to E$ such that $\theta\circ i_A=\mu.$ Now we may define an 
involution on $E$ by $d^{\dagger}=\theta(\eta^{\dagger})$ if 
$d=\theta(\eta).$ Then it is easy to check that $E$ is an operator $\ast$-algebra which is completely $\dagger$-isometrically 
$A$-isomorphic to $\Ml(A).$
\end{proof}
\begin{example}
Let $A= A_1(\Ddb)$, the functions in the disk algebra vanishing at $1$, which is the
norm closure of $(z-1) A(\Ddb)$, and let
$B = \{ f \in C(\Tdb) : f (1) = 0 \}$. By the nonunital variant of the Stone-Weierstrass theorem,
$B$ is generated as a $C^*$-algebra by $A$. Indeed $B = C^*_e(A)$, since any closed ideal of $B$ is the set of functions that vanish on a closed set in the circle containing $1$. Also for any $z_0 \in \Tdb$, $z_0 \neq 1$, there is a function in $A$ that peaks at $z_0$, if necessary by the noncommutative Urysohn lemma for approximately unital operator algebras \cite{BNII}. So the involution on $A$ descends from the natural involution on $B$.
It is easy to see, for example by examining the bidual of $B^{**}$ and noticing that $A$ and $B$ have a
common cai, that $M(A) = \{ T \in M(B) : TA \subset A \} =
\{ g\in C_b(\Tdb \setminus \{1 \}) : g (z-1) \in A(\Ddb) \}$.
For such $g$, since the negative Fourier coefficients of
$k=g(1-z)$ are zero, the negative Fourier coefficients of $g$ are constant, hence zero by the Riemann-Lebesgue lemma.  
Thus $g$ is in $H^\infty$, and has an analytic extension to the open disk.   Viewing $g$ as a  function $h$ on $\bar{\Ddb} \setminus \{1 \}$
we have $h = k/(z-1)$ for some $k \in A(\Ddb)$.  
So $M(A)$ consists of the bounded continuous functions on $\bar{\Ddb} \setminus \{1 \}$ that are analytic in the open disk, with involution  $\overline{f(\bar{z})}.$
\end{example}

Let $A, B$ be approximately unital operator $\ast$-algebras. A completely contractive $\dagger$-homomorphism $\pi: A\to M(B)$ will be called a {\em multiplier-nondegenerate $\dagger$-morphism}, if $B$ is a nondegenerate bimodule with respect to the natural module action of $A$ on $B$ via $\pi.$ This is equivalent to saying that for any cai $(e_t)$ of $A,$ we have $\pi(e_t)b\to b$ and $b\pi(e_t)\to b$ for $b\in B.$
\begin{proposition}\label{multext}
If $A, B$ are approximately operator $\ast$-algebras, and if $\pi:A\to M(B)$ is a multiplier-nondegenerate $\dagger$-morphism then $\pi$ extends uniquely to a unital completely contractive $\dagger$-homomorphism $\hat{\pi}: M(A)\to M(B).$ Moreover $\hat{\pi}$ is completely isometric if  and only if $\pi$ is completely isometric.	
\end{proposition}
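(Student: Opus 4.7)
The plan is to reduce this to the non-involutive version of the result, which is \cite[Proposition 2.6.12]{BLM} (the multiplier-nondegenerate extension theorem for approximately unital operator algebras), and then use a uniqueness argument to verify that the $\dagger$ is preserved. So the first step is to invoke that non-involutive result: it gives a unique unital completely contractive homomorphism $\hat{\pi}: M(A) \to M(B)$ extending $\pi$, which moreover is completely isometric if and only if $\pi$ is.

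Second, I need involutions on $M(A)$ and $M(B)$ to even state the $\dagger$-preservation claim. These are already in place: by Lemma \ref{bidu} the involution $\dagger$ on $A$ extends uniquely to $A^{**}$, and $M(A) = \{\eta \in A^{**} : \eta A \subset A, \, A\eta \subset A\}$ is stable under this extension, because if $\eta A \subset A$ then applying $\dagger$ gives $A\eta^\dagger = (\eta A)^\dagger \subset A^\dagger = A$, and similarly for the other side. Likewise for $M(B)$. The inclusion $A \hookrightarrow M(A)$ is then a $\dagger$-homomorphism, as noted in the paragraph just before the proposition.

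The third and key step is the uniqueness trick. Define $\rho : M(A) \to M(B)$ by $\rho(\eta) = \hat{\pi}(\eta^\dagger)^\dagger$. I claim $\rho$ is another unital completely contractive homomorphism extending $\pi$. Unital: $\rho(1) = \hat{\pi}(1)^\dagger = 1^\dagger = 1$. Completely contractive: $\dagger$ is completely isometric on both $M(A)$ and $M(B)$, and $\hat{\pi}$ is completely contractive, so the composition is completely contractive. Homomorphism: $\eta \mapsto \eta^\dagger$ is a conjugate-linear antihomomorphism on each side, and composing anti$\,\circ\,$hom$\,\circ\,$anti gives a homomorphism (the conjugate-linearities cancel as well). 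Extends $\pi$: for $a\in A$, $\rho(a) = \hat{\pi}(a^\dagger)^\dagger = \pi(a^\dagger)^\dagger = \pi(a)$, using that $\pi$ is itself a $\dagger$-homomorphism. By the uniqueness clause of \cite[Proposition 2.6.12]{BLM}, $\rho = \hat{\pi}$, i.e.\ $\hat{\pi}(\eta^\dagger) = \hat{\pi}(\eta)^\dagger$ for all $\eta \in M(A)$.

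Finally, the completely isometric equivalence is inherited directly from the non-involutive statement, since $\hat{\pi}$ extends $\pi$ and $\pi$ is the restriction of $\hat{\pi}$ to $A$. There is essentially no obstacle here; the only thing one must be careful about is making sure the involution on $M(A)$ coming from $A^{**}$ really agrees with the involution specified in the definition of a multiplier operator $*$-algebra (the paragraph preceding the proposition already verifies this through the canonical map $i: A \to \Ml(A)$), so that the statement $\hat{\pi}(\eta^\dagger) = \hat{\pi}(\eta)^\dagger$ indeed says $\hat{\pi}$ is a $\dagger$-homomorphism in the intended sense.
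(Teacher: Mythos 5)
Your proof is correct, and it reduces to the same external input as the paper (namely \cite[Proposition 2.6.12]{BLM}), but the mechanism you use to verify that $\hat{\pi}$ preserves $\dagger$ is genuinely different. The paper realizes $\hat{\pi}$ as the restriction to $M(A)$ of the weak* continuous extension $\tilde{\pi}: A^{**}\to B^{**}$, writes $\hat{\pi}(\eta)=w^*\text{-}\lim_t \pi(\eta e_t)$ for a $\dagger$-selfadjoint cai $(e_t)$ of $A$, and then pushes $\dagger$ through this weak* limit using the weak* continuity of the bidual involution from Lemma \ref{bidu}. You instead run the standard symmetrization trick: $\rho(\eta)=\hat{\pi}(\eta^{\dagger})^{\dagger}$ is another bounded (indeed completely contractive, since the two conjugate-linear transposed isometries cancel) unital homomorphism extending $\pi$, so the uniqueness clause forces $\rho=\hat{\pi}$. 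Both are valid; the uniqueness you invoke really does hold in the needed generality, since any bounded homomorphic extension $\rho$ satisfies $\rho(\eta)\pi(e_t)b=\pi(\eta e_t)b\to\rho(\eta)b$ for $b\in B$ by multiplier-nondegeneracy, which pins $\rho(\eta)$ down as a multiplier. Your route is slightly more abstract and avoids any explicit appeal to weak* continuity of the involution on $B^{**}$ (beyond knowing that $M(A)$ and $M(B)$ are $\dagger$-stable, which you check); the paper's route has the minor advantage of producing the concrete limit formula for $\hat{\pi}$. Your closing remark about reconciling the $A^{**}$-induced involution on $\Ml(A)$ with the abstract multiplier-algebra involution is also the right thing to flag, and is exactly what the paragraph preceding the proposition settles.
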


\begin{proof}
Regard $M(A)$ and $M(B)$ as $\dagger$-subalgebras of $A^{\ast\ast}$ and $B^{\ast\ast}$ respectively. Let $\tilde{\pi}:A^{\ast\ast}\to B^{\ast\ast}$ be the unique $w^{\ast}$-continuous $\dagger$-homomorphism extending $\pi.$ From \cite[Proposition 2.6.12]{BLM}, we know that $\hat{\pi}=\tilde{\pi}(\cdot)_{\vert_{M(A)}}$ is the unique bounded homomorphism on $M(A)$ extending $\pi,$ and $\hat{\pi}(M(A))\subset M(B).$

Let $(e_t)$ be a $\dg$-selfadjoint cai for $A.$ Then for any $\eta\in M(A),$ $\eta e_t \in A$ and $\eta e_t\xrightarrow{w^{\ast}} \eta$ .
Hence
$$\hat{\pi}(\eta)=w^{\ast}-\lim_{t}\pi(\eta e_t).$$
On the other hand, $(\eta e_t)^{\dg}\to \eta^{\dg},$ which implies
$$\hat{\pi}(\eta^{\dg})=w^{\ast}-\lim_{t}\pi((\eta e_t)^{\dg})=w^{\ast}-\lim_{t}\pi((\eta e_t))^{\dg}.$$
Since the involution on $A^{\ast\ast}$ is $w^{\ast}$-continuous, we get that
$$\hat{\pi}(\eta^{\dg})=\hat{\pi}(\eta)^{\dg}.$$
The rest follows from \cite[Proposition 2.6.12]{BLM}.
\end{proof}

\subsection{Dual operator $\ast$-algebras}

\begin{definition}
	Let $M$ be a dual operator algebra (that is, an operator algebra with an operator space predual--see
\cite[Section 2.7]{BLM}) and operator $\ast$-algebra such that the involution on $M$ is weak* continuous. Then $M$ is called a {\em dual operator $\ast$-algebra}.	
\end{definition}

We will identify any two dual operator $\ast$-algebras $M$ and $N$ which are $w^*$-homeomorphically and completely $\dagger$-isometrically isometric. 
\begin{proposition}
	Let $M$ be a dual (possibly nonunital) operator $\ast$-algebra.
	\begin{itemize}
		\item[(1)] The $w^*$-closure of a $\ast$-subalgebra of $M$ is a dual operator $\ast$-algebra.	
		\item[(2)] The unitization of $M$ is also a dual operator $\ast$-algebra.
	\end{itemize}
\end{proposition}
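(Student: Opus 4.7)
The plan is to handle both parts by realizing $M$ concretely as a weak*-closed subalgebra of some $B(H)$ via a completely isometric normal representation (which exists for any dual operator algebra, see e.g.\ \cite[Section 2.7]{BLM}), and then verifying the three requirements of a dual operator $\ast$-algebra: dual operator algebra structure, matrix-norm invariance of the involution, and weak*-continuity of the involution.

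For (1), let $N$ be a $\dagger$-invariant subalgebra of $M$ and set $\widetilde{N} = \overline{N}^{w^*}$. Multiplication in a dual operator algebra is separately weak*-continuous, so $\widetilde{N}$ is a subalgebra of $M$. As a weak*-closed subspace of the dual operator space $M$, $\widetilde{N}$ inherits a canonical dual operator space structure, with the inclusion $\widetilde{N} \hookrightarrow M$ a weak* homeomorphism onto its image. Since $\dagger: M \to M$ is weak*-continuous with $N^\dagger = N$, we get $\widetilde{N}^\dagger \subseteq \widetilde{N}$. The matrix-norm equality $\|[a_{ji}^\dagger]\| = \|[a_{ij}]\|$ and the weak*-continuity of $\dagger|_{\widetilde{N}}$ are both inherited from $M$, so $\widetilde{N}$ is a dual operator $\ast$-algebra.

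For (2), if $M$ is unital then $M^1 = M$ and there is nothing to prove, so assume $M$ is nonunital. Fix a weak*-continuous completely isometric representation $M \subset B(H)$ and put $D = M + \Cdb I_H \subset B(H)$. Since $I_H \notin M$, and the sum of a weak*-closed subspace with a finite-dimensional subspace of $B(H)$ is weak*-closed, $D$ is a unital weak*-closed subalgebra of $B(H)$ in which $M$ sits as a codimension-one subalgebra with its original norms. By the uniqueness clause in Meyer's theorem \cite[Corollary 2.1.15]{BLM}, $D$ is completely isometrically isomorphic to the unitization $M^1$; in particular $M^1$ is a dual operator algebra. By Lemma \ref{mey}, the involution on $M$ extends uniquely to an involution of type (1) on $M^1$ via $(a + \lambda 1)^\dagger = a^\dagger + \overline{\lambda}\, 1$, with the matrix-norm condition automatic. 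To verify weak*-continuity of this extension, observe that the character $\chi: M^1 \to \Cdb$, $\chi(a + \lambda 1) = \lambda$, is a contractive linear functional (being a character on a unital Banach algebra) whose kernel is the weak*-closed subspace $M$ of $M^1$; by the Krein--Smulian theorem $\chi$ is therefore weak*-continuous. Hence the projection $P: M^1 \to M$, $P(x) = x - \chi(x)\, 1$, is weak*-continuous, and the involution is the map $x \mapsto P(x)^\dagger + \overline{\chi(x)}\, 1$, which is weak*-continuous because $\dagger|_M$ is weak*-continuous and complex conjugation on $\Cdb$ is continuous.

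The main obstacle is establishing weak*-continuity of $\chi$ in (2): for this one needs to know that $M$ is weak*-closed in $M^1$, which is why the concrete realization $M^1 = M + \Cdb I_H$ inside $B(H)$ is essential. Once this geometric picture is in place, the rest of the argument is essentially bookkeeping.
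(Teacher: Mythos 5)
Your proof is correct and follows essentially the same route as the paper: part (1) is inherited directly from $M$, and part (2) turns on the weak*-continuity of the scalar coordinate functional on $M^1$, which the paper obtains via a nonzero weak*-continuous functional annihilating $M$ and you obtain via Krein--Smulian. The only difference is that you reconstruct the dual operator algebra structure of $M^1$ concretely inside $B(H)$, whereas the paper simply cites \cite[Proposition 2.7.4 (5)]{BLM} for that fact.
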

\begin{proof}
	For (1), the weak* -closure of $\ast$-subalgebra of $M$ is a dual operator algebra by \cite[Proposition 2.7.4 (4)]{BLM}. 
	
	For (2), suppose that $M$ is a nonunital operator $\ast$-algebra and write $I$ for the identity in $M^1.$ Suppose that $(x_t)_t$  and $(\lambda_t)_t$ are nets in $M$ and $\Cdb$ respectively, with $(x_t+\lambda_t I)$ converging in $w^*$-topology. By applying a nonzero weak* continuous
functional annihilating $M$, it is easy to see that $(\lambda_t)_t$ converges in $\Cdb.$ It follows that $(x_t)_t$ converges in $M$ in the $w^{*}$-topology. Thus, $(x_t+\lambda_t I)^{\dagger}$ converges in $M^1,$ in the $w^{*}$-topology. The rest follows immediately from \cite[Proposition 2.7.4 (5)]{BLM}.\end{proof}

\begin{proposition}
	Let $A$ be an operator $\ast$-algebra, and $I$ any cardinal. Then $\Kdb_I(A)^{\ast\ast}\cong \Mdb_I(A^{\ast\ast})$ as dual operator $\ast$-algebras.	
\end{proposition}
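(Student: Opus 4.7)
The plan is to reduce this to the already-known non-involutive isomorphism $\Kdb_I(A)^{\ast\ast} \cong \Mdb_I(A^{\ast\ast})$ as dual operator algebras (see e.g.\ the proof of \cite[Theorem 2.7.5]{BLM} and the surrounding discussion), and to verify that the involutions match under that identification.

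First I would check that $\Kdb_I(A)$ carries the natural involution $[a_{ij}]^\dagger = [a_{ji}^\dagger]$ making it an operator $\ast$-algebra. This is immediate on finite matrices from the operator $\ast$-algebra axiom on $A$ (the norm equality at level $n$ over $\Mdb_k(A)$ reduces to the norm equality at level $nk$ over $A$, under the canonical shuffle), and extends to $\Kdb_I(A)$ by density. Similarly, $\Mdb_I(A^{\ast\ast})$ carries the natural entrywise-transpose involution; to see that this turns $\Mdb_I(A^{\ast\ast})$ into a dual operator $\ast$-algebra I would invoke Lemma \ref{bidu} to obtain a weak*-continuous involution of type (1) on $A^{\ast\ast}$, and observe that the induced entrywise involution on $\Mdb_I(A^{\ast\ast})$ is then also weak*-continuous (the transposition of indices is weak*-continuous on $\Mdb_I(A^{\ast\ast})$ since this is just a permutation of matrix entries, and applying $\dagger$ entrywise is weak*-continuous because $\dagger$ on $A^{\ast\ast}$ is).

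Next I would apply Lemma \ref{bidu} to $\Kdb_I(A)$ itself, giving a weak*-continuous involution on the bidual $\Kdb_I(A)^{\ast\ast}$ extending the entrywise $\dagger$. Transporting this across the known completely isometric, weak*-homeomorphic identification $\Kdb_I(A)^{\ast\ast} \cong \Mdb_I(A^{\ast\ast})$, it agrees with the natural involution on $\Mdb_I(A^{\ast\ast})$ on the weak*-dense subalgebra $\Kdb_I(A)$. Since both involutions are weak*-continuous, they agree on all of $\Mdb_I(A^{\ast\ast})$ by a standard density argument. Hence the identification is a completely $\dg$-isometric and weak*-homeomorphic isomorphism of dual operator $\ast$-algebras.

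I do not anticipate a genuine obstacle; the only small care needed is that the involution on $A^{\ast\ast}$ from Lemma \ref{bidu} is weak*-continuous so that the induced entrywise involution on the matrix algebra is also weak*-continuous, and that weak*-density of $\Kdb_I(A)$ in $\Mdb_I(A^{\ast\ast})$ (i.e.\ in its bidual) is used to propagate the equality of involutions. Both of these are standard consequences of the facts already assembled in the paper.
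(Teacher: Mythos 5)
Your proposal is correct and follows essentially the same route as the paper: use the known weak*-homeomorphic completely isometric identification $\Kdb_I(A)^{\ast\ast}\cong \Mdb_I(A^{\ast\ast})$, note that both sides carry weak*-continuous involutions agreeing on the canonical copy of $\Kdb_I(A)$, and conclude by weak*-density. You supply more detail than the paper does (in particular the verification that the entrywise-transpose involution on $\Kdb_I(A)$ satisfies the matrix-norm axiom), but the argument is the same.
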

\begin{proof}
	The canonical embedding $A\subset A^{\ast\ast}$ induces a completely isometric $\dagger$-homomorphism $\theta: \Kdb_I(A)\to \Kdb_I(A^{\ast\ast})\subset \Mdb_{I}(A^{\ast\ast}).$  Notice that the involutions on $\Kdb_I(A)^{\ast\ast},$ $\Mdb_I(A^{\ast\ast})$ are $w^*$-continuous and $\Kdb_I(A)^{\ast\ast}\cong \Mdb_I(A^{\ast\ast})$ as operator algebras. Thus, $\Kdb_I(A)^{\ast\ast}\cong \Mdb_I(A^{\ast\ast})$ as dual operator $\ast$-algebras.	
\end{proof}

\begin{lemma}  \label{uxd} 
	If $X$ is a weak* closed  selfadjoint subspace of $B(H)$ for a Hilbert space $H$, 
then $\Ul(X)$ as defined as in Example \ref{uxy} is  a dual operator $\ast$-algebra.
\end{lemma}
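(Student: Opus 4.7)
The plan is to verify the two defining properties of a dual operator $\ast$-algebra: that $\Ul(X)$ admits an operator space predual (making it a dual operator algebra), and that the involution inherited from Subsection \ref{uxy} is weak$^*$-continuous.

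For the dual operator algebra structure, I would first observe that $\Ul(X)$ sits inside $M_2(B(H)) = B(H\oplus H)$, and its four `matrix slots' are the weak$^*$-closed subspaces $\Cdb I_H$, $X$, $\{0\}$, and $\Cdb I_H$ respectively. Since a product (or intersection with a sum of weak$^*$-closed coordinate subspaces) of weak$^*$-closed subspaces is weak$^*$-closed, $\Ul(X)$ is weak$^*$-closed in $B(H\oplus H)$. A routine computation shows $\Ul(X)$ is an algebra, hence a weak$^*$-closed subalgebra of $B(H\oplus H)$. By the standard fact that a weak$^*$-closed subspace of a dual operator space is itself a dual operator space, with predual obtained by quotienting the predual of the ambient space (see e.g.\ \cite[Section 2.7]{BLM}), $\Ul(X)$ is a dual operator algebra.

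For weak$^*$-continuity of the involution, recall that the involution on $\Ul(X)$ is $a \mapsto (u_2\otimes I_H)\, a^*\, (u_2\otimes I_H)$. A direct computation shows this sends
\[
\begin{pmatrix} \lambda_1 & x \\ 0 & \lambda_2 \end{pmatrix} \;\longmapsto\; \begin{pmatrix} \bar\lambda_2 & x^* \\ 0 & \bar\lambda_1 \end{pmatrix},
\]
which lies in $\Ul(X)$ because $X$ is selfadjoint. The adjoint operation on $B(H\oplus H)$ is weak$^*$-continuous (it is the transpose of the canonical identification of the predual with itself), and left/right multiplication by the fixed bounded operator $u_2 \otimes I_H$ is weak$^*$-continuous. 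Composing, the involution is weak$^*$-continuous on $B(H\oplus H)$, hence on the weak$^*$-closed subspace $\Ul(X)$.

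I expect no real obstacle here; the only thing to be careful about is making sure the weak$^*$-topology on $\Ul(X)$ induced by the subspace predual agrees with the relative weak$^*$-topology from $B(H\oplus H)$, but this is automatic for weak$^*$-closed subspaces of a dual operator space via Hahn--Banach (every weak$^*$-continuous functional on the subspace extends to the whole space). Combining the two parts above yields that $\Ul(X)$ is a dual operator $\ast$-algebra.
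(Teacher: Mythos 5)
Your argument is correct, and the paper itself offers no proof here (it says ``we leave this as an exercise to the reader''), so your write-up simply supplies the intended exercise: weak* closedness of $\Ul(X)$ in $B(H\oplus H)$ via the four coordinate compressions, the standard fact that a weak* closed subalgebra of $B(K)$ is a dual operator algebra with the relative weak* topology, and weak* continuity of $a\mapsto (u_2\otimes I_H)a^*(u_2\otimes I_H)$ as a composition of the (weak* continuous, conjugate linear) adjoint with multiplication by a fixed unitary. Nothing is missing; the only tacit ingredient is that $\Ul(X)$ is already known to be an operator $*$-algebra from the earlier example, which is exactly what the lemma presupposes.
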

\begin{proof}
	We leave this as an exercise to the reader.
\end{proof}

In connection with the last result we note that any operator $*$-space $X$ in the sense of \ref{uxy}, which is a dual 
operator space, and whose involution is weak* continuous, may be embedded weak* homeomorphically, via a $*$-linear 
complete
isometry, as a  weak* closed  selfadjoint subspace of $B(H)$ for a Hilbert space $H$.   So  $\Ul(X)$ is  a dual operator $\ast$-algebra,
again by Lemma \ref{uxd}.    To see this simply use the proof in \ref{uxy}, taking $u$ there to be a 
weak* homeomorphic complete isometry from $X$ into $B(H)$.

 The last result can be used to produce counterexamples concerning dual operator $*$-algebras, such as algebras with two distinct preduals, etc.
Similarly one may use the $\Ul(X)$ construction to easily obtain an example of a dual operator algebra which is an operator $\ast$-algebra, but the involution is not weak*-continuous.   We omit the details.

Recall that in \cite{BS} the maximal
$W^*$-algebra $W^*_{\rm max}(M)$ was defined for
  unital dual operator algebras $M$.
If $M$ is a dual operator algebra but
is not unital we define $W^*_{\rm max}(M)$ to be the
von Neumann subalgebra of $W^*_{\rm max}(M^1)$ generated
by the copy of $M$. Note that it has the desired
universal property: if $\pi : M \to N$ is a weak* continuous
completely contractive homomorphism into a von Neumann algebra
$N$, then by the normal version of Meyer's theorem (see \cite[Proposition
2.7.4 (6)]{BLM}) we may
extend to a weak* continuous
completely contractive unital homomorphism
$\pi^1 : M^1 \to N$.  Hence by the universal property
of $W^*_{\rm max}(M^1)$, we may extend further
to a normal unital $*$-homomorphism from $W^*_{\rm max}(M^1)$
into $N$.  Restricting to $W^*_{\rm max}(M)$
we have shown that   there exists a normal $*$-homomorphism
$\tilde{\pi} : W^*_{\rm max}(M) \to N$ extending $\pi$.

\begin{proposition}
Let $B = W^*_{\rm max}(M)$. Then $M$ is a dual operator $*$-algebra if and only if
there exists an order two $*$-automorphism
$\sigma : B \to B$ such that $\sigma(M) = M^*$.
In this case the involution on $M$ is  $a^{\dagger} = \sigma(a)^*$.	
\end{proposition}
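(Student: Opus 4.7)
The plan is to treat the two directions separately, using the universal property of $W^*_{\rm max}(M)$ recalled just before the statement.

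For the easy direction $(\Leftarrow)$, suppose such an order two $*$-automorphism $\sigma : B \to B$ exists with $\sigma(M) = M^*$. Define $a^{\dagger} = \sigma(a)^*$ for $a \in M$; this indeed lands in $M$ since $\sigma(a) \in M^*$. I would then verify the four properties in turn: conjugate linearity is immediate because $\sigma$ is linear and $*$ is conjugate linear; the antihomomorphism property
\[
(ab)^{\dagger} = \sigma(ab)^* = (\sigma(a)\sigma(b))^* = \sigma(b)^*\sigma(a)^* = b^{\dagger}a^{\dagger}
\]
follows because $\sigma$ is multiplicative; period $2$ follows from
$(a^{\dagger})^{\dagger} = \sigma(\sigma(a)^*)^* = \sigma^2(a) = a$
using that $\sigma$ commutes with $*$; the matrix norm identity
\[
\|[a_{ji}^{\dagger}]\| = \|[\sigma(a_{ji})^*]\| = \|[\sigma(a_{ij})]\| = \|[a_{ij}]\|
\]
follows because $\sigma$ is a complete isometry (being a $*$-automorphism of a $C^*$-algebra) and matrix adjoint is an isometry. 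Finally weak* continuity of $\dagger$ on $M$ is automatic since the $*$-automorphism $\sigma$ of the von Neumann algebra $B$ is normal, and the $*$-operation on $B$ is weak* continuous.

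For the direction $(\Rightarrow)$, the idea is to build $\sigma$ as the extension from $M$ to $B$ of the map $\pi(a) := (a^{\dagger})^*$, using the universal property. First I would check that $\pi : M \to B$ is a completely contractive, weak* continuous homomorphism: linearity and multiplicativity come from a short computation using that $\dagger$ is conjugate linear and anti-multiplicative and $*$ is the same; complete contractivity (in fact complete isometry) uses the operator $*$-algebra matrix norm condition
$\|[\pi(a_{ij})]\| = \|[a_{ji}^{\dagger}]\| = \|[a_{ij}]\|$;
and weak* continuity follows from weak* continuity of $\dagger$ on $M$ (our hypothesis) composed with weak* continuity of $*$ on $B$.

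By the universal property of $W^*_{\rm max}(M)$ recalled just above the statement, $\pi$ extends uniquely to a normal $*$-homomorphism $\sigma : B \to B$. To see $\sigma$ is an order two $*$-automorphism it suffices, by weak* density and the fact that $M$ generates $B$ as a von Neumann algebra, to check $\sigma^2 = \text{id}$ on $M$:
\[
\sigma^2(a) = \sigma(\pi(a)) = \sigma((a^{\dagger})^*) = \sigma(a^{\dagger})^* = ((a^{\dagger})^{\dagger})^{**} = a,
\]
where the third equality uses that $\sigma$ is a $*$-homomorphism. Finally, $\sigma(M) = \pi(M) = \{(a^{\dagger})^* : a \in M\} = M^*$, and the formula $a^{\dagger} = \sigma(a)^*$ is then immediate by construction. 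The only nontrivial point is ensuring $\pi$ has codomain a von Neumann algebra so the universal property genuinely applies; this is precisely why $B = W^*_{\rm max}(M)$ (rather than some $C^*$-cover) is the correct object for the statement.
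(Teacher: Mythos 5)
Your argument is correct and is essentially the paper's own proof: the paper simply says the result follows from a weak*-continuous variant of the argument given for Theorem \ref{opinvch}, which is exactly what you carry out --- defining $\pi(a)=(a^{\dagger})^*$, extending it to a normal $*$-homomorphism $\sigma$ on $W^*_{\rm max}(M)$ via the universal property, and verifying $\sigma^2=\mathrm{id}$ on the generating copy of $M$. Your explicit treatment of the converse direction and of the weak* continuity points (normality of $*$-automorphisms of von Neumann algebras, weak* continuity of $\dagger$) fills in details the paper leaves implicit, but the route is the same.
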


 \begin{proof}
This follows from a simple variant of the part of the
proof of Theorem \ref{opinvch}	that we did prove above,
where one ensures that all maps there are weak* continuous.
\end{proof}

\begin{proposition}
For any dual operator $*$-algebra $M,$ there is a Hilbert space $H$ (which may be taken to be $K \oplus K$ if $M \subset B(K)$ as a dual 
operator algebra completely 
isometrically), and a symmetry (that is, a selfadjoint unitary) $u$ on $H$, and a weak* continuous completely isometric homomorphism $\pi : M \to B(H)$
such that $\pi(a)^* = u \pi(a^\dagger) u$ for $a \in M$.	
\end{proposition}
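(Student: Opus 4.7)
The approach is to imitate the construction in the proof of Theorem \ref{opinvch}(iii) (which is the non-dual analogue) combined with the ``diagonal plus swap-symmetry'' device that appeared in the bimodule representation theorem in Section \ref{Exs}, upgrading each step so that weak* continuity is preserved. The target Hilbert space will be $H = K \oplus K$, where $M \subset B(K)$ is a fixed weak* continuous completely isometric embedding of $M$ as a dual operator algebra (such an embedding exists by \cite[Proposition 2.7.4]{BLM}).

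The key construction is an auxiliary map $\phi: M \to B(K)$ defined by $\phi(a) = (a^\dagger)^*$, where $*$ is taken in $B(K)$. Since $\dagger$ is a conjugate-linear antihomomorphism on $M$ and $*$ on $B(K)$ is a conjugate-linear antihomomorphism, their composition $\phi$ is a (linear) homomorphism. It is weak* continuous because $\dagger$ is weak* continuous on $M$ (this is built into the definition of a dual operator $*$-algebra) and the adjoint is weak* continuous on $B(K)$. For complete isometry, I would observe that for any $[a_{ij}] \in M_n(M)$,
\[
  \| [\,(a_{ij}^\dagger)^*\,] \| \;=\; \| [\,a_{ji}^\dagger\,]^* \| \;=\; \| [\,a_{ji}^\dagger\,] \| \;=\; \| [\,a_{ij}\,] \|,
\]
the first equality because taking the $(i,j)$-entry conjugate equals the $(j,i)$-entry of the $B(K^n)$-adjoint, the second because adjoint is isometric in $B(K^n)$, and the third being precisely the matrix-norm axiom (1) from the start of Section~\ref{inv}.

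Now define $\pi: M \to B(H)$ by $\pi(a) = \operatorname{diag}(a,\phi(a))$, a weak* continuous completely isometric homomorphism since block-diagonal embeddings preserve each of these properties. Let $u = \bigl(\begin{smallmatrix} 0 & I_K \\ I_K & 0 \end{smallmatrix}\bigr)$, a selfadjoint unitary on $H$. Using $a^{\dagger\dagger} = a$, a direct $2 \times 2$ block computation yields
\[
u\,\pi(a^\dagger)\,u \;=\; u \begin{pmatrix} a^\dagger & 0 \\ 0 & a^* \end{pmatrix} u \;=\; \begin{pmatrix} a^* & 0 \\ 0 & a^\dagger \end{pmatrix} \;=\; \pi(a)^*,
\]
which is the claimed identity.

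I do not anticipate a genuine obstacle: the argument is in essence a formula verification. The only point that uses the operator $*$-algebra axiom nontrivially is the complete isometry of $\phi$, and this falls out immediately from the matrix-norm condition $\|[a_{ji}^\dagger]\| = \|[a_{ij}]\|$. Weak* continuity is inherited from the defining property of a dual operator $*$-algebra at every step, and the rest of the structure is produced by the block-diagonal and block-swap construction.
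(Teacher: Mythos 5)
Your proof is correct and follows essentially the same route as the paper, which simply invokes the construction of \cite[Proposition 1.12]{BKM} (the block-diagonal map $a \mapsto \rho(a) \oplus \rho(a^\dagger)^*$ together with the swap symmetry) and notes that each ingredient is weak* continuous; you have written out exactly that construction and verification in full. The complete-isometry computation via $[(a_{ij}^\dagger)^*] = [a_{ji}^\dagger]^*$ and the $2\times 2$ block identity are both accurate, so there is nothing to add.
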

\begin{proof}
This is a tiny modification of the proof in \cite[Proposition 1.12]{BKM}, 
beginning with a weak* continuous completely isometric homomorphism 
$\rho : M \to B(K)$ 
and checking that the $\pi : M \to B(H)$
produced in that proof is weak* continuous.
 \end{proof}

 Similarly one has variants of the last two propositions valid for a dual operator algebra with a weak* continuous 
linear involution.   These will look like Theorem \ref{lininvch} but with the representations and isomorphisms also being weak* homeomorphisms.  
 Thanks to this, one will also have dual operator algebra  variants of Corollaries \ref{chsym}, \ref{chsym2}, and \ref{chsym3}, basically characterizing
symmetric  dual operator algebras.  
In these we are assuming also that $A$ is a dual operator algebra, 
and in the conclusions the representations and isomorphisms will also be weak* homeomorphisms.     For example:

\begin{corollary} \label{chsym4}  The  weak* closed algebra generated by any operator on a Hilbert space is
 isometrically isomorphic via a weak* homeomorphism to the  weak* closed   algebra generated by a complex symmetric operator 
on another Hilbert space.
\end{corollary}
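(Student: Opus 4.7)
The plan is to follow the template of Corollary \ref{chsym3}, but in the dual operator algebra category throughout. Let $T\in B(K)$ and let $M$ denote the weak* closed subalgebra of $B(K)$ generated by $T$. Since $M$ is generated by a single element, it is commutative.

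First I would invoke the dual operator algebra variant of Corollary \ref{chsym2} asserted in the paragraph preceding the statement. Concretely, the diagonal map $M \to M \oplus M^\circ$, $a \mapsto (a, a^\circ)$, is isometric and weak* homeomorphic (since $\circ$ preserves matrix norms and the weak* topology), and its image $N = \{(a, a^\circ) : a \in M\}$ is a weak* closed subalgebra of the dual operator algebra $M \oplus M^\circ$. Because $M$ is commutative, $N$ satisfies $N = N^\circ$ completely isometrically via the identity map, so $N$ is a \emph{symmetric dual operator algebra}. Moreover $N$ is generated as a weak* closed algebra by the single element $\tilde T = (T, T^\circ)$, since polynomials in $\tilde T$ give exactly $\{(p(T), p(T)^\circ)\}$, whose weak* closure is $N$.

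Next I would apply the dual operator algebra variant of Corollary \ref{chsym}. This produces a Hilbert space $H$, a conjugation $c$ on $H$, and a weak* continuous completely isometric homomorphism $\pi : N \to B(H)$ such that $c\,\pi(x)\,c = \pi(x)^*$ for every $x \in N$; in particular $\pi(x)$ is $c$-symmetric, hence complex symmetric. Setting $S = \pi(\tilde T) \in B(H)$, the operator $S$ is complex symmetric. Since $\pi$ is weak* homeomorphic and completely isometric, the image $\pi(N)$ is weak* closed in $B(H)$ and coincides with the weak* closed algebra generated by $S$. Composing the chain
\[
M \;\cong\; N \;\xrightarrow{\ \pi\ }\; \pi(N) \;=\; \overline{\operatorname{alg}(S)}^{\,w^*},
\]
each arrow being an isometric weak* homeomorphism (the first was verified above, the second by the properties of $\pi$), yields the desired conclusion.

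The only point requiring care is the first step: verifying that the dual operator algebra version of Corollary \ref{chsym2} really holds, i.e.\ that a commutative dual operator algebra is weak*-homeomorphically and isometrically isomorphic to a symmetric dual operator algebra. But this is immediate from the diagonal construction above, once one observes that the opposite structure $M^\circ$ of a dual operator algebra is again a dual operator algebra with the same weak* topology, so the map $a \mapsto (a, a^\circ)$ is automatically a weak* homeomorphism onto its image. Thus no genuinely new work is required; the corollary is a clean transcription of Corollary \ref{chsym3} into the dual setting using the weak* versions of Corollaries \ref{chsym} and \ref{chsym2} that the authors have just indicated.
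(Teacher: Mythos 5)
Your proof is correct and is exactly the route the paper intends: the paper offers no written proof of this corollary, merely asserting that the dual operator algebra variants of Corollaries \ref{chsym}--\ref{chsym3} hold with all maps weak* homeomorphic, and your argument is the faithful transcription of the proof of Corollary \ref{chsym3} (via the diagonal embedding into $M \oplus M^\circ$ and the dual version of Corollary \ref{chsym}) into that setting. The details you supply --- commutativity of the weak* closed singly generated algebra, weak* closedness of the diagonal, and that $\tilde T$ weak* generates $N$ --- are all sound.
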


\begin{proposition}
	Let $M$ be a dual operator $\ast$-algebra, and let $I$ be a $w^*$-closed $\dagger$-ideal. Then $M/I$ is a dual operator $\ast$-algebra.
\end{proposition}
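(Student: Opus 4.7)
The plan is to verify the three requirements of a dual operator $\ast$-algebra for $M/I$: that it is a dual operator algebra, that it is an operator $\ast$-algebra, and that the induced involution is weak$^\ast$ continuous.

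First I would note that the operator $\ast$-algebra structure on $M/I$ comes immediately from Proposition \ref{qdoai}, since $I$ is (in particular) a norm-closed $\dagger$-ideal. Next, because $I$ is weak$^\ast$ closed in $M$, the quotient $M/I$ has a canonical operator space predual, namely the preannihilator $I_\perp$ of $I$ in $M_\ast$, and with this predual $M/I$ is a dual operator algebra by the standard quotient theory of dual operator algebras (\cite[Proposition 2.7.4]{BLM} or the discussion around it). The quotient map $q : M \to M/I$ is thus a weak$^\ast$-continuous completely contractive homomorphism.

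The only remaining point, and the main thing to verify, is that the involution on $M/I$, defined by $q(a)^\dagger := q(a^\dagger)$ (well-defined precisely because $I^\dagger \subseteq I$), is weak$^\ast$-continuous. For this I would observe that the composition $q \circ (\dagger_M) : M \to M/I$ is weak$^\ast$-continuous, being the composition of two weak$^\ast$-continuous maps (the involution on $M$ is weak$^\ast$-continuous by hypothesis). This composition vanishes on $I$ since $I^\dagger \subseteq I \subseteq \ker q$. The standard universal property of weak$^\ast$-quotients then guarantees that the map factors through $q$ to give a weak$^\ast$-continuous map $M/I \to M/I$, which is precisely the induced involution.

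The hard part — or rather, the one place one has to be a little careful — is this last factorization step. I would justify it via predual duality: a linear map between dual operator spaces is weak$^\ast$-continuous if and only if it admits a predual map. The predual of $q \circ \dagger_M$ sends $\varphi \in (M/I)_\ast = I_\perp \subseteq M_\ast$ to $\varphi \circ \dagger_M \in M_\ast$; since $I^\dagger \subseteq I$, this functional again annihilates $I$, hence lies in $I_\perp$. This gives the desired map $I_\perp \to I_\perp$ and shows that the induced involution on $M/I$ has a predual, so it is weak$^\ast$-continuous. (Alternatively one can invoke the Krein--Smulian theorem on bounded nets, but the predual argument is cleanest.) This completes the verification that $M/I$ is a dual operator $\ast$-algebra.
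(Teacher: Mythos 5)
Your proof is correct and follows essentially the same route as the paper, which likewise gets the dual operator algebra structure from the quotient theory in \cite{BLM}, identifies $(M/I)_\ast$ with $I_\perp$, and deduces weak* continuity of the induced involution from there; you merely spell out the predual/factorization step that the paper dismisses as ``easy to see.'' One small point of precision: since $\dagger$ is conjugate linear, $\varphi\circ\dagger_M$ is a conjugate linear functional, so the predual map should really be $\varphi\mapsto\overline{\varphi(\,\cdot^{\dagger})}$, but this does not affect the key observation that $I^\dagger\subseteq I$ forces this functional to lie in $I_\perp$ whenever $\varphi$ does.
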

\begin{proof}
From \cite[Proposition 2.7.11]{BLM}, we know that $M/I$ is a dual operator algebra. As dual operator spaces, $M/I\cong (I_{\perp})^{\ast},$ from which it is easy to see that the involution on $M/I$ is $w^*$-continuous.	
\end{proof}

\begin{lemma}\label{delta}   If $A$ is an operator $*$-algebra then $\Delta(A) = A \cap A^*$ (adjoint 
taken in any containing $C^*$-algebra; see {\rm 2.1.2} in {\rm  \cite{BLM}})  is a 
$C^{\ast}$-algebra and $\Delta(A)^\dagger = \Delta(A)$. 
\end{lemma}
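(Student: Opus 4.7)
The plan is to prove the two assertions separately, noting that the first one is essentially standard while the second uses the characterization of operator $*$-algebras from Theorem \ref{opinvch}.

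For the first assertion, that $\Delta(A) = A \cap A^*$ is a $C^*$-algebra, I would simply observe that within any fixed $C^*$-cover $B$ of $A$, the set $A \cap A^*$ is closed under multiplication (since $A$ is a subalgebra and $a,b \in A \cap A^*$ implies $ab \in A$ and $(ab)^* = b^*a^* \in A$), closed under the adjoint $*$ by definition, and norm-closed as the intersection of two closed subspaces of $B$. Thus $\Delta(A)$ is a closed $*$-subalgebra of $B$, hence a $C^*$-algebra. This is also what is recorded in the reference to 2.1.2 of \cite{BLM}, and is independent of the choice of $C^*$-cover.

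For the $\dagger$-invariance of $\Delta(A)$, I would invoke Theorem \ref{opinvch}: choose a $C^*$-cover $B$ of $A$ (e.g., $C^*_e(A)$) equipped with a period-$2$ $*$-automorphism $\sigma : B \to B$ such that $\sigma(A) = A^*$ and $a^\dagger = \sigma(a)^*$ for $a \in A$. Applying $\sigma$ to both sides of $\sigma(A) = A^*$ and using that $\sigma$ commutes with $*$ (because $\sigma$ is a $*$-automorphism), one gets $A = \sigma(A^*) = \sigma(A)^*$, hence also $\sigma(A^*) = A$.

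Now let $a \in \Delta(A) = A \cap A^*$. On one hand, $a \in A$ gives $\sigma(a) \in A^*$, so $a^\dagger = \sigma(a)^* \in A$. On the other hand, since $a \in A^*$ we have $\sigma(a) \in \sigma(A^*) = A$, so $(a^\dagger)^* = \sigma(a) \in A$, which means $a^\dagger \in A^*$. Combining these, $a^\dagger \in A \cap A^* = \Delta(A)$, as desired. There is no real obstacle here: the only point to be careful about is that the $C^*$-cover used to define $\Delta(A)$ should be (or at least can be taken to be) one compatible with the involution, which is precisely what Theorem \ref{opinvch} supplies.
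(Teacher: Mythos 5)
Your proof is correct and follows essentially the same route as the paper: both arguments pass to a $C^*$-cover with compatible involution, i.e.\ a period-$2$ $*$-automorphism $\sigma$ with $\sigma(A)=A^*$ and $a^\dagger=\sigma(a)^*$, and deduce that $\dagger$ preserves $A\cap A^*$. The only (harmless) difference is that the paper first checks $\dagger$-invariance on selfadjoint elements of $\Delta(A)$ and then extends by linearity, whereas you verify $a^\dagger\in A$ and $a^\dagger\in A^*$ directly for an arbitrary $a\in\Delta(A)$, which is if anything slightly cleaner.
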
   \begin{proof}  That 
  $\Delta(A)$ does not depend on the particular containing $C^*$-algebra may be found in e.g.\ 2.1.2 in \cite{BLM}. 
as is the fact that it is spanned by its selfadjoint (with respect to the usual involution) elements.  If $A$ is also an operator $*$-algebra then $\Delta(A)$ is invariant under $\dagger$.  Indeed suppose that
 $B$ is a  $C^{\ast}$-cover of $A$ with compatible involution
coming from a $*$-automorphism $\sigma$ as usual. If $x = x^* \in \Delta(A)$ 
then $\sigma(x)$ is also selfadjoint, so is in $\Delta(A)$.  This
holds by linearity for any $x \in \Delta(A)$.  So 
$\Delta(A)^\dagger = \Delta(A)$.   \end{proof}

If $M$ is a dual operator algebra then $\Delta(M) = M \cap M^*$,  is a $W^*$-algebra (see e.g.\ 2.1.2 in \cite{BLM}).
If $M$ is a dual operator $*$-algebra then $\Delta(M)$ is a dual operator   $\ast$-algebra, indeed
it is a $W^*$-algebra with an extra involution $\dagger$ inherited from $M$.  

\begin{proposition}\label{jmdg}   
Suppose that $M$ is a dual operator $*$-algebra. 
Suppose that  $(p_i)_{i\in I}$ is a collection of projections in $M$. Then $(\wedge_{i\in I}\,  p_i)^{\dagger}=\wedge_{i\in I}\, p_i^{\dagger}$ and $(\vee_{i\in I}\, p_i)^{\dagger}=\vee_{i\in I}\, p_i^{\dagger}.$  
\end{proposition}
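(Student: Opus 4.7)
The plan is to reduce this assertion to the standard fact that a $*$-isomorphism of $W^*$-algebras is an order isomorphism on the projection lattice (and hence preserves arbitrary meets and joins), using the characterization of $\dagger$ from the proposition immediately preceding. First I would apply that proposition to fix a period-two, weak*-continuous $*$-automorphism $\sigma : B \to B$ of $B = W^*_{\rm max}(M)$ satisfying $a^\dagger = \sigma(a)^*$ for every $a \in M$. Since each projection $p_i$ is selfadjoint by the paper's convention, $p_i \in \Delta(M)$, and
\[
p_i^\dagger = \sigma(p_i)^* = \sigma(p_i),
\]
which is itself a projection. Moreover $\sigma(\Delta(M)) = \sigma(M \cap M^*) = M^* \cap M = \Delta(M)$, so $\sigma$ restricts to a $*$-automorphism of the $W^*$-algebra $\Delta(M)$.

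Next I would argue that the lattice operations on projections are intrinsic to the family: the meet (resp.\ join) of $(p_i)$ lies in the von Neumann subalgebra of $B$ generated by the family, so it agrees whether one computes it in $\Delta(M)$, in $B$, or in any other von Neumann algebra containing the $p_i$. The key observation is then that any $*$-isomorphism of $W^*$-algebras is an order isomorphism on the projection lattice, since $p \leq q$ is equivalent to $pq = p$, a condition preserved by $*$-homomorphisms. An order isomorphism between complete lattices automatically preserves arbitrary meets and joins: $\sigma(\wedge_{i} p_i) \leq \sigma(p_j)$ for each $j$ forces $\sigma(\wedge_{i} p_i) \leq \wedge_i \sigma(p_i)$, and applying the same reasoning to $\sigma^{-1}$ and the family $(\sigma(p_i))$ yields the reverse inequality; the join case is dual.

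Putting these steps together gives
\[
(\wedge_i p_i)^\dagger = \sigma(\wedge_i p_i) = \wedge_i \sigma(p_i) = \wedge_i p_i^\dagger,
\]
and similarly $(\vee_i p_i)^\dagger = \vee_i p_i^\dagger$. The only mildly technical point is verifying that meets and joins are independent of the ambient von Neumann algebra, so that the $*$-automorphism $\sigma$ really does commute with them once restricted to $\Delta(M)$; this is standard but is the main structural ingredient. Everything else is either an invocation of the preceding proposition or the elementary observation that $*$-homomorphisms preserve the relation $pq = p$.
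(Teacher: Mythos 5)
Your argument is correct, but it reaches the conclusion by a different route than the paper. Both proofs start from the same structural input (the involution has the form $a^{\dagger}=\sigma(a)^{*}$ for a period~2 $*$-automorphism $\sigma$, and one works inside a $W^*$-algebra containing all the $p_i$). From there the paper argues by hand: it uses weak* continuity of $\dagger$ together with the explicit formula $p\wedge q=\lim_n (pq)^n$ to get the finite case, passes to arbitrary families via the decreasing net of finite meets and weak* continuity again, and obtains the statement for suprema by taking orthocomplements. You instead observe that $\sigma$ restricts to a $*$-automorphism of $\Delta(M)$, that a $*$-isomorphism is an order isomorphism of the (complete) projection lattice since $p\leq q$ is the algebraic condition $pq=p$, and that order isomorphisms of complete lattices preserve arbitrary meets and joins; this handles finite and infinite families, and meets and joins, in one stroke and does not even use weak* continuity of $\sigma$ (which is automatic for $*$-isomorphisms of von Neumann algebras anyway). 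The price is the point you yourself flag: one must know that the meet and join of a family of projections are intrinsic, i.e.\ do not change when computed in $\Delta(M)$, in $W^*_{\rm max}(M)$, or in $B(H)$. That is standard (the meet is the projection onto the intersection of the ranges, which lies in the weak* closed algebra generated by the family and is the greatest lower bound already in $B(H)$), but it is genuinely the load-bearing step of your version, whereas the paper's limit formula sidesteps it. Either proof is acceptable; yours is a little slicker, the paper's is more self-contained.
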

\begin{proof}  By the analysis above the proposition we may assume that
$M$ is a $W^*$-algebra with an extra involution $\dagger$, which
is weak* continuous and is of the form $x^\dagger = \sigma(x)^*$ for
a weak* continuous period 2 $*$-automorphism $\sigma$ of $M$.   
If $p_i$ and $p_j$ are two projections in $M$, then $p_i\wedge p_j=\lim_n (p_ip_j)^n=\lim_n (p_jp_i)^n.$ By the weak$^{\ast}$-continuity of involution on $M$ we have
$$(p_i\wedge p_j)^{\dagger}=\lim_n [(p_jp_i)^n]^{\dagger}=\lim_n (p_i^{\dagger}p_j^{\dagger})^n=p_i^{\dagger}\wedge p_j^{\dagger}.$$
Thus for any finite subset $F$ of $I,$ we have $(\wedge_{i\in F}\, p_i)^{\dagger}=\wedge_{i\in F}\, p_i^{\dagger}.$
Note that the net $(\wedge_{i\in F}\, p_i)_F$ indexed by the directed set of finite subsets $F$ of $I,$ is a decreasing net with limit $\wedge_{i\in I}\, p_i.$ 
We have 
$$(\wedge_{i\in I}\, p_i)^{\dagger}=\lim_F \, (\wedge_{i\in F}\, p_i)^{\dagger}=\lim_F\, (\wedge_{i\in F}\, p_i^{\dagger})=\wedge_{i\in I} \, p_i^{\dagger}.$$
by weak* continuity of involution. The statement about 
suprema of projections follows by taking orthocomplements.
 \end{proof}

\subsection{Involutive M-ideals}
An $M$-projection $P$ on a Banach space with involution $X$ is called a {\em $\dagger$-$M$-projection} if $P$ is $\dagger$-preserving. A subspace $Y$ of $X$ is called a {\em $\dagger$-$M$-summand} if $Y$ is the range of a $\dagger$-$M$-projection. Such range is $\dagger$-closed. Indeed, if $y\in Y,$ then  $y=P(x)$ for some $x\in X.$ Thus, $y^{\dagger}=P(x)^{\dagger}=P(x^{\dagger})\in Y.$  A subspace $Y$ of $E$ is called an {\em involutive M-ideal} or a {\em $\dagger$-$M$-ideal} in $E$ if $Y^{\perp\perp}$ is a $\dagger$-$M$-summand in $E^{\ast\ast}.$
If $X$ is an operator $\ast$-space, then an $M$-projection is called a {\em complete $\dagger$-$M$-projection} if the amplification $P_n$ is a $\dagger$-$M$-projection on $M_n(X)$ for every $n\in \Ndb.$ Similarly, we could define {\em complete $\dagger$-$M$-summand}, {\em complete $\dagger$-$M$-ideal}, {\em left $\dagger$-$M$-projection},  {\em right $\dagger$-$M$-summand} and {\em right $\dagger$-$M$-ideal}. 
\begin{proposition}\label{dmsmi}
Let $X$ be an operator $\ast$-space. 
\begin{itemize}
\item[(1)] A linear idempotent $\dagger$-linear map $P: X\to X.$ 
 $P$ is a left $\dagger$-$M$-projection if and only if it is a right $\dagger$-$M$-projection, and these imply $P$ is a complete $\dagger$-$M$-projection.	
\item[(2)] A subspace $Y$ of $X$ is a complete $\dagger$-$M$-summand if and only if it is a left $\dagger$-$M$-summand if  and only if it is a right $\dagger$-$M$-summand.
\item[(3)] A subspace $Y$ of $X$ is a complete $\dagger$-$M$-ideal if and only if it is a left $M$-$\dagger$-ideal if and only if it is a right $\dagger$-$M$-ideal.

\end{itemize}

\end{proposition}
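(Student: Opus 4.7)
The plan is to reduce all three parts to the matching (non-involutive) equivalences among left, right, and complete $M$-projections, summands, and ideals for operator spaces (developed in \cite[Section 4.8]{BLM}), and verify that the $\dg$-linearity transfers freely.

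For part (1), since $P$ is $\dg$-linear by hypothesis, asserting that $P$ is a left (resp.\ right) $\dg$-$M$-projection is the same as asserting that $P$ is a left (resp.\ right) $M$-projection. The operator-space theorem then gives at once the equivalence of these two, together with the fact that $P$ is a complete $M$-projection. The step from ``complete $M$'' to ``complete $\dg$-$M$'' requires only that each amplification $P_n : M_n(X) \to M_n(X)$ be $\dg$-linear. Recalling that the involution on $M_n(X)$ sends $[a_{ij}]$ to $[a_{ji}^{\dg}]$, this is the entrywise check
$$P_n([a_{ij}])^{\dg} = [P(a_{ij})]^{\dg} = [P(a_{ji})^{\dg}] = [P(a_{ji}^{\dg})] = P_n([a_{ij}]^{\dg}),$$
using $\dg$-linearity of $P$ at the middle step.

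For (2), I use that a left (resp.\ right, complete) $M$-projection is uniquely determined by its range. If $Y$ is a left $\dg$-$M$-summand with projection $P$ onto $Y$, then $P$ is $\dg$-linear and, by (1), is also a right and complete $\dg$-$M$-projection, whence $Y$ is simultaneously a right and complete $\dg$-$M$-summand. The remaining implications are symmetric. For (3), I pass to the bidual. First I need to show that $X^{**}$ is itself an operator $*$-space with weak*-continuous involution extending $\dg$. This follows along the lines of Lemma \ref{bidu}: embed $X$ as a selfadjoint subspace of some $B(H)$ via the map $\Theta$ from Subsection \ref{uxy}, and extend $\dg$ weak*-continuously through the embedding $X^{**} \hookrightarrow B(H)^{**}$. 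Then $Y$ is a left $\dg$-$M$-ideal in $X$ iff $Y^{\perp\perp}$ is a left $\dg$-$M$-summand in $X^{**}$; applying (2) at the bidual level yields the equivalence with the right and complete versions.

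The main delicate step is the bidual extension in (3): one must verify that the weak*-continuous extension of $\dg$ to $X^{**}$ is compatible with the $M$-projection structure on $X^{**}$. Once the extension is in place, however, being a left/right/complete $\dg$-$M$-projection on $X^{**}$ is an intrinsic property (a left/right/complete $M$-projection that is $\dg$-preserving for the extended involution), and (2) applies verbatim at the bidual level with no further ingredient needed.
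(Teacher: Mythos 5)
The decisive step in part (1) is exactly the one you wave away. For a general operator space, left $M$-projections and right $M$-projections are genuinely different notions and neither implies the other: for instance, the coordinate projection on the column Hilbert space $C_2$ is a left $M$-projection but not a right $M$-projection (the row map sends $(a,b)^{t}$ to ${\rm diag}(a,b)$, which has norm $\max(|a|,|b|)$ rather than $(|a|^2+|b|^2)^{1/2}$). What \cite[Proposition 4.8.4 (1)]{BLM} actually gives is that $P$ is a complete $M$-projection if and only if it is \emph{both} a left and a right $M$-projection; it does not assert that left and right are equivalent. So your sentence ``the operator-space theorem then gives at once the equivalence of these two'' is false, and this is precisely where the involution must enter. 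The whole point of the proposition is that the matrix-norm identity $\Vert [a_{ji}^{\dagger}]\Vert = \Vert [a_{ij}]\Vert$ makes $\dagger$ a complete isometry which transposes matrices, hence carries the column $\bigl(P(x^{\dagger}),\, x^{\dagger}-P(x^{\dagger})\bigr)^{t}$ onto (the entrywise dagger of) the row $\bigl(P(x),\, x-P(x)\bigr)$, using $P(x^{\dagger})=P(x)^{\dagger}$. The paper's proof consists of exactly this norm computation, showing that a left $\dagger$-$M$-projection is a right $\dagger$-$M$-projection and vice versa, and only \emph{then} invokes \cite[Proposition 4.8.4 (1)]{BLM} to conclude that $P$ is a complete $\dagger$-$M$-projection. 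Your entrywise check that $P_n$ is $\dagger$-linear is correct but beside the point.

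Parts (2) and (3) of your outline are structurally in line with the paper (reducing summands to the uniqueness of the associated projections, and ideals to summands in the bidual, where the weak* continuous extension of $\dagger$ from Lemma \ref{bidu} is available), but as written they inherit the gap, since both rest on the unproved left--right equivalence in (1).
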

\begin{proof}
(1) \ If $P$ is a left $\dagger$-$M$-projection, then the map
\[\sigma_p(x)=\begin{pmatrix}
P(x)  \\
x-P(x)
\end{pmatrix}\]	
is a completely isometry from $X$ to $C_2(X).$ Also,
\begin{align*}
\Vert x^{\dagger}\Vert&=\Vert \sigma_P(x^{\dagger})\Vert=\biggl\Vert\begin{pmatrix}
P(x^{\dagger})  \\
x^{\dagger}-P(x^{\dagger})
\end{pmatrix}\biggr\Vert= \biggl\Vert \begin{pmatrix}
P(x^{\dagger})&0 \\
x^{\dagger}-P(x^{\dagger})&0
\end{pmatrix}\biggr\Vert\\
&=\biggl\Vert \begin{pmatrix}
P(x)^{\dagger})&0 \\
x^{\dagger}-P(x)^{\dagger}&0
\end{pmatrix} \biggr\Vert=\biggl\Vert \begin{pmatrix}
P(x)&x-P(x) \\
0&0
\end{pmatrix}^{\dagger} \biggr\Vert\\
&=\biggl\Vert \begin{pmatrix}
P(x)&x-P(x) \\
0&0
\end{pmatrix}\biggr\Vert
=\biggl\Vert \begin{pmatrix}
P(x), x-P(x)\end{pmatrix}\biggr\Vert=\Vert x\Vert.
\end{align*}
One can easily generalize this to matrices, so that $P$ is a right $\dagger$-$M$-projection. Similarly, if $P$ is is a right $\dagger$-$M$-projection then $P$ is a left $\dagger$-$M$-projection. By Proposition 4.8.4 (1) in \cite{BLM}, we know that $P$ is a complete $\dagger$-$M$-projection.

(2) \  It follows from (1) and \cite[Proposition 4.8.4 (2)]{BLM}. Now (3) is also clear.
\end{proof}
\begin{theorem}
Let $A$ be an approximately unital operator $\ast$-algebra. 
\begin{itemize}
\item[(i)] The right $\dagger$-$M$-ideals are the $\dagger$-$M$-ideals in $A$, which are also the complete $\dagger$-$M$-ideals. These are exactly the approximately unital closed  $\dagger$-ideals in $A.$
\item[(ii)] The right $\dagger$-$M$-summands are the $\dagger$-$M$-summands in $A,$ which are also the complete $\dagger$-$M$-summands. These are exactly the principal ideals $Ae$ for a $\dagger$-selfadjoint central projection $e\in M(A).$  \end{itemize}
\end{theorem}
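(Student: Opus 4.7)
The plan is to bootstrap off the classical (non-involutive) characterization of $M$-ideals and $M$-summands in an approximately unital operator algebra from \cite{BLM} (see the chapter on $M$-ideals there, in particular the result giving: right $M$-ideals = two-sided $M$-ideals = complete $M$-ideals = approximately unital closed two-sided ideals; and similarly right $M$-summands = complete $M$-summands = $Ae$ for a central projection $e \in M(A)$). Proposition \ref{dmsmi}(2)--(3) immediately yields the equivalences in both (i) and (ii) between the right, two-sided, and complete $\dagger$-versions, so everything reduces to matching $\dagger$-$M$-ideals (resp.\ $\dagger$-$M$-summands) with the algebraic objects named in the statement.

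For (i), if $J$ is a $\dagger$-$M$-ideal, then by the classical result $J$ is an approximately unital closed two-sided ideal in $A$, and $J^{\perp\perp} = pA^{**} = A^{**}p$ for a central projection $p \in A^{**}$, with $M$-projection $P(x) = xp = px$. Since $P$ is $\dagger$-linear, its range $J^{\perp\perp}$ is $\dagger$-invariant, hence so is $J = A \cap J^{\perp\perp}$. Conversely, given an approximately unital closed $\dagger$-ideal $J$, Lemma \ref{dcai} produces a $\dagger$-selfadjoint cai whose weak* limit is the support projection $p$ of $J^{\perp\perp}$; weak* continuity of $\dagger$ on $A^{**}$ (Lemma \ref{bidu}) then forces $p^\dagger = p$. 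A one-line computation, namely $P(x^\dagger) = x^\dagger p = x^\dagger p^\dagger = (px)^\dagger = (xp)^\dagger = P(x)^\dagger$, shows that the classical $M$-projection $P$ is $\dagger$-linear, so $J$ is a $\dagger$-$M$-ideal.

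For (ii), analogously, the classical part identifies $M$-summands of $A$ with principal ideals $Ae$ for central projections $e \in M(A) \subset A^{**}$, with $M$-projection $P(x) = xe$. Requiring $P(x^\dagger) = P(x)^\dagger$ for all $x$ gives $x^\dagger e = e^\dagger x^\dagger$; testing this against a $\dagger$-selfadjoint cai (or lifting to the unitization and applying to $1$) and using weak* continuity of $\dagger$ extracts $e = e^\dagger$. Conversely, if $e \in M(A)$ is central and $\dagger$-selfadjoint, then $P(x) = xe$ is $\dagger$-linear by direct computation using centrality of $e$, so $Ae$ is a $\dagger$-$M$-summand.

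The main obstacle is conceptually mild and amounts to bookkeeping: one must verify that $\dagger$-preservation of the abstract $M$-projection on $A^{**}$ is equivalent to $\dagger$-invariance of the underlying support projection (resp.\ $\dagger$-selfadjointness of the underlying central projection in $M(A)$). This bridge is provided by the $\dagger$-selfadjoint cai of Lemma \ref{dcai} together with the weak* continuity of $\dagger$ on the bidual furnished by Lemma \ref{bidu}; absent these two tools there would be no way to translate the analytic condition on $P$ into the algebraic statement about $J$ or $e$.
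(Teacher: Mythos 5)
Your proposal is correct and follows essentially the same route as the paper: reduce to the classical characterization in \cite[Theorem 4.8.5]{BLM} via Proposition \ref{dmsmi}, then check that the involutive decorations match up. The only (minor) divergence is in extracting $e=e^\dagger$: the paper notes that the range $D$ of a $\dagger$-$M$-projection is $\dagger$-closed, so both $e$ and $e^\dagger$ are identities for $D^{\perp\perp}$ and hence equal, whereas you test the $M$-projection against a $\dagger$-selfadjoint cai and use weak* continuity of $\dagger$ on $A^{**}$ --- both arguments are sound and of comparable length.
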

\begin{proof}
(ii) \ By Proposition \ref{dmsmi} (2), the right $\dagger$-$M$-summands are exactly the complete $\dagger$-$M$-summands. Moreover, by \cite[Theorem 4.8.5 (3)]{BLM}, the $M$-summands in $A$ are exactly the complete $M$-summands. If $D$ is a $\dagger$-$M$-summand, then $D$ is a complete $\dagger$-$M$-summand and there exists a central projection $e\in M(A)$ such that $D=eA.$ Then $D^{\perp\perp}=eA^{\ast\ast}$ and $e$ is an identity for $D^{\perp\perp}$. Also, $e^{\dagger}$ serves as an identity in $D^{\perp\perp}$, so that $e=e^{\dagger}.$

(i) \ By a routine argument, the results follow as in \cite[Theorem 4.8.5 (1)]{BLM} and Proposition \ref{dmsmi} (3).
\end{proof}

\section{Involutive hereditary subalgebras, ideals, and $\dagger$-projections}

\subsection{Involutive hereditary subalgebras}
Throughout this section $A$ is an operator $\ast$-algebra (possibly not approximately unital). Then $A^{\ast\ast}$ is an operator $\ast$-algebra.
We  recall from the end of Section \ref{inv}  the definition of  an {\em open} (or $A$-open) and {\em closed}  projection in $A^{**},$ {\em hereditary subalgebras} of $A$ (or HSA's),  and  {\em support projections} of  HSA's (which are the same as $A$-open projections).
   If $p$ is $A$-open, then $p^{\dagger}$ is also $A$-open.    Indeed, if $x_t \in A$ with $x_t=px_t=x_tp=px_tp\to p\, \, {\rm weak^*},$ then $x_t^{\dagger}=p^{\dagger}x_t^{\dagger}=x_t^{\dagger}p^{\dagger}=p^{\dagger}x_t^{\dagger}p^{\dagger}\to p^{\dagger}\, \, {\rm weak^*},$ which means $p^{\dagger}$ is also open.

If $p$ is $\dagger$-selfadjoint and open, then we say $p$ is {\em $\dagger$-open} in $A^{**}$.   This happens if  and only if there exists a $\dagger$-selfadjoint net $(x_t)$ in $A$ with
 $$x_t=px_t=x_t p=p x_t p\to p \,\, {\rm weak^*}.$$  To see this replace $(x_t)$ in the last paragraphs by $((x_t+x_t^\dagger)/2)$.
 If also  $A$ is approximately unital  then we say that $p^\perp = 1-p$ is $\dagger$-closed.
If $p$ is $\dagger$-open in $A^{**}$ then  clearly
$$D=pA^{\ast\ast}p \cap A=\{a\in A: a=ap=pa=pap\}$$ 
is a closed $\dagger$-subalgebra of $A.$ We call such a $\dagger$-subalgebra $D$ is an {\em involutive hereditary subalgebra} or a {\em $\dagger$-hereditary subalgebra} of $A$ (or,\ $\dagger$-HSA).   HSA's are inner ideals: that is $DAD \subset D$.

In the following statements, we often omit the proof details where are similar to usual operator algebras case (see e.g. \cite{BLM, BHN,BRI,BRII})
\begin{proposition}\label{dhsa1}
A subalgebra $D$ of an operator $*$-algebra $A$ is a HSA and $D^{\dagger}\subset D$ if and only if $D$ is a $\dg$-HSA.	
\end{proposition}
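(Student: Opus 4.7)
The plan is to use the bijective correspondence between hereditary subalgebras and open projections in $A^{**}$ recalled at the end of Section \ref{inv}, combined with the fact (Lemma \ref{bidu}) that $\dagger$ extends to a weak* continuous involution on $A^{**}$.

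First I would handle the forward direction. Suppose $D$ is an HSA with $D^\dagger \subset D$, and let $p \in A^{**}$ be its support projection, so that $D^{\perp\perp} = pA^{**}p$ has identity $p$. Applying $\dagger$ and using its weak* continuity yields $(D^{\perp\perp})^\dagger = (D^\dagger)^{\perp\perp} \subset D^{\perp\perp}$, so $D^{\perp\perp}$ is $\dagger$-closed. Then $p^\dagger$ lies in $D^{\perp\perp}$ and acts as a (two-sided) identity on it (since $\dagger$ is an anti-homomorphism), forcing $p^\dagger = p$. To verify $\dagger$-openness, take any cai $(x_t)$ of $D$ and replace it by $(x_t + x_t^\dagger)/2$; this is a $\dagger$-selfadjoint net in $D$ (using $D^\dagger \subset D$) whose weak* limit is still $p$, and which satisfies the equations $x_t = px_t = x_tp = px_tp$ required in the definition. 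So $p$ is $\dagger$-open, i.e., $D$ is a $\dagger$-HSA.

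For the reverse direction, if $D$ is a $\dagger$-HSA then by definition its support projection $p$ is $\dagger$-open, in particular $p = p^\dagger$. Then for any $d \in D$ we have $d = pdp$, so $d^\dagger = p^\dagger d^\dagger p^\dagger = pd^\dagger p \in pA^{**}p \cap A = D$. Hence $D^\dagger \subset D$, and $D$ is of course also a HSA.

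I do not expect any serious obstacle here; the only subtle point is making sure the identification $p^\dagger = p$ really follows from $\dagger$-invariance of $D^{\perp\perp}$, which relies on uniqueness of the identity in a unital algebra together with the fact that $\dagger$ being an (anti-)homomorphism sends an identity to an identity. Everything else is a direct symmetrization argument or a one-line consequence of the $pA^{**}p \cap A$ description of the HSA.
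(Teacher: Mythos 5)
Your proof is correct and follows essentially the same route as the paper: the reverse direction is immediate from $D = pA^{**}p\cap A$ with $p^\dagger = p$, and the forward direction rests on the observation that $p^\dagger$ is also an identity for $D^{\perp\perp}$ (since $\dagger$ extends weak* continuously to $A^{**}$ and preserves $D^{\perp\perp}$), so $p = p^\dagger$ by uniqueness of the identity. Your extra verification of $\dagger$-openness via the symmetrized cai $(x_t + x_t^\dagger)/2$ is exactly the remark the paper makes just before the proposition, so nothing is missing.
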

\begin{proof}
One direction is trivial.  Conversely, if $D$ is a HSA, then $D=pA^{\ast\ast}p\cap A,$ for some open projection $p\in A^{\ast\ast}.$ Here, $p\in D^{\perp\perp}$ and $p$ is an identity for $D^{\perp\perp}.$ If also $D$ is $\dg$-selfadjoint, then $p^{\dg}\in D^{\perp\perp}$ also serves as identity. By uniqueness of identity for $D^{\perp\perp},$ then $p=p^{\dagger}.$  
\end{proof}

\begin{proposition}\label{dhsainner}
A subspace of an operator $\ast$-algebra $A$	 is a $\dg$-HSA if and only if it is an approximately unital $\dg$-selfadjoint inner ideal.
\end{proposition}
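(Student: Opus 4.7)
The plan is to deduce this directly from the non-involutive analogue (a subspace of an operator algebra is an HSA if and only if it is an approximately unital inner ideal; see e.g.\ \cite{BHN} or the discussion in \cite[Section 2.5]{BLM}) together with Proposition \ref{dhsa1}.

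For the forward direction, suppose $D$ is a $\dagger$-HSA of $A$. By definition $D$ is a closed $\dagger$-subalgebra, so in particular $D^\dagger \subset D$, i.e.\ $D$ is $\dagger$-selfadjoint. By Proposition \ref{dhsa1}, $D$ is an HSA of $A$, so by the non-involutive characterization $D$ is an approximately unital inner ideal. (Alternatively, as noted right after the definition of HSA in Section \ref{inv}, the support projection $p$ of $D$ gives $D = pA^{**}p \cap A$, which is an inner ideal since $D A D \subset p A^{**} p \cap A = D$, and $D$ has a cai by \cite[Proposition 2.5.8]{BLM}.)

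For the converse, suppose $D$ is an approximately unital, $\dagger$-selfadjoint inner ideal of $A$. By the non-involutive characterization of HSA's, $D$ is an HSA of $A$. Since in addition $D^\dagger \subset D$, Proposition \ref{dhsa1} gives that $D$ is a $\dagger$-HSA.

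There is essentially no obstacle here: the only substantive point is that the support projection of an HSA whose elements are closed under $\dagger$ must itself be $\dagger$-selfadjoint, and this is exactly what Proposition \ref{dhsa1} provides (by uniqueness of the identity of $D^{\perp\perp}$, applied to both $p$ and $p^\dagger$). Everything else is a direct appeal to the operator algebra analogue, as allowed by the remark at the start of Section 5.
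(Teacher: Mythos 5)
Your proof is correct and follows essentially the same route as the paper's: both directions reduce to the non-involutive fact that HSA's are exactly the approximately unital inner ideals, combined with Proposition \ref{dhsa1} to transfer the $\dagger$-selfadjointness to the support projection. The paper's own proof is just a terser version of exactly this argument.
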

\begin{proof}
If $J$ is a $\dg$-HSA, then $J$ is an approximately unital $\dg$-selfadjoint  inner ideal.

If $J$ is an approximately unital $\dg$-selfadjoint  inner ideal, then by Proposition \ref{dhsa1} $J$ is a HSA and $\dg$-selfadjoint which means that J is a $\dg$-HSA.
\end{proof}

{\bf Remark.}
If $J$ is an approximately unital ideal or  inner ideal of operator $\ast$-algebra, we cannot necessarily expect $J$ to be $\dg$-selfadjoint. For example, let $A(\Ddb)$ be the disk algebra and $$A_i(\Ddb)=\{f: f\in A(\Ddb), f(i)=0\}.$$ 
Then $A_i(\Ddb)$ is an approximately unital ideal but obviously it is not $\dg$-selfadjoint.

\medskip

The following is another characterization of $\dagger$-HSA's.
\begin{corollary}\label{dghsa}
Let $A$ be an operator $\ast$-algebra and suppose that $(e_t)$ is a $\dg$-selfadjoint net in $\ball(A)$ such that $e_te_s\to e_s$ with $t.$ Then
$$\{x\in A: xe_t\to x, e_t x\to x\}$$
is a $\dg$-HSA of $A.$ Conversely, every $\dg$-HSA of $A$ arises in this way.
\end{corollary}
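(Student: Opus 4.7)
The plan is to reduce both directions to the operator algebra analogue of this characterization (a standard result for HSAs, essentially implicit in \cite[Proposition 2.5.8]{BLM} and the discussion at the end of Section \ref{inv}), and then leverage Lemma \ref{dcai} and Proposition \ref{dhsa1} to upgrade to the involutive setting.

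For the forward direction, set $D = \{x \in A : xe_t \to x \text{ and } e_t x \to x\}$. The non-involutive version already tells us that $D$ is a HSA of $A$, so by Proposition \ref{dhsa1} it suffices to check that $D$ is $\dg$-selfadjoint. This is where the $\dg$-selfadjointness of the net does the work: since $e_t^\dg = e_t$ and $\dg$ is an isometric conjugate linear antiautomorphism of $A$, applying $\dg$ to the relation $xe_t \to x$ yields $e_t x^\dg \to x^\dg$, and similarly $e_t x \to x$ gives $x^\dg e_t \to x^\dg$. Hence $x \in D$ implies $x^\dg \in D$, and $D$ is a $\dg$-HSA.

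For the converse, let $D$ be a $\dg$-HSA with support projection $p$. Then $p = p^\dg$ by the $\dg$-selfadjointness of $D$ (which forces $p^\dg$ to also serve as the identity of $D^{\perp\perp}$, cf.\ Proposition \ref{dhsa1}). By Lemma \ref{dcai} applied to $D$, we may choose a $\dg$-selfadjoint cai $(e_t)$ for $D$ in $\ball(A)$. Then $e_te_s \to e_s$ with $t$ for each fixed $s$, since $(e_t)$ is a cai and $e_s \in D$. Set $E = \{x \in A : xe_t \to x, e_tx \to x\}$. The inclusion $D \subset E$ is immediate from the cai property. For the reverse inclusion, note that $e_t \to p$ weak$^*$, so $x \in E$ forces $xp = x$ and $px = x$, hence $x = pxp \in pA^{**}p \cap A = D$. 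Thus $E = D$, as required.

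The only step that demands a little care is verifying that the support projection of a $\dg$-HSA is itself $\dg$-selfadjoint; this is precisely the content of Proposition \ref{dhsa1}, and once it is in hand, Lemma \ref{dcai} immediately supplies the $\dg$-selfadjoint cai. No genuine obstacle arises beyond transcribing the operator algebra argument while keeping track of the involution.
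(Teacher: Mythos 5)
Your proof is correct and follows essentially the same route as the paper: reduce to the known operator algebra (non-involutive) statement and then track the involution, using the $\dagger$-selfadjointness of $(e_t)$ to see $J^\dagger \subset J$ in the forward direction and a $\dagger$-selfadjoint cai converging weak* to the ($\dagger$-selfadjoint) support projection in the converse. The only cosmetic difference is that the paper packages the forward direction via Proposition \ref{dhsainner} (approximately unital $\dagger$-selfadjoint inner ideal) rather than via Proposition \ref{dhsa1} directly, which amounts to the same thing.
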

\begin{proof}
Let $J=\{x\in A: xe_t\to x, e_t x\to x\}.$ Then it is easy to see that $J$ is an inner ideal and $J^{\dagger}\subset J.$ By Proposition \ref{dhsainner}, $J$ is a $\dagger$-HSA. Conversely, if $D$ is a $\dagger$-HSA and $(e_t)$ is a $\dg$-selfadjoint cai for $D,$ then 
$$D=pA^{**}p\cap A=\{x\in A: xe_t\to x, e_t x\to x\},$$
 where $p$ is the weak* limit of $(e_t).$
 \end{proof}

Closed right ideals $J$ of an operator $\ast$-algebra $A$ possessing a $\dagger$-selfadjoint left cai will be called {\em r-$\dagger$-ideals}. Similarly, closed left ideals $J$ of an operator $\ast$-algebra $A$ possessing a $\dagger$-selfadjoint right cai will be called l-$\dagger$-{\em ideals}.
Note that there is a bijective correspondence between r-$\dagger$-ideals and l-$\dagger$-ideals, namely $J\to J^{\dagger}.$ 
For $C^*$-algebras r-$\dagger$-ideals are precisely the closed right ideals, and there is an obvious bijective correspondence between r-$\dagger$-ideals and l-$\dagger$-ideals, namely $J\to J^*$.
  
\begin{theorem}\label{dgrlhsa}
Suppose that $A$ is an operator $\ast$-algebra (possibly not approximately unital), and $p$ is a $\dagger$-projection in $A^{\ast\ast}.$ Then the following are equivalent:
\begin{itemize}
\item[(i)] $p$ is $\dg$-open in $A^{\ast\ast}.$
\item[(ii)]	$p$ is the left support projection of an r-$\dagger$-ideal of $A.$
\item[(iii)] $p$ is the right support projection of an l-$\dagger$-ideal of $A.$
\item[(iv)] $p$ is the support projection of a $\dg$-hereditary algebra of $A.$
\end{itemize}
\end{theorem}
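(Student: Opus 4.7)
The plan is to close the cycle (i) $\Leftrightarrow$ (iv), (iv) $\Rightarrow$ (ii) $\Rightarrow$ (i), and (iv) $\Rightarrow$ (iii) $\Rightarrow$ (i), mirroring the non-involutive version in \cite{BRI,BHN,BLM} and simply tracking $\dagger$-selfadjointness at each step. The equivalence (i) $\Leftrightarrow$ (iv) is essentially Proposition \ref{dhsa1}: given (i), the HSA $D = pA^{\ast\ast}p \cap A$ attached to the open projection $p$ satisfies $D^\dagger = D$ because $p^\dagger = p$, hence is a $\dagger$-HSA with support $p$; and (iv) $\Rightarrow$ (i) is immediate from the definition of a $\dagger$-HSA.

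For (iv) $\Rightarrow$ (ii) and (iv) $\Rightarrow$ (iii), let $D$ be a $\dagger$-HSA with support $p$, and use Lemma \ref{dcai} to pick a $\dagger$-selfadjoint cai $(e_t)$ for $D$. Standard operator algebra theory (e.g.\ \cite{BHN,BRI}) identifies the associated r-ideal $J = \overline{DA}$ as having $(e_t)$ as a left cai with left support $p$; the $\dagger$-selfadjointness of $(e_t)$ makes $J$ an r-$\dagger$-ideal, giving (ii). The symmetric construction $\overline{AD} = J^\dagger$ is an l-$\dagger$-ideal with right support $p$, giving (iii).

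The only substantive new step is (ii) $\Rightarrow$ (i); (iii) $\Rightarrow$ (i) is symmetric. Suppose $J$ is an r-$\dagger$-ideal with $\dagger$-selfadjoint left cai $(e_t)$, and let $p$ be the weak$^*$ limit, which is the left support projection of $J$. Since the involution on $A^{\ast\ast}$ is weak$^*$-continuous by Lemma \ref{bidu} and each $e_t^\dagger = e_t$, we get $p^\dagger = p$. The cai property yields $pe_t = e_t$; applying $\dagger$ and using $p^\dagger = p$ and $e_t^\dagger = e_t$ gives $e_t p = (p e_t)^\dagger = e_t$. Hence the $\dagger$-selfadjoint net $(e_t)$ satisfies $e_t = p e_t = e_t p = p e_t p$ with $e_t \to p$ weak$^*$, so $p$ is $\dagger$-open in the sense stated above Proposition \ref{dhsa1}.

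The main obstacle, modest as it is, lies precisely in this one-sidedness: the definition of an r-$\dagger$-ideal supplies only a left cai, which a priori yields only $p e_t = e_t$, so the whole argument hinges on the small but essential trick that joint $\dagger$-selfadjointness of $p$ and of each $e_t$ upgrades this to the two-sided identity $e_t = p e_t p$ required for $\dagger$-openness. Once this is in hand, the involutive refinements of the standard r-ideal/l-ideal/HSA/open projection correspondence fall out painlessly.
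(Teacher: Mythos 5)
Your proposal is correct and follows essentially the same route as the paper's proof: both reduce (i) $\Leftrightarrow$ (iv) to the definition of $\dagger$-openness, build the r-$\dagger$-ideal from a $\dagger$-selfadjoint cai of the $\dagger$-HSA, obtain the l-$\dagger$-ideal as $J^\dagger = A^{\ast\ast}p\cap A$, and recover $\dagger$-openness from the weak* limit of the selfadjoint cai; only the ordering of the implications in the cycle differs. The ``upgrade'' $pe_t=e_t \Rightarrow e_tp=e_t$ via $\dagger$-selfadjointness that you single out is exactly the point the paper uses (implicitly) in its (iii) $\Rightarrow$ (i) step.
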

\begin{proof}
The equivalence of (i) and (iv) is just the definition of being $\dg$-open in $A^{\ast\ast}.$ 

Suppose (i).  If $p$ is $\dg$-open then $p$ is the support projection for some $\dg$-HSA $D.$ Let $(e_t)$ be a $\dg$-selfadjoint cai for $D,$ then $p=w^{\ast}$-$\lim_{t}e_t.$ 	Let 
$$J=\{x\in A: e_tx\to x\},$$
then $J$ is a right ideal of $A$ with $\dg$-selfadjoint left cai $(e_t)$ and $p$ is the left support projection of $J.$

Suppose (ii).  If $p$ is the left support projection of an r-$\dagger$-ideal $J$ of $A$ with $\dg$-selfadjoint left cai $(e_t),$ then $J=pA^{\ast\ast}\cap A.$ Therefore $J^{\dagger}=A^{\ast\ast}p\cap A,$ which is an l-$\dagger$-ideal and $p$ is the right support projection of $J^{\dagger}.$

Suppose (iii).  If $p$ is the right support projection of an l-$\dagger$-ideal of $A$ with $\dg$-selfadjoint right cai $(e_t),$ then $p=$weak*-$\lim_t e_t=p^{\dg},$ which means that $p$ is $\dg$-open. 

Similarly we can get the equivalence between (i) and (iii).
\end{proof}

If $J$ is an operator $\ast$-algebra with an $\dg$-selfadjoint left cai $(e_t),$ then we set 
$$\Ll(J)=\{a\in J: ae_t\to a\}.$$
\begin{corollary}
A subalgebra of an operator $\ast$-algebra $A$ is $\dg$-hereditary if and only if it equals $\Ll(J)$ for an r-$\dagger$-ideal $J$. Moreover the correspondence $J\mapsto \Ll(J)$ is a bijection from the set of r-$\dagger$-ideals of $A$ onto the set of $\dg$-HSA's of $A.$ The inverse of this bijection is the map $D\to DA.$ Similar results hold for the l-$\dagger$-ideals of $A.$	
\end{corollary}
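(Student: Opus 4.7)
The plan is to use Theorem \ref{dgrlhsa} as the scaffolding: it already gives a bijection between $\dagger$-open projections of $A^{**}$, r-$\dagger$-ideals, and $\dagger$-HSAs, all via support projections. So it suffices to show that the concrete maps $J \mapsto \Ll(J)$ and $D \mapsto DA$ realize that bijection, and then independence from choices falls out automatically. I would not re-prove the existence of a common support projection $p$ from scratch; I would instead chase the construction through Theorem \ref{dgrlhsa}.

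First, going from a $\dg$-HSA $D$ to an r-$\dg$-ideal: let $(e_t)$ be a $\dg$-selfadjoint cai for $D$ (produced in Lemma \ref{dcai}) with $p = \text{w}^*\text{-}\lim_t e_t$ the support projection. I would show that $J := DA$ (i.e.\ the norm closure of $\{da : d\in D, a\in A\}$) is a closed right ideal of $A$, that $(e_t)$ is a $\dg$-selfadjoint left cai for $J$ by the usual argument $e_t(da) = (e_t d)a \to da$ together with boundedness of $(e_t)$, and that the weak* limit of $(e_t)$, namely $p$, is the left support projection of $J$. Hence $J$ is an r-$\dg$-ideal corresponding to $p$ under Theorem \ref{dgrlhsa}. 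To check that this $J$ coincides with $pA^{**} \cap A$, I would note both equal the left-support closure and are forced to agree by the Theorem.

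Second, going from an r-$\dg$-ideal $J$ (with $\dg$-selfadjoint left cai $(e_t)$ and left support $p$, which is $\dg$-open by Theorem \ref{dgrlhsa}) to a $\dg$-HSA: I would show
\[
\Ll(J) \; = \; pA^{**}p \cap A.
\]
For the forward inclusion, if $a\in \Ll(J)$ then $a\in J$ gives $e_t a \to a$ in norm (so $pa = a$), while the defining condition $ae_t\to a$ gives $ap = a$; hence $a = pap \in pA^{**}p\cap A$. For the reverse inclusion, if $a \in pA^{**}p \cap A$ then $a$ lies in the $\dg$-HSA with support $p$ and so $(e_t)$ is a two-sided cai for it, giving both $e_ta\to a$ and $ae_t\to a$ in norm; since $e_t a\in J$ and the right ideal $J$ is closed, $a\in J$, and then the second convergence puts $a$ in $\Ll(J)$. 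This identification also shows $\Ll(J)$ does not depend on the particular choice of $\dg$-selfadjoint left cai.

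Finally, for the bijection: starting from a $\dg$-HSA $D$ with support $p$ and passing through $J = DA$, the previous paragraph gives $\Ll(J) = pA^{**}p\cap A = D$, where the last equality is by the definition of the $\dg$-HSA with support $p$. Starting from an r-$\dg$-ideal $J$ with left support $p$ and $\dg$-HSA $D = \Ll(J) = pA^{**}p \cap A$, the argument of the second paragraph shows $DA$ is an r-$\dg$-ideal with left support $p$; by the bijection part of Theorem \ref{dgrlhsa}, $DA = J$. The main obstacle is the equality $\Ll(J) = pA^{**}p \cap A$ in the second paragraph; the crux is extracting norm convergence of $ae_t \to a$ for $a\in pA^{**}p\cap A$, which is where $\dg$-selfadjointness of $(e_t)$ is essential (the same net that witnesses $e_t a\to a$ also witnesses $ae_t \to a$). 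The analogous results for l-$\dg$-ideals follow by applying the involution $\dg$ and the bijection $J\mapsto J^\dg$ between r-$\dg$-ideals and l-$\dg$-ideals.
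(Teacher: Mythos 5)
Your proof is correct. It differs from the paper's mainly in how the work is distributed: the paper invokes Corollary \ref{dghsa} to write a $\dg$-HSA $D$ as $\{x\in A: xe_t\to x,\ e_tx\to x\}$, sets $J=\{x\in A: e_tx\to x\}$ (rather than $DA$), observes $D=\Ll(J)$, and then refers the bijectivity and the identification of the inverse map to the non-involutive case \cite[Corollary 2.7]{BHN}. You instead route everything through the support-projection correspondence of Theorem \ref{dgrlhsa}, proving the two key identities $\Ll(J)=pA^{**}p\cap A$ and $DA=pA^{**}\cap A$ explicitly and letting bijectivity fall out of the uniqueness of the ideal/HSA determined by $p$. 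This makes your argument more self-contained (it also shows, as you note, that $\Ll(J)$ is independent of the chosen left cai, a point the paper leaves implicit), at the cost of leaning on the standard facts that a right ideal with left cai equals $pA^{**}\cap A$ for $p$ the weak* limit of that cai --- facts the paper itself uses freely in proving Theorem \ref{dgrlhsa}, so this is legitimate. Your use of $\dg$-selfadjointness of $(e_t)$ and of $p$ to convert $e_ta\to a$ into $ae_t\to a$ for $a\in pA^{**}p\cap A$ is exactly the new involutive ingredient needed, and it is handled correctly. One small caveat: Theorem \ref{dgrlhsa} as stated is an equivalence of conditions on a projection, not literally a bijection statement, so when you write ``by the bijection part of Theorem \ref{dgrlhsa}, $DA=J$'' you should really say that both are r-$\dg$-ideals with the same left support projection $p$ and hence both equal $pA^{**}\cap A$; this is what you in effect use, so it is a phrasing issue rather than a gap.
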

\begin{proof}
If $D$ is a $\dg$-HSA, then by Corollary \ref{dghsa}, we have 
$$D=\{x\in A: xe_t\to x, e_tx\to x\},$$
where $(e_t)$ is a  $\dg$-selfadjoint cai for $D.$ Set $J=\{x\in A: e_tx\to x\},$ then $J$ is an r-$\dagger$-ideal with $D=\Ll(J).$ 

Conversely, if $J$ is an r-$\dagger$-ideal and $(e_t)$ is a $\dagger$-selfadjoint left cai for $J,$ then 
$$D=\{x\in A: xe_t \to x, e_tx \to x\}$$ 
is a $\dg$-HSA  by Corollary \ref{dghsa}, and $D=\Ll(J).$ The remainder is as in \cite[Corollary 2.7]{BHN}. 
\end{proof}

As in the  operator algebra case  \cite[Corollary 2.8]{BHN}, if $D$ is a $\dg$-hereditary subalgebra of an operator $\ast$-algebra $A,$ and if $J=DA$ and $K=AD,$ then $JK=J\cap K=D.$	  Also as in the  operator algebra case  \cite[Theorem 2.10]{BHN}, any $\dg$-linear functional  on a HSA $D$
of an approximately unital operator $\ast$-algebra $A$  has a unique $\dg$-linear Hahn-Banach extension to $A$.   This is because 
if $\varphi$ is any Hahn-Banach extension to $A$, then $\overline{\varphi(x^\dagger)}$ is another, so these must be equal
by \cite[Theorem 2.10]{BHN}.

\begin{proposition}
Let $D$ be an approximately unital $\dg$-subalgebra 	of an approximately unital operator $\ast$-algebra $A.$ The following are equivalent:
\begin{itemize}
\item[(i)] $D$ is a $\dg$-hereditary subalgebra of $A.$	
\item[(ii)] Every completely contractive unital $\dg$-linear map from $D^{\ast\ast}$ into a unital operator $\ast$-algebra $B$, has a unique completely contractive unital $\dg$-extension from $A^{\ast\ast}$ into $B.$
\item[(iii)] Every completely contractive  $\dg$-linear map $T$ from $D$ into a unital weak* closed operator $\ast$-algebra $B$ such that $T(e_t)\to 1_B$ weak* for some cai $(e_t)$ for $D$ has a unique completely contractive weakly $\dg$-extension $\tilde{T}$ from $A$ into $B$ with $\tilde{T}(f_s)\to 1_B$ weak* for some(or all) cai $(f_s)$ for $A.$
\end{itemize}
\end{proposition}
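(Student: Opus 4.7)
The plan is to follow the blueprint of the non-involutive operator algebra analogue (of the form proved in e.g.\ \cite[Theorem 2.10]{BHN} and \cite[Section 2.6]{BLM}), using it as a black box for the purely operator-algebraic content, and grafting on the $\dagger$-preservation conclusions via a uniqueness-of-extension trick. Since $D^\dagger \subset D$ is part of the hypothesis, once we know $D$ is an HSA, Proposition \ref{dhsa1} promotes it to a $\dagger$-HSA, so the new content concerns making the various extensions $\dagger$-linear.

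For (i) $\Rightarrow$ (ii), I would start with a completely contractive unital $\dagger$-linear $\psi : D^{**} \to B$. The non-involutive version of the theorem produces a unique c.c.\ unital extension $\Phi : A^{**} \to B$. To upgrade $\Phi$ to be $\dagger$-linear, consider $\Phi^\sharp(a) := \Phi(a^\dagger)^\dagger$. By Lemma \ref{bidu} the involution on $A^{**}$ is a weak* continuous complete isometry, and $\dagger$ on $B$ is a complete isometry fixing $1_B$, so $\Phi^\sharp$ is again c.c.\ and unital. On $D^{**}$, which is $\dagger$-invariant since $D$ is, we have $\Phi^\sharp = \psi$ because $\psi$ is $\dagger$-linear. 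The uniqueness clause of the non-involutive extension theorem then forces $\Phi^\sharp = \Phi$, i.e.\ $\Phi$ is $\dagger$-linear. Uniqueness among $\dagger$-preserving extensions is automatic from the stronger uniqueness already given.

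For (ii) $\Rightarrow$ (iii), given $T : D \to B$ as described, I would first bootstrap $T$ to a weak* continuous c.c.\ unital map $\bar T : D^{**} \to B$: the condition $T(e_t) \to 1_B$ combined with $B$ being weak* closed gives this via the standard Krein--Smulian / bidual argument (as used in \cite[Proposition 2.6.2]{BLM} and surrounding material). Since $\dagger$ is weak* continuous on both $D^{**}$ (Lemma \ref{bidu}) and $B$, and $T$ is $\dagger$-linear on the weak* dense subset $D$, $\bar T$ is also $\dagger$-linear. Now apply (ii) to obtain a c.c.\ unital $\dagger$-linear $\tilde \Phi : A^{**} \to B$ extending $\bar T$, and set $\tilde T := \tilde\Phi|_A$. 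Weak* continuity of $\tilde\Phi$ together with $f_s \to 1_{A^{**}}$ weak* for any cai gives $\tilde T(f_s) \to 1_B$ weak*.

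For (iii) $\Rightarrow$ (i), which is the main obstacle, the strategy is to reduce to the non-involutive characterization of HSAs. The tactic is to take $B := D^{**}$, which is a unital operator $*$-algebra with weak* continuous involution, and let $T : D \hookrightarrow D^{**}$ be the canonical inclusion, a c.c.\ $\dagger$-linear map sending a cai of $D$ weak* to $1_{D^{**}}$. Applying (iii) yields a c.c.\ $\dagger$-linear $\tilde T : A \to D^{**}$ with $\tilde T(f_s) \to 1_{D^{**}}$ weak*, and $\tilde T$ extends weak* continuously to $A^{**}$. Following the argument for the operator algebra case, $\tilde T$ must act as $a \mapsto pap$ where $p$ is the support projection of $D^{**}$ in $A^{**}$, from which one extracts $D = pA^{**}p \cap A$ and hence that $D$ is a HSA. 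The delicate point, and the principal obstacle, is to verify that the $\dagger$-linear version of (iii) still supplies enough extensions to rerun the standard HSA identification (the non-involutive version uses all c.c.\ unital maps, not merely $\dagger$-linear ones); if this direct approach falls short, the fallback is to produce general c.c.\ unital extensions out of $\dagger$-linear ones by passing from a candidate target $B_0$ to the involutive enlargement $B_0 \oplus B_0^\star$ and combining a map with its adjoint twist, then extracting the desired extension by projection.
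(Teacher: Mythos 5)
Your route is the paper's route: reduce to the non-involutive theorem of \cite[Proposition 2.11]{BHN}, obtain $\dagger$-linearity of the extensions in (i) $\Rightarrow$ (ii) by the symmetrization $\Phi^\sharp(a)=\Phi(a^\dagger)^\dagger$ together with the (stronger, non-involutive) uniqueness clause, and prove (iii) $\Rightarrow$ (i) by feeding the inclusion $D\to D^{\perp\perp}$ into (iii) and identifying the resulting extension with the compression by the identity $e$ of $D^{\perp\perp}$. The only point you leave unresolved --- the ``delicate point'' in (iii) $\Rightarrow$ (i) --- is not in fact an obstacle, and the fallback via $B_0\oplus B_0^\star$ is unnecessary. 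The reason is that $e$ is $\dagger$-selfadjoint: $D^{\perp\perp}$ is $\dagger$-invariant (by Lemma \ref{bidu} and $D^\dagger\subset D$), so $e^\dagger$ is also an identity for $D^{\perp\perp}$ and hence $e^\dagger=e$. Consequently $eA^{**}e$ is a unital weak* closed operator $\ast$-algebra, and the compression $a\mapsto eae$ is itself a completely contractive \emph{$\dagger$-linear} map on $A$ which restricts to the identity on $D$ and carries any cai of $A$ weak* to $e$. It therefore competes in the uniqueness clause of (iii), applied to the inclusion $T:D\to D^{\perp\perp}\subset eA^{**}e$ --- and that clause only needs to quantify over $\dagger$-linear extensions. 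Uniqueness forces $\tilde T(a)=eae$, so $eAe\subset D^{\perp\perp}$, hence $eA^{**}e=D^{\perp\perp}$ by weak* density, and $D=D^{\perp\perp}\cap A=eA^{**}e\cap A$ is a HSA whose (open) support projection $e$ is $\dagger$-selfadjoint; Proposition \ref{dhsa1} then gives that $D$ is a $\dagger$-HSA. So the standard HSA identification reruns verbatim in the involutive setting, and no supply of non-$\dagger$-linear extensions is required.
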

\begin{proof} 
Let $e$ be the identity of $D^{\ast\ast}.$ Obviously, $e$ is $\dagger$-selfadjoint. 
 If (iii) holds, then the inclusion from $D$ to $D^{\perp\perp}$ extends to a unital complete $\dg$-contraction $T: A\to D^{\ast\ast}\subset eA^{\ast\ast}e.$ The map $x\to exe$ on $A^{\ast\ast}$ is also a completely contractive unital $\dg$-extension of the inclusion map $D^{\ast\ast}\to eD^{\ast\ast}e.$ It follows from the hypothesis that these maps coincide, and so $eA^{\ast\ast}e=D^{\ast\ast},$ which implies that $D$ is a $\dg$-HSA. 
The rest is left as an exercise to the reader, being very similar to the proof of \cite[Proposition 2.11]{BHN}.
\end{proof}

\subsection{Support projections and $\dagger$-HSA's}

\begin{lemma}\label{sdhsa}
If $(J_i)$ is a family of r-$\dagger$-ideals in an operator $\ast$-algebra $A,$ with matching family of $\dagger$-HSA's $(D_i),$ and if $J$ is the norm closure of $\sum_i \, J_i$ then the $\dg$-HSA matching $J$ is the $\dg$-HSA $D$ generated by the $(D_i)$.
\end{lemma}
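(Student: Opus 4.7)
The plan is to pass to the bidual and reduce everything to the behavior of support projections, using that $\dagger$-selfadjointness is preserved by suprema of projections in a dual operator $\ast$-algebra. For each $i$, let $p_i \in A^{**}$ denote the support projection of $D_i$; by Theorem \ref{dgrlhsa}, $p_i$ is $\dagger$-open and coincides with the left support projection of the r-$\dagger$-ideal $J_i$, so $p_i^\dagger = p_i$. Set $p = \vee_i p_i$ in $A^{**}$. Since $A^{**}$ is a dual operator $\ast$-algebra (Lemma \ref{bidu}), Proposition \ref{jmdg} gives
\[
p^\dagger \;=\; \bigvee_i p_i^\dagger \;=\; \bigvee_i p_i \;=\; p.
\]

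Next, I would invoke the non-involutive analogue of the statement, which is standard from the bijective order-preserving correspondence between $A$-open projections, r-ideals, and HSAs (cf.\ \cite{BHN, BRII}): $J = \overline{\sum_i J_i}$ is a closed right ideal whose left support projection is exactly $p$, $J$ carries a left cai (built from the cai's of the $J_i$'s in the usual fashion), and the HSA matching $J$ is the HSA generated by the family $(D_i)$. Because $p$ is both open in $A^{**}$ and $\dagger$-selfadjoint, $p$ is $\dagger$-open, hence by Theorem \ref{dgrlhsa} the ideal $J$ is actually an r-$\dagger$-ideal; its matching $\dagger$-HSA is $pA^{**}p \cap A$, which is the same subalgebra as the non-involutive HSA matching $J$. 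By the previous sentence this subalgebra is the HSA generated by the $(D_i)$; since each $D_i$ is $\dagger$-invariant, this HSA is $\dagger$-invariant, and so coincides with the $\dagger$-HSA $D$ generated by the family $(D_i)$.

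The only substantive new step, beyond transcribing the operator algebra argument, is verifying $p^\dagger = p$; once that is in hand, the theory of $\dagger$-open projections (Theorem \ref{dgrlhsa}) automatically upgrades the known non-involutive conclusion to the involutive one. The potential obstacle is justifying the passage from pairs $p_i \wedge p_j$ to infinite suprema while preserving $\dagger$, but this is precisely Proposition \ref{jmdg} combined with the weak$^\ast$ continuity of the involution on $A^{**}$ coming from Lemma \ref{bidu}.
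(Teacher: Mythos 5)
Your proposal is correct and follows essentially the same route as the paper, which simply cites the matching non-involutive operator algebra result and notes that $\dagger$-HSA's are HSA's. You have merely made explicit the detail the paper leaves implicit, namely that the common support projection $p=\vee_i p_i$ is $\dagger$-selfadjoint (via Proposition \ref{jmdg}), so that Theorem \ref{dgrlhsa} upgrades the non-involutive conclusion to the involutive one.
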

\begin{proof}
This follows from the matching operator algebra result, since e.g.\ any $\dagger$-HSA is a HSA.
\end{proof}

In the next result, e.g.\ $\overline{zA}$ denotes the {\em norm closure} of $zA$.

\begin{proposition}\label{csrdihsa}
Let $A$ be an operator $\ast$-algebra (not necessarily with an identity or approximate identity). Suppose that $(x_k)$ is a sequence of $\dg$-selfadjoint elements in $\mathfrak{F}_A,$ and $\alpha_k\in (0,1]$ add to $1.$ Then the closure of the sum of the r-$\dagger$-ideals $\overline{x_k A},$ is the r-$\dagger$-ideal $\overline{zA},$ where $z=\sum_{k=1}^{\infty} \alpha_kx_k\in \mathfrak{F}_A.$ Similarly, the $\dagger$-HSA generated by all the $\overline{x_kAx_k}$ equals $\overline{zAz}.$	 \end{proposition}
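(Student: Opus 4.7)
The plan is to reduce the proposition to the identity of support projections $s(z) = \bigvee_k s(x_k)$ in $A^{\ast\ast}$, and then invoke the bijections between r-$\dg$-ideals, $\dg$-HSA's, and $\dg$-open projections established in Theorem~\ref{dgrlhsa} and Lemma~\ref{sdhsa}.

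First I would check that $z$ is a well-defined $\dagger$-selfadjoint element of $\mathfrak{F}_A$. Each $x_k$ satisfies $\Vert x_k\Vert \leq 2$, so the series defining $z$ converges absolutely. For the partial sums $s_N = \sum_{k=1}^N \alpha_k x_k$, the triangle inequality gives
\[
\Vert 1 - s_N\Vert \leq \Bigl(1-\sum_{k=1}^{N}\alpha_k\Bigr) + \sum_{k=1}^{N}\alpha_k\Vert 1-x_k\Vert \leq 1,
\]
so $s_N \in \mathfrak{F}_A$, and norm closedness yields $z \in \mathfrak{F}_A$. Continuity of $\dagger$ together with $x_k^\dagger = x_k$ gives $z^\dagger = z$. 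Hence $z$ is a real positive $\dagger$-selfadjoint element of $\mathfrak{F}_A$, and by Corollary~\ref{fbab}, $\overline{zA}$ is an r-$\dg$-ideal with left support projection $s(z)$, while $\overline{zAz}$ is the matching $\dg$-HSA with the same support projection.

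Next I would prove $s(z) = \bigvee_k s(x_k)$. Iterating the fact $s(u+v) = s(u)\vee s(v)$ for $u,v \in \mathfrak{r}_A$ (from \cite[Proposition 2.14]{BRI}; cf.\ Proposition~\ref{sdaggi} in the $\dg$-selfadjoint case), each partial sum $s_N \in \mathfrak{r}_A$ satisfies $s(s_N) = \bigvee_{k=1}^N s(x_k)$. Writing $z = s_N + t_N$ with $t_N = \sum_{k>N}\alpha_k x_k \in \mathfrak{r}_A$, a further application gives $s(z) \geq s(s_N)$, and taking the join over $N$ yields $s(z) \geq \bigvee_k s(x_k)$. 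Conversely, setting $p = \bigvee_k s(x_k)$, we have $p x_k = x_k$ for all $k$, hence $p s_N = s_N$, and passing to the norm limit gives $pz = z$, so $s(z) \leq p$.

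Finally I would apply the bijection from Theorem~\ref{dgrlhsa} between r-$\dg$-ideals and $\dg$-open projections. Each $s(x_k)$ is $\dg$-open, and Proposition~\ref{jmdg} ensures that $\bigvee_k s(x_k)$ is also $\dg$-selfadjoint, hence $\dg$-open as the support of the closed sum of r-$\dg$-ideals (using the straightforward r-ideal analog of Lemma~\ref{sdhsa}). Since the closure of $\sum_k \overline{x_k A}$ is an r-$\dg$-ideal with support $\bigvee_k s(x_k) = s(z)$, the bijection identifies it with $\overline{zA}$. The $\dg$-HSA claim is then immediate from Lemma~\ref{sdhsa}, or equivalently from the analogous bijection between $\dg$-HSA's and $\dg$-open projections applied to the same equality of supports. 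The main obstacle is the support computation $s(z) = \bigvee_k s(x_k)$, particularly the passage from the finite-sum identity to the infinite series (where norm convergence of $(s_N)$ to $z$ does the work); everything else is routine bookkeeping with the correspondences from Section~5.1.
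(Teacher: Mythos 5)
Your argument is correct, but it takes a more self-contained route than the paper does. The paper's proof is essentially a two-line reduction: it cites the matching non-involutive operator algebra result (from \cite{BRI}) for the identity of $\overline{zA}$ with the closure of $\sum_k \overline{x_kA}$, and then observes that since $z$ is $\dagger$-selfadjoint and lies in ${\mathfrak F}_A$, the ideal $\overline{zA}$ is an r-$\dagger$-ideal (via the $\dagger$-selfadjoint left cai $(z^{1/n})$ of Corollary \ref{fbab}). You instead re-derive the non-involutive core by computing support projections: showing $s(z)=\vee_k s(x_k)$ and then invoking the bijection between r-$\dagger$-ideals (resp.\ $\dagger$-HSA's) and $\dagger$-open projections. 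This is a legitimate and arguably more instructive proof --- it is close to how the cited result is actually established in \cite{BRI} --- but note that it quietly leans on the same background facts the citation encapsulates: the join formula $s(u+v)=s(u)\vee s(v)$ for general real positive $u,v$ (the paper only records the special case $s(x)\vee s(x^\dagger)=s(x+x^\dagger)$ in Proposition \ref{sdaggi}, so you should cite \cite[Proposition 2.14]{BRI} or its ${\mathfrak r}_A$-extension explicitly), the fact that the closure of a sum of r-ideals is again an r-ideal, and the injectivity of the support-projection correspondence. Your preliminary verification that $z\in{\mathfrak F}_A$ and $z^\dagger=z$, and the two-sided estimate $s(z)\geq s(s_N)$ and $pz=z$, are all sound. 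What the paper's approach buys is brevity; what yours buys is an explicit identification of exactly which projection both sides support, which also makes the $\dagger$-HSA statement fall out uniformly rather than by a parallel citation.
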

\begin{proof}
By the matching operator algebra result,
the right ideal $\overline{zA}$ is the closure of the sum of the right ideals $\overline{x_kA}.$  If $z\in \mathfrak{F}_A$ is $\dagger$-selfadjoint then $\overline{zA}$ is an r-$\dagger$-ideal.
\end{proof}

If $S\subset A,$ define $S_{\dagger}$ to be the set of $\dagger$-selfadjoint elements in $S.$ 
\begin{lemma}\label{spjoin}
Let $A$ be an operator $\ast$-algebra, a subalgebra of a $C^{\ast}$-algebra $B.$
\begin{itemize}
\item[(i)] The support projection of a $\dagger$-HSA $D$ in $A$ equals $\vee_{a\in ({\mathfrak F}_D)_{\dagger}}s(a)$ (which equals $\vee_{a\in ({\mathfrak r}_D)_{\dagger}}s(a)$).
\item[(ii)]	 The support projection of an r-$\dagger$-ideal $J$ in $A$ equals $\vee_{a\in ({\mathfrak F}_J)_{\dagger}}s(a)$ (which equals $\vee_{a\in ({\mathfrak r}_J)_{\dagger}}s(a)$).
\end{itemize}
\end{lemma}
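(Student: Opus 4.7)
The plan is to reduce the lemma to the known non-involutive analogues: for any HSA $D$ (respectively any r-ideal $J$) in an operator algebra, the support projection equals $\vee_{a\in {\mathfrak F}_D}\, s(a)$ (respectively $\vee_{a\in {\mathfrak F}_J}\, s(a)$), and similarly with ${\mathfrak r}$ in place of ${\mathfrak F}$. Given this, the only work is to show that restricting these suprema to $\dagger$-selfadjoint elements does not decrease them.

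For (i), given any $a\in {\mathfrak F}_D$ I would set $b=(a+a^{\dagger})/2$. By Proposition \ref{sdaggi} the set ${\mathfrak F}_A$ is closed under $\dagger$, hence so is ${\mathfrak F}_D=D\cap {\mathfrak F}_A$, and by convexity $b\in ({\mathfrak F}_D)_{\dagger}$. The identity $s(a+a^{\dagger})=s(a)\vee s(a^{\dagger})$ from the same proposition then yields $s(b)\geq s(a)$. Thus the supremum over $({\mathfrak F}_D)_{\dagger}$ dominates the supremum over ${\mathfrak F}_D$, and the reverse inequality is trivial. To pass from ${\mathfrak F}_D$ to ${\mathfrak r}_D$ I would invoke the ${\mathfrak F}$-transform: for $\dagger$-selfadjoint $x\in {\mathfrak r}_D$ the element $y=x(1+x)^{-1}$ is $\dagger$-selfadjoint, lies in $\tfrac{1}{2}{\mathfrak F}_D\subset {\mathfrak F}_D$, and satisfies $s(y)=s(x)$ by Corollary \ref{fbab}; combined with the trivial inclusion $({\mathfrak F}_D)_{\dagger}\subset ({\mathfrak r}_D)_{\dagger}$ this gives equality of the two joins.

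For (ii), let $p$ denote the support projection of the r-$\dagger$-ideal $J$. By Theorem \ref{dgrlhsa} and the subsequent corollary, $J=DA$ where $D=\Ll(J)$ is a $\dagger$-HSA whose support projection is also $p$. I would use $D\subset J$ to obtain $({\mathfrak F}_D)_{\dagger}\subset ({\mathfrak F}_J)_{\dagger}$, while on the other hand any $a\in {\mathfrak F}_J$ satisfies $pa=a$ and so $s(a)\leq p$. Combining these observations with (i) applied to $D$ yields
\[
p \;=\; \vee_{a\in ({\mathfrak F}_D)_{\dagger}}\, s(a)\;\leq\; \vee_{a\in ({\mathfrak F}_J)_{\dagger}}\, s(a)\;\leq\; p,
\]
forcing equality throughout. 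The ${\mathfrak r}_J$ version then follows by the same ${\mathfrak F}$-transform argument as in (i).

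The main obstacle is more conceptual than technical: an r-$\dagger$-ideal $J$ is not itself $\dagger$-closed in general, so the averaging trick $a\mapsto (a+a^{\dagger})/2$ used in (i) cannot be applied directly inside $J$. This is precisely why part (ii) must be handled via the auxiliary $\dagger$-HSA $D=\Ll(J)$, which carries the $\dagger$-structure and shares its support projection with $J$. Once that bridge is in place, all the hard work has already been done in Proposition \ref{sdaggi} and Corollary \ref{fbab}.
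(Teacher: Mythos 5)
Your proposal is correct and follows essentially the same route as the paper: reduce to the non-involutive join formulas and upgrade them using the averaging $a\mapsto (a+a^{\dagger})/2$ together with $s(a)\vee s(a^{\dagger})=s(a+a^{\dagger})$ from Proposition \ref{sdaggi}. The paper dispatches (ii) with ``this is similar,'' and your explicit bridge through the $\dagger$-HSA $D=\Ll(J)$ (motivated by the correct observation that an r-$\dagger$-ideal need not be $\dagger$-closed, so the averaging trick cannot be run inside $J$ itself) is a sound and welcome filling-in of that omitted detail.
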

\begin{proof}
(i) \ Suppose $p$ is the support projection of $D,$ then $p=\vee_{b\in {\mathfrak F}_D}s(b)=\vee_{b\in {\mathfrak r}_D}s(b)$ by the operator algebra 
case we are generalizing. Thus, 
$$p\geq \vee_{a\in ({\mathfrak r}_D)_{\dagger}} \, s(a)\geq \vee_{a\in ({\mathfrak F}_D)_{\dagger}} \, s(a).$$
For any $b\in {\mathfrak F}_D,$ we have $b^{\dagger}\in {\mathfrak F}_D$ and 
$s(b)\vee s(b^{\dagger})=s(\frac{b+b^{\dagger}}{2})$, by Proposition \ref{sdaggi}. 
Hence, 
$$p=\vee_{b\in {\mathfrak F}_D}s(b)\leq \vee_{a\in ({\mathfrak F}_D)_{\dagger}}s(a).$$
Therefore, $p\leq \vee_{a\in ({\mathfrak F}_D)_{\dagger}}s(a)\leq \vee_{a\in ({\mathfrak r}_D)_{\dagger}}s(a).$ 

(ii) \ This is similar.
\end{proof}

\begin{lemma}\label{mindgshsa}
For any operator $\ast$-algebra $A,$ if $E\subset ({\mathfrak r}_A)_{\dagger},$ then the smallest $\dagger$-hereditary subalgebra of $A$ containing $E$ is $pA^{\ast\ast}p\cap A,$ where $p=\vee_{x\in E}\, s(x).$	
\end{lemma}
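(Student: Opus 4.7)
The plan is to show both containments: that $pA^{**}p \cap A$ is itself a $\dagger$-HSA containing $E$, and that it is contained in every $\dagger$-HSA containing $E$.

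First I would verify that $p$ is a $\dagger$-open projection in $A^{**}$. For each $x \in E \subset (\mathfrak{r}_A)_\dagger$, Proposition \ref{sdaggi} gives $s(x)^\dagger = s(x^\dagger) = s(x)$, so each $s(x)$ is a $\dagger$-selfadjoint open projection (openness being part of the quoted support-projection theorem from \cite{BRII}). By Lemma \ref{bidu}, $A^{**}$ is a dual operator $*$-algebra, so Proposition \ref{jmdg} yields
\[
p^\dagger = \Bigl( \bigvee_{x \in E} s(x) \Bigr)^\dagger = \bigvee_{x \in E} s(x)^\dagger = \bigvee_{x \in E} s(x) = p.
\]
Since the supremum of a family of $A$-open projections is $A$-open (by the order-preserving bijection between HSA's and open projections recalled at the end of Section \ref{inv}), $p$ is open, hence $\dagger$-open. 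Therefore $D_0 := pA^{**}p \cap A$ is a $\dagger$-HSA.

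Next I would check that $E \subset D_0$. For each $x \in E$, the support projection satisfies $x = s(x) x = x s(x)$; since $s(x) \leq p$, this forces $x = px = xp = pxp$, so $x \in D_0$.

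For minimality, suppose $D$ is any $\dagger$-HSA of $A$ containing $E$, with support projection $q$ (which is $\dagger$-open by Proposition \ref{dhsa1}). For each $x \in E$ we have $x \in D = qA^{**}q \cap A$, giving $qx = x = xq$. Because $s(x)$ is characterized as the smallest projection in $A^{**}$ with this property, $s(x) \leq q$, and taking joins gives $p \leq q$. Hence $D_0 = pA^{**}p \cap A \subset qA^{**}q \cap A = D$.

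There is no real obstacle here: the whole argument rests on three facts that are already in hand, namely the $\dagger$-invariance of support projections of $\dagger$-selfadjoint real positive elements (Proposition \ref{sdaggi}), the weak* continuity of the involution on $A^{**}$ together with Proposition \ref{jmdg} to commute joins past $\dagger$, and the standard correspondence between $\dagger$-open projections and $\dagger$-HSA's. The only point requiring minor care is ensuring $p$ is open (not merely $\dagger$-selfadjoint), but this is immediate from joins of open projections being open.
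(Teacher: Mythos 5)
Your proposal is correct and follows essentially the same route as the paper: the paper defers the claim that $pA^{**}p\cap A$ is the smallest HSA containing $E$ to the non-involutive operator algebra result being generalized, and then gives exactly your minimality argument (any $\dagger$-HSA $D\supset E$ has $p\in D^{\perp\perp}$, whence $pA^{**}p\cap A\subset D$). Your explicit verification that $p^\dagger=p$ via Proposition \ref{sdaggi} and Proposition \ref{jmdg} just spells out a detail the paper leaves implicit.
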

\begin{proof}  By  the operator algebra case we are generalizing, $pA^{\ast\ast}p\cap A$ is the smallest $\dagger$-hereditary subalgebra of $A$       containing $E$.
Conversely, if $D$ is a $\dagger$-HSA of $A$ containing $E$ then $D^{\perp\perp}$ contains $p$ by a routine argument, so $pA^{\perp\perp}p\subset D^{\perp\perp}$ and $pA^{\perp\perp}p\cap A\subset D^{\perp\perp}\cap A=D.$
 \end{proof}
\begin{corollary}
For any operator $\ast$-algebra $A,$ suppose that a convex set $E\subset {\mathfrak r}_A$ and $E^{\dagger}\subset E.$ Then the smallest hereditary subalgebra of $A$ containing $E$ is $pA^{\ast\ast}p\cap A,$ where $p=\vee_{x\in E_{\dagger}} s(x).$ Indeed, this is the smallest $\dagger$-HSA of $A$ containing $E.$
\end{corollary}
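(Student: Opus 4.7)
The strategy is to apply Lemma \ref{mindgshsa} to the $\dagger$-selfadjoint part $E_\dagger$ of $E$. Since every $s(x)$ with $x \in E_\dagger$ is $\dagger$-selfadjoint (Proposition \ref{sdaggi}), so is $p = \vee_{x \in E_\dagger} s(x)$, and consequently $pA^{\ast\ast}p \cap A$ is a $\dagger$-HSA. Lemma \ref{mindgshsa} --- and the inspection of its proof, which applies verbatim to any HSA rather than only to $\dagger$-HSAs --- shows that $pA^{\ast\ast}p \cap A$ is the smallest HSA of $A$ containing $E_\dagger$. Hence, once we know $E \subset pA^{\ast\ast}p \cap A$, any HSA of $A$ containing $E$ will contain $E_\dagger$ and therefore contain $pA^{\ast\ast}p \cap A$; this gives the minimality claim among all HSAs containing $E$ (and \emph{a fortiori} among $\dagger$-HSAs).

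The only real work is the inclusion $E \subset pA^{\ast\ast}p \cap A$. Fix $x \in E$. Convexity of $E$ together with $E^\dagger \subset E$ gives $y := (x + x^\dagger)/2 \in E$, and $y^\dagger = y$, so $y \in E_\dagger$ and $s(y) \leq p$. It therefore suffices to prove the inequality $s(x) \leq s(y)$: then $x = s(x)\, x\, s(x) \in s(x) A^{\ast\ast} s(x) \subset pA^{\ast\ast}p$, so $x \in pA^{\ast\ast}p \cap A$.

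The inequality $s(x) \leq s(y)$ is the ${\mathfrak r}_A$-analogue of the identity $s(z) \vee s(z^\dagger) = s(z + z^\dagger)$ from Proposition \ref{sdaggi}, which is stated there only for $z \in {\mathfrak F}_A$. To obtain it, I would pass through the ${\mathfrak F}$-transform (introduced just above Section \ref{Exs}): set $a = {\mathfrak F}(x) = x(1+x)^{-1} \in \tfrac{1}{2} {\mathfrak F}_A$, so that $s(a) = s(x)$, while $a^\dagger = {\mathfrak F}(x^\dagger)$ gives $s(a^\dagger) = s(x^\dagger)$. Proposition \ref{sdaggi} applied to $a$ then yields $s(x) \vee s(x^\dagger) = s(a + a^\dagger)$. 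Descending into a compatible $C^*$-cover $B$ with period-$2$ $*$-automorphism $\sigma$ (so $x^\dagger = \sigma(x)^*$), a short computation gives ${\rm Re}(a + a^\dagger) = {\rm Re}(a) + \sigma({\rm Re}(a))$ and $2\, {\rm Re}(y) = {\rm Re}(x) + \sigma({\rm Re}(x))$. Invoking $s(a) = s(x)$ together with the standard identification of the support of an accretive element with the $C^*$-support of its real part (the fact underlying Proposition \ref{sdaggi}), both of these elements have support $s(x) \vee \sigma(s(x))$, so $s(a + a^\dagger) = s(y)$; hence $s(x) \leq s(x) \vee s(x^\dagger) = s(y)$, as required.

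The step I expect to be the main obstacle is this identification $s(a + a^\dagger) = s(y)$, because it forces one to descend into a compatible $C^*$-cover and invoke the link between operator-algebraic support projections of accretive elements and the $C^*$-algebraic support of their real parts. Modulo this, the remaining ingredients --- the convexity and $\dagger$-invariance manipulations yielding $y \in E_\dagger$, the $\dagger$-selfadjointness of $p$, and the appeal to Lemma \ref{mindgshsa} for minimality --- are a routine reassembly of tools already in the paper.
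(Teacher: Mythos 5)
Your proof is correct and follows essentially the same route as the paper's: both arguments come down to noting that convexity plus $E^{\dagger}\subset E$ puts $(x+x^{\dagger})/2$ in $E_{\dagger}$, establishing $s(x)\leq s((x+x^{\dagger})/2)$, and then invoking Lemma \ref{mindgshsa}. The paper simply asserts that key inequality (it is immediate from $s(u+v)=s(u)\vee s(v)$ for real positive $u,v$, i.e.\ the fact from \cite[Proposition 2.14]{BRI} already cited in Proposition \ref{sdaggi}), so your ${\mathfrak F}$-transform detour through a compatible $C^*$-cover, while valid, does more work than is needed.
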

\begin{proof}  By the lemma the smallest HSA (and smallest $\dagger$-HSA) containing $E$ is $pA^{\ast\ast}p\cap A,$ where $p=\vee_{a\in E}\, s(a).$ For any $a\in E$, $\frac{a+a^{\dagger}}{2}\in E$ by convexity of $E$. Notice that $s(\frac{a+a^{\dagger}}{2})\leq p$ and $s(\frac{a+a^{\dagger}}{2})\geq s(a).$   So  $p=\vee_{x\in E_{\dagger}}s(x)$. 
\end{proof}

Again in the next several results in this section, `overline' denotes the {\em norm closure}.

\begin{theorem}
If $A$ is an operator $\ast$-algebra then $\dagger$-HSA's (resp.\ r-$\dagger$-ideals) in $A$ are precisely the sets of form $\overline{EAE}$ (resp.\ $\overline{EA}$) for some $E\subset({\mathfrak r}_A)_{\dagger}.$	The latter set is the smallest $\dagger$-HSA (resp.\ r-$\dagger$-ideal) of $A$ containing $E.$ 
\end{theorem}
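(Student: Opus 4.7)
My plan is to prove both equalities by combining Lemma~\ref{mindgshsa} with the construction of an explicit $\dagger$-selfadjoint cai. By Corollary~\ref{fbab} we have $\overline{xA} = \overline{{\mathfrak F}(x) A}$ and $\overline{xAx} = \overline{{\mathfrak F}(x) A {\mathfrak F}(x)}$, where ${\mathfrak F}(x) = x(1+x)^{-1}$ is $\dagger$-selfadjoint (Proposition~\ref{sdaggi}); thus replacing $E$ by $\{{\mathfrak F}(x) : x \in E\}$ changes neither $\overline{EA}$ nor $\overline{EAE}$, and I may assume $E \subset (\frac{1}{2}{\mathfrak F}_A)_\dagger$. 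Put $p = \vee_{x \in E} s(x)$. Propositions~\ref{sdaggi} and~\ref{jmdg} show $p$ is $\dagger$-selfadjoint, and Lemma~\ref{mindgshsa} identifies $p$ as the $\dagger$-open support of the smallest $\dagger$-HSA $D = p A^{**} p \cap A$ containing $E$; via Theorem~\ref{dgrlhsa}, $p$ is also the left support of the smallest r-$\dagger$-ideal $J = p A^{**} \cap A$ containing $E$.

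For each finite $F \subset E$ put $y_F = |F|^{-1} \sum_{x \in F} x$, a convex combination of elements of $(\frac{1}{2}{\mathfrak F}_A)_\dagger$, hence in $(\frac{1}{2}{\mathfrak F}_A)_\dagger$. Proposition~\ref{csrdihsa} gives $s(y_F) = \vee_{x \in F} s(x)$, and these support projections increase weak* to $p$. The roots $y_F^{1/n}$ are $\dagger$-selfadjoint (Proposition~\ref{sdaggi}), form a cai for the $\dagger$-HSA $\overline{y_F A y_F}$, and a left cai for $\overline{y_F A}$ (Corollary~\ref{fbab}). Since $x = s(x) x s(x) \in \overline{xAx} \subset \overline{EAE}$ for every $x \in E$, we have $y_F \in \overline{EAE}$; approximating $z \mapsto z^{1/n}$ uniformly on the spectrum of $y_F$ by polynomials with no constant term gives $y_F^{1/n} \in \overline{EAE}$ (and likewise $\in \overline{EA}$), once $\overline{EAE}$ is known to be a subalgebra. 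For a typical $b = e_1 a e_2 \in EAE$, choosing $F \supset \{e_1, e_2\}$ places $e_1, e_2 \in \overline{y_F A y_F}$ (because $s(e_i) \leq s(y_F)$), whence $y_F^{1/n} b \to b$ and $b y_F^{1/n} \to b$ in norm as $n \to \infty$. A standard diagonal argument, indexed by finite subsets of $EAE$ paired with tolerances, promotes this into a genuine $\dagger$-selfadjoint cai $(u_\alpha) \subset \overline{EAE}$ for $\overline{EAE}$; an analogous argument yields a $\dagger$-selfadjoint left cai for $\overline{EA}$.

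A direct computation shows $\overline{EAE}$ is a $\dagger$-subalgebra of $A$ (via $(e_1 a_1 e_2)(e_3 a_3 e_4) = e_1(a_1 e_2 e_3 a_3) e_4$), is an inner ideal ($\overline{EAE} \cdot A \cdot \overline{EAE} \subset \overline{EAE}$), and is $\dagger$-closed ($(e_1 a e_2)^\dagger = e_2 a^\dagger e_1$). Combined with approximate unitality, Proposition~\ref{dhsainner} gives that $\overline{EAE}$ is a $\dagger$-HSA whose support is $w^*$-$\lim_\alpha u_\alpha = \vee_F s(y_F) = p$, forcing $\overline{EAE} = D$. Analogously $\overline{EA}$ is an r-$\dagger$-ideal of left support $p$, so $\overline{EA} = J$. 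Minimality is then immediate from Lemma~\ref{mindgshsa} and Theorem~\ref{dgrlhsa}. For the converse direction---that every $\dagger$-HSA (resp.\ r-$\dagger$-ideal) arises this way---take $E = ({\mathfrak r}_D)_\dagger$ (resp.\ $({\mathfrak r}_J)_\dagger$); Lemma~\ref{spjoin} ensures the constructed $p$ matches the given support, so $\overline{EAE} = D$ (resp.\ $\overline{EA} = J$). The main technical hurdle is the assembly of the double-indexed family $(y_F^{1/n})_{F,n}$ into a single cai converging on all of $\overline{EAE}$: each individual element is approximated correctly by some $(F,n)$, but the standard reindexing via finite test sets and tolerances is required to produce a genuine net.
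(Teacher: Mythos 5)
Your argument is correct and is essentially the paper's: the paper reduces to $E\subset(\tfrac{1}{2}{\mathfrak F}_A)_\dagger$ via the ${\mathfrak F}$-transform, invokes \cite[Theorem 3.18]{BWj} for $\overline{EAE}$ being the smallest HSA containing $E$, and uses the doubly indexed net $(a_F^{1/n})$ of roots of finite averages as the $\dagger$-selfadjoint (left) cai --- exactly the construction you have unpacked in detail. The only cosmetic difference is in the easy direction, where the paper takes $E$ to be a $\dagger$-selfadjoint cai of the given $\dagger$-HSA rather than $({\mathfrak r}_D)_\dagger$.
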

\begin{proof}
If $D$ is a $\dagger$-HSA (resp.\ r-$\dagger$-ideal) and taking $E$ to be a $\dagger$-selfadjoint cai for the $\dagger$-HSA $D$ (resp.\ a $\dagger$-selfadjoint left cai for the r-$\dagger$-ideal), then the results follows immediately. 
Conversely for any $x\in ({\mathfrak r}_A)_{\dagger},$ we have $x(1+x)^{-1}\in (\frac{1}{2}{\mathfrak F}_A)_{\dagger}$ as we said in
Corollary \ref{fbab}.  Then as in \cite[Theorem 3.18]{BWj} we may assume that $E\subset(\frac{1}{2}{\mathfrak F}_A)_{\dagger}.$ Note that $D=\overline{EAE}$ is the smallest HSA  containing $E$ by \cite[Theorem 3.18]{BWj} and $D$ is $\dagger$-selfadjoint, so that $D$ is the smallest $\dg$-HSA containing $E.$ Similarly, $\overline{EA}$ is the smallest right ideal with a $\dagger$-selfadjoint left contractive identity of $A$ containing $E$. Moreover, for any finite subset $F\subset E$ if $a_F$ is the average of the elements in $F,$ then $(a_F^{1/n})$ will serve as a $\dagger$-selfadjoint left cai for $\overline{EA}.$ \end{proof}

In particular, the largest $\dagger$-HSA in  an operator $\ast$-algebra $A$ is the largest HSA in $A$, and the largest approximately unital subalgebra in $A$ (see
\cite[Section 4]{BRII}), namely $A_H = \overline{{\mathfrak r}_A A {\mathfrak r}_A}
= \overline{({\mathfrak r}_A)_{\dagger} A ({\mathfrak r}_A)_{\dagger}}$.
The latter equality follows because $A_H$ has a cai in ${\mathfrak r}_A$, hence has a  cai in $({\mathfrak r}_A)_{\dagger}$.

\begin{theorem}\label{chdghsar}
Let $A$ be an operator $\ast$-algebra (not necessarily with an identity or approximate identity.) The $\dagger$-HSA's (resp.\ r-$\dagger$-ideals) in $A$ are precisely the closures of unions of an increasing net of $\dg$-HSA's (resp.\ r-$\dagger$-ideals) of the form $\overline{xAx}$ (resp.\ $\overline{xA}$) for $x\in ({\mathfrak r}_A)_{\dagger}.$	
\end{theorem}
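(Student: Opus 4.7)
The plan is to handle the $\dagger$-HSA assertion; the r-$\dagger$-ideal case follows by an entirely parallel argument replacing $\overline{xAx}$ with $\overline{xA}$ throughout. Two inclusions must be checked.

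For the ``if'' direction I will verify that $\overline{\bigcup_i D_i}$ is a $\dagger$-HSA whenever $(D_i)$ is an increasing net of $\dagger$-HSA's with each $D_i = \overline{x_i A x_i}$ for $x_i \in ({\mathfrak r}_A)_\dagger$. Directedness makes $\bigcup_i D_i$ a $\dagger$-invariant inner subalgebra of $A$, and I can assemble a $\dagger$-selfadjoint cai for its closure by gluing the $\dagger$-selfadjoint cai's of the $D_i$ (every finite subset of the union lies in some single $D_i$, whose cai then serves). The closure is thus a closed, approximately unital, $\dagger$-selfadjoint inner ideal of $A$, hence a $\dagger$-HSA by Proposition \ref{dhsainner}; alternatively one can invoke Lemma \ref{sdhsa} applied to the associated r-$\dagger$-ideals $D_i A$.

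For the ``only if'' direction I will start with a given $\dagger$-HSA $D$ and appeal to the preceding theorem to write $D = \overline{EAE}$ for some $E \subset ({\mathfrak r}_A)_\dagger$. Applying the $\mathfrak{F}$-transform $a \mapsto a(1+a)^{-1}$ elementwise, which preserves $\dagger$-selfadjointness and produces the same HSA $\overline{aAa}$ (Corollary \ref{fbab}), I reduce to the case $E \subset (\tfrac{1}{2}{\mathfrak F}_A)_\dagger$. Then I index by the directed set of finite subsets $F \subset E$ ordered by inclusion, and for each $F = \{a_1,\dots,a_n\}$ set $x_F = \tfrac{1}{n}(a_1 + \cdots + a_n)$, which remains in $(\tfrac{1}{2}{\mathfrak F}_A)_\dagger \subset ({\mathfrak r}_A)_\dagger$ by convexity. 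Proposition \ref{csrdihsa}, applied to this finite convex combination, identifies $\overline{x_F A x_F}$ as precisely the $\dagger$-HSA generated by $\{\overline{a A a} : a \in F\}$, which is manifestly monotone in $F$ and supplies the required increasing net.

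It remains to show $\overline{\bigcup_F \overline{x_F A x_F}} = D$. The inclusion $\supset$ is immediate by specialising to singleton $F = \{a\}$: the union then contains $\overline{a A a}$ for every $a \in E$, so the closure contains $\overline{EAE} = D$. For $\subset$, note that since $a = s(a) a s(a) \in s(a) A^{**} s(a) \cap A = \overline{aAa} \subset D$ we have $E \subset D$; hence each $x_F$ is a convex combination of elements of $D$, and the inner-ideality of $D$ yields $\overline{x_F A x_F} \subset \overline{DAD} \subset D$. I expect the main conceptual step to be the monotonicity of $F \mapsto \overline{x_F A x_F}$, which depends on the convex-combination identification in Proposition \ref{csrdihsa}; everything else is bookkeeping, with the only minor subtlety being the $\mathfrak{F}$-transform reduction needed to bring the representing elements into ${\mathfrak F}_A$ where that proposition directly applies.
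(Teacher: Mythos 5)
Your argument is correct and follows essentially the same route as the paper: both index by finite subsets of a $\dagger$-selfadjoint generating family, average to get $x_F \in (\tfrac{1}{2}{\mathfrak F}_A)_\dagger$, and use Proposition \ref{csrdihsa} to produce the increasing net whose union closes up to $D$ (the paper indexes over all of $({\mathfrak F}_D)_{\dagger}$ and invokes Lemma \ref{spjoin}, where you argue the two containments directly). The one step worth tightening is the inclusion $D = \overline{EAE} \subset \overline{\bigcup_F \overline{x_F A x_F}}$: singletons only put $\bigcup_{a\in E}\overline{aAa}$, and hence $E$, inside the right-hand side, and to absorb the cross terms $e_1 A e_2$ for distinct $e_1, e_2 \in E$ you should add that the right-hand side is (by your own ``if'' direction) a $\dagger$-HSA, hence an inner ideal containing $E$, and therefore contains $EAE$.
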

\begin{proof}
Suppose that $D$ is a $\dg$-HSA (resp.\ an r-$\dagger$-ideal). The set of $\dagger$-HSA's (resp.\ r-$\dagger$-ideals) $\overline{a_FAa_F}$ (resp.\ $\overline{a_F A}$) as in the last proof, indexed by finite subsets $F$ of $({\mathfrak F}_D)_{\dagger},$ is an increasing net. Lemma \ref{spjoin} can be used to show, as in \cite{BWj}, that the closure of the union of these $\dagger$-HSA's (resp.\ r-$\dagger$-ideals) is $D.$	
\end{proof}

As in the theory we are following, it follows that $\dagger$-open projections are just the sup's of a collection (an increasing net 
if desired) of $\dagger$-selfadjoint support projections $s(x)$ for 
 $\dagger$-selfadjoint  $x \in {\mathfrak r}_A$.

\begin{theorem}\label{strdghsa}
Let $A$ be any operator $\ast$-algebra (not necessarily with an identity or approximate identity). Every separable $\dagger$-HSA or $\dagger$-HSA with a countable cai (resp.\ separable r-$\dagger$-ideal or r-$\dagger$-ideal with a countable cai) is equal to $\overline{xAx}$ (resp.\, $\overline{xA}$) for some $x\in ({\mathfrak F}_A)_{\dagger}.$	
\end{theorem}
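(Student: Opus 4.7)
The plan is to mimic the non-involutive argument in \cite[Theorem 2.19]{BRI}, using the $\dagger$-selfadjoint cai machinery already developed in the paper. First I would reduce both cases to the countable cai hypothesis. A separable $\dagger$-HSA $D$ has a countable cai via the standard density argument (extract from any cai a sequence $e_{t_n}$ that $1/n$-approximates the $n$th element of a dense sequence), so by Corollary \ref{cdgcai12} it has a countable $\dagger$-selfadjoint cai $(e_n) \subset \frac{1}{2}{\mathfrak F}_D$. For a separable r-$\dagger$-ideal $J$ with support projection $p$, the matching $\dagger$-HSA $D = pA^{\ast\ast}p \cap A$ sits inside $J$ and so is separable, and any $\dagger$-selfadjoint cai of $D$ is automatically a $\dagger$-selfadjoint left cai for $J$. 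Thus in every case we arrive at a countable $(e_n) \subset \frac{1}{2}({\mathfrak F}_A)_\dagger$ which is a cai for $D$ and a left cai for $J$.

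Next I would take
\[
x \;=\; \sum_{n=1}^\infty 2^{-n} e_n,
\]
which lies in $({\mathfrak F}_A)_\dagger$ since the partial sums lie in the closed, convex, $\dagger$-invariant set ${\mathfrak F}_A$. By Proposition \ref{csrdihsa}, $\overline{xAx}$ is the $\dagger$-HSA generated by the sequence $\{\overline{e_n A e_n}\}$, and $\overline{xA}$ is the r-$\dagger$-ideal generated by the sequence $\{\overline{e_n A}\}$. It remains to show these recover $D$ and $J$ respectively; by Theorem \ref{dgrlhsa} (bijectivity of support projections), it suffices to show the support projections agree with $p$. By Lemma \ref{sdhsa} and the principal case $\overline{e_n A e_n} \leftrightarrow s(e_n)$, the support projection of $\overline{xAx}$ (equivalently, of $\overline{xA}$) is $q := \bigvee_n s(e_n)$. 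Since $e_n \in D$, $s(e_n) \leq p$ for each $n$, giving $q \leq p$. Conversely $s(e_n) e_n = e_n$, so $q e_n = e_n$; passing to the weak* limit yields $q p = p$, hence $p \leq q$. Therefore $q = p$, and we conclude $\overline{xAx} = D$ and $\overline{xA} = J$.

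The only genuine subtlety I anticipate is the reduction step for r-$\dagger$-ideals. Since $J$ itself is not $\dagger$-invariant, one cannot simply symmetrize an arbitrary countable left cai by averaging with its $\dagger$-image (which lies in $J^\dagger$, not in $J$); this is the reason for routing through the associated $\dagger$-HSA $D \subset J$, which does inherit separability and carries a countable $\dagger$-selfadjoint cai supplied by Corollary \ref{cdgcai12}. Beyond this, the argument is purely support-projection bookkeeping in $A^{\ast\ast}$, with Proposition \ref{sdaggi} ensuring that each $s(e_n)$ is $\dagger$-selfadjoint so that $q$ automatically lies in the $\dagger$-selfadjoint part of the projection lattice.
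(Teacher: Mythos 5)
Your proposal is correct and follows essentially the same route as the paper: reduce to a countable $\dagger$-selfadjoint cai $(e_n)\subset\frac{1}{2}{\mathfrak F}_D$ via Corollary \ref{cdgcai12}, set $x=\sum_n 2^{-n}e_n$, and invoke Proposition \ref{csrdihsa} to identify $\overline{xAx}$ (resp.\ $\overline{xA}$) with $D$ (resp.\ $J$). The extra detail you supply --- the support-projection bookkeeping showing $\bigvee_n s(e_n)=p$, and routing the r-$\dagger$-ideal case through the matching $\dagger$-HSA rather than symmetrizing a left cai of $J$ --- is exactly the content the paper leaves implicit, and is handled correctly.
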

\begin{proof}
If $D$ is a $\dagger$-HSA with a countable cai, then $D$ has a countable $\dagger$-selfadjoint cai $(e_n)$ in $\frac{1}{2}{\mathfrak F}_D.$ Also, $D$ is generated by the $\dagger$-HSA's $\overline{e_n A e_n}$ so $D=\overline{xAx},$ where $x=\sum_{n=1}^{\infty} \frac{e_n}{2^n}.$ For the separable case, note that any separable approximately unital operator $\ast$-algebra has a countable cai. 
For r-$\dagger$-ideals, the result follows from the same argument. 
\end{proof}

\begin{corollary}
If $A$ is a separable operator $\ast$-algebra, then the $\dagger$-open projections in $A^{\ast\ast}$ are precisely the $s(x)$ for $x\in({\mathfrak r}_A)_{\dagger}.$	
\end{corollary}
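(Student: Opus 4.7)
The plan is to verify the two inclusions separately, with the nontrivial direction following quickly from the separable structure theorem for $\dagger$-HSA's (Theorem \ref{strdghsa}).

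For the easy inclusion, suppose $x \in ({\mathfrak r}_A)_\dagger$. By the theorem cited from \cite[Corollary 3.4]{BRII}, the support projection $s(x)$ exists and is open in $A^{\ast\ast}$. By Proposition \ref{sdaggi}, $s(x)^\dagger = s(x^\dagger) = s(x)$, so $s(x)$ is $\dagger$-selfadjoint. Hence $s(x)$ is $\dagger$-open. (One may also argue directly: the net $((x^{1/n} + (x^{1/n})^\dagger)/2)$ is $\dagger$-selfadjoint in $A$ and converges weak* to $s(x)$.)

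For the converse, let $p$ be any $\dagger$-open projection in $A^{\ast\ast}$, and let $D = pA^{\ast\ast}p \cap A$ be the corresponding $\dagger$-HSA. Since $A$ is separable, so is the closed subalgebra $D$. By Theorem \ref{strdghsa}, there exists $x \in ({\mathfrak F}_A)_\dagger \subset ({\mathfrak r}_A)_\dagger$ with $D = \overline{xAx}$. The support projection of $\overline{xAx}$ equals $s(x)$ (this is the content of Corollary \ref{fbab} together with the bijective correspondence between $\dagger$-HSA's and their support projections), so $p = s(x)$, as desired.

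The only real subtlety lies in matching $p$ with $s(x)$ once we have $D = \overline{xAx}$: we need that the support projection of the $\dagger$-HSA $\overline{xAx}$ (as a weak* limit of any of its cai's) coincides with the support projection $s(x)$ of the element $x$. But this is already noted in Corollary \ref{fbab}, since $(x^{1/n})$ is a $\dagger$-selfadjoint left cai for $\overline{xA}$ with weak* limit $s(x)$, and $\overline{xAx}$ is the matching $\dagger$-HSA with the same support projection. So no real obstacle remains; the result is essentially a direct corollary of Theorem \ref{strdghsa} plus Proposition \ref{sdaggi}.
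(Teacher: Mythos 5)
Your proof is correct and follows essentially the same route as the paper, which simply notes that any $\dagger$-HSA of a separable $A$ is separable and invokes Theorem \ref{strdghsa}; your write-up just makes explicit the easy inclusion (via Proposition \ref{sdaggi}) and the identification of $p$ with $s(x)$ that the paper leaves implicit.
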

\begin{proof}
If $A$ is separable, then so is any $\dagger$-HSA. So the result follows from Theorem \ref{strdghsa}.	
\end{proof}
\begin{corollary}
If $A$ is a separable operator $\ast$-algebra with cai, then there exists an $x\in({\mathfrak F}_A)_{\dagger}$ with $A=\overline{xA}=\overline{Ax}=\overline{xAx}.$	
\end{corollary}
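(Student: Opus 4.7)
The approach is to recognize that an approximately unital operator $\ast$-algebra $A$ is a $\dagger$-HSA of itself, so the preceding Theorem \ref{strdghsa} already does essentially all the work. The plan is to invoke that theorem to produce a single $x$ for which $A = \overline{xAx}$, and then deduce the other two equalities for free by sandwiching.

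First I would verify that $A$ is indeed a $\dagger$-HSA of itself. Since $A$ has a cai, by Lemma \ref{dcai} it has a $\dagger$-selfadjoint cai, whose weak* limit is the identity $1$ of $A^{\ast\ast}$. Then $1^{\dagger}=1$, so $1$ is a $\dagger$-open projection in $A^{\ast\ast}$, and $1\cdot A^{\ast\ast}\cdot 1\cap A = A$. So $A$ is a $\dagger$-HSA of itself. Since $A$ is separable and approximately unital, it has a countable cai (standard), so Theorem \ref{strdghsa} applies to $A$ itself, producing some $x\in(\mathfrak{F}_A)_{\dagger}$ with $A = \overline{xAx}$.

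Next I would deduce the other equalities. Because $x\in A$, we have the chain
\[
\overline{xAx}\;\subseteq\;\overline{xA}\;\subseteq\; A
\qquad\text{and}\qquad
\overline{xAx}\;\subseteq\;\overline{Ax}\;\subseteq\; A,
\]
so the equality $A = \overline{xAx}$ immediately forces $A=\overline{xA}=\overline{Ax}=\overline{xAx}$. Alternatively, the equality $A=\overline{Ax}$ follows from $A=\overline{xA}$ by applying $\dagger$ and using $x^{\dagger}=x$, since $\dagger$ is an isometric antiautomorphism.

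The only genuinely non-trivial input is Theorem \ref{strdghsa}; there is essentially no obstacle beyond checking that the hypotheses of that theorem apply to $A$ itself as a $\dagger$-HSA, which is straightforward. Indeed one could even sidestep Theorem \ref{strdghsa} and argue directly: take a countable $\dagger$-selfadjoint cai $(e_n)\subset\tfrac{1}{2}\mathfrak{F}_A$ (available by Corollary \ref{cdgcai12}) and set $x=\sum_{n=1}^{\infty}e_n/2^n\in(\mathfrak{F}_A)_{\dagger}$; then $s(e_n)\leq s(x)$ for every $n$, so $s(x)=1_{A^{\ast\ast}}$, and Proposition \ref{csrdihsa} (or rather its proof) gives $A=\overline{xA}$ and $A=\overline{xAx}$, with $A=\overline{Ax}$ following by applying $\dagger$.
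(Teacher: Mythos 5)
Your proof is correct and is exactly the argument the paper intends: the corollary is stated without proof as an immediate consequence of Theorem \ref{strdghsa} applied to $A$ viewed as a separable $\dagger$-HSA of itself, and your sandwiching $\overline{xAx}\subseteq\overline{xA},\overline{Ax}\subseteq A$ cleanly yields all three equalities from the single element $x$.
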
 

\subsection{Involutive compact projections}
Throughout this section, $A$ is an operator $\ast$-algebra. We will say that a projection $q\in A^{\ast\ast}$ is {\em compact relative to} $A$ if it is closed and $q=qx$ for some $x\in \ball(A).$ Furthermore, if $q$ is $\dagger$-selfadjoint, we say that such $q$ is an {\em involutive compact projection}, or is {\em $\dagger$-compact} in $A^{\ast\ast}.$

\begin{proposition}
 A $\dagger$-projection $q$ is compact if only if there exists a $\dagger$-selfadjoint element $a\in {\ball(A)}$ such that $q=qa.$	
\end{proposition}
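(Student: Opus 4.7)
My plan is to exploit the antimultiplicativity of $\dagger$ to convert one-sided module conditions on $q$ into two-sided ones, and then to symmetrize an element of $\ball(A)$ by averaging it with its $\dagger$-image, in the same spirit as the averaging trick already used in Lemma \ref{dcai} and Proposition \ref{sdaggi}.

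First I will handle the easier direction. Starting from a $\dagger$-selfadjoint $a\in\ball(A)$ with $q=qa$, I apply $\dagger$ to this equation; since $\dagger$ is antimultiplicative and $q^\dagger=q$, $a^\dagger=a$, this yields $q=aq$. Hence $q=qa=aq$ with $a\in\ball(A)$, and the standard operator algebra theory of compact projections (see the Blecher--Hay--Neal characterization in \cite{BHN}, together with related results in \cite{BRI,BRII,Bnpi}) shows that a projection satisfying such a two-sided identity in $\ball(A)$ is automatically closed in $A^{**}$. Combined with $q=qa$, this is the definition of compact, so $q$ is $\dagger$-compact.

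For the forward direction I begin with a $\dagger$-compact projection $q$, so $q$ is closed and $q=qx$ for some $x\in\ball(A)$. The essential input from the ordinary operator algebra theory is the two-sided upgrade: a closed projection with $q=qx$ for some $x\in\ball(A)$ in fact satisfies $q=qb=bq$ for some single $b\in\ball(A)$ (this is part of the compactness theory in \cite{BHN,BRI,BRII,Bnpi}). Applying $\dagger$ to each of $q=qb$ and $q=bq$ and using $q^\dagger=q$ gives $q=b^\dagger q$ and $q=qb^\dagger$. Setting $a:=\tfrac{1}{2}(b+b^\dagger)\in\ball(A)$, this element is $\dagger$-selfadjoint and
\[
qa=\tfrac{1}{2}(qb+qb^\dagger)=\tfrac{1}{2}(q+q)=q,
\]
completing the proof.

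The main obstacle is simply invoking the correct operator-algebra fact that compactness of a closed projection admits the symmetric formulation $q=qb=bq$ for some contraction $b$; once that is in hand, the involutive upgrade from $b$ to a $\dagger$-selfadjoint $a$ via the one-line average is automatic, and nothing genuinely new needs to be proved in the involutive setting.
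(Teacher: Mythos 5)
Your forward direction is essentially the paper's own argument and is correct: from $q=qx$ with $x\in\ball(A)$ one gets $xq=q$, then applying $\dagger$ and averaging yields $q=q\bigl(\tfrac{x+x^\dagger}{2}\bigr)$ with $\tfrac{x+x^\dagger}{2}$ a $\dagger$-selfadjoint contraction. (One small remark: the upgrade $qx=q\Rightarrow xq=q$ is elementary operator theory for a contraction and a projection --- compress $x$ to $qH\oplus q^{\perp}H$ and note that a contraction whose $(1,1)$ corner is the identity has vanishing $(2,1)$ corner --- so it needs neither closedness of $q$ nor the compactness machinery of \cite{BHN,BRI,BRII}.)

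The ``easier'' direction of your argument, however, contains a false step: the claim that $q=qa=aq$ with $a\in\ball(A)$ forces $q$ to be closed in $A^{**}$. If $A$ is unital then $a=1$ satisfies $q=qa=aq$ for \emph{every} projection $q\in A^{**}$, so your claim would make every $\dagger$-projection closed. Concretely, for $A=C([0,1])$ with the trivial involution, $q=\chi_{(0,1/2)}$ is an open, non-closed projection in $A^{**}$, yet $q=qa=aq$ for $a=1$ (or for any real-valued $a\in\ball(A)$ equal to $1$ on $[0,1/2]$). This is exactly why the theorem later in this subsection, and \cite[Theorem 6.2]{BRII}, keep ``$q$ is ($\dagger$-)closed with respect to $A^1$'' as a hypothesis \emph{in addition to} the existence of $a\in\ball(A)_{\dagger}$ with $aq=qa=q$: the witness alone carries no closedness information. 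The intended content of the ``trivial'' direction is only that a $\dagger$-selfadjoint contraction is in particular a contraction, so the witness condition in the definition of compactness is met; the closedness of $q$ has to be carried along as part of the hypothesis rather than derived from the witness.
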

\begin{proof}
One direction is trivial. Conversely if $q$ is compact, then there exists $a\in\ball(A)$ such that $q=qa.$ It is easy to argue from elementary operator theory that we have $aq=q.$ Thus, $q=q(\frac{a+a^{\dagger}}{2}).$
\end{proof} 

\begin{theorem}\label{eqrdcp}
Let $A$ be an approximately unital operator $\ast$-algebra. If $q$ is a projection in $A^{\ast\ast}$ then the following are equivalent:
\begin{itemize}
\item[(i)] $q$ is a $\dagger$-closed projection in $(A^1)^{\ast\ast},$
\item[(ii)] $q$ is $\dagger$-compact in $A^{\ast\ast}$
\item[(iii)] $q$ is closed in $A^{**}$ and there exists a $\dagger$-selfadjoint element $x\in\frac{1}{2}{\mathfrak F}_A$ such that $q=qx.$
\end{itemize}
\end{theorem}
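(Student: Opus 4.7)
The plan is to bootstrap from the non-involutive version of the theorem (available in \cite[Section 6]{BRI}, \cite{BHN, Bnpi}) and track the involution $\dagger$ through the standard equivalences by averaging. I will read (iii) as implicitly requiring $q$ to be a $\dagger$-projection, as the $\dagger$-decorations in (i) and (ii) already build this in.

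First I would handle (i) $\Rightarrow$ (iii). Applying the standard non-involutive theorem to $q$ (which is closed in $(A^1)^{**}$, and also in $A^{**}$ since $A$ is approximately unital) produces $x_0 \in \tfrac{1}{2}{\mathfrak F}_A$ with $q = qx_0$. Since $\tfrac{1}{2}{\mathfrak F}_A$ is $\dagger$-invariant by Proposition \ref{sdaggi} and convex, the average $x := \tfrac{1}{2}(x_0 + x_0^\dagger)$ is $\dagger$-selfadjoint and still lies in $\tfrac{1}{2}{\mathfrak F}_A$. To verify $qx = q$ I invoke the elementary fact from the paragraph preceding the theorem: $q = q x_0$ with $q$ an orthogonal projection and $\|x_0\| \leq 1$ gives $x_0 q = q$; applying $\dagger$ and using $q = q^\dagger$ then yields $q x_0^\dagger = q$, hence $qx = q$.

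Next, for (ii) $\Rightarrow$ (i), starting from compact $\dagger$-selfadjoint $q$, the non-involutive equivalence delivers a net $(x_t) \subset A^1$ with $x_t = (1-q) x_t (1-q) \to 1-q$ weak*. Since $(1-q)^\dagger = 1-q$ and the extension of $\dagger$ to $(A^1)^{**}$ is weak*-continuous (Lemmas \ref{mey} and \ref{bidu}), the averaged net $\tfrac{1}{2}(x_t + x_t^\dagger)$ is $\dagger$-selfadjoint, retains the sandwich structure $(1-q)\,y_t\,(1-q) = y_t$, and converges weak* to $1-q$; this exhibits $1-q$ as $\dagger$-open, i.e., $q$ as $\dagger$-closed. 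The implication (iii) $\Rightarrow$ (ii) is then immediate from the definitions together with the tacit $\dagger$-selfadjointness of $q$.

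The hard part will be addressing the correct reading of (iii): taken literally, there are examples (e.g.\ $A = M_2$ with $a^\dagger = u a^* u$ for the swap unitary $u$, $q = e_{11}$, $x = I$) in which (iii) holds but $q \neq q^\dagger$, so (iii) must be understood as tacitly including $q = q^\dagger$ in order to match (i) and (ii). Once this interpretation is fixed, the remaining work reduces to the $\dagger$-averaging trick and Proposition \ref{sdaggi}.
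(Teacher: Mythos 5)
Your proof is correct and is essentially the paper's argument: the paper simply says to run through the proof of the non-involutive result \cite[Theorem 2.2]{BNII} choosing all elements $\dagger$-selfadjoint, and your version does the same thing slightly more cleanly by applying that result as a black box and then symmetrizing the witnesses $x_0\mapsto \tfrac12(x_0+x_0^\dagger)$ and $x_t\mapsto\tfrac12(x_t+x_t^\dagger)$, using the $\dagger$-invariance and convexity of $\tfrac12{\mathfrak F}_A$ and the weak* continuity of $\dagger$. Your observation that (iii) as literally stated does not force $q=q^\dagger$ (your $M_2$ example is valid) and must be read with $q$ tacitly a $\dagger$-projection is a fair and correct reading of a small imprecision in the statement.
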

\begin{proof}
This follows from a variant of the proof of \cite[Theorem 2.2]{BNII}: one just needs to go carefully through the proof noting that
all elements may be chosen to be  $\dagger$-selfadjoint.  
\end{proof}

\begin{corollary}
Let $A$ be an approximately unital operator $\ast$-algebra. Then the infimum of any family of $\dagger$-compact projections in $A^{\ast\ast}$ is a $\dagger$-compact projection in $A^{\ast\ast}.$ Also, the supremum of two commuting $\dagger$-compact projections in $A^{\ast\ast}$ is a $\dagger$-compact projection in $A^{\ast\ast}.$	
\end{corollary}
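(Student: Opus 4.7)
The plan is to deduce both statements from the corresponding non-involutive results for compact projections in approximately unital operator algebras (in the spirit of \cite{BNII}, which is what Theorem \ref{eqrdcp} and its proof already invoke), combined with Proposition \ref{jmdg}, which tells us that the involution on a dual operator $*$-algebra commutes with the lattice operations $\wedge$ and $\vee$ on projections. The point is that by Lemma \ref{bidu}, the bidual $A^{**}$ is a dual operator $*$-algebra, so Proposition \ref{jmdg} applies directly to projections in $A^{**}$. Thus once a projection has been shown to be compact via the non-involutive theory, $\dagger$-selfadjointness will be automatic from the corresponding $\dagger$-selfadjointness of the projections in the family.

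For the first assertion, fix a family $(q_i)$ of $\dagger$-compact projections in $A^{**}$ and set $q = \wedge_i q_i$. First I would invoke the non-involutive operator algebra fact that the infimum of a family of compact projections is compact in $A^{**}$ (this is the analog in \cite{BNII} of Akemann's classical result applied in $(A^1)^{**}$, which is the route behind Theorem \ref{eqrdcp}); this gives that $q$ is a compact projection. Next, since each $q_i$ is $\dagger$-selfadjoint, Proposition \ref{jmdg} gives
\[
q^\dagger \; = \; \bigl(\wedge_i \, q_i\bigr)^\dagger \; = \; \wedge_i \, q_i^\dagger \; = \; \wedge_i \, q_i \; = \; q,
\]
so $q$ is $\dagger$-selfadjoint, and therefore $\dagger$-compact.

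For the second assertion, let $p_1, p_2$ be two commuting $\dagger$-compact projections in $A^{**}$. The non-involutive version gives that $p_1 \vee p_2$ is a compact projection in $A^{**}$. By Proposition \ref{jmdg} we then have $(p_1 \vee p_2)^\dagger = p_1^\dagger \vee p_2^\dagger = p_1 \vee p_2$, so it is $\dagger$-compact. Alternatively, since $p_1$ and $p_2$ commute, one may simply observe $p_1 \vee p_2 = p_1 + p_2 - p_1 p_2$, and both summands are manifestly $\dagger$-selfadjoint (note $(p_1 p_2)^\dagger = p_2^\dagger p_1^\dagger = p_2 p_1 = p_1 p_2$). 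The only genuine work is in the non-involutive compactness step, and that is the main obstacle; but it is precisely the analog of the situation handled in Theorem \ref{eqrdcp} via \cite[Theorem 2.2]{BNII}, so no new argument is needed. Everything else is a direct transcription using Lemma \ref{bidu} and Proposition \ref{jmdg}.
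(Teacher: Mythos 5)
Your proof is correct and follows essentially the same route as the paper: the paper also deduces compactness of the infimum and supremum from \cite[Corollary 2.3]{BNII} and then simply notes that the infimum and supremum of $\dagger$-projections are again $\dagger$-projections. You merely make that last observation explicit by citing Proposition \ref{jmdg} (via Lemma \ref{bidu}), which is exactly the justification the paper leaves implicit.
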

\begin{proof}
Note that the infimum and supremum of $\dagger$-projections are still $\dagger$-projections. Then the results follow immediately from \cite[Corollary 2.3]{BNII}.
\end{proof}

\begin{corollary}
Let $A$ be an approximately unital operator $\ast$-algebra, with an approximately unital closed $\dagger$-subalgebra $D.$ A projection $q\in D^{\perp\perp}$ is $\dagger$-compact in $D^{\ast\ast}$ if and only if $q$ is $\dagger$-compact in $A^{\ast\ast}.$	
\end{corollary}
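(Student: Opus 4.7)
The plan is to reduce to the non-involutive version of the result already recorded in the operator-algebra literature (namely \cite[Corollary 2.4]{BNII}, cf.\ the proof of the previous corollary in this subsection), and then verify that the extra $\dagger$-selfadjointness condition passes between $D^{**}$ and $A^{**}$ without change. Concretely, by the definition given just before Theorem \ref{eqrdcp}, a projection is $\dagger$-compact exactly when it is compact and $\dagger$-selfadjoint. So for $q \in D^{\perp\perp}$ it suffices to show: (a) $q$ is compact in $D^{**}$ iff $q$ is compact in $A^{**}$, and (b) $\dagger$-selfadjointness of $q$ is the same whether $q$ is viewed in $D^{**}$ or as its image in $A^{**}$ under the canonical identification $D^{**} \cong D^{\perp\perp}$.

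For (a), I would simply invoke the operator-algebra version of the statement (the same reference used in the proof of the previous corollary in this subsection). For (b), the argument is that since $D$ is a $\dagger$-subalgebra of $A$, the inclusion $D \hookrightarrow A$ is a $\dagger$-homomorphism. By Lemma \ref{bidu} the involutions on $D^{**}$ and $A^{**}$ are the unique weak*-continuous extensions of the respective involutions on $D$ and $A$. Since the bidual inclusion $D^{**} \to A^{**}$ is weak*-continuous and $\dagger$-preserving on the weak*-dense subset $D$, it is $\dagger$-preserving on $D^{**}$. Under the identification $D^{**} \cong D^{\perp\perp}$, the involution on $D^{**}$ therefore agrees with the restriction of the involution on $A^{**}$. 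Hence $q = q^\dagger$ in $D^{**}$ iff $q = q^\dagger$ in $A^{**}$.

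As a sanity check, one can run the argument directly through Theorem \ref{eqrdcp}(iii). For the forward direction, if $q$ is $\dagger$-compact in $D^{**}$ then there exists a $\dagger$-selfadjoint $x \in \frac{1}{2}{\mathfrak F}_D$ with $q = qx$, and since $\frac{1}{2}{\mathfrak F}_D \subset \frac{1}{2}{\mathfrak F}_A$ this same $x$ witnesses $\dagger$-compactness in $A^{**}$ (closedness in $A^{**}$ follows from the non-involutive corollary). For the converse, start with a (not necessarily $\dagger$-selfadjoint) $y \in \frac{1}{2}{\mathfrak F}_D$ with $q = qy$ obtained from the non-involutive result; then $(y + y^\dagger)/2 \in \frac{1}{2}{\mathfrak F}_D$ is $\dagger$-selfadjoint, and the identity $q = q(y + y^\dagger)/2$ follows exactly as in the proof of the proposition preceding Theorem \ref{eqrdcp} (use $qy = q$, apply $\dagger$, and use $q = q^\dagger$ to deduce $qy^\dagger = q$).

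I do not anticipate a real obstacle here: the content is entirely bookkeeping between the involutive and non-involutive versions. The only subtle point is the compatibility of the two involutions in (b), and that is immediate from weak*-continuity plus uniqueness in Lemma \ref{bidu}.
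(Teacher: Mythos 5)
Your proof is correct and is exactly the argument the paper intends (the corollary is stated without proof, being regarded as immediate from the matching non-involutive result in \cite{BNII} once one notes that $\dagger$-selfadjointness of $q$ is unambiguous). Your part (b), identifying the involution on $D^{**}$ with the restriction of the involution on $A^{**}$ to $D^{\perp\perp}$ via weak* continuity and the uniqueness in Lemma \ref{bidu}, is precisely the small point worth making explicit.
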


\begin{corollary}
Let $A$ be an approximately unital operator $\ast$-algebra. If a $\dagger$-projection $q$ in $A^{\ast\ast}$ is dominated by an open projection $p$ in $A^{\ast\ast},$ then $q$ is $\dagger$-compact in $pA^{\ast\ast}p.$	
\end{corollary}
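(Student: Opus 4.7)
The plan is to reduce to the case where $p$ is $\dagger$-open, and then combine the non-involutive analog of this corollary with a $\dagger$-symmetrization step.

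For the reduction, I would use that the involution on $A^{\ast\ast}$ is weak$^\ast$-continuous (Lemma \ref{bidu}), so applying $\dagger$ to a net witnessing openness of $p$ (as in the discussion at the start of Section 5.1) shows that $p^{\dagger}$ is also open in $A^{\ast\ast}$. Because $q = q^{\dagger} \leq p$ forces $q \leq p^{\dagger}$, we have $q \leq p_0 := p \wedge p^{\dagger}$. Meets of open projections are open in $A^{\ast\ast}$ (a standard fact in the noncommutative topology of operator algebras), and $p_0^{\dagger} = p_0$, so $p_0$ is $\dagger$-open. Since $\ball(p_0 A^{\ast\ast} p_0 \cap A) \subset \ball(p A^{\ast\ast} p \cap A)$, replacing $p$ by $p_0$ is harmless; doing so, $D := p A^{\ast\ast} p \cap A$ becomes a $\dagger$-HSA of $A$ and $D^{\ast\ast} \cong p A^{\ast\ast} p$ as dual operator $\ast$-algebras.

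I would then invoke the non-involutive analog of this corollary (the operator algebra statement this result generalizes) to obtain an $x \in \ball(D)$ with $q = qx$. As in the proof of the proposition preceding Theorem \ref{eqrdcp}, elementary operator theory then also gives $xq = q$. Applying $\dagger$ to these identities and using $q = q^{\dagger}$ produces $q x^{\dagger} = x^{\dagger} q = q$, so the $\dagger$-selfadjoint element $y := (x + x^{\dagger})/2 \in \ball(D)$ satisfies $qy = yq = q$. This exhibits $q$ as $\dagger$-compact in $D^{\ast\ast}$, and hence in $p A^{\ast\ast} p$.

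The main obstacle is ensuring the non-involutive statement invoked in the second paragraph is actually available in the form needed: that $q \leq p_0$ being dominated by an open projection is enough to conclude that $q$ is compact in $D^{\ast\ast}$. This rests on $q$ being automatically closed in $(D^1)^{\ast\ast}$, which is the content-bearing input from the noncommutative topology developed in the papers such as \cite{BRI, BHN, BNII}; if it is not stated in precisely the form required, one would have to verify it directly using the $\ball$-density definition of open projections together with $q \leq p_0$. Once that is in hand, the $\dagger$-symmetrization step is essentially automatic.
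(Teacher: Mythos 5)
This corollary is stated in the paper without proof, and for good reason: by the definition at the start of the ``Involutive compact projections'' subsection, ``$q$ is $\dagger$-compact in $pA^{**}p$'' means precisely that $q$ is compact relative to $pA^{**}p\cap A$ in the ordinary (non-involutive) sense \emph{and} that $q=q^{\dagger}$. The witnessing element $x\in{\rm Ball}(pA^{**}p\cap A)$ with $q=qx$ is not required to be $\dagger$-selfadjoint (that it can always be so chosen is the content of the separate proposition before Theorem \ref{eqrdcp}, proved by exactly your averaging trick). So the result follows in one line from its non-involutive antecedent in \cite{BNII}, which gives that $q$ is compact in $pA^{**}p$; the $\dagger$-selfadjointness of $q$ is already a hypothesis. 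Your symmetrization of the witness and your replacement of $p$ by a smaller projection are both unnecessary, and the latter actually changes the conclusion, since the statement concerns $pA^{**}p$ for the given $p$.

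More seriously, your reduction rests on the claim that ``meets of open projections are open,'' which is not a standard fact and is false. Only \emph{suprema} of open projections are guaranteed open; the infimum of two open projections can fail to be open even in a $C^*$-algebra (for instance, two continuous fields of rank-one projections over $[0,1]$ in $C([0,1],M_2)$ that agree only at $t=0$ have nonzero meet supported on the atom over $0$, while the corresponding hereditary subalgebras intersect in $\{0\}$). This is exactly why the paper only ever takes joins of open projections, as in $s(x)\vee s(x^{\dagger})=s(x+x^{\dagger})$ in Proposition \ref{sdaggi} and in Lemma \ref{spjoin}, never meets. It is true that $p^{\dagger}$ is open and that $q\le p^{\dagger}$, but $p\wedge p^{\dagger}$ need not be open, so your $p_0$ need not support a HSA and the argument breaks there. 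Finally, you are right to be uneasy about the hypothesis: ``$\dagger$-projection'' must be read as ``$\dagger$-compact projection'' (this is how the corollary is invoked in the proof of Theorem \ref{ulosacai}, and it matches the non-involutive statement being generalized); mere domination by an open projection cannot force $q$ to be closed or compact --- take $p=1$ --- so this cannot be ``verified directly'' as you suggest, it must be assumed.
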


In much of what follows we use the peak projections $u(a)$ defined and studied in e.g.\ \cite{BNII,BRII}.   These may 
be defined to be projections  $q$ in $A^{**}$ which are the weak* limits of $a^n$ for some $a \in {\rm Ball}(A)$, in the case
such weak* limit exists.  We will not take the time to review the properties of $u(a)$ here.   We will however several times below 
use silently the following fact:

\begin{lemma} \label{ulemm}  If $a  \in {\rm Ball}(A)$ for an operator $\ast$-algebra $A$, and if $u(a)$ is a peak projection, with $a^n \to u(a)$ weak*, then 
$u((a + a^\dagger)/2) = u(a) \wedge u(a)^\dagger$ in $A^{**}$ and this
is a peak projection.   Indeed
$((a + a^\dagger)/2)^n \to u((a + a^\dagger)/2)$ weak*.
\end{lemma}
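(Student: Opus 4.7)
The plan is to reduce the statement to the standard theory of peak projections for operator algebras by combining two ingredients: weak* continuity of the involution on $A^{**}$ (Lemma \ref{bidu}) and an averaging principle for peak projections.

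First I would apply the involution to the hypothesis.  Since $\dagger$ extends to a weak* continuous period-2 conjugate linear anti-automorphism of $A^{**}$, and $(a^n)^\dagger = (a^\dagger)^n$ for every $n$, taking $\dagger$ of $a^n \to u(a)$ weak* yields $(a^\dagger)^n \to u(a)^\dagger$ weak*.  Hence $u(a^\dagger)$ exists as a peak projection and coincides with $u(a)^\dagger$.  Set $c = (a + a^\dagger)/2 \in \mathrm{Ball}(A)$, which is $\dagger$-selfadjoint, and $r = u(a) \wedge u(a)^\dagger \in A^{**}$.

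Next I would verify that $c$ fixes $r$ from both sides.  From $a^n \to u(a)$ weak* and separate weak* continuity of multiplication in $A^{**}$ one has $a \, u(a) = u(a) \, a = u(a)$; since $r \leq u(a)$ (so $u(a) r = r u(a) = r$), this gives $ar = a u(a) r = u(a) r = r$ and symmetrically $ra = r$.  The same argument with $a^\dagger$ in place of $a$ yields $a^\dagger r = r a^\dagger = r$.  Therefore $cr = rc = r$, and inductively $c^n r = r c^n = r$ for all $n \geq 1$.

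To conclude that $c^n \to r$ weak*, whence $u(c) = r$ as a peak projection, I would invoke the averaging principle for peak projections from the references \cite{BRII,BNII}: for $x,y \in \mathrm{Ball}(A)$ with peak projections $u(x), u(y)$, the sequence $((x+y)/2)^n$ converges weak* to $u(x) \wedge u(y)$.  Applied with $y = a^\dagger$ this immediately gives the desired conclusion.  The main obstacle is confirming that this averaging principle is available in the cited literature in exactly the form needed; if not, one would prove it directly by passing to the atomic representation of a compatible $C^*$-cover $B$ and using the Hilbert space equality-in-triangle-inequality fact: if $\| a\xi + a^\dagger \xi \| = 2\|\xi\|$ with $\|a\xi\|, \|a^\dagger \xi\| \leq \|\xi\|$, then $a\xi = a^\dagger\xi = \xi$.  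This identifies the fixed vectors of $c$ on each atomic summand with the simultaneous fixed vectors of $a$ and $a^\dagger$, i.e.\ with the range of $r$, from which weak* convergence $c^n \to r$ (and hence $u(c) = r$) follows by standard arguments.
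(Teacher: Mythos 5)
Your proposal is correct and follows essentially the same route as the paper: first deduce that $(a^\dagger)^n \to u(a)^\dagger$ weak* so that $u(a^\dagger)=u(a)^\dagger$ is a peak projection, and then invoke the averaging result for peak projections (this is precisely \cite[Proposition 1.1]{BNII}, the result the paper cites) to get $((a+a^\dagger)/2)^n \to u(a)\wedge u(a)^\dagger$ weak*. The extra verifications you include (that $c=(a+a^\dagger)/2$ fixes $u(a)\wedge u(a)^\dagger$ on both sides, and the fallback Hilbert-space argument) are not needed since the cited averaging principle is available in exactly the required form.
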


\begin{proof}
Clearly $(a^\dagger)^n \to u(a)^\dagger$ weak*, so that 
$u(a^\dagger) = u(a)^\dagger$ is a peak projection.   Then $u((a + a^\dagger)/2) = u(a) \wedge u(a)^\dagger$ by 
\cite[Proposition 1.1]{BNII}, and since this is a projection it is 
by \cite[Section 3]{BNII} a peak projection, is $\dagger$-selfadjoint, 
and $((a + a^\dagger)/2)^n \to u((a + a^\dagger)/2)$ weak*.
\end{proof}

The following is the
involutive variant of the version of the Urysohn lemma for approximately unital operator $\ast$-algebras in \cite[Theorem 2.6]{BNII}.  

\begin{theorem} \label{ulosacai}    Let $A$ be an approximately unital operator $\ast$-algebra. If a $\dagger$-compact projection $q$ in $A^{**}$
is dominated by a $\dagger$-open projection $p$ in $A^{\ast\ast}$, then there exists $b\in(\frac{1}{2}{\mathfrak F}_A)_{\dagger}$ with $q=qb, b=pb.$ Moreover, $q\leq u(b)\leq s(b)\leq p,$ 
and $b$ may also be chosen to be
`nearly positive' in the sense of the introduction to  {\rm \cite{BRord}}:
we can make it as close  in norm as we like to an actual positive element. 
 \end{theorem}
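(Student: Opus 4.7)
The plan is to reduce the involutive Urysohn theorem to its non-involutive counterpart \cite[Theorem 2.6]{BNII} by a symmetrization trick. First I would apply that earlier result, using that $p$ is in particular $A$-open and $q$ is compact, to produce a nearly positive $c \in \frac{1}{2}\mathfrak{F}_A$ with $qc = q$, $c = pc$ (and also $cq = q$, $cp = c$, which follow because $q \leq s(c) \leq p$ and $s(c) \leq p$ imply that $p$ acts as a two-sided identity on $c$ and $q$ is dominated by the peak projection $u(c)$), and $q \leq u(c) \leq s(c) \leq p$. Then I would set
\[
b = \tfrac{1}{2}(c + c^\dagger),
\]
which is manifestly $\dagger$-selfadjoint, and check that it does the job.

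That $b \in \frac{1}{2}\mathfrak{F}_A$ is clear from $\|1-2b\| \leq \tfrac{1}{2}\|1-2c\| + \tfrac{1}{2}\|(1-2c)^\dagger\| \leq 1$, using Proposition \ref{sdaggi}. Since $p^\dagger = p$ and $q^\dagger = q$, applying $\dagger$ to the identities $cp = c$, $pc = c$, $cq = q$, $qc = q$ (recall that $\dagger$ is antimultiplicative) produces $pc^\dagger = c^\dagger p = c^\dagger$ and $qc^\dagger = c^\dagger q = q$; averaging then gives $pb = bp = b$ and $qb = bq = q$, as required. For nearly positivity, choose $c_0 \geq 0$ in a compatible $C^*$-cover $B$ with $\|c - c_0\|$ as small as we like, and let $\sigma$ be the $*$-automorphism on $B$ with $a^\dagger = \sigma(a)^*$; since $c_0$ is selfadjoint, $c_0^\dagger = \sigma(c_0)^* = \sigma(c_0) \geq 0$, so $(c_0 + c_0^\dagger)/2$ witnesses that $b$ is nearly positive.

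For the chain $q \leq u(b) \leq s(b) \leq p$, the outer bound $s(b) \leq p$ follows from $pb = bp = b$ (which forces $pb^n = b^np = b^n$ for all $n$, hence $s(b) \leq p$ after passing to the weak* limit of appropriate powers). The peak projection $u(b)$ exists because $b \in \frac{1}{2}\mathfrak{F}_A$, and by Lemma \ref{ulemm} applied to $c$ we have $u(b) = u(c) \wedge u(c)^\dagger$. Since $q \leq u(c)$ from the original Urysohn lemma and $q = q^\dagger \leq u(c)^\dagger$, the infimum bound delivers $q \leq u(b)$; the middle bound $u(b) \leq s(b)$ is immediate from the definitions.

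The only potentially delicate point is extracting the two-sided relations $cq = q$ and $cp = c$ from the version of \cite[Theorem 2.6]{BNII} one chooses to cite, but these are standard consequences of the peak/support inclusions already provided there, so no serious obstacle is expected.
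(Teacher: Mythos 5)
Your proof is correct, but it takes a different route from the paper's. The paper first localizes: by the preceding corollary, $q$ is $\dagger$-compact in $D^{**}=pA^{**}p$ for the $\dagger$-HSA $D$ supported by $p$, and then Theorem \ref{eqrdcp} applied inside $D$ hands over a $\dagger$-selfadjoint $b\in\frac{1}{2}{\mathfrak F}_D$ directly, with the remaining assertions following as in \cite[Theorem 2.6]{BNII}. You instead apply the non-involutive Urysohn lemma \cite[Theorem 2.6]{BNII} globally as a black box and then symmetrize, setting $b=(c+c^\dagger)/2$; the key points — that ${\mathfrak F}_A$ is $\dagger$-closed and convex, that $c_0\geq 0$ forces $c_0^\dagger=\sigma(c_0)\geq 0$, and that $u((c+c^\dagger)/2)=u(c)\wedge u(c)^\dagger$ via Lemma \ref{ulemm} so that $q=q^\dagger\leq u(c)\wedge u(c)^\dagger=u(b)$ — all check out, and your extraction of the two-sided identities $cq=q$, $cp=c$ from $q\leq u(c)\leq s(c)\leq p$ is sound. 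The paper's route avoids any averaging but requires the involutive machinery (\ref{eqrdcp} and the relative-compactness corollary, which themselves rest on rerunning the proofs of \cite{BNII} with $\dagger$-selfadjoint choices); yours is more modular, using only the published non-involutive statement plus the $\dagger$-calculus already established in Proposition \ref{sdaggi} and Lemma \ref{ulemm}. This averaging trick is exactly the device the paper uses elsewhere (e.g.\ in the Urysohn lemma for non-approximately-unital algebras and in Theorem \ref{peakthang2}), so it fits the paper's toolkit even though it is not the proof given here. One cosmetic point: for $s(b)\leq p$ it is cleaner to invoke minimality of the support projection (any projection acting as a two-sided identity on $b$ dominates $s(b)$), or to note $pb^{1/n}=b^{1/n}p=b^{1/n}$ and pass to the weak* limit; the powers $b^n$ alone only give $u(b)\leq p$.
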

\begin{proof}
If $q\leq p$ as stated, then by the last corollary we know $q$ is $\dagger$-compact in $D^{\ast\ast}=pA^{\ast\ast}p,$ where $D$ is a $\dagger$-HSA supported by $p.$	By Theorem \ref{eqrdcp}, there exists a $\dagger$-selfadjoint $b\in \frac{1}{2}{\mathfrak F}_D$ such that $q=qb$ and $b=bp.$ The rest follows as in
 \cite[Theorem 2.6]{BNII}.
\end{proof}

\begin{theorem}
Suppose that $A$ is an operator $\ast$-algebra (not necessarily approximately unital), and that $q\in A^{\ast\ast}$ is a projection. The following are equivalent:
 \begin{itemize}
 \item[(1)] $q$ is $\dagger$-compact with respect to $A.$
 \item[(2)] $q$ is $\dagger$-closed with respect to $A^1$ and there exists $a\in \ball(A)_{\dagger}$ with $aq=qa=q.$
 \item[(3)] $q$ is a decreasing weak$^{\ast}$ limit of $u(a)$ for $\dagger$-selfadjoint element $a\in \ball(A).$
 \end{itemize}
\end{theorem}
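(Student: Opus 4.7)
The plan is to prove the cycle $(1)\Rightarrow(2)\Rightarrow(3)\Rightarrow(1)$, the last two being the nontrivial implications.

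For $(1)\Rightarrow(2)$, the definition of $\dagger$-compactness gives $q=q^\dagger$, $q$ closed in $(A^1)^{\ast\ast}$, and $qx=q$ for some $x\in\ball(A)$. A standard Hilbert space argument (as in the first proposition of this subsection) shows that $\|x\|\leq 1$ and $qx=q$ force $xq=q$ as well. Taking $\dagger$ of $qx=xq=q$ and using $q^\dagger=q$ yields $qx^\dagger=x^\dagger q=q$, so the averaged element $a=(x+x^\dagger)/2\in\ball(A)_\dagger$ satisfies $aq=qa=q$, which is (2).

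For $(3)\Rightarrow(1)$, write $u(a_i)\searrow q$ weak$^*$ with each $a_i\in\ball(A)_\dagger$. Fix any index $i_0$. Because $a_{i_0}^{k+1}\to u(a_{i_0})$ weak$^*$ and right multiplication by $a_{i_0}$ is weak$^*$ continuous on $A^{\ast\ast}$, we have $u(a_{i_0})a_{i_0}=u(a_{i_0})$; since $q\leq u(a_{i_0})$, this gives $qa_{i_0}=q\,u(a_{i_0})\,a_{i_0}=q\,u(a_{i_0})=q$, so $q$ is compact. Each $u(a_i)$ is $\dagger$-selfadjoint as a weak$^*$ limit of the $\dagger$-selfadjoint powers $a_i^k$ (Lemma \ref{bidu}), and $q$ inherits $\dagger$-selfadjointness. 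Finally, $1-q=\bigvee_i (1-u(a_i))$ is a supremum of open projections in $(A^1)^{\ast\ast}$, hence open, so $q$ is closed. Thus $q$ is $\dagger$-compact.

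For $(2)\Rightarrow(3)$, the main step, we invoke the non-involutive version of the theorem from \cite{BNII}: closedness of $q$ in $(A^1)^{\ast\ast}$ together with the existence of $a\in\ball(A)$ with $aq=qa=q$ produces a sequence $b_n\in\ball(A)$ such that the peak projections $u(b_n)$ decrease weak$^*$ to $q$. To make the elements $\dagger$-selfadjoint, set $c_n=(b_n+b_n^\dagger)/2\in\ball(A)_\dagger$. By Lemma \ref{ulemm}, $u(c_n)=u(b_n)\wedge u(b_n)^\dagger$ is again a peak projection. Since $q=q^\dagger\leq u(b_n)$ yields $q\leq u(b_n)^\dagger$, we have $q\leq u(c_n)$; the $u(c_n)$ are decreasing because both $u(b_n)$ and $u(b_n)^\dagger$ are; and $u(c_n)\leq u(b_n)\searrow q$ forces $u(c_n)\searrow q$ weak$^*$.

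The main obstacle is $(2)\Rightarrow(3)$, specifically the appeal to the non-involutive analogue; once that is in hand the $\dagger$-symmetrization via Lemma \ref{ulemm} and weak$^*$ continuity of the involution is purely formal.
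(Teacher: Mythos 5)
Your proposal is correct and follows essentially the same route as the paper: the crux, (2)$\Rightarrow$(3), is handled exactly as in the text by quoting the non-involutive theorem to get a decreasing net (not a sequence, in general) of peak projections $u(b_t)\searrow q$ and then symmetrizing via $u\bigl((b_t+b_t^\dagger)/2\bigr)=u(b_t)\wedge u(b_t)^\dagger$ together with order-preservation of $\dagger$. The remaining implications, which the paper dispatches by citing the non-involutive version, you verify directly by the same routine averaging tricks, so there is no substantive difference.
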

\begin{proof}
(2) $\Rightarrow$ (3) \ Given (2) we certainly have $q$ compact with respect to $A$ by \cite[Theorem 6.2]{BRII}.
By \cite[Theorem 3.4]{BNII}, $q=\lim_t u(z_t),$ where $z_t\in \ball(A)$ and $u(z_t)$ is decreasing. We have  $q=q^\dagger = \lim_ t u(z_t^{\dagger}).$ Moreover, $u(z_t)\wedge u(z_t^{\dagger})=u(\frac{z_t+z_t^{\dagger}}{2}).$ Hence, $q$ is a decreasing weak* limit of $u(\frac{z_t+z_t^{\dagger}}{2})$ since the involution preserves order. 

The rest follows from \cite[Theorem 6.2]{BRII}.	
\end{proof}
\begin{corollary}
Let $A$ be a (not necessarily approximately unital) operator $\ast$-algebra. If $q$ is $\dagger$-compact then $q$ is a weak* limit of a net of $\dagger$-selfadjoint elements $(a_t)$ in $\ball(A)$ with $a_tq=q$ for all $t.$	
\end{corollary}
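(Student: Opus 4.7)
The plan is to combine the previous theorem (item (3)) with an iteration trick. By that theorem, $q$ is a decreasing weak* limit of peak projections $u(a_s)$ corresponding to $\dagger$-selfadjoint elements $a_s \in \ball(A)$. Since the convergence is decreasing we have $q \leq u(a_s)$ for every $s$, so $u(a_s) q = q$. The basic property of peak projections (from $a_s^n \to u(a_s)$ weak*) gives $a_s u(a_s) = u(a_s)$, and hence
$$a_s q \; = \; a_s u(a_s) q \; = \; u(a_s) q \; = \; q$$
for every $s$. Moreover, since $\dagger$ is anti-multiplicative and $a_s^\dagger = a_s$, we have $(a_s^n)^\dagger = (a_s^\dagger)^n = a_s^n$, so each $a_s^n$ is $\dagger$-selfadjoint and lies in $\ball(A)$, and the same computation as above iterated gives $a_s^n q = q$ for all $n$.

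Next I would show that $q$ lies in the weak* closure of the set $S = \{ a_s^n : s \in I,\, n \in \Ndb \}$. Given any weak* neighborhood $V$ of $q$, first pick $s$ large enough so that $u(a_s) \in V$ (possible since $u(a_s) \to q$ weak*); then for that $s$ pick $n$ large enough so that $a_s^n \in V$ (possible since $a_s^n \to u(a_s)$ weak* as $n \to \infty$). Thus $S \cap V$ is nonempty for every weak* neighborhood $V$ of $q$, so $q$ belongs to the weak* closure of $S$. Then we can form the standard net indexed by weak* neighborhoods $V$ of $q$ ordered by reverse inclusion, choosing $a_V \in S \cap V$; this is a net in $S$ converging weak* to $q$, all of whose members are $\dagger$-selfadjoint elements of $\ball(A)$ satisfying $a_V q = q$.

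There is no real obstacle here: the main thing is the observation $a_s q = q$ coming from the fact that the peak projections $u(a_s)$ above $q$ absorb $a_s$, together with the standard iteration $a_s^n \to u(a_s)$ which allows us to approximate $u(a_s)$ (and hence eventually $q$) in weak* by elements of $A$ itself rather than by projections in $A^{**}$. No separate argument is needed for the non-approximately-unital case because all that is used is the content of the preceding theorem.
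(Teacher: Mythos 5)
Your argument is correct and is essentially the intended one: the paper states this as an immediate consequence of the preceding theorem (item (3)) without giving details, and your deduction — $a_s u(a_s)=u(a_s)$ by separate weak* continuity of multiplication, hence $a_s q = a_s u(a_s) q = q$, followed by the iterated-limit extraction of a net from $\{a_s^n\}$ converging weak* to $q$ — is exactly how the non-involutive version in \cite{BRII} is obtained. The only points worth making explicit in a write-up are that $(a_s^n)^\dagger=(a_s^\dagger)^n=a_s^n$ uses anti-multiplicativity of $\dagger$ on powers of a single element, and that $a_s u(a_s)=u(a_s)$ follows from $a_s^{n+1}\to a_s u(a_s)$ weak*; both are fine as you have them.
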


\subsection{Involutive peak projections}

Let $A$ be an operator $\ast$-algebra. A $\dagger$-projection $q\in A^{\ast\ast}$ is called an {\em involutive peak projection} or a {\em $\dagger$-peak projection} if it is a peak projection.

\begin{proposition}\label{scpd}
Suppose $A$ is a separable operator $\ast$-algebra (not necessarily approximately unital), then the $\dagger$-compact projections in $A^{\ast\ast}$ are precisely the peak projections $u(a),$ for some $\dagger$-selfadjoint $a\in \ball(A).$
\end{proposition}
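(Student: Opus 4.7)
The plan is to combine the non-involutive separable analogue, namely that in a separable operator algebra $A$ every compact projection in $A^{**}$ has the form $u(a)$ for some $a\in\text{Ball}(A)$ (this is in the theory of \cite{BNII,BRII} being followed here), with Lemma \ref{ulemm}, which tells us how to symmetrize peak projections of non-selfadjoint contractions.

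First I would dispatch the easy direction. Suppose $a \in \text{Ball}(A)$ is $\dagger$-selfadjoint and $u(a)$ is a peak projection, so $a^n \to u(a)$ weak*. Applying $\dagger$ (which is weak* continuous on $A^{**}$ by Lemma \ref{bidu}) gives $(a^\dagger)^n = (a^n)^\dagger \to u(a)^\dagger$ weak*, so $u(a) = u(a^\dagger) = u(a)^\dagger$. Thus $u(a)$ is a $\dagger$-projection. Since peak projections are compact in the sense used here, $u(a)$ is $\dagger$-compact.

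For the converse, suppose $q \in A^{**}$ is $\dagger$-compact. In particular $q$ is compact with respect to $A$. By the separable operator-algebra result alluded to above (the non-involutive analogue of what we are proving, from \cite{BNII, BRII}), there exists $a \in \text{Ball}(A)$ with $q = u(a)$, i.e.\ $a^n \to q$ weak*. Applying $\dagger$ and using weak* continuity of the involution as in the first paragraph gives $u(a^\dagger) = u(a)^\dagger = q^\dagger = q$. Now apply Lemma \ref{ulemm} to produce the $\dagger$-selfadjoint contraction $b = (a + a^\dagger)/2 \in \text{Ball}(A)$; the lemma yields
\[
u(b) \;=\; u(a) \wedge u(a)^\dagger \;=\; q \wedge q \;=\; q,
\]
and $b^n \to u(b) = q$ weak*, as required.

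The main (very mild) obstacle is simply invoking cleanly the separable non-involutive version of the statement; everything else is a short symmetrization using Lemma \ref{ulemm} and the weak* continuity of $\dagger$ on $A^{**}$. No separate argument is needed for the non-approximately-unital case because the separable operator-algebra result, Lemma \ref{bidu}, and Lemma \ref{ulemm} are all available in that generality.
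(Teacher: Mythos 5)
Your proposal is correct and follows essentially the same route as the paper: quote the separable non-involutive characterization of compact projections as peak projections $u(a)$ from \cite[Proposition 6.4]{BRII}, then symmetrize via $q = u(a) \wedge u(a^\dagger) = u((a+a^\dagger)/2)$ using Lemma \ref{ulemm}. The only cosmetic difference is that you spell out the easy direction and the weak* continuity of $\dagger$ explicitly, which the paper leaves implicit.
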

\begin{proof}
If $A$ is separable then a projection in $A^{\ast\ast}$ is compact if and only if $q=u(a),$ for some $a\in \ball(A)$ (see \cite[Proposition 6.4]{BRII}). If $q$ is $\dagger$-selfadjoint, then   $$q=u(a^{\dagger}) =
u(a)\wedge u(a^{\dagger}) =  u((a^{\dagger}+a)/2),$$    using 
e.g.\ Lemma \ref{ulemm}.
\end{proof}

\begin{proposition}\label{dpeak}
If $a\in\frac{1}{2}{\mathfrak F}_A$ with $a^{\dagger}=a,$ then $u(a)$ is a $\dagger$-peak projection and it is a peak for $a.$	
\end{proposition}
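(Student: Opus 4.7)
The plan is to invoke the weak* continuity of the involution on $A^{**}$ (Lemma \ref{bidu}) together with the standard fact that for $a\in\frac{1}{2}{\mathfrak F}_A$ the sequence of powers $(a^n)$ converges in the weak* topology to a projection, which is by definition the peak projection $u(a)$ of $a$. This latter fact is part of the operator algebra theory being generalized (see e.g.\ \cite[Lemma 1.3]{BNII} and the discussion of peak projections in \cite{BRII}); in particular $u(a)$ is a peak for $a$ in the sense of the paper, and this part requires no new argument in the involutive setting.

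Next I would verify $\dagger$-selfadjointness of $u(a)$. Since $a = a^\dagger$, the involution in $A$ (and hence its weak* continuous extension to $A^{**}$ from Lemma \ref{bidu}) satisfies
\[
(a^n)^\dagger = (a^\dagger)^n = a^n, \qquad n \in \Ndb.
\]
Applying the weak* continuous involution on $A^{**}$ to the weak* convergence $a^n \to u(a)$ yields $u(a)^\dagger = u(a)$, so $u(a)$ is $\dagger$-selfadjoint. Combined with the previous paragraph, $u(a)$ is a $\dagger$-peak projection and is a peak for $a$, as required.

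I do not anticipate a serious obstacle here: the only nontrivial input is the pre-existing fact that powers of elements of $\frac{1}{2}{\mathfrak F}_A$ converge weak* to their peak projection, and once that is in hand the $\dagger$-invariance is an immediate consequence of weak* continuity of the extended involution. One could alternatively appeal to Lemma \ref{ulemm} by observing that $(a+a^\dagger)/2 = a$, but this would be circular since that lemma already assumes a peak projection exists; the cleaner route is the direct one above.
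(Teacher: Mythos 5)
Your proof is correct and is essentially the paper's own argument: the paper likewise observes that $u(a)=\lim_n a^n$ weak*, deduces $\dagger$-selfadjointness from $(a^n)^\dagger=(a^\dagger)^n=a^n$ together with weak* continuity of the involution on $A^{**}$, and cites \cite[Lemma 3.1, Corollary 3.3]{BNII} for the fact that $u(a)$ is a peak projection and a peak for $a$. Your version merely spells out the appeal to Lemma \ref{bidu} more explicitly.
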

\begin{proof}
Since  $u(a)=\lim a^n$ weak* in this case, we see that $u(a)$ is $\dagger$-selfadjoint. From \cite[Lemma 3.1, Corollary 3.3]{BNII}, we know that $u(a)$ is a peak projection and is a peak for $a.$
\end{proof}
\begin{theorem}
If $A$ is an approximately unital operator $\ast$-algebra, then
\begin{itemize}
\item[(i)] A projection $q\in A^{\ast\ast}$ is $\dagger$-compact if only if it is a decreasing limit of $\dagger$-peak projections. 
\item[(ii)] If $A$ is a separable approximately unital operator $\ast$-algebra, then the $\dagger$-compact projections in $A^{\ast\ast}$ are precisely the 
$\dagger$-peak projections.
\item[(iii)] A projection in $A^{\ast\ast}$ is a $\dagger$-peak projection in $A^{\ast\ast}$ if  and only if it is of form $u(a)$ for some $a\in (\frac{1}{2}{\mathfrak F}_A)_{\dagger}.$
\end{itemize}
\end{theorem}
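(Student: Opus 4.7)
The plan is to handle the three parts in order, each built on a small observation about how peak projections interact with the involution. The key tool throughout will be Lemma \ref{ulemm}, which says that for $a\in\ball(A)$ a peak projection, $u((a+a^\dagger)/2)=u(a)\wedge u(a^\dagger)$ is $\dagger$-selfadjoint.

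For (i), the forward direction is a direct refinement of the previous theorem (the one just before Proposition \ref{scpd}), which says any $\dagger$-compact $q$ is a decreasing weak$^*$-limit of $u((z_t+z_t^\dagger)/2)$ for some $z_t\in\ball(A)$. Each such $u((z_t+z_t^\dagger)/2)$ is a peak projection by Lemma \ref{ulemm}, and it is $\dagger$-selfadjoint, so it is a $\dagger$-peak projection. For the converse, any $\dagger$-peak projection is a peak projection, hence compact by \cite[Section 3]{BNII}, and it is $\dagger$-selfadjoint, so it is $\dagger$-compact by Theorem \ref{eqrdcp}. A decreasing limit of $\dagger$-compact projections is again $\dagger$-compact by the infimum corollary stated earlier in Subsection 5.3.

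For (ii), the forward direction is immediate from Proposition \ref{scpd}: in the separable case every $\dagger$-compact $q$ has the form $u(a)$ for some $\dagger$-selfadjoint $a\in\ball(A)$, and this $u(a)$ is by definition a $\dagger$-peak projection. The converse repeats the argument in (i): a $\dagger$-peak projection is a $\dagger$-selfadjoint peak projection, hence $\dagger$-compact.

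For (iii), one direction is exactly Proposition \ref{dpeak}. For the other, the plan is: given a $\dagger$-peak projection $q$, first produce a $\dagger$-selfadjoint $b\in\ball(A)$ with $u(b)=q$ (take any peak element $c$ for $q$ and replace it by $(c+c^\dagger)/2$ using Lemma \ref{ulemm}, since $q=q^\dagger$). Then one would like to promote $b$ to an element of $(\tfrac{1}{2}{\mathfrak F}_A)_\dagger$. The most efficient route is to invoke the non-involutive fact from the theory of peak projections in \cite{BNII,BRII}, namely that every peak projection is already of the form $u(a')$ for some $a'\in\tfrac{1}{2}{\mathfrak F}_A$; then $q^\dagger=q$ forces $u((a'+a'^\dagger)/2)=u(a')\wedge u(a'^\dagger)=q$ by Lemma \ref{ulemm}, and $(a'+a'^\dagger)/2\in(\tfrac{1}{2}{\mathfrak F}_A)_\dagger$ since $\tfrac{1}{2}{\mathfrak F}_A$ is closed under convex combinations and $\dagger$-invariant by Proposition \ref{sdaggi}.

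The main obstacle is this last promotion in (iii). If the non-involutive version is not directly available to cite, a backup plan is to use the involutive Urysohn Lemma (Theorem \ref{ulosacai}) applied with the open projection $p=1_{A^{**}}$ and the $\dagger$-compact projection $q$: this yields $b\in(\tfrac{1}{2}{\mathfrak F}_A)_\dagger$ with $qb=bq=q$ and $q\leq u(b)$. To upgrade $q\leq u(b)$ to equality, one would combine $b$ with a $\dagger$-selfadjoint peak element $c$ for $q$ via a product or average, and use the weak$^*$-convergence of powers together with $qb=q$ and $c^n\to q$; this is a technical but essentially standard calculation mirroring arguments in \cite[Section 3]{BNII}.
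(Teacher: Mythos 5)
Your argument is correct, and for parts (ii) and (iii) it coincides with what the paper does: (ii) is exactly Proposition \ref{scpd} plus the observation that a $\dagger$-peak projection is a $\dagger$-selfadjoint compact projection, and for (iii) your ``most efficient route'' (quote the non-involutive fact that every peak projection is $u(a)$ for some $a\in\frac{1}{2}{\mathfrak F}_A$, then replace $a$ by $(a+a^\dagger)/2$ and invoke Lemma \ref{ulemm}) is precisely the paper's proof, so your backup plan is not needed.

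The one place you genuinely diverge is the forward direction of (i). You dispose of it by citing the equivalence $(1)\Leftrightarrow(3)$ in the general theorem at the end of the subsection on involutive compact projections, whose proof already exhibits a $\dagger$-compact $q$ as a decreasing weak* limit of the $\dagger$-peak projections $u\bigl((z_t+z_t^\dagger)/2\bigr)$; this is legitimate and shorter. The paper instead gives a self-contained construction: starting from $q=qx$ with $x$ $\dagger$-selfadjoint, it writes $1-q$ as an increasing limit of support projections $s(x_t)$ of $\dagger$-selfadjoint $x_t\in\frac{1}{2}{\mathfrak F}_{A^1}$ (via Theorem \ref{chdghsar}), forms $z_t=\frac{1-x_t+x}{2}$ and $a_t=z_tx$, shows $u(a_t)=u(z_t)\searrow q$ following \cite[Lemma 3.1]{BNII}, and then symmetrizes. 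In effect the paper re-runs the argument of \cite[Theorem 3.4]{BNII} in the involutive setting, while you reuse the already-proved involutive statement; what the paper's version buys is an explicit form for the approximating net, while yours buys brevity. Either is acceptable, provided you make sure the earlier theorem you cite has itself been established independently of part (i) (it has: its proof of $(2)\Rightarrow(3)$ rests only on \cite[Theorem 6.2]{BRII}, \cite[Theorem 3.4]{BNII} and Lemma \ref{ulemm}).
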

\begin{proof}
(ii) \ Follows from Proposition \ref{scpd} and Proposition \ref{dpeak}.

(i) \ One direction is obvious. Conversely, let $q\in A^{\ast\ast}$ be a $\dagger$-compact projection with $q=qx$ for some $\dagger$-selfadjoint element $x\in \ball(A).$
Then $q\leq u(x).$ Now $1-q$ is an increasing limit of $s(x_t)$ for $\dagger$-selfadjoint elements $x_t\in \frac{1}{2}{\mathfrak F}_{A^1}$,
by Theorem \ref{chdghsar} and the remark after it.
 Therefore, $q$ is a decreasing weak* limit of the $q_t=s(x_t)^{\perp}=u(1-x_t).$ Let $z_t=\frac{1-x_t+x}{2},$ then $u(z_t)$ is a projection. Since $q\leq q_t$ and $q\leq u(x),$ then $q\leq u(z_t)$. Note that $z_t$ is $\dagger$-selfadjoint, so $u(z_t)=u(z_t)^{\dagger}.$ Let $a_t=z_tx\in \ball(A),$ then $u(a_t)=u(z_t)$ by the argument in \cite[Lemma 3.1]{BNII}.
 Thus, $u(a_t)=u(z_t)\searrow q$ as in that proof.  Moreover, $u(a_t^{\dagger})  = u(a_t)^{\dagger} \searrow q,$ which implies by an argument 
above  that $u(\frac{a_t+a_t^{\dagger}}{2})\searrow q.$   

(iii) \ One direction is trivial. For the other, if $q$ is a $\dagger$-peak projection, then by the operator algebra case
there exists $a\in \frac{1}{2}{\mathfrak F}_A$ such that $q=u(a).$ Let $b=(a+a^{\dagger})/2,$ then   $q=u(b)$
  by 
e.g.\ Lemma \ref{ulemm}.   
\end{proof}
\begin{corollary}
Let $A$ be an operator $\ast$-algebra. The supremum of two commuting $\dagger$-peak projections in $A^{\ast\ast}$ is a $\dagger$-peak projection in $A^{\ast\ast}.$	
\end{corollary}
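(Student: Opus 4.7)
The plan is to mirror the strategy used a few paragraphs earlier for the corollary on supremums of commuting $\dagger$-compact projections, namely reduce to the operator algebra analogue and then verify $\dagger$-selfadjointness.

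First I would invoke the operator algebra version of the statement, that the supremum of two commuting peak projections in $A^{**}$ is again a peak projection. This is available from the non-involutive theory of peak projections (see e.g.\ \cite{BNII}), and since $\dagger$-peak projections are in particular peak projections, we conclude that $p := q_1 \vee q_2$ is a peak projection in $A^{**}$.

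Second I would check that $p$ is $\dagger$-selfadjoint. Since $q_1$ and $q_2$ are commuting projections, the usual lattice formula gives $p = q_1 + q_2 - q_1 q_2$. The involution on $A^{**}$ (Lemma \ref{bidu}) is conjugate linear and anti-multiplicative, so using $q_i^\dagger = q_i$ and $q_1 q_2 = q_2 q_1$,
\[ p^\dagger = q_1^\dagger + q_2^\dagger - q_2^\dagger q_1^\dagger = q_1 + q_2 - q_2 q_1 = q_1 + q_2 - q_1 q_2 = p. \]
Equivalently, since $A^{**}$ is a dual operator $*$-algebra, Proposition \ref{jmdg} gives $(q_1 \vee q_2)^\dagger = q_1^\dagger \vee q_2^\dagger = q_1 \vee q_2$ directly.

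Combining these two observations, $p$ is a $\dagger$-selfadjoint peak projection, i.e.\ a $\dagger$-peak projection, which is what we wanted. I do not anticipate a real obstacle here; the work has already been done in setting up the characterization of $\dagger$-peak projections and in showing that the involution on $A^{**}$ is weak* continuous and interacts well with the projection lattice, so the corollary is essentially a one-line consequence of the operator algebra analogue plus $\dagger$-invariance.
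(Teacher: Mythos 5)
Your proposal is correct and is exactly the argument the paper intends: the paper states this corollary without proof, but its analogue for $\dagger$-compact projections is proved by precisely your reduction (quote the non-involutive result from \cite{BNII}, then note that the supremum of commuting $\dagger$-projections is again $\dagger$-selfadjoint, e.g.\ via Proposition \ref{jmdg} or the formula $q_1+q_2-q_1q_2$). Since a $\dagger$-peak projection is by definition just a $\dagger$-selfadjoint peak projection, nothing more is needed.
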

\begin{lemma}
For any operator $\ast$-algebra $A,$ the $\dagger$-peak projections for $A$ are exactly the weak* limits of $a^n$ for $\dagger$-selfadjoint element $a\in \ball(A)$ if such limit exists.
\end{lemma}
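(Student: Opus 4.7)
The plan is to prove the two directions separately, relying on Lemma \ref{bidu} (weak* continuity of the involution on $A^{**}$) and Lemma \ref{ulemm} (behavior of peak projections under the symmetrization $a \mapsto (a+a^\dagger)/2$). No additional machinery should be needed, and no approximate-unitality hypothesis on $A$ is required.

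For the easy direction, suppose $a \in \ball(A)$ is $\dagger$-selfadjoint and the weak* limit $q = \lim_n a^n$ exists in $A^{**}$. Then $q$ is by definition a peak projection (it is $u(a)$). Because $a^\dagger = a$, we have $(a^n)^\dagger = (a^\dagger)^n = a^n$, and applying the weak*-continuous involution on $A^{**}$ from Lemma \ref{bidu} yields $q^\dagger = q$. Hence $q$ is a $\dagger$-peak projection.

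For the converse, let $q \in A^{**}$ be a $\dagger$-peak projection. By definition of peak projection there exists $a \in \ball(A)$ with $a^n \to q$ weak*, i.e., $q = u(a)$. Taking $\dagger$ term by term and again using weak* continuity of the involution on $A^{**}$, we get $(a^\dagger)^n = (a^n)^\dagger \to q^\dagger = q$ weak*, so $u(a^\dagger) = q$ as well, and in particular $u(a)^\dagger = u(a^\dagger)$. Now set $b = (a + a^\dagger)/2 \in \ball(A)$; then $b$ is $\dagger$-selfadjoint. Lemma \ref{ulemm} gives $u(b) = u(a) \wedge u(a)^\dagger = q \wedge q = q$, and moreover $b^n \to u(b) = q$ in the weak* topology. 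Thus $q$ is a weak* limit of $b^n$ for a $\dagger$-selfadjoint contraction $b$, as required.

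There is no real obstacle here; the one point worth flagging is that both directions hinge on the weak*-continuity of the involution on $A^{**}$ from Lemma \ref{bidu} in order to pass $\dagger$ through a weak* limit, and the symmetrization step uses Lemma \ref{ulemm} precisely in the form stated there.
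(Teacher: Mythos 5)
Your proof is correct and follows essentially the same route as the paper: symmetrize $a$ to $b=(a+a^\dagger)/2$ and invoke Lemma \ref{ulemm} for the main direction, and use weak* continuity of the involution (Lemma \ref{bidu}) for the easy direction. The only point to tighten is your phrase ``by definition a peak projection'' in the easy direction: that the weak* limit of $a^n$ (when it exists) is actually a projection is not definitional but is the content of \cite[Lemma 1.3]{BRII}, which is precisely what the paper cites there.
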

\begin{proof}
If $q$ is a $\dagger$-peak projection, then there exists $a\in \ball(A)$ such that $q=u(a)$ which is also the weak* limit of $a^n.$ Since $q$ is $\dagger$-selfadjoint, by Lemma \ref{ulemm} we have $q=u(a^{\dagger}) =u(\frac{a+a^{\dagger}}{2})$, which is the weak* limit of $((a+a^{\dagger})/2)^n.$ The converse  follows from \cite[Lemma 1.3]{BRII}.
\end{proof}

{\bf Remark.}   In the theory of peak projections for operator algebras $A$ which are not necessarily approximately unital in 
 \cite[Section 6]{BRII}, there are two notions of peak projection, one stronger than the other.  If $A$ is an operator  $\ast$-algebra
one would say that  a projection is  a {\em $\dagger$-${\mathfrak F}$-peak projection} for $A$ if it is $\dagger$-selfadjoint and ${\mathfrak F}$-peak, where the latter means that it equals $u(a)$ for an $a \in \frac{1}{2}{\mathfrak F}_A$. A projection in $A^{\ast\ast}$ is  {\em $\dagger$-${\mathfrak F}$-compact} if it is a decreasing limit of $\dagger$-${\mathfrak F}$-peak projections.    We recall that $A_H$ was discussed above Theorem \ref{chdghsar}.
 One may then prove similarly to the development in  \cite[Proposition 6.5]{BRII} (with appropriate tweaks in the proofs): 
\begin{itemize}
\item[(i)] A projection $q$ in $A^{\ast\ast}$ is $\dagger$-${\mathfrak F}$-compact if only if 	$q$ is a $\dagger$-compact projection for $A_H.$
\item[(ii)] A projection in $A^{\ast\ast}$ is a $\dagger$-${\mathfrak F}$-peak projection if and only if it is a $\dagger$-peak projection for $A_H.$
\item[(iii)] If $A$ is separable then every $\dagger$-${\mathfrak F}$-compact projection in $A^{\ast\ast}$ is a $\dagger$-${\mathfrak F}$-peak projection.  
\end{itemize}

\subsection{Some interpolation results}

Item (ii) in the following should be compared with Theorem \ref{ulosacai} 
which gets a slightly better result in the case that $A$ is approximately unital.  

\begin{theorem}[Noncommutative Urysohn lemma for operator $\ast$-algebras] Let $A$ be a (not necessarily approximately unital) operator $\dagger$-subalgebra of $C^{\ast}$-algebra $B$ with a second involution $\dagger$.  Let $q$ be a $\dagger$-compact projection in $A^{\ast\ast}.$ 
\begin{itemize}
\item[(i)]	For any $\dagger$-open projection $p\in B^{\ast\ast}$ with $p\geq q,$ and any $\varepsilon>0,$ there exists an $a\in \ball(A)_{\dagger}$ with $aq=q$ and $\Vert a(1-p)\Vert <\varepsilon.$ 
\item[(ii)] For any $\dagger$-open projection $p\in A^{\ast\ast}$ with $p\geq q,$ there exists a $\dagger$-selfadjoint element $a\in \ball(A)$ with $q=qa$ and $a=pa.$
\end{itemize}
\end{theorem}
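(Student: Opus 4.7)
My plan is to reduce each part to a previously established (non-involutive) noncommutative Urysohn lemma and then impose $\dagger$-selfadjointness by an explicit symmetrization. For (i) the base result is the operator algebra noncommutative Urysohn lemma from \cite{BRII, BNII} applied in $B$, while for (ii) it is the approximately unital involutive version, Theorem \ref{ulosacai}, applied inside the $\dagger$-HSA of $A$ supported by $p$.

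For (i), I would first invoke the operator algebra Urysohn lemma to produce $a_0 \in \ball(A)$ with $a_0 q = q$ and $\|a_0(1-p)\| < \varepsilon$; the companion identity $q a_0 = q$ then comes for free from the standard contraction fact that $T x = x$ forces $T^{\ast} x = x$, already noted elsewhere in the paper. Setting
\[ a \; = \; a_0^{\dagger} \, a_0, \]
the element $a$ lies in $A$, is manifestly $\dagger$-selfadjoint, and has norm at most $\|a_0^{\dagger}\|\|a_0\| \leq 1$ since $\dagger$ is isometric on $1 \times 1$ matrices. Applying $\dagger$ to $q a_0 = q$ yields $a_0^{\dagger} q = q$, so $a q = a_0^{\dagger}(a_0 q) = a_0^{\dagger} q = q$, and $\|a(1-p)\| \leq \|a_0(1-p)\| < \varepsilon$ is immediate. (A symmetrization via $(a_0 + a_0^{\dagger})/2$ would also work, but would require the two-sided estimate $\|(1-p)a_0\| < \varepsilon$ obtained via the compatible $\ast$-automorphism on $B$ and $(1-p)^{\dagger} = 1-p$.)

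For (ii), let $D = p A^{\ast\ast} p \cap A$ be the $\dagger$-HSA of $A$ with support projection $p$. Then $D$ is an approximately unital operator $\ast$-algebra, its bidual identifies naturally with $pA^{\ast\ast}p$ with identity $p$, and $q \in D^{\ast\ast}$ since $q \leq p$. The next step is to show $q$ is $\dagger$-compact in $D^{\ast\ast}$, which follows from the earlier corollary that $\dagger$-compactness descends from an approximately unital ambient operator $\ast$-algebra to an approximately unital $\dagger$-subalgebra; if $A$ itself is not approximately unital one first passes to $A^1$, in which $q$ remains $\dagger$-compact directly from the non-unital definition. With this in place, Theorem \ref{ulosacai} applied to the pair $(D,p)$, with $p$ playing the role of its own $\dagger$-open dominating projection, produces a $\dagger$-selfadjoint $a \in \frac{1}{2}\mathfrak{F}_D \subset \ball(A)$ with $qa = q$, and the relation $a = pa$ is automatic because $a \in D \subset p A^{\ast\ast} p$.

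The main obstacle I anticipate is the bookkeeping in (ii) to verify $\dagger$-compactness of $q$ in $D^{\ast\ast}$ when $A$ is not assumed approximately unital: the cited descent result is stated for approximately unital $A$, so one has to move through $A^1$ and match up $(D^1)^{\ast\ast}$ with $p A^{1 \ast\ast} p$, transferring the closedness of $1-q$ across biduals. Part (i), by contrast, should be short and routine once the symmetrization $a = a_0^{\dagger} a_0$ is adopted, since this trick avoids any need for a two-sided norm estimate.
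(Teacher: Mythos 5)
Your argument is correct, and both parts rest on the same two ingredients the paper uses: the non-involutive Urysohn lemma \cite[Theorem 6.6]{BRII} for (i), and Theorem \ref{ulosacai} for (ii); the differences are in how you symmetrize and where you relocate the problem. In (i) the paper averages, taking $a=(b+b^\dagger)/2$, which forces it to use the two-sided estimate $\Vert(1-p)b\Vert<\varepsilon$ that \cite[Theorem 6.6]{BRII} supplies (via $\Vert b^\dagger(1-p)\Vert=\Vert((1-p)b)^\dagger\Vert$, since $(1-p)^\dagger=1-p$); your product $a=a_0^\dagger a_0$ is a genuinely different symmetrization that needs only the one-sided estimate and submultiplicativity, at the cost of the small step $qa_0=q\Rightarrow a_0^\dagger q=q$ --- but that is the same elementary contraction fact the paper needs anyway to see that its averaged element satisfies $aq=q$, so nothing is lost. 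In (ii) the paper's route is shorter: it applies Theorem \ref{ulosacai} directly in $A^1$ (where $q$ is still $\dagger$-compact and $p$ still $\dagger$-open) and then observes that the resulting $a$ satisfies $a=pa\in A^{\perp\perp}\cap A^1=A$, which lands $a$ back in $A$ in one line. Your route through the $\dagger$-HSA $D=pA^{\ast\ast}p\cap A$ reaches the same conclusion but effectively re-runs the opening of the proof of Theorem \ref{ulosacai} (which itself passes to $D$ and invokes Theorem \ref{eqrdcp}), and, as you anticipated, it still has to detour through $A^1$ to justify that $\dagger$-compactness descends to $D^{\ast\ast}$; so it is valid but does strictly more bookkeeping than necessary. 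In short: (i) is a clean alternative worth keeping, and for (ii) the $A^{\perp\perp}\cap A^1=A$ shortcut is the move you were missing.
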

\begin{proof}
(i) \ By \cite[Theorem 6.6]{BRII} there exists $b\in \ball(A)$ such that 
$$bq=q,\,\Vert b(1-p)\Vert <\varepsilon~\mbox{and}~\Vert (1-p)b\Vert <\varepsilon.$$
Then $a=\frac{b+b^{\dagger}}{2}\in \ball(A)_{\dagger}$ does the trick, since
$$\Vert (\frac{b+b^{\dagger}}{2})(1-p) \Vert\leq \frac{1}{2}\Vert b(1-p)\Vert+\frac{1}{2}\Vert ((1-p)b)^{\dagger}\Vert<\varepsilon.$$ 

(ii) \ Apply Theorem \ref{ulosacai} in $A^1$ to obtain  a $\dagger$-selfadjoint element $a\in \ball(A^1)$, $p\in A^{\perp\perp}$ and $ap=q.$ Then $a\in A^{\perp\perp}\cap A^1=A.$  
\end{proof}

The following is an involutive variant of the noncommutative peak interpolation type result in \cite[Theorem 5.1]{BRII}.

\begin{theorem} \label{peakthang}   Suppose that $A$ is an operator $*$-algebra 
and that  $q$ is a $\dagger$-closed projection in $(A^1)^{**}$.
If $b = b^\dagger \in A$ with $b q= qb$, then $b$ achieves its distance to the
right ideal $J = \{ a \in A : q a = 0 \}$ (this is a r-$\dagger$-ideal if  $1-q \in A^{**}$),
and also achieves its distance to $\{ x \in A : x q = qx = 0 \}$ (this is a $\dagger$-HSA if  $1-q \in A^{**}$).  
If further $\Vert b q \Vert \leq 1$,
then  there exists a $\dagger$-selfadjoint
 element $g \in {\rm Ball}(A)$ with $g q =  q g = b q$.
\end{theorem}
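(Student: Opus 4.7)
The plan is to reduce everything to the operator-algebra version \cite[Theorem 5.1]{BRII} and then symmetrize with the involution. First I would handle the identifications of $J$ and $\{x \in A : xq = qx = 0\}$ as an r-$\dagger$-ideal and $\dagger$-HSA respectively (under the hypothesis $1-q \in A^{**}$). Since $q$ is $\dagger$-closed in $(A^1)^{**}$, its orthocomplement $1-q$ is $\dagger$-open; and if moreover $1-q \in A^{**}$, then by Theorem \ref{dgrlhsa} the $\dagger$-open projection $1-q$ is the left support projection of the r-$\dagger$-ideal $(1-q) A^{**} \cap A = \{a \in A : qa = 0 \} = J$, and is the support projection of the $\dagger$-HSA $(1-q)A^{**}(1-q) \cap A = \{x \in A : qx = xq = 0\}$.

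Next I would obtain the distance-achieving assertions by direct appeal to the non-involutive version \cite[Theorem 5.1]{BRII}: since $b \in A$ and $q$ is closed in $(A^1)^{**}$ with $bq = qb$, that theorem already gives the two distance claims (these statements do not involve the involution of $A$ at all).

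For the interpolation assertion in the case $\|bq\| \leq 1$, I would apply the non-involutive form of the peak interpolation result to obtain some $g_0 \in \mathrm{Ball}(A)$ with $g_0 q = q g_0 = bq$, and then set
\[
g \;=\; \frac{g_0 + g_0^\dagger}{2}.
\]
Clearly $g^\dagger = g$, and since the involution is a complete isometry $\|g\| \leq 1$. To verify $gq = qg = bq$, I would use that $q = q^\dagger$ and $b = b^\dagger$, together with the hypothesis $bq = qb$, to compute $(bq)^\dagger = q b^\dagger = qb = bq$. Applying $\dagger$ to the identities $g_0 q = bq$ and $q g_0 = bq$ then yields $q g_0^\dagger = bq$ and $g_0^\dagger q = bq$. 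Averaging with the original identities gives $gq = qg = bq$ as required.

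The only mildly subtle point is noticing that the hypothesis $bq = qb$ is precisely what guarantees $(bq)^\dagger = bq$, which is what makes the symmetrization of $g_0$ still produce the prescribed values on both sides of $q$; everything else is a routine transcription of the non-involutive theorem plus an averaging argument, and there is no serious obstacle.
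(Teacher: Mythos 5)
Your proposal is correct and follows essentially the same route as the paper: reduce to the non-involutive result \cite[Theorem 5.1]{BRII} and then symmetrize with $\dagger$. The only (immaterial) difference is that the paper averages the distance-achieving element $y$ in $D=\{x\in A: xq=qx=0\}$, after tracing through the cited proof to check that $D$ is $\dagger$-invariant, whereas you average the interpolant $g_0$ directly and verify $gq=qg=bq$ from $(bq)^\dagger=q^\dagger b^\dagger=qb=bq$; since $g_0=b-y$ and $b=b^\dagger$, the two constructions yield the same element $g=(g_0+g_0^\dagger)/2=b-(y+y^\dagger)/2$.
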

\begin{proof}  Proceed as in the proof of \cite[Theorem 5.1]{BRII}.  The algebra 
$\tilde{D}$ is a $\dagger$-HSA in $A^1$.
Thus if $C$ is as in that proof, $C$ is $\dagger$-selfadjoint and $\tilde{D}$  is a $\dagger$-ideal in $C$.
Also $I = C \cap A$ and $D = I \cap \tilde{D}$ are  $\dagger$-selfadjoint in $C$.   Note that
 if  $x \in A$ with $x q = qx = 0$ then $x \in \tilde{D} \cap A \subset C \cap A = I$, so 
$x \in \tilde{D} \cap A \subset \tilde{D} \cap  I = D$.   So $D = \{ x \in A : x q = qx = 0 \}$.
By the  proof we are following,  there 
exists $y \in D \subset J$ such that 
$$\| b - y \| = \| b - y^\dagger \| = d(b,D) = \|  b q \| = d(b,J) \geq \| b - z \|,$$
where $z = (y + y^\dagger)/2$.   Set $g = b - z$, then    $g$ is $\dagger$-selfadjoint with $g q =  q g = b q$
(since $D$ is $\dagger$-selfadjoint), 
and $\| g \| = \|  b q \|$.  
  \end{proof}

 Theorem 4.10 in \cite{BRord} is the (noninvolutive) operator algebra
version of  the last result (and \cite[Theorem 5.1]{BRII}), but with
the additional feature that  the `interpolating element'
$g$ in the last result is also in $\frac{1}{2} {\mathfrak F}_A$.  Whence after replacing $g$ by $g^{\frac{1}{n}}$, it is
`nearly positive' in the sense of the introduction to  \cite{BRord}: 
we can make it as close  in norm as we like to an actual positive element.   As we have pointed out elsewhere, there seems
to be a mistake in  Theorem 4.10  (and hence also in 4.12)
in \cite{BRord}.  It is claimed there (and used at the end of the proof) that $D$ is approximately unital.
However this error disappears in what is perhaps the most important case, namely that  $q$ is the `perp' of 
a (open) projection in $A^{**}$.  Then $D$ is certainly a HSA in $A$, and is  approximately unital.    Hence we have  (in the
 involutive case by averaging the element produced by the
 original proof, with its `dagger'):

\begin{theorem} \label{peakth2}
 Suppose that $A$ is an operator $*$-algebra, 
 $p$ is a $\dagger$-open projection in $A^{**}$,
and $b = b^\dagger \in A$ with $bp= p b$
 and $\Vert b (1-p) \Vert \leq 1$ (where $1$ is the identity of the unitization of $A$ if $A$ is nonunital).
Suppose also that $\| (1-2b) (1-p) \| \leq 1$.  
Then  there exists a  $\dagger$-selfadjoint element $g \in \frac{1}{2} {\mathfrak F}_A
\subset {\rm Ball}(A)$ with $g (1-p) =  (1-p) g = b(1-p)$.   Such $g$ may be chosen
`nearly positive' in the sense of the introduction to  {\rm \cite{BRord}}, indeed 
it may be chosen to be as close as we like  to an actual positive element.  
\end{theorem}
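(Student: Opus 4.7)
The plan is to deduce this from the non-involutive operator algebra peak interpolation result (Theorem 4.10 of \cite{BRord}), applied with $q = 1-p$, and then symmetrize by averaging with $\dagger$. As the authors note just above the statement, the usual concern about $D$ being approximately unital in the proof of \cite[Theorem 4.10]{BRord} disappears precisely in the case at hand, since $q = 1-p$ is the perp of an open projection $p \in A^{**}$, so the relevant $D$ is a genuine HSA and hence automatically approximately unital. Note that since $p$ is $\dagger$-open, $p$ is $\dagger$-selfadjoint (as a projection in $A^{**}$), and the hypotheses $\|b(1-p)\| \leq 1$ and $\|(1-2b)(1-p)\| \leq 1$ are invariant under $\dagger$ because $b = b^\dagger$ and $1,p$ are $\dagger$-selfadjoint; similarly $bp = pb$ is preserved.

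First I would apply \cite[Theorem 4.10]{BRord} in the form described above to produce a (possibly non-$\dagger$-selfadjoint) element $g_0 \in \frac{1}{2}\mathfrak{F}_A$, nearly positive in the sense that it may be taken as close in norm as desired to a genuinely positive element $h$ of a compatible $C^*$-cover, and satisfying $g_0(1-p) = (1-p)g_0 = b(1-p)$. Then I set $g := (g_0 + g_0^\dagger)/2$. Since $\mathfrak{F}_A$ is $\dagger$-invariant (Proposition \ref{sdaggi}) and convex, $g \in \frac{1}{2}\mathfrak{F}_A$, and $g$ is visibly $\dagger$-selfadjoint.

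Next I would verify the interpolation identity for $g$. Applying $\dagger$ to $g_0(1-p) = b(1-p)$ and using $b^\dagger = b$, $p^\dagger = p$, gives $(1-p)g_0^\dagger = (1-p)b$, and since $bp = pb$ we have $(1-p)b = b(1-p)$. Hence $(1-p)g_0^\dagger = b(1-p)$; similarly $g_0^\dagger(1-p) = b(1-p)$. Averaging the two identities for $g_0$ with those for $g_0^\dagger$ yields $g(1-p) = (1-p)g = b(1-p)$, as required. For the near positivity, if $\|g_0 - h\| < \varepsilon$ with $h \geq 0$ in a compatible $C^*$-cover $B$, then the $*$-automorphism $\sigma$ on $B$ implementing $\dagger$ sends $h$ to a positive element $\sigma(h) = h^\dagger$ (since $\sigma$ preserves selfadjointness and positivity), and so $\|g_0^\dagger - h^\dagger\| < \varepsilon$. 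Then $\|g - (h+h^\dagger)/2\| < \varepsilon$ and $(h+h^\dagger)/2$ is positive in $B$, so $g$ is nearly positive.

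The main obstacle, or at least the only subtlety, is confirming that the non-involutive Theorem 4.10 of \cite{BRord} applies as advertised in this special case (with $D$ being an approximately unital HSA, avoiding the gap flagged in the remark preceding Theorem \ref{peakth2}); the averaging step itself is then essentially automatic, because all the relevant objects ($\mathfrak{F}_A$, the convex set of nearly positive elements, the identity $g_0(1-p) = (1-p)g_0 = b(1-p)$, and the equality $bp = pb$) are symmetric under $\dagger$ in the sense needed.
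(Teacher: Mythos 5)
Your proposal is correct and follows exactly the route the paper takes: invoke the noninvolutive \cite[Theorem 4.10]{BRord} in the special case $q = 1-p$ with $p$ open (where the HSA $D$ is genuinely approximately unital, so the gap flagged in the preceding remark disappears), and then average the resulting element with its $\dagger$. Your verification of the interpolation identities and of the preservation of near positivity under the averaging is the routine detail the paper leaves implicit, and it is carried out correctly.
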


\begin{theorem} \label{peakthang2}  {\rm (A noncommutative Tietze theorem)} \
 Suppose that $A$ is an operator $*$-algebra
(not necessarily approximately unital),
and that  $p$ is a $\dagger$-open projection in $A^{**}$.  Set $q = 1-p \in (A^1)^{**}$.
Suppose that $b = b^\dagger  \in A$ with $b p= pb$ and $\Vert b q \Vert \leq 1$, and that  the numerical range of 
$bq$ (in $q (A^1)^{**}q$ or $(A^1)^{**}$) 
is contained in a compact convex set $E$ in the plane satisfying $E = \bar{E}$,
where the latter is the reflection of $E$ with respect to the 
real axis.  We also suppose, by
fattening it slightly if necessary, that $E \not\subset \Rdb$.  Then there exists a $\dagger$-selfadjoint element $g \in {\rm Ball}(A)$ with $g q =  q g = b q$, 
such that  the numerical range of 
$g$ with respect to  $A^1$ is contained in $E$.
\end{theorem}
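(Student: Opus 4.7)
The plan is to reduce to the noninvolutive operator-algebra form of the noncommutative Tietze theorem and then symmetrize using the involution $\dagger$. The interpolation statements already in hand (Theorem \ref{peakthang} and Theorem \ref{peakth2}) produce a $\dagger$-selfadjoint $g$ with $gq = qg = bq$ but give no numerical range control, so the extra input needed is the operator algebra version of the noncommutative Tietze theorem from \cite{BRord}. Since $p$ is $\dagger$-open in $A^{**}$, in particular $p$ is an open projection in $A^{**}$, so $q = 1-p$ is the perp of an open projection in $A^{**}$; this is precisely the `good case' in which, as remarked in the discussion preceding Theorem \ref{peakth2}, the noncommutative Tietze theorem of \cite[Theorem 4.12]{BRord} is valid.

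First I would apply that Tietze theorem to the pair $(b,q)$ with target convex compact set $E$. The hypotheses $\|bq\| \leq 1$ and that the numerical range of $bq$ in $(A^1)^{**}$ lies in $E$ (with $E \not\subset \Rdb$) are exactly what the operator algebra result requires. This yields an element $g_0 \in \ball(A)$ with $g_0 q = q g_0 = bq$ and with numerical range (with respect to $A^1$) contained in $E$.

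Next I would apply $\dagger$ to these identities. Because $b = b^\dagger$ and $q = q^\dagger$ (the latter since $p$ is $\dagger$-selfadjoint), we compute $(bq)^\dagger = q^\dagger b^\dagger = qb = bq$, so $bq$ is $\dagger$-fixed, and consequently $g_0^\dagger \in \ball(A)$ also satisfies $g_0^\dagger q = q g_0^\dagger = bq$. To control the numerical range of $g_0^\dagger$, choose a $C^*$-cover $B$ of $A^1$ compatible with the involution, so that $a^\dagger = \sigma(a)^*$ for a period-$2$ $*$-automorphism $\sigma$ of $B$. For any state $\varphi$ on $B$ (equivalently, on $A^1$), $\varphi \circ \sigma$ is again a state, and
\[ \varphi(g_0^\dagger) \;=\; \varphi(\sigma(g_0)^*) \;=\; \overline{(\varphi \circ \sigma)(g_0)} \;\in\; \overline{E} \;=\; E, \]
using the hypothesis $E = \overline{E}$. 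Hence the numerical range of $g_0^\dagger$ is also contained in $E$.

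Finally set $g := \tfrac{1}{2}(g_0 + g_0^\dagger)$. Then $g$ is $\dagger$-selfadjoint, lies in $\ball(A)$ by convexity of the unit ball, and satisfies $gq = qg = bq$. For any state $\varphi$ on $A^1$, the value $\varphi(g) = \tfrac{1}{2}(\varphi(g_0) + \varphi(g_0^\dagger))$ is a convex combination of two points of $E$, hence lies in $E$ by convexity. The main obstacle is correctly locating and citing the operator algebra noncommutative Tietze theorem from \cite{BRord} with matching hypotheses --- in particular understanding why the fattening requirement $E \not\subset \Rdb$ is imposed (which presumably enters that proof as a nondegeneracy condition ensuring one is not forced into a purely selfadjoint situation where peak interpolation would fail). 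Once $g_0$ is secured, the $\dagger$-symmetrization step above is routine and uses only convexity of $E$, the hypothesis $E = \overline{E}$, and the formula $a^\dagger = \sigma(a)^*$.
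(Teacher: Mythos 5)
Your proposal is correct and follows essentially the same route as the paper: invoke \cite[Theorem 4.12]{BRord} to produce an element $a \in {\rm Ball}(A)$ with $aq = qa = bq$ and numerical range in $E$, then average with $a^\dagger$, using the state computation $\varphi(a^\dagger) = \overline{\varphi(\sigma(a))} \in \bar{E} = E$ in a compatible $C^*$-cover together with convexity of $E$. Your additional checks (that $bq$ is $\dagger$-fixed so $a^\dagger$ satisfies the same interpolation identities, and that $q$ being the perp of an open projection puts us in the case where the cited theorem is valid) are correct and only make explicit what the paper leaves implicit.
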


 \begin{proof}   By \cite[Theorem  4.12]{BRord},  there exists
 $a \in  {\rm Ball}(A)$ with $aq =  q a = b q$, 
such that  the numerical range of 
$a$ with respect to  $A^1$ is contained in $E$.
Then $g = (a+ a^\dagger)/2$ is  $\dagger$-selfadjoint.  Let $B$ be a unital $C^*$-cover of $A^1$ with
compatible involution $\sigma(b)^*$ as usual.  If $\varphi$ is a state of $B$ then 
$\varphi \circ \sigma$ is a state too, and so $$\varphi(a^\dagger) = \overline{\varphi(\sigma(a))} \in \bar{E} = E.$$
From this it is clear that $\varphi(g) \in E$.
 \end{proof}

\begin{corollary} Suppose that $A$ is an operator $*$-algebra
(not necessarily approximately unital),
and that  $J$ is an approximately unital closed $\dagger$-ideal in $A$. 
Suppose that $b = b^\dagger$ is an element in ${\mathfrak F}_{A/J}$ 
(resp.\ in  ${\mathfrak r}_{A/J}$).  Then there exists a $\dagger$-selfadjoint element $a$ 
 in ${\mathfrak F}_{A}$ 
(resp.\ in  ${\mathfrak r}_{A}$) with $a + J = b$.\end{corollary}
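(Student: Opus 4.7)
The plan is to reduce both cases to the peak-interpolation machinery of Theorems \ref{peakth2} and \ref{peakthang2}, using the support projection $p \in A^{**}$ of the approximately unital $\dg$-ideal $J$. By Lemma \ref{dcai} one may choose a $\dg$-selfadjoint cai for $J$, so $p$ is $\dg$-open. Moreover $p$ is \emph{central} in $A^{**}$: for any $a \in A$, both $pa$ and $ap$ lie in $pA^{**}p$ (since $jp = pj = j$ for each $j \in J$), so $pa = pap = ap$. Under the standard identification $(A/J)^{**} \cong (1-p)A^{**}$, the quotient map factors through the isometric embedding $a \mapsto (1-p)a$, so $\|a+J\|_{A/J} = \|(1-p)a\|$ for $a \in A$. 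I then pick any lift $\tilde b_0 \in A$ of $b$ and replace it by $b_0 := (\tilde b_0 + \tilde b_0^\dg)/2 \in A$: since $J^\dg = J$ and $b^\dg = b$, this $b_0$ is still a lift, and is $\dg$-selfadjoint.

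For $b \in {\mathfrak F}_{A/J}$ I will apply Theorem \ref{peakth2} to $c_0 := b_0/2$. Centrality gives $c_0 p = p c_0$; the identification above yields $\|c_0(1-p)\| = \|b/2\|_{A/J} \le 1$ (as $\|b\|_{A/J} \le 2$) and $\|(1-2c_0)(1-p)\| = \|1-b\|_{(A/J)^1} \le 1$ by hypothesis. The theorem then produces a $\dg$-selfadjoint $g \in \tfrac12 {\mathfrak F}_A$ with $g(1-p) = c_0(1-p)$, whence $g + J = b/2$ and $a := 2g \in {\mathfrak F}_A$ is the required lift.

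For $b \in {\mathfrak r}_{A/J}$ I will instead apply the noncommutative Tietze theorem (Theorem \ref{peakthang2}). The argument used in its proof---if $\varphi$ is a state of a compatible $C^*$-cover of $(A/J)^1$ then $\varphi \circ \sigma$ is also a state---shows that $W(b) = \overline{W(b)}$ because $b = b^\dg$; and $W(b)$ is a compact convex subset of the closed right half plane since $b \in {\mathfrak r}_{A/J}$. Let $M := \|b\|_{A/J}$ (the case $M = 0$ being trivial). Choose a compact convex $E \subset \{{\rm Re}\,z \ge 0\}$ with $E = \bar E$, $E \not\subset \Rdb$ and $W(b) \subset E$, fattening $E$ vertically off the real axis (e.g.\ by $i[-\ep,\ep]$) if $W(b) \subset \Rdb$. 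Applying Theorem \ref{peakthang2} with the lift $b_0/M$---noting $\|(b_0/M)q\| = 1$ and $W((b_0/M)q) \subset E/M$, where $q = 1-p$---yields a $\dg$-selfadjoint $g' \in \ball(A)$ with $g'(1-p) = (b_0/M)(1-p)$ and $W(g') \subset E/M$; hence $g' \in {\mathfrak r}_A$, and $a := Mg'$ is the desired lift. The delicate point is precisely the ${\mathfrak r}$ case: the conical constraint $a+a^* \ge 0$ is not expressed by a single norm bound suitable for Theorem \ref{peakth2}, so one must pass to the numerical-range formulation of Tietze, and the required reflection-symmetry $W(b) = \overline{W(b)}$ is the $\dg$-selfadjoint analogue of the standard self-conjugacy of the numerical range of a $*$-selfadjoint element.
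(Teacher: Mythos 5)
Your argument is correct, and for the $\mathfrak{r}_{A/J}$ case it is essentially the route the paper takes: apply the noncommutative Tietze theorem (Theorem \ref{peakthang2}) with $E$ a rectangle such as $[0,K]\times[-K,K]$, $K=\|b\|$, using that $\dagger$-selfadjointness forces the numerical range to be symmetric about the real axis. For the $\mathfrak{F}_{A/J}$ case you route through Theorem \ref{peakth2} applied to $b_0/2$ and the (central, $\dagger$-open) support projection $p$ of $J$; this is valid, and your application even lands in exactly the situation where the caveat about \cite[Theorem 4.10]{BRord} discussed before Theorem \ref{peakth2} disappears, since $1-p$ is the perp of an open projection and the relevant algebra $D=\{x\in A: x(1-p)=(1-p)x=0\}=J^{\perp\perp}\cap A=J$ is approximately unital. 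The paper's own proof is much shorter: it observes that the operator $*$-algebra variants of \cite[Proposition 6.1 and Corollary 6.1]{BRI} (the non-involutive lifting results for $\mathfrak{F}$ and $\mathfrak{r}$) hold, and that both cases follow from the ``$(a+a^\dagger)/2$ trick'': take any lift $a\in\mathfrak{F}_A$ (resp.\ $a\in\mathfrak{r}_A$) of $b$ furnished by the non-involutive result and replace it by $(a+a^\dagger)/2$. Since $J^\dagger=J$ and $b^\dagger=b$ this average is still a lift, it is $\dagger$-selfadjoint, and it remains in $\mathfrak{F}_A$ (resp.\ $\mathfrak{r}_A$) because these sets are convex and $\dagger$-stable (Proposition \ref{sdaggi}). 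Your approach buys an explicit interpolation-theoretic construction of the lift (and, in the $\mathfrak{F}$ case, near positivity of $g$ if desired); the averaging trick buys brevity and avoids all of the support-projection and bidual bookkeeping.
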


 \begin{proof} Indeed the operator $*$-algebra variant 
 of \cite[Proposition 6.1]{BRI} and \cite[Corollary 6.1]{BRI} hold.   The ${\mathfrak r}_{A/J}$
 lifting follows from the last theorem with $E= [0,K] \times [-K,K]$ and $K = \| b \|$ say.
 However both results also follow by the usual  $(a+ a^\dagger)/2$ trick.  \end{proof}

The following is the `nearly positive' case of  a simple noncommutative peak interpolation result which has implications for the unitization of an operator $\ast$-algebra.

\begin{proposition} \label{interpnp}  Suppose that $A$ is an approximately unital operator $*$-algebra, and $B$ is a $C^*$-algebra generated by $A$
with compatible involution $\dagger$.  If
$c = c^\dagger \in B_+$ with $\Vert c \Vert < 1$ then there
exists a $\dagger$-selfadjoint  $a \in \frac{1}{2} {\mathfrak F}_A$ with $|1 - a|^2 \leq 1-c$.
Indeed such $a$ can be chosen to also be nearly positive.
\end{proposition}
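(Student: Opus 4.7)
The plan is to reduce the inequality $|1-a|^2 \le 1-c$ to a simpler real-positivity inequality, and then apply Theorem \ref{havin}. First I would unpack the target condition inside any $C^*$-cover: expanding $|1-a|^2 = (1-a)^*(1-a) = 1 - 2\,\mathrm{Re}(a) + a^*a$, one sees that $|1-a|^2 \le 1-c$ is equivalent to
\[
c \;\le\; 2\,\mathrm{Re}(a) - a^*a.
\]
On the other hand, $a \in \tfrac{1}{2}{\mathfrak F}_A$ means $\|1-2a\|\le 1$, which in a $C^*$-cover says $(1-2a)^*(1-2a) \le 1$; this simplifies to $a^*a \le \mathrm{Re}(a)$. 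Consequently, for any $a \in \tfrac{1}{2}{\mathfrak F}_A$,
\[
2\,\mathrm{Re}(a) - a^*a \;\ge\; \mathrm{Re}(a).
\]
So it is enough to produce a $\dagger$-selfadjoint, nearly positive element $a \in \tfrac{1}{2}{\mathfrak F}_A$ with $\mathrm{Re}(a) \ge c$.

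Next I would invoke Theorem \ref{havin}\,(2'). The hypothesis of Proposition \ref{interpnp} (that $A$ is approximately unital and $B$ is a $C^*$-cover with compatible involution $\dagger$) matches the setting of that theorem, and the element $c$ is a $\dagger$-selfadjoint positive element of ${\mathcal U}_B$. Thus (2') yields a $\dagger$-selfadjoint, nearly positive $a \in \tfrac{1}{2}{\mathfrak F}_A$ with $c \preccurlyeq a$. Unwinding this order relation, $c \preccurlyeq a$ means $a - c \in {\mathfrak r}_A$, i.e.\ $(a-c) + (a-c)^* \ge 0$ in $B$; since $c = c^* = c^\dagger$, this gives $a + a^* \ge 2c$, i.e.\ $\mathrm{Re}(a) \ge c$.

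Combining with the reduction of the first paragraph, this $a$ satisfies $|1-a|^2 \le 1 - \mathrm{Re}(a) \le 1-c$, and the near-positivity of $a$ is inherited directly from Theorem \ref{havin}\,(2'). There is no real obstacle here: the content of the proposition is entirely packaged in the characterization $\tfrac{1}{2}{\mathfrak F}_A \Leftrightarrow a^*a \le \mathrm{Re}(a)$, combined with the approximation of $\dagger$-selfadjoint positive elements of $B$ by $\dagger$-selfadjoint nearly positive elements of $\tfrac{1}{2}{\mathfrak F}_A$ already established in Theorem \ref{havin}.
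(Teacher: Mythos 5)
Your proof is correct and takes essentially the same route as the paper's: the paper's own argument invokes Theorem \ref{havin} (2) together with the computation of \cite[Proposition 4.9]{BRord}, which is precisely the chain $c \leq {\rm Re}(a) \leq 2\,{\rm Re}(a) - a^*a$ that you derive explicitly from the equivalence $a \in \tfrac{1}{2}{\mathfrak F}_A \Leftrightarrow a^*a \leq {\rm Re}(a)$. There is nothing to add.
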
 \begin{proof}   As in \cite[Proposition 4.9]{BRord}, but using our  Theorem \ref{havin} (2), there exists
nearly positive $\dagger$-selfadjoint  $a \in \frac{1}{2} {\mathfrak F}_A$ with $$c \leq {\rm Re}(a) \leq 2 {\rm Re}(a) - a^* a,$$
and  $|1-a|^2 \leq 1-c$.
 \end{proof}

We end with an involutive case of the best noncommutative peak interpolation result (from \cite{Bnpi}), a noncommutative generalization of a famous interpolation result of  Bishop.
See \cite{Bnpi} for more context and an explanation of the classical variant, and the significance of the noncommutative variant.
Unfortunately we cannot prove this result for operator $\ast$-algebras without imposing a further strong condition ($d$ commutes with
$b$ and $q^\perp (A^1)^{**} q^\perp \cap A^1$).    This is a good example  of a complicated result 
which  is not clear in advance whether it has  `involutive variants'.   In this case  it is valid, without the strong condition just mentioned, for  involutions of   types  (3) and (4) at the start of Section
{\rm \ref{inv}}.     We treat the type (3) case.   For an operator algebra $A$, let $\bar{A}$ be as in the Remark after Proposition 
\ref{ciop}.  In this case a conjugate linear completely isometric 
involution $\pi$  on $A$ of type {\rm (3)  at the start of Section
{\rm \ref{inv}},  gives rise after composition with the canonical map $- : A \to \bar{A}$, to a
linear completely isometric  isomorphism $A \to \bar{A}$.   
This map extends to a $*$-isomorphism $C^*_{\rm max}(A) \to C^*_{\rm max}(\bar{A}) = (C^*_{\rm max}(A)^\circ)^\star$.
Composing this  with the canonical map $-$, we obtain a conjugate linear $*$-automorphism on $B = C^*_{\rm max}(A)$ which we 
will also write as $\pi$.   This is the compatible conjugate linear involution on $B$.

\begin{theorem}  Suppose that $A$ is an operator algebra, with a conjugate linear completely isometric 
involution $\pi$   of type {\rm (3)}  at the start of Section
{\rm \ref{inv}}.   Suppose that $A$ is a subalgebra of a unital $C^*$ -algebra $B$ with compatible conjugate linear $*$-automorphism  $\pi$ on $B$.
Suppose that $q$ is a closed projection in $B^{**}$ which lies in $(A^1)^{\perp\perp}$ and satisfies $\pi^{**}(q) = q$.   If $b$ is an element in $A$ with $bq=qb$ 
and $b = \pi(b),$ and if $qb^*bq\leq qd$ for an invertible positive $d\in B$ with $d=\pi(d)$ which commutes with $q,$ then there exists a $g\in\ball(A)$ with $gq=qg=bq, g = \pi(g),$ and $g^*  g\leq d.$  
\end{theorem}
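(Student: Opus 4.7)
The plan is to reduce to the non-involutive noncommutative peak interpolation theorem of \cite{Bnpi}, and then symmetrize the resulting interpolating element with respect to $\pi$ by averaging with its $\pi$-image. The data $(b,q,d)$ has been set up precisely so that $\pi$ acts trivially on them, which makes the symmetrization work cleanly.

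First I would invoke the main theorem of \cite{Bnpi} applied to $A \subset B$, the closed projection $q \in (A^1)^{\perp\perp}$, the element $b \in A$ with $bq = qb$, and the invertible positive $d \in B$ commuting with $q$ satisfying $qb^*bq \leq qd$. This produces some (not necessarily $\pi$-invariant) element $g_0 \in \ball(A)$ with
\[ g_0 q = q g_0 = bq \qquad \text{and} \qquad g_0^*g_0 \leq d. \]

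Next I would extend $\pi$ to a weak$^*$-continuous conjugate linear $*$-automorphism $\pi^{**}$ on $B^{**}$ (as in Lemma \ref{bidu}, which is stated for the other three types of involution but goes through identically here). Using that $\pi^{**}(q) = q$, $\pi(b) = b$, and $\pi(d) = d$, I would verify that $\pi(g_0) \in A$ also interpolates: since $\pi^{**}$ is a (conjugate linear) algebra homomorphism,
\begin{align*}
\pi(g_0)\, q &= \pi^{**}(g_0)\, \pi^{**}(q) = \pi^{**}(g_0 q) = \pi^{**}(bq) = bq,
\end{align*}
and similarly $q\,\pi(g_0) = bq$. Moreover, since $\pi$ is a conjugate linear $*$-automorphism,
\[ \pi(g_0)^* \pi(g_0) = \pi(g_0^*)\, \pi(g_0) = \pi(g_0^* g_0) \leq \pi(d) = d. \]

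Finally, set $g := \tfrac{1}{2}(g_0 + \pi(g_0)) \in A$. Then $\pi(g) = g$ by construction, and $gq = qg = bq$ by linearity. The norm bound is immediate: $\|g\| \leq \tfrac{1}{2}(\|g_0\| + \|\pi(g_0)\|) \leq 1$ since $\pi$ is isometric. For the operator bound, I would use the elementary $C^*$-inequality $(x+y)^*(x+y) \leq 2(x^*x + y^*y)$ to obtain
\[ g^* g \;=\; \tfrac{1}{4}(g_0 + \pi(g_0))^*(g_0 + \pi(g_0)) \;\leq\; \tfrac{1}{2}\bigl(g_0^* g_0 + \pi(g_0)^* \pi(g_0)\bigr) \;\leq\; d. \]
The only place where care is needed is checking the $g^*g \leq d$ bound after averaging --- the interpolation identity and norm bound pass through trivially, but the quadratic bound requires the $C^*$-inequality above and the fact that both $g_0^*g_0$ and $\pi(g_0)^*\pi(g_0)$ lie below the same invariant bound $d$. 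This is the reason the hypothesis $\pi(d)=d$ is essential.
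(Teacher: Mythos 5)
Your proposal is correct and follows essentially the same route as the paper: invoke the non-involutive interpolation theorem of \cite{Bnpi} to get an element $h$ with the desired properties, then average $h$ with $\pi(h)$ and use the parallelogram-type inequality $\bigl(\frac{x+y}{2}\bigr)^*\bigl(\frac{x+y}{2}\bigr)\leq \frac{1}{2}(x^*x+y^*y)$ together with $\pi(h^*h)\leq \pi(d)=d$ to preserve the bound $g^*g\leq d$. The only cosmetic difference is that you verify separately that $\pi(g_0)$ interpolates before averaging, whereas the paper folds this into the final computation; both are the same argument.
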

\begin{proof}    
By the proof of  \cite[Theorem 3.4]{Bnpi}, 
there exists $h\in A$ with 
$qh=hq=bq,$ and $h^*h\leq d$.  (We remark that $f = d^{-{\frac{1}{2}}}$ in that proof.)
Thus also $\pi(h^*h)
 \leq \pi(d) = d$.
 Let $g=\frac{h+\pi(h)}{2}.$  Then $g = \pi(g)$
and $qg=gq=bq$.  Also 
$$g^* g \leq (\frac{h+\pi(h)}{2})^*(\frac{h+\pi(h)}{2})+(\frac{h-\pi(h)}{2})^*(\frac{h-\pi(h)}{2})
.$$  Thus 
$$g^* g \leq \frac{h^*h}{2}+\frac{\pi(h)^*\pi(h)}{2}= \frac{h^*h}{2}+\pi (\frac{h^*h}{2}) \leq d,$$ as desired.
\end{proof}

As we said in the introduction, a theorem such as the last one about  conjugate linear completely isometric 
involutions is often simply a result about real  operator algebras in the sense of e.g.\ \cite{Sharma}.  In this case the result is:  Suppose that $A$ is a real operator algebra, a subalgebra of a unital real $C^*$ -algebra $B$.
Suppose that $q$ is a closed projection in $B^{**}$ which lies in $(A^1)^{\perp\perp}$.   If $b$ is an element in $A$ with $bq=qb$ 
and  $qb^*bq\leq qd$ for an invertible positive $d\in B$ which commutes with $q,$ then there exists a $g\in\ball(A)$ with $gq=qg=bq,$ and $g^*  g\leq d.$

We end by listing a couple of other results in our paper that do not seem to extend       to all of the   types of involutions discussed at the start of Section
 {\rm \ref{inv}}.   Namely, the parts of Lemma \ref{dcai} involving left and right do not work for involutions
of types (3) and (4).  Also the correspondence between left and right multipliers, left and right  $M$-ideals,
and left and right $\dagger$-ideals, do not work for these types of involutions.

\medskip  

{\bf Acknowledgments.}   This project grew out of 
\cite{BKM}, and we thank Jens Kaad and Bram Mesland for several  ideas and perspectives
learned there.   
We also thank Stephan Garcia--whose work on complex symmetric operators
has influenced some results in our paper--for helpful conversations, and also Elias Katsoulis.

\end{document}